\newcommand{\bbA}{\mathbb{A}}
\newcommand{\bbC}{\mathbb{C}}
\newcommand{\bbD}{\mathbb{D}}
\newcommand{\bbN}{\mathbb{N}}
\newcommand{\bbP}{\mathbb{P}}
\newcommand{\bbQ}{\mathbb{Q}}
\newcommand{\bbR}{\mathbb{R}}
\newcommand{\bbV}{\mathbb{V}}
\newcommand{\bbZ}{\mathbb{Z}}
\newcommand{\Gm}{\mathbb{G}_m}
\newcommand{\cA}{\mathcal{A}}
\newcommand{\cH}{\mathcal{H}}
\newcommand{\cM}{\mathcal{M}}
\newcommand{\cO}{\mathcal{O}}
\newcommand{\cV}{\mathcal{V}}
\newcommand{\rH}{\textup{H}}
\newcommand{\frp}{\mathfrak{p}}
\newcommand{\frm}{\mathfrak{m}}
\newcommand{\an}{\textup{an}}
\newcommand{\s}{\textup{s}}
\newcommand{\sft}{\textup{sft}}
\newcommand{\sat}{\textup{sat}}
\newcommand{\dR}{\textup{dR}}
\newcommand{\too}{\longrightarrow}
\newcommand{\into}{\hookrightarrow}
\renewcommand{\phi}{\varphi}
\renewcommand{\epsilon}{\varepsilon}
\renewcommand{\ker}{\Ker}
\newcommand{\iso}{\simeq}
\DeclareMathOperator{\pr}{pr}
\DeclareMathOperator{\Spec}{Spec}
\DeclareMathOperator{\Proj}{Proj}
\DeclareMathOperator{\Spf}{Spf}
\DeclareMathOperator{\Hom}{Hom}
\DeclareMathOperator{\im}{Im}
\DeclareMathOperator{\Ker}{Ker}
\DeclareMathOperator{\Frac}{Frac}
\DeclareMathOperator{\Fit}{Fit}
\DeclareMathOperator{\coker}{Coker}
\DeclareMathOperator{\red}{red}
\DeclareMathOperator{\Pic}{Pic}
\DeclareMathOperator{\id}{id}
\DeclareMathOperator*{\hotimes}{\hat{\otimes}}
\DeclareMathOperator{\codim}{codim}
\renewcommand{\le}{\leqslant}
\renewcommand{\ge}{\geqslant}
\newcommand{\MM}[1]{\ifnum\commentson = 1{\color{blue}{#1}}\fi}
\newcommand{\JP}[1]{\ifnum\commentson = 1{\color{magenta}{#1}}\fi}
\DeclareMathOperator*{\colim}{colim}
\renewcommand{\projlim}{\lim}
\renewcommand{\injlim}{\colim}
\theoremstyle{plain}
\newtheorem{theoremintro}{Theorem}
\newtheorem*{corollaryintro}{Corollary}
\newtheorem{theorem}{Theorem}[section]
\newtheorem{lemma}[theorem]{Lemma}
\newtheorem{proposition}[theorem]{Proposition}
\newtheorem{corollary}[theorem]{Corollary}
\newtheorem{claim}[theorem]{Claim}
\theoremstyle{definition}
\newtheorem{definition}[theorem]{Definition}
\newtheorem*{exampleintro}{Example}
\theoremstyle{remark}
\newtheorem{remark}[theorem]{Remark}
\numberwithin{equation}{section}
\begin{document}
\title{Affine vs. Stein in rigid geometry}

\author{Marco Maculan}
\email{marco.maculan@imj-prg.fr}
\address{Institut de Math\'ematiques de Jussieu, Sorbonne Universit\'e, 4 place Jussieu, F-75252 Paris}

\author{J\'er\^ome Poineau}
\email{jerome.poineau@unicaen.fr}
\address{Laboratoire de math\'ematiques Nicolas Oresme, Universit\'e de Caen Normandie, BP 5186, F-14032 Caen Cedex}

%\subjclass[2000]{
%32P05, %Non-Archimedean analysis
%32E10 %Stein spaces 
%}

\maketitle

\begin{abstract} We investigate the relationship between affine and Stein varieties in the context of rigid geometry. We show that the two concepts are much more closely related than in complex geometry, \emph{e.g.} they are equivalent for surfaces. This rests on the density of algebraic functions in analytic functions. One key ingredient to prove such a density statement is an extension result for Cartier divisors.
\end{abstract}

\setcounter{section}{-1}

\section{Introduction}

\subsection{Motivation}

Let $K$ be a complete non-trivially valued non-Archimedean field. A variety $X$ over $K$, that is, a separated $K$-scheme of finite type, is Stein if the $K$-analytic space  $X^\an$ attached to $X$ admits a closed $K$-analytic embedding in $\bbA^{n, \an}_K$ for some $n \in \bbN$; see \ref{sec:SteinSpaces}. Needless to say, affine varieties are Stein. The aim of the present note is to investigate to which extent the converse statement holds true. The complex analytic version of the question is a classical problem, whose history (recalled below) consists mainly of negative results. For instance, given an integer $d \ge 2$, there is a $d$-dimensional smooth complex variety that is not affine but whose associated complex manifold is Stein. In stark contrast, and breaking the persisting analogy with complex geometry, we derive several general consequences of the Stein property for non-Archimedean varieties summarized as follows (see \cref{Thm:SteinImpliesQuasiAffine,,Thm:AffineIfFiniteType,,Thm:RestrictionToCurvesIsSurjective}):

\begin{theoremintro} \label{Thm:WhatSteinImpliesIntro} A Stein variety $X$ is quasi-affine and, for any closed subscheme~$Y$ of dimension~$\le 1$, the restriction map $\Gamma(X, \cO_X) \to \Gamma(Y, \cO_Y)$ is surjective. Furthermore, if the ring $\Gamma(X_{\red}, \cO_X)$ is Noetherian or if the $K$-algebra $\Gamma(X, \cO_X)$ is finitely generated, then $X$ is affine.
\end{theoremintro}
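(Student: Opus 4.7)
The plan is to establish the three claims in sequence, each relying on the previous ones.

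\emph{Quasi-affineness.} The crucial input is the density of $\Gamma(X,\cO_X)$ in $\Gamma(X^{\an},\cO_{X^{\an}})$ for the natural Fr\'echet topology, the main technical result of the paper. First, Kiehl's non-Archimedean Cartan~B applied to the ideal sheaf of $\{x,y\}$ for any two distinct points $x,y \in X$ shows that analytic global sections separate points; density transfers this to algebraic global sections. Second, Cartan~B applied to $\frm_x^2$ for each $x \in X$ produces a surjection $\Gamma(X^{\an},\cO_{X^{\an}}) \twoheadrightarrow \cO_{X,x}/\frm_x^2$; since the target is finite-dimensional over $\kappa(x)$, density upgrades this to surjectivity of $\Gamma(X,\cO_X) \to \cO_{X,x}/\frm_x^2$. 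These two properties together make $X \hookrightarrow \Spec\Gamma(X,\cO_X)$ a locally closed immersion, so $X$ is quasi-affine.

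\emph{Surjectivity onto subschemes of dimension $\le 1$.} Let $Y \subset X$ be closed of dimension $\le 1$. Since $Y^{\an}$ is closed in the Stein space $X^{\an}$ it is itself Stein; by the first part applied to $Y$, combined with $\dim Y \le 1$, $Y$ is affine. For $s \in \Gamma(Y,\cO_Y)$, Cartan~B for the coherent ideal $\cI_{Y^{\an}} \subset \cO_{X^{\an}}$ provides an analytic extension $\tilde s^{\an} \in \Gamma(X^{\an},\cO_{X^{\an}})$, and density yields an algebraic $g \in \Gamma(X,\cO_X)$ approximating $\tilde s^{\an}$. The difficulty is that $g|_Y$ is only close to $s$ in the Fr\'echet topology of $\Gamma(Y^{\an},\cO_{Y^{\an}})$, while $\Gamma(Y,\cO_Y)$ is not closed therein, so a naive limit argument does not suffice. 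This is the main obstacle of the whole theorem, and I expect the paper's extension result for Cartier divisors to enter here: the principal analytic divisor on $X^{\an} \times \bbA^{1,\an}$ cut out by $y - \tilde s^{\an}(x)$ (with $y$ the coordinate on $\bbA^1$) should extend to an algebraic Cartier divisor on $X \times \bbA^1$, which upon checking that it projects isomorphically onto $X$ must be the graph of an algebraic section $\tilde s \in \Gamma(X,\cO_X)$ lifting $s$.

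\emph{Affineness when $\Gamma(X,\cO_X)$ is Noetherian.} Set $A := \Gamma(X,\cO_X)$; by the first part, $X$ embeds as a quasi-compact open subscheme of $\Spec A$. Assume for contradiction that $Z := \Spec A \setminus X$ is non-empty, and pick a minimal prime $\frq$ of $A$ over the ideal of $Z$, so that $V(\frq)$ is an irreducible component of $Z$. Noetherian dimension theory (Krull's principal ideal theorem combined with prime avoidance) provides a prime $\frp \subsetneq \frq$ for which $V(\frp)$ is an integral $1$-dimensional closed subscheme of $\Spec A$; the minimality of $\frq$ guarantees that the generic point $\frp$ is not in $Z$, whence $\frp \in X$. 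Setting $C := V(\frp)$ and $Y := C \cap X$ yields a closed subscheme of $X$ of dimension $\le 1$ that is a dense open of the integral affine curve $C$, with non-empty finite complement $C \cap Z$. Choose $a \in A/\frp$ vanishing on $C \cap Z$ and any lift $\tilde a \in A$; by the second part, $1/a \in \Gamma(Y,\cO_Y)$ lifts to some $g \in A$. Then $g\tilde a - 1 \in A$ vanishes on $Y$, so its image in the domain $A/\frp$ vanishes on the dense open $Y \subset C$, and therefore on all of $C$; but at any closed point $w \in C \cap Z$ one has $\tilde a(w) = 0$, giving $(g\tilde a - 1)(w) = -1 \ne 0$, a contradiction.
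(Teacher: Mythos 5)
Your first step is essentially the paper's route to \cref{Thm:SteinImpliesQuasiAffine} (density, \cref{Thm:DensityAlgebraicSections}, plus holomorphic separability), though note that ``separates points and tangent vectors'' only yields a monomorphism of finite type, not automatically a locally closed immersion; the paper instead shows that injectivity of $\sigma_X\colon X\to\Spec\Gamma(X,\cO_X)$ alone suffices, via the proper-fibres/Zariski-main-theorem argument of \cref{Cor:CharacterizationQuasiAffine}, which also handles the fact that $\Gamma(X,\cO_X)$ is not yet known to be of finite type. The genuine gap is in your second step, at exactly the point you flag. Your proposed mechanism --- extend the analytic divisor $y-\tilde s^{\an}$ on $X^\an\times\bbA^{1,\an}$ to an algebraic Cartier divisor on $X\times\bbA^1$ projecting isomorphically to $X$ --- cannot work: such an algebraic divisor would have analytification equal to the graph of $\tilde s^{\an}$, i.e.\ it would exhibit an arbitrary analytic function on $X^\an$ as algebraic (already false for $X=\bbA^1$), while any weaker form of ``extension'' loses the property of being a graph. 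The paper's divisor-extension theorems live over the valuation ring $R$ (extending a divisor from the generic fibre to a proper model) and feed into the density theorem; they are not algebraization statements on $X^\an$. What the paper actually does here is: $Y^\an$ is Stein, hence $Y$ is affine (\cref{Lemma:SteinsDontContainProper,Lemma:NonProperAffinoidCurvesAreAffine}); the image $S$ of $\Gamma(X,\cO_X)$ in $\Gamma(Y,\cO_Y)$ is dense in $\Gamma(Y^\an,\cO_{Y^\an})$; and then the key new ingredient, \cref{Lem:DenseSubalgebrasCurves}: a dense subalgebra of the functions on an affine scheme of dimension $\le 1$ is everything, proved via an embedding with components in $S$, normalization, the absence of nontrivial open immersions into curves, and the closedness-of-image results \cref{Thm:ClosednessImage} and \cref{Cor:SectionsAffineOpen}. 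Without an argument of this kind, density does not upgrade to surjectivity, and your second step does not close.

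Your third step has an independent gap. If the component $V(\frq)$ of $Z$ has positive dimension, no prime $\frp\subsetneq\frq$ can have $\dim A/\frp=1$; and even if you pick a closed point of $Z$ and a prime immediately below it, that only bounds the coheight locally at that maximal ideal: $\dim A/\frp$, and more to the point $\dim(V(\frp)\cap X)=\trdeg_K\kappa(\frp)$, may still be as large as $\dim X$ (recall $A$ is merely Noetherian, not of finite type, so it need not be catenary or equidimensional), so you cannot feed $Y=V(\frp)\cap X$ into your second step. (There is also the smaller issue that your $a$ may vanish at points of $Y$, so $1/a\notin\Gamma(Y,\cO_Y)$; this is repairable --- e.g.\ surjectivity gives $\Gamma(Y,\cO_Y)=A/\frp$, and $Y$ affine then forces $Y=V(\frp)$ --- but the existence of the curve is the real obstruction.) Producing a dimension-$\le 1$ closed subscheme of $X$ whose closure in $\Spec A$ meets the boundary is precisely the delicate point, and the paper does not argue this way: the Noetherian case is obtained from \cref{Thm:SteinImpliesQuasiAffine} together with \cref{Cor:QuasiAffineSteinNoetherianIsAffine}, i.e.\ Brenner's purely algebraic result that a non-affine quasi-affine open with Noetherian function ring has boundary of codimension $\ge 2$ in $\Spec$ of that ring, played against the analytic bound $\codim^{\s}\le 1$ for complements of Stein opens (Hartogs extension, \cref{Thm:ClosednessImage}, \cref{Prop:YonedaStein}) and Koh's theorem on superheight; the dimension-$\le 1$ surjectivity statement is used only for the surface corollary via the Goodman--Landman criterion \cref{Prop:AffineIFFRestrictionToSubvarietySurjective}.
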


Recall that a variety $X$ is quasi-affine if it admits an open embedding into an affine variety. The $K$-algebra of regular functions on an algebraic group is finitely generated \cite[III.3.8]{DemazureGabriel}. Therefore as a consequence we obtain a new proof of the following result previously proved in \cite{UniversalExtension}:

\begin{corollaryintro} \label{Thm:AffineIFFSteinGroupsIntro} An algebraic group is Stein if and only if it is affine.
\end{corollaryintro}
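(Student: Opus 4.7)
The direction ``affine $\Rightarrow$ Stein'' is immediate: an affine variety $G = \Spec A$ with $A = K[t_1,\dots,t_n]/I$ comes with a closed immersion into $\bbA^n_K$, whose analytification is a closed $K$-analytic embedding of $G^\an$ into $\bbA^{n,\an}_K$. So the only content is in the converse, and the plan is simply to feed the algebraic-group input into \cref{Thm:WhatSteinImpliesIntro}.

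Let $G$ be a Stein algebraic group over $K$. By the first assertion of \cref{Thm:WhatSteinImpliesIntro}, $G$ is quasi-affine. By the quoted result \cite[III.3.8]{DemazureGabriel}, the $K$-algebra $\Gamma(G,\cO_G)$ is finitely generated, hence in particular Noetherian. The second assertion of \cref{Thm:WhatSteinImpliesIntro} then yields that $G$ is affine.

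There is essentially no obstacle to overcome beyond invoking the two inputs: the whole work has already been done in \cref{Thm:WhatSteinImpliesIntro} (which is where the genuinely non-Archimedean content lives) and in the Demazure--Gabriel finiteness result (a purely algebraic fact about algebraic groups, independent of any analytic structure). The only minor point worth flagging is that one really needs both halves of \cref{Thm:WhatSteinImpliesIntro}: quasi-affineness alone does not imply affineness, and Noetherianness of $\Gamma(G,\cO_G)$ is used to bridge the gap. Since the Demazure--Gabriel statement supplies finite generation (a strictly stronger property than Noetherianness) for free in the algebraic-group setting, the bridge is trivial here, and no separate argument is required.
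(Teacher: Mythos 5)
Your proposal is correct and follows exactly the paper's intended argument: combine \cref{Thm:WhatSteinImpliesIntro} (Stein $\Rightarrow$ quasi-affine, and affine when $\Gamma(G,\cO_G)$ is Noetherian) with the Demazure--Gabriel finite generation of $\Gamma(G,\cO_G)$, the converse direction being immediate. Nothing is missing.
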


Also, a result of Goodman-Landman (see \cite[proposition 3.13]{GoodmanLandman}, or \cref{Prop:AffineIFFRestrictionToSubvarietySurjective}) permits to derive the equivalence in the case of surfaces (see \cref{Thm:SteinIFFAffineSurfaces}):

\begin{corollaryintro} \label{Thm:AffineIFFSteinSurfacesIntro} An algebraic surface is Stein if and only if it is affine.
\end{corollaryintro}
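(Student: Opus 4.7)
The proof is a direct assembly of the two ingredients highlighted in the excerpt, so the plan is to combine them cleanly rather than to prove anything substantially new.

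First, I would dispose of the easy direction: an affine variety is, tautologically, Stein, since $\bbA^n_K$ admits a closed embedding into itself and any closed subvariety of an affine space inherits one. So I only need to treat the implication Stein $\Rightarrow$ affine for a surface $X$.

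Next, I would invoke Theorem~\ref{Thm:WhatSteinImpliesIntro}. It provides two pieces of information about the Stein surface $X$: firstly, $X$ is quasi-affine; secondly, for every closed subscheme $Y \subset X$ of dimension $\le 1$, the restriction
\[
\Gamma(X, \cO_X) \longrightarrow \Gamma(Y, \cO_Y)
\]
is surjective. Since $\dim X = 2$, every proper closed subscheme of $X$ has dimension at most~$1$, so the surjectivity above holds for \emph{every} proper closed subscheme of $X$.

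Finally, I would apply the Goodman--Landman criterion recalled in \cref{Prop:AffineIFFRestrictionToSubvarietySurjective}: a quasi-affine variety whose ring of global sections surjects onto the ring of global sections of every closed subvariety of codimension at least one is in fact affine. Combined with the previous step, this yields that $X$ is affine, completing the proof.

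There is no real obstacle, since both inputs are stated or cited above the corollary; the only subtle point is the dimensional coincidence that turns the hypothesis of \cref{Prop:AffineIFFRestrictionToSubvarietySurjective} (surjectivity for \emph{all} proper closed subschemes) into the hypothesis actually provided by \cref{Thm:WhatSteinImpliesIntro} (surjectivity for closed subschemes of dimension $\le 1$). That coincidence is precisely what restricts the corollary to surfaces.
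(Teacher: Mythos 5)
Your proof is correct and follows essentially the same route as the paper: combine the quasi-affineness and the surjectivity of restriction to closed subschemes of dimension $\le 1$ from \cref{Thm:WhatSteinImpliesIntro} with the Goodman--Landman criterion of \cref{Prop:AffineIFFRestrictionToSubvarietySurjective}. One minor caveat: your claim that every proper closed subscheme of a surface has dimension at most one fails when $X$ is reducible or non-reduced, but this is harmless since the criterion only involves integral closed subschemes that are not irreducible components, and those do have dimension $\le 1$.
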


\subsection{Complex background} As we are about to remind, the complex analogue of the previous statements is false---not to make too fine a pun, the behaviour in non-Archimedean geometry is far more rigid.  Let $M_\dR(E)$ be the moduli space of line bundles endowed with a connection on a complex elliptic curve $E$. The line bundles in question are necessarily of degree $0$, and forgetting the connection gives rise to a surjective map of complex varieties $\pi \colon M_\dR(E) \to \Pic^0(E) = E$. Tensor product of line bundles makes $M_{\dR}(E)$ an algebraic group and $\pi$ a morphism of algebraic groups. Its kernel is identified with the space of differential forms on~$E$:
\[ 0 \too \bbV(\Gamma(E, \Omega^1_E)) \too M_\dR(E) \too E \too 0.\]
The algebraic group $M_\dR(E)$ is \emph{not} affine---the quotient of an affine group by a subgroup is affine, whereas the elliptic curve $E$ is most certainly not. Actually, more is true: it is not even quasi-affine since every algebraic function on $M_{\dR}(E)$ is constant (see \cite[proposition 2.3]{BrionAntiAffine}). From the transcendental perspective, Riemann-Hilbert's correspondence sets up a biholomorphism of $M_\dR(E)^\an$ with the group of characters of the topological fundamental group of the elliptic curve 
\[ \Hom_{\bbZ}(\pi_1(E(\bbC), 0), \bbC^\times) \cong \bbC^\times \times \bbC^\times.\]
In particular, $M_\dR(E)^\an$ admits a closed holomorphic embedding in $\bbC^3$. Summing up, the complex algebraic group $M_\dR(E)$ is not affine, but the complex  Lie group $M_\dR(E)^\an$ is Stein. Since it is of dimension $2$, this shows that the analogues of both corollaries to  \cref{Thm:WhatSteinImpliesIntro} do not hold over the complex numbers.\footnote{Hartshorne attributes this example to Serre (see \cite[Chapter VI, Example 3.2]{AmpleSubvarieties}), but it seems that such a phenomenon was known earlier by Severi and Conforto.}

As remarked by Neeman \cite{Neeman}, a complex quasi-affine variety $X$ is affine if and only if $X^\an$ is Stein and the $\bbC$-algebra $\Gamma(X, \cO_X)$ is finitely generated. The equivalence has been strengthened by Brenner \cite{Brenner} (see \cref{Cor:QuasiAffineSteinNoetherianIsAffine}), replacing the finite generation of $\Gamma(X, \cO_X)$ by being Noetherian. Keeping the notation of the previous paragraph, let $H$ be an ample line bundle on $E$. Consider the $\Gm$-principal bundle $X$ on $M:= M_\dR(E)$ associated with the line bundle $\pi^\ast H^\vee$ dual to the pull-back of $H$ along $\pi \colon M \to E$. Ampleness of $H$ implies that $X$ is quasi-affine. Since $M^\an$ is Stein, the $\bbC^\times$-bundle $X^\an$ is also Stein \cite{MatsushimaMorimoto}. However, the complex variety $X$ cannot be affine, as it would imply that $M$ is. It follows that the ring $\Gamma(X, \cO_X)$ is not Noetherian.\footnote{This can be proved by purely algebraic means \cite[theorem 3.9]{BrionAntiAffine}.}

\subsection{Main results} Over the non-Archimedean field $K$ the technical crux leading to \cref{Thm:WhatSteinImpliesIntro} is the following (see \cref{Thm:DensityAlgebraicSections}):

\begin{theoremintro}\label{Thm:DensityAlgebraicSectionsIntro}Let $F$ be a semi-reflexive coherent sheaf on a variety $X$. Then $\Gamma(X, F)$ is dense in $\Gamma(X^\an, F^\an)$.
\end{theoremintro}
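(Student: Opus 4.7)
The plan is to reduce the density question to a projective compactification of $X$, invoke non-Archimedean GAGA there, and finally establish a Runge-type approximation theorem on the open part.

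First, by Nagata's compactification theorem combined with Chow's lemma, I would embed $X$ as a dense open subvariety of a projective $K$-variety $\ol{X}$. After possibly modifying $\ol{X}$ along the boundary, one arranges that $\ol{X} \setminus X$ is the support of an effective ample Cartier divisor $D$---this is presumably where the extension result for Cartier divisors advertised in the abstract enters, as not every closed complement is automatically of pure codimension one and Cartier. Next, extend $F$ to a coherent sheaf $\ol{F}$ on $\ol{X}$ (always possible for coherent sheaves on open subschemes of a Noetherian scheme), and consider the filtration $\{\ol{F}(nD)\}_{n \ge 0}$. This yields a natural injection
\[ \varinjlim_n \Gamma(\ol{X}, \ol{F}(nD)) \too \Gamma(X, F). \]
The semi-reflexivity of $F$ is used to show that this map is also surjective: a section of $F$ over $X$, viewed as a rational section of $\ol{F}$, must have poles of bounded order along $D$, because $F$ embeds in its bidual $F^{\vee\vee}$, which is reflexive and hence well-behaved along Cartier divisors.

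Second, since $\ol{X}$ is proper, non-Archimedean GAGA (K\"opf--Kiehl) identifies algebraic and analytic global sections:
\[ \Gamma(\ol{X}, \ol{F}(nD)) \iso \Gamma(\ol{X}^\an, \ol{F}(nD)^\an) \quad \text{for every } n \ge 0. \]
The problem thereby reduces to showing that the natural restriction map
\[ \varinjlim_n \Gamma(\ol{X}^\an, \ol{F}(nD)^\an) \too \Gamma(X^\an, F^\an) \]
has dense image with respect to the Fr\'echet topology of uniform convergence on an affinoid exhaustion of $X^\an$.

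The main obstacle is precisely this last step: a Runge-type approximation theorem on a non-affinoid analytic space. Fix an exhaustion $U_1 \subset U_2 \subset \cdots$ of $X^\an$ by affinoid domains, each relatively compact in the next. Given $s \in \Gamma(X^\an, F^\an)$ and an index $m$, the task is to find, for some $n$ large enough, a section $\sigma \in \Gamma(\ol{X}^\an, \ol{F}(nD)^\an)$ whose restriction to $U_m$ approximates $s|_{U_m}$. The ampleness of $\cO_{\ol{X}}(D)$ supplies, via Serre vanishing and its analytic counterpart, enough global sections so that restriction from $\ol{X}^\an$ to any fixed affinoid neighborhood $V \supset U_m$ in $\ol{X}^\an$ is surjective for $n \gg 0$. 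Combined with a Runge-type density of restrictions from $V$ to $U_m$, this would yield the required approximation. The delicate points are to choose the exhaustion so that each $U_m$ forms a Runge pair with an appropriate affinoid neighborhood in $\ol{X}^\an$, and to obtain quantitative norm control ensuring that the large poles of the approximating section $\sigma$ along $D$ do not spoil uniform convergence on $U_m$.
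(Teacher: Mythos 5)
Your global frame coincides with the paper's reduction: compactify by Nagata, arrange the boundary to be an effective Cartier divisor $D$ (a routine blow-up over $K$), extend $F$ to a coherent $\ol{F}$ on $\ol{X}$, identify $\injlim_n \Gamma(\ol{X}, \ol{F}(nD))$ with $\Gamma(X,F)$ (this holds for \emph{any} quasi-coherent extension, with no semi-reflexivity needed), apply GAGA on the proper $\ol{X}$, and reduce to showing that $\injlim_n \Gamma(\ol{X}^\an, \ol{F}(nD)^\an)$ is dense in $\Gamma(X^\an, F^\an)$. But that last step, which you flag as ``the main obstacle'', is the entire content of the theorem, and your sketch of it does not work. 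The claim that ampleness of $\cO_{\ol X}(D)$ plus Serre vanishing makes the restriction $\Gamma(\ol{X}^\an, \ol{F}(nD)^\an) \to \Gamma(V, \ol{F}(nD)^\an)$ surjective for $n \gg 0$, with $V$ a fixed affinoid neighbourhood, is false: already for $\ol{X} = \bbP^1$, $D = \{\infty\}$, $F = \cO$ and $V$ a closed disc, the source consists of polynomials of degree $\le n$ while the target is a Tate algebra, so no finite $n$ surjects; density of the union over all $n$ is precisely what must be proved, and Serre vanishing only controls restriction to closed subschemes, not to analytic domains. More fundamentally, any soft Runge-type argument of the kind you outline would apply verbatim over $\bbC$, where the statement is false (all algebraic functions on $M_\dR(E)$ are constant while $M_\dR(E)^\an \cong \bbC^\times \times \bbC^\times$), so the proof must exploit a specifically non-Archimedean mechanism, which your proposal never introduces.

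That mechanism in the paper is the use of models over the valuation ring $R$: one extends $\ol{X}$ and $D$ to a proper flat (formal) model in which $D$ becomes an effective Cartier divisor \emph{flat over $R$} --- this is where the advertised Cartier-divisor extension theorem is actually used, not to make the boundary Cartier over $K$ --- and one extends $F$ to a coherent sheaf on the model which is $D$-torsion free with $F$ and $F_{\rvert D}$ flat over $R$; this is where semi-reflexivity genuinely enters, not in the identification of $\injlim_n \Gamma(\ol{X},\ol{F}(nD))$ with $\Gamma(X,F)$. Density is then proved $\varpi$-adically: injectivity statements modulo $\varpi^n$ (relying on the flatness of $F_{\rvert D}$ over $R$) reduce the approximation to lifting sections along the projective system $\bigl(\Gamma(X_m, F(dD))\bigr)_m$, whose Mittag-Leffler property comes from properness over an affine base (formal GAGA-type finiteness). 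The flatness of $D$ over $R$ is essential --- the paper gives a counterexample when it fails --- whereas ampleness plays no role. As written, your proposal leaves the core approximation unproved and rests on a false surjectivity claim, so it has a genuine gap.
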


See \ref{sec:StatementsDensityPolarSection} for the definition of the topology on $\Gamma(X, F)$. A series of comments:

\begin{enumerate}
\item Within the framework of Berkovich spaces, the topological space underlying the $K$-analytic space $X^\an$ is locally compact: when $X$ is reduced, the topology on $\Gamma(X^\an, \cO_X^\an)$ is that of uniform convergence on compact subsets; when $X$ is not reduced, as in complex analysis, the topology is finer. 

\item An $\cO_X$-module $F$ is \emph{semi-reflexive} if the natural homomorphism $F \to F^{\vee \vee}$ is injective. Therefore \cref{Thm:DensityAlgebraicSectionsIntro} applies in particular to vector bundles and coherent sheaves of ideals. However, we ignore whether it stays true for any coherent $\cO_X$-module.

\item  The techniques involved in the proof are fairly general and permit to carry on the proof when $K$ is replaced by an affinoid $K$-algebra $A$ (in the sense of Berkovich) and $X$ by a separated $A$-scheme of finite type.

\item Let $X$ be a variety and $F$ a coherent $\cO_X$-module. If $X$ is affine, then the density of $\Gamma(X, F)$ in $\Gamma(X^\an, F^\an)$ is rather easy to prove. When $X$ is proper, the non-Archimedean GAGA theorem implies that the natural map $\Gamma(X, F) \to \Gamma(X^\an, F^\an)$ is an isomorphism. Therefore, \cref{Thm:DensityAlgebraicSectionsIntro} is most interesting when $X$ belongs to neither of these two classes, and especially when $X$ does not admit a proper morphism onto an affine variety.

\item \Cref{Thm:DensityAlgebraicSectionsIntro} has little to do with algebraic varieties: the only property we use is that they can be ``compactified''. The general statement concerns the complement of an effective Cartier divisor in a proper $K$-analytic space (see \cref{Thm:DensityMeromorphicSectionsAnalytic}) or,  even more generally, holomorphically convex (see \cref{Thm:ComplementDivisorHolomorphicallyConvex}).

\item The argument for deducing \cref{Thm:WhatSteinImpliesIntro} from \cref{Thm:DensityAlgebraicSectionsIntro} is not exclusive to non-Archimedean analysis and can be adapted over the complex numbers to yield the following result: if $X$ is a complex variety such that $\Gamma(X, \cO_X)$ is dense in $\Gamma(X^\an, \cO_X^\an)$ and $X^\an$ is Stein, then it satisfies the conclusions of \cref{Thm:WhatSteinImpliesIntro}.

\item Consider the moduli space $M_{\dR}(E)$ of line bundles endowed with a connection on a complex elliptic curve $E$. Then, all of the algebraic functions on $M_\dR(E)$ are constant (see for instance \cite[proposition 2.3]{BrionAntiAffine}), while $M_\dR(E)$ is biholomorphic to $\bbC^\times \times \bbC^\times$. Therefore, the complex analogue of \cref{Thm:DensityAlgebraicSectionsIntro} is false.
\end{enumerate}

\Cref{Thm:DensityAlgebraicSectionsIntro} marks quite a difference between complex and non-Archimedean analysis: roughly speaking, there are fewer rigid analytic functions with essential singularities than holomorphic ones.\footnote{Resembling phenomena were observed by Cherry throughout his work in non-Archimedean Nevanlinna theory \cite{CherryThesis}, \cite{CherryAbelian}, \cite{CherryZhuan}.} Mirroring this discrepancy, its proof has to rely on some technique unavailable in the complex framework: with no originality, it is the involvement of models over the ring of integers $R$ of $K$. To state the result let $\varpi \in R \smallsetminus \{ 0 \}$ be a topological nilpotent element and for an $R$-scheme $S$ let $S_n$ be the closed subscheme $S \times_R (R/ \varpi^n R)$ of $S$. The density result lying at the core of the method is the following (see \cref{Thm:DensityMeromorphicSectionsModel}):

\begin{theoremintro}\label{Thm:DensityAlgebraicSectionsModelIntro} Let $D$ be an effective divisor in a proper and flat $R$-scheme $X$. If $D$ is flat over $R$, then for any semi-reflexive coherent $\cO_X$-module $F$  the natural map 
\[ \Gamma(X \smallsetminus D, F) \too \projlim_{n \in \bbN} \Gamma(X_n \smallsetminus D_n, F), \]
is injective, and has a dense image.
\end{theoremintro}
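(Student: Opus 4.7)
The plan is to identify both sides via pole-order filtrations along $D$, deduce injectivity from Krull's intersection theorem, and establish density by extending sections using the Cartier divisor extension result proved earlier in the paper.

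Since $F$ is semi-reflexive, it is $\cO_X$-torsion-free: any torsion section is killed by every homomorphism $F \to \cO_X$ (as $\cO_X$ is itself torsion-free), hence by the injection $F \to F^{\vee\vee}$. The same reasoning, combined with the fact that $\cO_X$ is $R$-torsion-free by flatness of $X$ over $R$, shows $F$ is $R$-flat. Consequently the multiplication-by-$\sigma_D$ maps give injections $F(kD) \hookrightarrow F((k+1)D)$, so
\[
\Gamma(X \smallsetminus D, F) \;=\; \varinjlim_{k \ge 0} \Gamma(X, F(kD)),
\]
where each $\Gamma(X, F(kD))$ is a finitely generated $R$-module by properness of $X$. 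Because $D$ is $R$-flat, $D_n$ remains a Cartier divisor on $X_n$, and an analogous colimit description holds on each level of the $\varpi$-adic tower.

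For injectivity, I take $s$ in the kernel, represented by some $t \in \Gamma(X, F(kD))$. Its vanishing in each $\Gamma(X_n \smallsetminus D_n, F)$, combined with injectivity of the pole-order transitions on $X_n$, forces $t$ to map to $0$ in $\Gamma(X_n, F(kD)_n)$. Since $F(kD)$ is $R$-flat, this kernel is precisely $\varpi^n \Gamma(X, F(kD))$, so $t \in \bigcap_n \varpi^n \Gamma(X, F(kD))$. Krull's intersection theorem, applied to the finitely generated $R$-module $\Gamma(X, F(kD))$ over the valuation ring $R$ (where $\varpi$ lies in the maximal ideal), then yields $t = 0$.

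For density, I fix $(s_n)_n \in \projlim_n \Gamma(X_n \smallsetminus D_n, F)$ and $N \ge 1$, and aim to produce $s \in \Gamma(X \smallsetminus D, F)$ whose reduction on $X_N \smallsetminus D_N$ equals $s_N$. Representing $s_N$ by some $t_N \in \Gamma(X_N, F(kD)_N)$, the strategy is to extend $t_N$ to a compatible sequence $(t_n)_{n \ge N}$ with $t_n \in \Gamma(X_n, F(k^\ast D)_n)$ at a common pole order $k^\ast \ge k$. Once such a uniform $k^\ast$ is secured, Grothendieck's existence theorem yields a global section $t \in \Gamma(X, F(k^\ast D)) \cong \projlim_n \Gamma(X_n, F(k^\ast D)_n)$, which then provides the required $s \in \Gamma(X \smallsetminus D, F)$.

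The main obstacle lies in producing the uniform pole order $k^\ast$: the naive compatibility of $(s_n)$ in the filtered colimits allows the representative pole orders to grow with $n$, so formal GAGA cannot be applied directly. This is precisely the role of the Cartier extension result alluded to in the abstract: it controls, in a bounded way, how the divisorial data encoded by $t_N$ can be spread across the formal tower $(X_n)_n$, allowing a single pole order $k^\ast$ to suffice for every level. The hypothesis that $D$ is $R$-flat is essential here, as it ensures that the Cartier-divisor structure along $D$ restricts cleanly to each $D_n$ and admits compatible lifts back through the tower.
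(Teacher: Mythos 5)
Your skeleton is the right one --- identify $\Gamma(X\smallsetminus D,F)$ with $\varinjlim_k\Gamma(X,F(kD))$, work level by level on the $X_n$, and apply the theorem on formal functions at a \emph{fixed} pole order --- but the proof has a genuine gap exactly at the point you yourself flag: you never produce the uniform pole order $k^\ast$, and the tool you invoke cannot produce it. The Cartier-divisor extension theorem (\cref{Thm:ModelOfCartierDivisorsIntro}, \cref{Thm:ModelOfCartierDivisors}) constructs a model $(X,D)$ with $D$ a flat Cartier divisor extending a divisor given on the generic fibre; it enters the paper only in building models (deduction of the analytic density statement), and it says nothing about sections of $F$ over the truncations $X_n$ or about bounding their pole orders. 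The mechanism that actually bounds the pole order in the paper's proof of \cref{Thm:DensityMeromorphicSectionsModel} is \cref{Lemma:KeyInjectivityFormal}: since $F(D)_{\rvert D}$ is flat over $R$, the sequence $0 \to F \to F(D) \to F(D)_{\rvert D} \to 0$ stays exact modulo $\varpi^n$, so multiplication by the canonical section is injective on $\Gamma(X_n,F(dD))$ \emph{and} on the quotients $\Gamma(X_n,F(dD))/\Gamma(X_m,F(dD))$; taking for each $n$ the representative $f_n$ of minimal pole order $d_n$, this quotient-injectivity shows that $f_n$ lifts to $\Gamma(X_m,F(d_nD))$ for every $m\ge n$, i.e.\ the order $d_n$ already suffices at all higher levels, after which Mittag--Leffler (properness over an affine base) and the theorem on formal functions yield the global section. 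No argument of this kind appears in your proposal.

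A second, related gap: both your injectivity step and your ``analogous colimit description on each level'' presuppose that the maps $\Gamma(X_n,F(dD))\to\Gamma(X_n,F((d+1)D))$, equivalently $\Gamma(X_n,F(dD))\to\Gamma(U_n,F)$, are injective. This is not a consequence of semi-reflexivity of $F$ alone: it requires $F_{\rvert D}$ to be flat over $R$ (locally, take $F=(x,\varpi)\subset\cO_X$ and $D=V(x)$; this $F$ is an ideal sheaf, hence semi-reflexive, yet $F_{\rvert D}$ has $\varpi$-torsion). The paper proves the density statement under the hypotheses ``$F$ $D$-torsion free, $F$ and $F_{\rvert D}$ flat over $R$'' and uses semi-reflexivity together with flatness of $D$ only to reduce to that case, by replacing $F$ with a sheaf agreeing with it where it matters, via $F\hookrightarrow F^{\vee\vee}$ and a saturation along $D$ (cf.\ \cref{Lemma:RestrictionOfDualIsFlatFormal,Lemma:SaturationOfSemiReflexiveFormal}); your argument skips this reduction. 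Finally, your appeal to Krull's intersection theorem is not available as stated, since $R$ is in general non-Noetherian: finite generation of $\Gamma(X,F(kD))$ over $R$, and the Mittag--Leffler property you would need, are precisely the non-Noetherian finiteness results the paper imports from Abbes and Fujiwara--Kato, so this point is repairable but must be justified.
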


Let us relate this to \cref{Thm:DensityAlgebraicSectionsIntro}. Let $\hat{U}$ be the formal $R$-scheme obtained as formal completion of $U = X \smallsetminus D$ along its special fiber and $i \colon \hat{U} \into U$ the natural morphism of locally ringed spaces. Then, 
\[  \Gamma(\hat{U}, i^\ast F) = \projlim_{n \in \bbN} \Gamma(U_n, F). \]
Let $U_K$ be the generic fiber of $U$ and $F_K$ the restriction of $F$ to $U_K$. With this notation, the $K$-vector space $\Gamma(\hat{U}, i^\ast F) \otimes_R K$ is the space of global sections $\Gamma(\hat{U}_\eta, F_K^\an)$ of the coherent sheaf $F_K^\an$ on a certain compact subset $\hat{U}_\eta$ of $U_K^\an$, called the \emph{Raynaud generic fiber} of the admissible formal $R$-scheme $\hat{U}$. \Cref{Thm:DensityAlgebraicSectionsModelIntro} states that any $K$-analytic section $f \in \Gamma(\hat{U}_\eta, F_K^\an)$ can be approximated globally on the compact subset $\hat{U}_\eta$ by algebraic sections $g \in \Gamma(U_K, F_K)$.

\begin{exampleintro} \label{Ex:FailureOfDensity} In general, \cref{Thm:DensityAlgebraicSectionsModelIntro} fails to be true when flatness of $D$ is discarded. Suppose for simplicity $R$ discretely valued and let $k$ be the residue field of $R$. Consider the blow-up $\pi \colon X \to \bbP^1_R$  along a $k$-rational point of the special fiber. Let $D$ be the exceptional divisor. The completion of the open subset $U:= X \smallsetminus D$ along its special fiber is isomorphic to the completion of the affine line $\bbA^1_R$ along its special fiber. Thus 
\[ \projlim_{n \in \bbN} \Gamma(U_n, \cO_X) \cong R \{ z \} := \projlim_{n \in \bbN} R/\varpi^n R[z],\]
where $z$ is the coordinate function on $\bbA^1_R$. However the generic fiber of $U$ is $\bbP^1_K$ so by flat base change  $\Gamma(U, \cO_X) \into \Gamma(\bbP^1_K, \cO_{\bbP^1_K}) = K$. In particular, the image of
\[ R=  \Gamma(U, \cO_X) \too \projlim_{n \in \bbN} \Gamma(U_n, \cO_X) \cong R \{ z \}, \]
is not dense.
\end{exampleintro}

Most of the comments for \cref{Thm:DensityAlgebraicSectionsIntro} have their counterpart for \cref{Thm:DensityAlgebraicSectionsModelIntro}:

\begin{enumerate}
\item The topology on $\projlim_{n \in \bbN} \Gamma(U_n, F)$ is the prodiscrete one.

\item Technically speaking, it is the flatness of $F$ and of $F_{\rvert D}$ that is used during the proof rather than the semi-reflexivity of $F$ and the flatness of $D$. However, in the deduction of \cref{Thm:DensityAlgebraicSectionsIntro}, we fail to see how to extend an arbitrary coherent sheaf on the generic fiber to one on the model whose restriction to $D$ is flat over $R$, while we manage to do this under the semi-reflexivity assumption.

\item The argument to prove \cref{Thm:DensityAlgebraicSectionsModelIntro} goes through when $R$ is replaced by a flat $R$-algebra of finite type.

\item When $U$ is affine, \cref{Thm:DensityAlgebraicSectionsModelIntro} follows at once from the surjectivity of the restriction map $\Gamma(U, F) \to \Gamma(U_n, F)$, in turn implied by the vanishing of higher cohomology. A slight generalization of the previous argument involving the formal GAGA theorem yields \cref{Thm:DensityAlgebraicSectionsModelIntro} when $U$ admits a proper morphism onto an affine $R$-scheme of finite type. Therefore, \cref{Thm:DensityAlgebraicSectionsModelIntro} is more interesting when $U$ does not fall in the latter category.

\item The natural setting for \cref{Thm:DensityAlgebraicSectionsModelIntro} is that of a proper formal $R$-scheme $X$ together with an effective Cartier divisor $D \subset X$. The result then states the density of sections over $X \smallsetminus D$ having polar singularities along $D$ among all the sections over $X \smallsetminus D$ (see \cref{Thm:DensityMeromorphicSectionsModel}).
\end{enumerate}

For \cref{Thm:DensityAlgebraicSectionsModelIntro} to be of any use, given a variety $U_K$ over $K$, one has to be able to produce a model $U$ of $U_K$ which is the complement of an effective Cartier divisor $D$ flat over $R$ in a proper flat $R$-scheme $X$ (see \cref{Thm:ModelOfCartierDivisors}):

\begin{theoremintro} \label{Thm:ModelOfCartierDivisorsIntro}  Let $D_K$ be an effective Cartier divisor in a be a proper variety $X_K$. Then, there are an effective Cartier divisor $D$  in a proper flat $R$-scheme $X$ and an isomorphism of $K$-schemes $X_K \cong X \times_R K$ inducing an isomorphism $D_K  \cong D \times_R K $.
\end{theoremintro}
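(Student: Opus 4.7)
The plan is to build $(X, D)$ in three stages: choose a proper flat $R$-model of $X_K$; extend $D_K$ to a closed subscheme by scheme-theoretic closure; and then modify the model via a blow-up so that the extension becomes an effective Cartier divisor, before restoring flatness.

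To begin, pick any proper flat $R$-model $X_0$ of $X_K$. When $X_K$ is projective this is immediate: take the scheme-theoretic closure inside $\bbP^n_R$ of a projective embedding of $X_K$. In general, existence of such a model follows by combining Nagata--Temkin compactification of $X_K \to \Spec K$ inside a proper $R$-scheme with Raynaud--Gruson flattening; alternatively one can descend to the case where the base is the valuation ring of a discretely valued subfield of $K$. Let $\cI_0 \subset \cO_{X_0}$ be the ideal sheaf of the scheme-theoretic closure of $D_K$ in $X_0$; its restriction to $X_K$ coincides with the invertible ideal of $D_K$, although $\cI_0$ itself need not be invertible globally.

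Next, let $\pi \colon \widetilde X \to X_0$ be the blow-up of $X_0$ along $\cI_0$. By the universal property $\cI_0 \cdot \cO_{\widetilde X}$ is invertible and cuts out an effective Cartier divisor $\widetilde D \subset \widetilde X$; since $\cI_0$ is already invertible on $X_K$, the morphism $\pi$ is an isomorphism over the generic fiber and $\widetilde D$ restricts there to $D_K$. To recover flatness, let $X$ be the scheme-theoretic image of the open immersion $X_K \into \widetilde X$: then $X$ is closed in $\widetilde X$ and hence proper over $R$; it has $X_K$ as generic fiber; and it is flat over $R$ because it is cut out by the $R$-torsion ideal of $\cO_{\widetilde X}$, so $\cO_X$ is torsion-free over the valuation ring $R$. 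Setting $D := \widetilde D \cap X$, a local generator $f$ of $\cI_0 \cdot \cO_{\widetilde X}$ restricts to a non-zero divisor on $X$: by $R$-flatness all associated points of $X$ lie in the generic fiber, where $f$ cuts out the effective Cartier divisor $D_K$ and is therefore a non-zero divisor. Hence $D$ is an effective Cartier divisor on $X$ with generic fiber $D_K$, as desired.

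The main obstacle, as I see it, is the first step: producing a proper flat $R$-model of $X_K$ when $R$ is non-Noetherian (for instance $R$ the ring of integers of $\bbC_p$). This rests on fairly general compactification and flattening theorems which are known but technically serious. Over a discretely valued $R$, or when $X_K$ is projective, no real work is needed. Once the proper flat model is in place, the blow-up and scheme-theoretic-image manipulation is routine, and the Cartier property survives restriction to $X$ precisely because $R$-flatness concentrates all associated points of $X$ on the generic fiber.
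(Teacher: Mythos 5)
Your construction is exactly the ``first reflex'' that the introduction describes and then rejects, and the reason it is rejected is the gap in your argument. Note first that your flattening step is vacuous: since $\cO_{X_0}$ is $R$-torsion free, the Rees algebra $\bigoplus_d \cI_0^d$ is torsion free, so the blow-up $\widetilde X$ is already flat over $R$ and your $X$ is all of $\widetilde X$. Thus your final divisor $D$ is the total transform $\pi^{-1}(D_0)$ of the closure $D_0$ of $D_K$. This is indeed an effective Cartier divisor with generic fiber $D_K$ (your nonzerodivisor argument via schematic density of the generic fiber is fine), but it is in general \emph{not flat over} $R$: wherever $D_0$ fails to be Cartier on the special fiber, $\pi^{-1}(D_0)$ acquires vertical components, i.e.\ $\cO_D$ has $\varpi$-torsion. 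The introductory wording does not repeat ``flat over $R$'' for $D$, but the statement is an abbreviation of \cref{Thm:ModelOfCartierDivisors} (the case $Y=\Spec R$), and the flatness of $D$ is precisely the content that makes the theorem usable in \cref{Thm:DensityMeromorphicSectionsModel}; the paper even spells out that taking the exceptional/total transform of the blow-up in the closure produces a Cartier divisor ``but will not be flat over $R$ anymore in general''. The paper's actual proof therefore does not blow up $I_{D_0}$ itself: after a preliminary blow-up it blows up $I_{D_0'}+\varpi^n\cO$ for $n\gg 0$ (the integer $n$ supplied by \cref{Lemma:ControlDenominators}) and shows via \cref{Lemma:TechnicalBlowingUpLemma} that the \emph{strict} transform equals $\pi^{-1}(D_0')-E$, which is simultaneously Cartier and, being the schematic closure of the generic fiber of the divisor, flat over $R$. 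That step, which your proposal omits entirely, is the heart of the theorem.

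A secondary point: you locate the non-Noetherian difficulty in the wrong place. Producing a proper flat model is easy --- Nagata over $\Spec R$ followed by the schematic closure of $X_K$ (flat $=$ torsion free over a valuation ring); no flattening theorem is needed. The genuine non-Noetherian issue is in your blow-up step: for $\widetilde X\to X_0$ to be proper (indeed of finite type), the ideal $\cI_0$ of the closure of $D_K$ must be of finite type, which is not automatic when $R$ is not Noetherian. The paper secures this with \cref{Lemma:FinitePresentationAndFlatness} (via Raynaud--Gruson): a finitely generated ideal with $R$-flat quotient over a finite type flat $R$-algebra is finitely presented. Without such an argument, properness of your $X$ over $R$ is unproved outside the discretely valued case.
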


In order to construct such an $X$ and $D$, the first reflex is to take any proper flat model  $X$ of $X_K$ and the Zariski closure $D$ of $D_K$  in $X$. The closed subscheme $D$ is without doubt flat over $R$ but may fail to be a Cartier divisor. To repair such an issue, one considers the blow-up $X'$ of $X$ along $D$. The inverse image $D'$ of $D$ in $X'$ is thus Cartier, but will not be flat over $R$ anymore in general. Looking at the Zariski closure in $X'$ of the generic fiber of $D'$ brings us back to square one.  Not to be caught in a vicious circle, one has to choose carefully the blow-up of $X$ to perform. Namely, if $I$ is the sheaf of ideals of $\cO_X$ defining the closed subscheme $D$, then we consider the blow-up $\pi \colon X' \to X$ of $X$ along the ideal $I + \varpi^n \cO_X$, for some $n \in \bbN$ big enough. The strict transform $D'$ of $D$ in $X'$ is shown to be the effective Cartier divisor $\pi^\ast D - E$, where $E$ is the exceptional divisor and the minus sign stands for the difference of effective Cartier divisors (\emph{cf.} \cref{Lemma:TechnicalBlowingUpLemma}). Gabber communicated us an alternative proof of \cref{Thm:ModelOfCartierDivisorsIntro}. The same procedure allows to prove the following geometric variant, which might be of independent interest (see \cref{Thm:ExtendingCartierSchemeVersion}):

\begin{theoremintro} \label{Thm:ExtendingCartierSchemeVersionIntro} Let $D$ be a closed subscheme of a Noetherian scheme $X$. Let $U \subset X$  be an open subset such that 
\begin{enumerate}
\item $D \cap U$ is scheme-theoretically dense in $D$;
\item $D \cap U$ is an effective Cartier divisor in $U$.
\end{enumerate}
Then there is a $U$-admissible blow-up $\pi \colon X' \to X$ such that the strict transform of~$D$ in $X'$ is an effective Cartier divisor.
\end{theoremintro}

 With \cref{Thm:ModelOfCartierDivisorsIntro} under the belt, the proof of \cref{Thm:DensityAlgebraicSectionsIntro} is achieved by arbitrarily enlarging the compact subset obtained as Raynaud's generic fiber (of the formal completion) of the model. This is a routine operation (\emph{cf.} \cref{Lemma:ExhaustionByBlowUp}), except for the extra care one has to employ not to lose the flatness over $R$ of the Cartier divisor in question (\emph{cf.} \cref{Prop:StrictTransformBlowUpIntersectionTwoCartierFormal}).

\subsection*{Acknowledgements} We thank O. Gabber for communicating an alternative proof of \cref{Thm:ModelOfCartierDivisorsIntro} and  J.-B. Bost and F. Charles for sharing an earlier version of their manuscript \cite{BostCharles}. We thank the referee for his comments permitting to improve the paper in many points. M.M. was supported by ANR-18-CE40-0017. J.P. acknowledges support by the ERC project TOSSIBERG, grant agreement 637027, and the Deutsche Forschungsgemeinschaft, through TRR 326 \emph{Geometry and Arithmetic of Uniformized Structures}, project number 444845124.

\section{Extending Cartier divisors: the algebraic case} \label{sec:ExtensionCartierDivisorsAlgebraic}

\subsection{Statements} \label{sec:CartierDivisor} We start by recalling some basic notions concerning Cartier divisors in a scheme $X$. For a closed subscheme $Z \subset X$ let $I_Z$ denote the kernel of the restriction map $\cO_X \to \cO_Z$.  An \emph{effective Cartier divisor} $D \subset X$ is a closed subscheme whose defining sheaf of ideals $I_D$ is invertible. Let $\cO_X(D)$ denote the dual of $I_D$. The homomorphism $s_D \colon \cO_X \to \cO_X(D)$ dual to the inclusion $I_D \to \cO_X$ is called the \emph{canonical section} associated with $D$.  For effective Cartier divisors $D, D' \subset X$, the Cartier divisor $D + D'$ is the closed subscheme with sheaf of ideals $I_D \otimes_{\cO_X} I_{D'}$ whence the identity $\cO_X(D + D') = \cO_X(D) \otimes_{\cO_X} \cO_X(D')$. If the closed immersion $D \to X$ factors through $D'$, then $I_{D'} \subset I_{D}$ and the sheaf of ideals $I_{D'} \otimes_{\cO_X} \cO_X(D)$ defines the Cartier divisor $D ' - D$. A closed subscheme $Z$ of a scheme $X$ is \emph{finitely presented} if the closed immersion $i \colon Z \to X$ is of finite presentation. Let $U \subset X$ be an open subscheme. A morphism $X' \to X$ is called a \emph{$U$-admissible blow-up} if there exists a finitely presented closed subscheme $Z \subset X \smallsetminus U$ such that $X'$ is isomorphic to the blow-up of $X$ along $Z$.

\begin{theorem} \label{Thm:ExtendingCartierSchemeVersion} Let $D$ be a closed subscheme of a Noetherian scheme $X$. Let $U \subset X$  be an open subset such that 
\begin{enumerate}
\item $D \cap U$ is scheme-theoretically dense in $D$;
\item $D \cap U$ is an effective Cartier divisor in $U$.
\end{enumerate}
Then there is a $U$-admissible blow-up $\pi \colon X' \to X$ such that the strict transform of~$D$ in $X'$ is an effective Cartier divisor.
\end{theorem}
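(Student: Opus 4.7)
The plan is to follow the strategy sketched in the introduction for the analogous statement over a DVR: construct the required blow-up by blowing up an ideal of the form $I_D + J^n$ for a suitable $J$ and a sufficiently large $n$. First I would choose a quasi-coherent ideal sheaf $J \subseteq \cO_X$ of finite type with $V(J) = X \smallsetminus U$ set-theoretically---the Noetherian hypothesis makes this automatic (for instance, take $J$ to be the ideal defining the reduced structure on the complement). By construction $J|_U = \cO_U$, so $\cI_n := I_D + J^n$ satisfies $\cI_n|_U = \cO_U$ and the center $V(\cI_n)$ is a finitely presented closed subscheme of $X \smallsetminus U$. The blow-up $\pi_n \colon X_n' \to X$ of $\cI_n$ is then a $U$-admissible blow-up, and the whole content of the theorem is concentrated in showing that, for $n$ large enough, the pullback $I_D \cO_{X_n'}$ is an invertible sheaf of ideals. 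Granting this, $\pi_n^\ast D$ is an effective Cartier divisor and the strict transform of $D$ is $\pi_n^\ast D - E$ (where $E$ is the exceptional divisor of $\pi_n$), effective because the inclusion $I_D \subseteq \cI_n$ translates into $I_D \cO_{X_n'} \subseteq \cI_n \cO_{X_n'} = \cO_{X_n'}(-E)$.

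The invertibility claim for $I_D \cO_{X_n'}$ is local on $X$. I would reduce to $X = \Spec A$ affine, write $I_D = (a_1,\ldots,a_r)$ and $J = (b_1,\ldots,b_s)$, and cover $X_n'$ by the standard affine charts indexed by generators of $\cI_n$. On a chart where some $a_i$ generates $\cI_n \cO_{X_n'}$, the remaining generators are multiples of $a_i$, so $I_D \cO_{X_n'} = \cI_n \cO_{X_n'}$ is principal generated by a non-zero-divisor, and the strict transform of $D$ is empty there. On a chart where a monomial $b^\beta$ of degree $n$ in the $b_j$'s generates $\cI_n \cO_{X_n'}$, the chart ring satisfies $a_j = b^\beta \gamma_j$ for some $\gamma_j$, so $I_D \cO_{X_n'} = b^\beta \cdot (\gamma_1,\ldots,\gamma_r)$; the goal is to show $(\gamma_1,\ldots,\gamma_r)$ is invertible on this chart for $n$ large enough.

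The main obstacle is this last invertibility. My inputs would be the Artin--Rees lemma in $A$ (providing an $n_0$ with $I_D \cap J^n \subseteq J^{n-n_0} I_D$ for $n \geq n_0$, so that taking $n$ beyond $n_0$ renders the relations $b^\beta \gamma_j = a_j$ stable in the chart ring), assumption~(2) which furnishes the invertibility of $(\gamma_j)_j$ on the preimage of $U$ (where $b^\beta$ is a unit), and assumption~(1) which prevents $b^\beta$-torsion from appearing in the strict transform and thereby forces the saturation $(\gamma_1,\ldots,\gamma_r) : (b^\beta)^\infty$ to coincide with $(\gamma_1,\ldots,\gamma_r)$ itself. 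Assembling these chart-wise analyses produces the identity $\tilde D_n = \pi_n^\ast D - E$ of effective Cartier divisors on $X_n'$ advertised in the introduction; packaging the local work into a single clean statement is the role of the technical blow-up lemma mentioned there.
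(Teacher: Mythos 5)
Your plan hinges on a single unproved assertion, and it is exactly the hard point: that for $n\gg 0$ the pullback $I_D\cO_{X'_n}$ is invertible on the blow-up of $\cI_n=I_D+J^n$. On the charts where a monomial $b^\beta\in J^n$ generates $\cI_n\cO_{X'_n}$, this amounts to the ideal $(\gamma_1,\dots,\gamma_r)$, $\gamma_j=a_j/b^\beta$, being invertible \emph{at the points of the exceptional divisor} (over $D\cap V(J)$), and none of the inputs you list delivers this. Artin--Rees in the form $I_D\cap J^n\subseteq J^{n-n_0}I_D$ says nothing about principality of $(\gamma_j)$ along $V(b^\beta)$; hypothesis (2) only handles the locus where $b^\beta$ is a unit; and hypothesis (1), i.e.\ saturatedness, does not imply local principality --- an ideal can equal its own $b^\beta$-saturation and still fail to be invertible on $V(b^\beta)$ (the ideal $(x,z)$ at the vertex of the cone $xy=z^2$ is the standard example). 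The same gap infects the second half of the plan: the identification of the strict transform with $\pi^\ast_n D-E$ presupposes that $\pi^\ast_n D$ is Cartier and then still requires an argument (this is where density is really used), so as written the proposal assumes precisely what has to be proved. I could not verify your one-step claim in general, and it does not follow formally from the paper's theorem; if you want to pursue it you must supply a genuine proof of the invertibility for large $n$, which is where all the difficulty lives.

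The paper sidesteps this difficulty rather than solving it head-on, and it is worth seeing how. After a first $U$-admissible blow-up making $Z=X\smallsetminus U$ a Cartier divisor, it blows up $D$ itself: the total transform $D_0'=\pi^{-1}(D)$ is then an effective Cartier divisor \emph{containing} the strict transform $D'$, and the blow-up is $U$-admissible because $D\cap U$ is already Cartier. With this auxiliary Cartier divisor in hand, \cref{Lemma:ControlDenominators} (a coherence statement, applied to the inclusion $I_{D_0'}\to I_{D'}$, which is an isomorphism over $U$) produces an $n$ with, locally, $g^nI_{D'}\subseteq(f_0)$ where $f_0$ cuts out $D_0'$ and $g$ cuts out $Z'$; and \cref{Lemma:TechnicalBlowingUpLemma} then blows up the ideal $(f_0,g^n)=I_{D_0'}+I_{Z'}^n$ --- not $I_{D'}+I_{Z'}^n$ --- and computes on the two charts that the strict transform of $D'$ is the Cartier divisor $\pi^{-1}(D_0')-E$, the chart computation using in an essential way that every generator $f_i$ of $I_{D'}$ is a multiple of $t=f_0/g^n$. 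The introduction's phrase ``blow-up in $I+\varpi^n\cO_X$'' is an informal shorthand for this two-step construction; taking it literally, as you did, leaves the crucial invertibility with no justification.
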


Over the ring of integers $R$ of a field $K$ with a non-trivial non-Archimedean absolute value we can get round of the Noetherian hypothesis and obtain the following generalization of \cref{Thm:ModelOfCartierDivisorsIntro}:

\begin{theorem} \label{Thm:ModelOfCartierDivisors}  Let $Y$ be a finitely presented affine $R$-scheme, $f_K \colon X_K \to Y \times_R K$ a proper morphism of $K$-schemes and  $D_K \subset X_K$ an effective Cartier divisor. 

Then, there are a flat $R$-scheme $X$, a proper morphism of $R$-schemes $f \colon X \to Y$, an effective Cartier divisor $D \subset X$ flat over $R$, and an isomorphism $X_K \cong X \times_R K$ of $K$-schemes  through which $D_K$ is identified with $D \times_R K$ and $f_K$ with $f \times \id_K$.
\end{theorem}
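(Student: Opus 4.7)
The plan is to reduce to the Noetherian setting, apply \cref{Thm:ExtendingCartierSchemeVersion} there, and then descend to $R$ by flat base change.

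First, by the standard limit arguments of EGA~IV~\S 8, the data descend to a finitely generated $\bbZ$-subalgebra $R_0 \subset R$: one finds a finitely presented affine $R_0$-scheme $Y_0$ with $Y_0 \otimes_{R_0} R = Y$, and, over the fraction field $K_0 := \Frac(R_0)$, a proper morphism $f_0\colon X_0 \to Y_0 \otimes_{R_0} K_0$ together with an effective Cartier divisor $D_0 \subset X_0$, such that base change along $K_0 \hookrightarrow K$ recovers $f_K\colon X_K \to Y_K$ and $D_K \subset X_K$. Since $R_0$ is Noetherian, \cref{Thm:ExtendingCartierSchemeVersion} is now applicable over $\Spec R_0$.

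Next, I spread out $X_0$ to a finitely presented $R_0$-scheme $\widetilde{X}_0 \to Y_0$ and use Nagata compactification to embed $\widetilde{X}_0$ as an open subscheme of a proper $Y_0$-scheme $\overline{X}_0$. Taking the scheme-theoretic closure of $X_0$ in $\overline{X}_0$---and if needed applying a Raynaud--Gruson flattening blow-up supported outside $X_0$---produces a proper flat $R_0$-scheme $\mathcal{X}_0 \to Y_0$ with generic fiber $X_0$. Let $\mathcal{D}_0$ be the scheme-theoretic closure of $D_0$ in $\mathcal{X}_0$; after one further such flattening, $\mathcal{D}_0$ is also flat over $R_0$. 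Applying \cref{Thm:ExtendingCartierSchemeVersion} to the pair $(\mathcal{X}_0, \mathcal{D}_0)$ with $U := X_0$---the hypotheses hold because $\mathcal{D}_0 \cap U = D_0$ is schematically dense in $\mathcal{D}_0$ (being its schematic closure) and Cartier in $U$ by assumption---yields a $U$-admissible blow-up $\pi\colon \mathcal{X}_0' \to \mathcal{X}_0$ whose strict transform $\mathcal{D}_0'$ of $\mathcal{D}_0$ is an effective Cartier divisor. Since $\pi$ is an isomorphism above $U$, the divisor $\mathcal{D}_0'$ is the scheme-theoretic closure of $D_0$ in $\mathcal{X}_0'$, and so remains flat over $R_0$.

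Finally, setting $X := \mathcal{X}_0' \otimes_{R_0} R$, $D := \mathcal{D}_0' \otimes_{R_0} R$, and $f$ the base change of $\mathcal{X}_0' \to Y_0$, the properties of flatness, properness, and being an effective Cartier divisor are all preserved under base change; and the generic fiber over $R$ is computed as $\mathcal{X}_0' \otimes_{R_0} R \otimes_R K = \mathcal{X}_0' \otimes_{R_0} K = X_0 \otimes_{K_0} K = X_K$, with analogous identifications for $D$ and $f$. The principal technical obstacle will be to ensure that $\mathcal{X}_0'$ itself is flat over $R_0$, so that base change produces a flat $R$-scheme; this relies on the specific form of the blow-up supplied by \cref{Thm:ExtendingCartierSchemeVersion}---chosen, as discussed in the introduction, as the blow-up of an ideal of the form $I + \varpi^n \mathcal{O}_{\mathcal{X}_0}$, which is admissible in the sense that preserves flatness---possibly followed by a further $U$-admissible flattening of $\mathcal{X}_0'$ carried out in such a way as not to disturb the Cartier divisor $\mathcal{D}_0'$ (for instance, by exploiting that $\mathcal{D}_0'$ is already $R_0$-flat and therefore stable under subsequent $U$-admissible modifications).
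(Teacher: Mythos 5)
Your reduction to a finitely generated $\bbZ$-subalgebra $R_0\subset R$ does not work as written, because it trades the one real advantage of the base $R$ for a harder problem. Over the valuation ring $R$ (or a DVR) flatness equals torsion-freeness, so scheme-theoretic closures of generic-fibre data are automatically flat; over $R_0$, which is typically of dimension $\ge 2$, this fails, and your repeated assertions of this type are unjustified: the closure $\mathcal{X}_0$ of $X_0$ need not be $R_0$-flat, the closure $\mathcal{D}_0$ of $D_0$ need not be, and above all the claim ``$\mathcal{D}_0'$ is the scheme-theoretic closure of $D_0$ in $\mathcal{X}_0'$, and so remains flat over $R_0$'' is a non sequitur -- this is precisely the step where, in the paper's argument over $R$, the valuation-ring hypothesis is used. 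The appeals to Raynaud--Gruson do not repair this: the flattening theorem produces a blow-up of the \emph{base} $\Spec R_0$ in a centre away from the generic point (not a blow-up of the total space ``supported outside $X_0$''), so it changes the base and does not deliver a proper flat $R_0$-scheme in the form you use it; and your closing suggestion of ``a further $U$-admissible flattening of $\mathcal{X}_0'$ \dots not disturbing the Cartier divisor $\mathcal{D}_0'$'', with $\mathcal{D}_0'$ ``stable under subsequent $U$-admissible modifications'', is exactly the vicious circle the theorem is designed to break: strict transforms restore flatness but can destroy the Cartier property, total transforms preserve Cartier-ness but can destroy flatness. Your last paragraph in effect concedes that the crucial point (simultaneous flatness and Cartier-ness after the modifications) is left open. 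There is also a more elementary error: $U:=X_0$ is the fibre of $\mathcal{X}_0$ over the generic point of $\Spec R_0$, and that generic point is \emph{not} open (unlike for $R$, where $\Spec R$ has two points and the generic fibre is the complement of the special fibre), so $X_0$ is not an open subscheme of $\mathcal{X}_0$ and \cref{Thm:ExtendingCartierSchemeVersion} cannot be invoked with this $U$; one would at least have to replace it by the open locus where $\mathcal{D}_0$ is Cartier.

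For comparison, the paper avoids Noetherian approximation altogether: it applies Nagata compactification directly over $R$, takes closures of $X_K$ and $D_K$ (flat because $R$ is a valuation ring), substitutes \cref{Lemma:FinitePresentationAndFlatness} (Raynaud--Gruson finite presentation of flat finitely generated modules) for the missing Noetherianity, blows up in $D$ to make its total transform Cartier, and then applies \cref{Lemma:TechnicalBlowingUpLemma} to the blow-up in an ideal of the form $I_{D_0'}+\varpi^n\cO$, chosen so that the strict transform is \emph{simultaneously} an effective Cartier divisor and the closure of $\pi^{-1}(D_K)$, hence flat over $R$. If you want to salvage a descent-style argument, you would have to prove an $R_0$-version of this simultaneous statement over a higher-dimensional Noetherian base, which is strictly harder than the problem you started with.
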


One recovers \cref{Thm:ModelOfCartierDivisorsIntro} in the introduction by taking $Y = \Spec R$. Notice that the hypothesis of $Y$ being finitely presented over $R$ is easily met. Indeed, given a finitely generated $K$-algebra $A_K$ and a surjection $\phi \colon K [ t_1, \dots, t_n ] \to A_K$ of $K$-algebras, the ideal the ideal $I := \ker \phi \cap R [ t_1, \dots, t_n ]$ is finitely generated by \cref{Lemma:FinitePresentationAndFlatness}. Thus the $R$-algebra $A := R [ t_1, \dots, t_n ] / I$ is flat, finitely presented, and such that $A \otimes_R K$ is isomorphic to $A_K$.

\subsection{Proofs} The remainder of this section is devoted to the proof of \cref{Thm:ExtendingCartierSchemeVersion,,Thm:ModelOfCartierDivisors}. We begin with the following technical result:

\begin{lemma} \label{Lemma:ControlDenominators} Let $X$ be a Noetherian scheme and $\phi \colon F \to G$ a homomorphism of coherent $\cO_X$-modules such that $\phi_{\rvert U} \colon F_{\rvert U} \to G_{\rvert U}$ is surjective on the complement  $U$ of an effective Cartier divisor  $Z \subset X$. 

Then there is an integer $N \ge 1$ such that, for all $n \ge N$, the image of $G \to G(nZ)$ is contained in the image of $\phi \otimes \id \colon F(nZ) \to G(nZ)$.
\end{lemma}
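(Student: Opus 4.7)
The plan is to rephrase the desired containment in terms of the cokernel $C := \coker(\phi)$ and then produce $N$ by a straightforward local analysis plus Noetherian compactness. Since tensoring by the invertible sheaf $\cO_X(nZ)$ is exact, applying $-\otimes \cO_X(nZ)$ to the right-exact sequence $F \xrightarrow{\phi} G \to C \to 0$ yields $\coker(\phi \otimes \id_{\cO_X(nZ)}) = C(nZ)$. Hence the statement that the image of $G \to G(nZ)$ is contained in the image of $\phi \otimes \id$ is equivalent to the vanishing of the composite $G \to G(nZ) \twoheadrightarrow C(nZ)$. Because $G \to C$ is surjective, this composite further factors through $G \twoheadrightarrow C$, and the question reduces to producing an integer $N \ge 1$ such that the canonical morphism $\id_C \otimes s_Z^n \colon C \to C(nZ)$ vanishes for every $n \ge N$.

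Since $\phi_{\rvert U}$ is surjective, the coherent sheaf $C$ is supported on $Z$. I next analyze the map $C \to C(nZ)$ locally. On an affine open $\Spec A \subset X$ over which the invertible ideal $I_Z$ is generated by a non-zero-divisor $f \in A$, the line bundle $\cO_X(nZ)$ is trivialized by $f^{-n}$ inside the localization $A_f$, and under this trivialization the canonical section $s_Z^n$ corresponds to the element $f^n \in A$: this follows at once from the definition of $s_Z^n$ as dual to the inclusion $I_Z^n \hookrightarrow \cO_X$. Consequently $C \to C(nZ)$ becomes multiplication by $f^n$ on $M := C(\Spec A)$. Since $M$ is a finitely generated $A$-module supported in $V(f)$, one has $f \in \sqrt{\Ann_A(M)}$, hence $f^k M = 0$ for some integer $k \ge 0$.

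To conclude, the Noetherian hypothesis allows us to cover $X$ by finitely many affine opens $\Spec A_i$ on each of which $I_Z$ is generated by a non-zero-divisor $f_i$, with a corresponding exponent $k_i$. Setting $N := \max_i k_i$, for every $n \ge N$ the map $C \to C(nZ)$ vanishes on each $\Spec A_i$, hence vanishes globally. The only step requiring real attention is the local identification of $\id_C \otimes s_Z^n$ with multiplication by $f^n$ in a trivialization of $\cO_X(nZ)$; beyond that, the proof is a routine combination of the coherence of $C$ (which produces local annihilation by a power of $f$) with Noetherian compactness (which makes these exponents uniformly bounded).
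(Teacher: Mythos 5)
Your proof is correct. The paper handles the same reduction (finitely many affines on which $I_Z$ is principal, then take the maximum) but produces the local bound by a direct element-wise computation: it picks generators $g_1,\dots,g_r$ of $\Gamma(X,G)$, uses surjectivity of $\phi$ over $D(a)$ to get relations $a^{m_i}(a^{n_i}g_i-\phi(f_i))=0$, and sets $N=\max_i(n_i+m_i)$ --- in effect showing that $a^N$ annihilates the cokernel without ever naming it. You instead make $C=\coker\phi$ explicit, observe that the asserted containment of images is exactly the vanishing of $\id_C\otimes s_Z^n$ (using right-exactness of $-\otimes\cO_X(nZ)$ and the surjectivity of $G\to C$), and obtain the local bound from $\Supp M=V(\Ann M)$ for the finitely generated module $M=\Gamma(\Spec A,C)$, which is supported in $V(f)$. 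Your identification of $s_Z^n$ with multiplication by $f^n$ under the trivialization of $\cO_X(nZ)$ is correct and is indeed the one step needing care. So the engine is the same (coherence plus quasi-compactness give a uniform power of the local equation), but your packaging via the cokernel isolates the actual content of the lemma --- that $s_Z^N$ kills $\coker\phi$ --- and replaces the paper's explicit denominator-clearing by the standard support/annihilator relation; the paper's version is marginally more self-contained, yours is cleaner and makes the sheaf-theoretic reformulation transparent.
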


\begin{proof} By quasi-compactness of $X$, one may suppose that $X = \Spec A$ is affine, and $Z$ is a principal Cartier divisor with equation $a = 0$ for a nonzerodivisor $a \in A$. The $A$-modules $\Gamma(X, F)$, $\Gamma(X, G)$ are finitely generated and $U$ is the principal open subset $D(a) = \Spec A_a$. Let $g_1, \dots, g_r$ be generators of $\Gamma(X, G)$. The hypothesis of $\phi_{\rvert U} \colon F_{\rvert U} \to G_{\rvert U}$ being surjective implies that, for $i = 1, \dots, r$ there are non-negative integers $n_i, m_i \in \bbN$ and $f_i \in \Gamma(X, F)$ such that $a^{m_i}( a^{n_i} g - \phi(f_i)) = 0$. The integer $N = \max \{ n_1 + m_1, \dots, n_r + m_r \}$ does the job.
\end{proof}

The key result at the core of the method is the following:

\begin{lemma} \label{Lemma:TechnicalBlowingUpLemma} Let $D$ be a finitely presented closed subscheme of a scheme $X$, $Z \subset X$ an effective Cartier divisor and $U = X \smallsetminus Z$ such that
\begin{enumerate}
\item $D \cap U$ is scheme-theoretically dense in $D$;
\item  $D$ is contained in an effective Cartier divisor $D_0$ of $X$. 
\end{enumerate}
Assume there is an integer $n \ge 1$ such that the image of $I_{D} \to I_{D}(nZ)$ is contained in the image of $I_{D_0}(nZ) \to I_{D}(nZ)$. Let $\pi \colon X' \to X$ be the blow-up of $X$ along the scheme-theoretic intersection of $D_0$ and $nZ$, and $E \subset X'$ be the exceptional divisor. 

Then, the strict transform of $D$ in $X'$ is the effective Cartier divisor $\pi^{-1}(D_0) - E$.
\end{lemma}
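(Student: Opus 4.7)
The statement is local on $X$, so I reduce to $X = \Spec A$ with $Z = V(z)$, $D_0 = V(d_0)$ for nonzerodivisors $z, d_0 \in A$, and $D = V(\fra)$ for some ideal $\fra$ of $A$. The three hypotheses then become $(d_0) \subseteq \fra$, $z^n \fra \subseteq (d_0)$, and $z$ being a nonzerodivisor on $A/\fra$.

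The first step is to observe that $D \cap U = D_0 \cap U$ as subschemes of $U = \Spec A[1/z]$: the inclusion $\fra \cdot A[1/z] \subseteq (d_0) \cdot A[1/z]$ follows from the fact that for any $a \in \fra$ one has $z^n a \in (d_0)$, while the reverse inclusion is immediate from $(d_0) \subseteq \fra$. Setting $C = V(d_0, z^n)$ for the centre of the blow-up, I have $D \smallsetminus C = D \cap U$, because the open locus $\{d_0 \neq 0\}$ is disjoint from $D_0 \supseteq D$; the analogous equality holds for $D_0$. Hence the strict transforms of $D$ and of $D_0$ in $X'$, both obtained as the scheme-theoretic closure in $X'$ of $D \cap U = D_0 \cap U$ via the isomorphism $\pi^{-1}(X \smallsetminus C) \cong X \smallsetminus C$, coincide. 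It is therefore enough to prove that the strict transform of $D_0$ equals the effective Cartier divisor $\pi^{-1}(D_0) - E$.

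Cover $X'$ by the two affine charts $V_1 = \Spec A[z^n/d_0]$ and $V_2 = \Spec A[d_0/z^n] =: \Spec A_2$, realised as subrings of $A[1/d_0]$ and $A[1/z]$ respectively; in particular $d_0$ is a nonzerodivisor on $\cO(V_1)$ and $z$ is a nonzerodivisor on $A_2$. On $V_1$ one has $I_E = (d_0) = I_{\pi^{-1}(D_0)}$, so $\pi^{-1}(D_0) - E$ is trivial there; correspondingly $D_0 \cap U$ has empty preimage in $V_1$, because on $V_1 \cap \pi^{-1}(U)$ the element $d_0$ is nonzero whereas $D_0$ is cut out by $d_0$. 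On $V_2$, writing $u = d_0/z^n$, one has $I_E = (z^n)$ and $I_{\pi^{-1}(D_0)} = (d_0) = (z^n u)$, so $\pi^{-1}(D_0) - E$ has ideal $(u) \cdot A_2$. The preimage of $D_0 \cap U$ in $V_2 \cap \pi^{-1}(U) = \Spec A_2[1/z]$ is cut out by $(u) \cdot A_2[1/z]$, and its scheme-theoretic closure in $V_2$ has ideal $(u) \cdot A_2$ provided $z$ is a nonzerodivisor on $A_2/(u) A_2$; this follows from the natural inclusion $A_2/(u) A_2 \hookrightarrow A[1/z]/(d_0 \cdot A[1/z]) = (A/(d_0))[1/z]$, in whose target $z$ is invertible. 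The principal technical point is the construction of $V_2 = \Spec A[d_0/z^n]$ as a subring of $A[1/z]$ (via the Rees-algebra description of the blow-up, not the naive presentation $A[u]/(d_0 - z^n u)$, which can carry spurious $z$-torsion): this is what guarantees both that $z$ is a nonzerodivisor on $A_2$ and the injectivity $A_2/(u) A_2 \hookrightarrow (A/(d_0))[1/z]$ needed for the closure computation.
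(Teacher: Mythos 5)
Your reduction is correct and even clarifying: since $D\cap U=D_0\cap U$ and $\pi$ is an isomorphism over $U$, the strict transform of $D$ (closure of $\pi^{-1}(D\smallsetminus Z)$) coincides with that of $D_0$, and your chart computation of $\pi^{-1}(D_0)-E$ (trivial on $V_1$, ideal $(u)$ on $V_2$) is also right. The gap is the very last step. On $V_2=\Spec A_2$, $A_2=A[d_0/z^n]\subset A[1/z]$, the ideal of the strict transform is the $z$-saturation $A_2\cap d_0A[1/z]$ of $(u)$, and you claim it equals $(u)$ because the subring construction of $A_2$ ``guarantees the injectivity of $A_2/(u)A_2\hookrightarrow (A/(d_0))[1/z]$''. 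That implication is false: $z$-torsion-freeness of $A_2$ says nothing about $z$-torsion in $A_2/(u)A_2$. Concretely, take $A=k[z,d,a,c]/(z^2a-dc)$ (a domain), $d_0=d$, $Z=V(z)$, $n=1$, $u=d/z$. Then $a=dc/z^2$ lies in $A_2\cap d_0A[1/z]$, but $a\notin uA_2$: membership would mean $z^{N+1}a\in d\,(d,z)^N$ for some $N$, i.e.\ (cancelling the nonzerodivisor $d$) $z^{N-1}c\in(d,z)^N$, and reducing modulo $d$ this forces $z^{N-1}c\in(z^N)$ in $k[z,a,c]/(z^2a)$, which fails for every $N\ge 1$ (and $za\notin(d)$ rules out $N=0$). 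So in this example the strict transform of $D_0$ is strictly smaller than $V(u)=\pi^{-1}(D_0)-E$; your argument, which after the reduction never uses $D$ again, would prove a false statement.

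What is missing is exactly where hypothesis (1) and the choice of $n$ must enter, and where the paper uses them. Under the lemma's hypotheses one has $\Ker\bigl(A\to A[1/z]/d_0A[1/z]\bigr)=I_D$ (scheme-theoretic density of $D\cap U$ in $D$, combined with $I_DA[1/z]=d_0A[1/z]$), and every element $f\in I_D$ satisfies $z^nf\in(d_0)$, hence $f\in uA_2$. Writing an element of $A_2\cap d_0A[1/z]$ as $\sum_i a_iu^i$ with $a_i\in A$, its constant term $a_0$ lies in that kernel, hence in $I_D\subseteq uA_2$, which gives $A_2\cap d_0A[1/z]=uA_2$; this is the paper's computation and it is the step your proposal skips. (In the counterexample above no admissible $D$ exists, which is why the reduced statement fails there; the point is that your write-up discards $D$ before the only place where it is genuinely needed.)
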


In particular, there is a $U$-admissible blow-up  $\pi \colon X' \to X$ such that the strict transform of $D$ is an effective Cartier divisor.

\begin{proof} The question is local on $X$, thus one may assume that $X = \Spec A$ is affine and that $D_0$, $Z$ are principal Cartier divisors. Let $f_0 \in A$ be a generator of the ideal of $D_0$, and let $g \in A$ be a generator of the ideal of $Z$. Since $D$ is finitely presented, there are $f_1, \dots, f_r$ such that $f_0, \dots, f_r$ generate the ideal of $D$. With this notation, note that the hypothesis implies that the ideals $f_0 A_g$ and $f_0 A_g + \cdots + f_r A_g$ coincide. Moreover, the blow-up $X'$ is covered by the following affine open subsets:
\begin{align*}
X'_0 &= \Spec A[ f_0 / g^n], & A[ f_0 / g^n] &= (A[t] / (f_0 - t g^n)) / (g\textup{-torsion}),\\
X'_1 &= \Spec A[ g^n / f_0], & A[ g^n / f_0] &= (A[u] / (g^n - u f_0)) / (f_0\textup{-torsion}).
\end{align*}

\begin{claim}
The strict transform $D'$ of $D$ in $X'$ does not meet $X'_1$. 
\end{claim}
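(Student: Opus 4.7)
The plan is to inspect the affine chart $X'_1$, observe that the defining relation $g^n = f_0 u$ collapses the ideal of the exceptional divisor to $(f_0)$, and then exploit the fact that $f_0$ already belongs to $I_D$ to conclude that the strict transform is empty there.

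Recall that the strict transform admits the saturation description
\[
I_{D'} \;=\; \bigl(I_D \cdot \cO_{X'}\bigr) : I_E^{\infty}.
\]
On $X'_1 = \Spec B_1$, the ideal of $E$ is the extension of the blow-up center $J = (f_0, g^n)$, and so it equals $(f_0, g^n)B_1$. Using the defining relation of $B_1$, namely $g^n = f_0 u$, one gets
\[
(f_0, g^n)\,B_1 \;=\; (f_0,\, f_0 u)\,B_1 \;=\; (f_0)\,B_1.
\]
Hence the ideal of $E \cap X'_1$ is $(f_0)B_1$.

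Now hypothesis (2) asserts that $D$ is contained in the effective Cartier divisor $D_0$, which means $I_{D_0} = (f_0) \subseteq I_D$. In particular $f_0 \in I_D \cdot B_1$, so $1 \cdot f_0 \in I_D \cdot B_1$ exhibits $1 \in (I_D \cdot B_1) : f_0 \subseteq I_{D'}|_{X'_1}$. Therefore $I_{D'}|_{X'_1} = B_1$, i.e.\ $D' \cap X'_1 = \emptyset$, which is the content of the claim.

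There is no serious obstacle: the only point requiring care is writing out the ideal of the exceptional divisor on $X'_1$ using the presentation of $B_1$ given in the paper, and then observing the automatic tautology $f_0 \in (f_0)$. Notice that neither the scheme-theoretic density hypothesis (1) nor the integer-$n$ hypothesis intervenes in this step; they will both come in on the complementary chart $X'_0$, where one has to identify the ideal of the strict transform as $(f_0/g^n)B_0$ in order to realize $D'$ concretely as the effective Cartier divisor $\pi^{-1}(D_0) - E$.
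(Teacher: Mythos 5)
Your proof is correct, and at bottom it rests on exactly the two facts the paper exploits on this chart: $f_0\in I_D$ (hypothesis (2)) and the relation $g^n=uf_0$ in $A[g^n/f_0]$. The packaging differs: the paper takes its stated definition of $D'$ as the scheme-theoretic closure of $\pi^{-1}(D\smallsetminus Z)$ and argues point-set-theoretically --- $g$, hence $f_0$, is invertible on $X'_1\smallsetminus\pi^{-1}(Z)$, so $\pi^{-1}(D\smallsetminus Z)$ is disjoint from the open set $X'_1$ and therefore so is its closure --- whereas you compute the colon ideal $(I_D\cO_{X'}):I_E^{\infty}$ on the chart and find it is the unit ideal. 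The one point you should make explicit is that the saturation formula you ``recall'' describes the strict transform relative to the blow-up center $D_0\cap nZ$, while the lemma's $D'$ is defined as the closure of $\pi^{-1}(D\smallsetminus Z)$; these coincide here because $D\subset D_0$ forces $E\cap\pi^{-1}(D)$ and $\pi^{-1}(Z)\cap\pi^{-1}(D)$ to have the same support (equivalently, saturate with respect to $g$ instead: since $g^n=uf_0\in I_D A[g^n/f_0]$, the kernel of $A[g^n/f_0]\to (A[g^n/f_0]/I_D A[g^n/f_0])_g$ already contains $1$). With that one-line identification in place, your computation $I_E A[g^n/f_0]=(f_0)$ and $1\in(I_D A[g^n/f_0]:f_0)$ gives the claim; what your route buys is a purely ideal-theoretic statement at the cost of justifying the saturation description, while the paper's version avoids any ideal computation. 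Your closing remark that neither hypothesis (1) nor the choice of $n$ intervenes in this claim agrees with the paper.
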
 

\begin{proof}[Proof of the claim]
The strict transform of $D$ is the scheme-theoretic closure in $X'$ of $\pi^{-1}(D \smallsetminus Z)$. Now, the function $g^n$ is invertible on $X'_1 \smallsetminus \pi^{-1}(Z)$. The equality $g^n = uf_0$ on $X'_1$ implies that $f_0$ is also invertible on $X'_1 \smallsetminus \pi^{-1}(Z)$. Therefore,
\[ \pi^{-1}(D \smallsetminus Z) \cap X'_1 = \pi^{-1}(D) \cap (X'_1 \smallsetminus \pi^{-1}(Z)) = \emptyset. \]
It follows that $D'$ does not meet $X'_1$.
\end{proof}

\begin{claim} The strict transform $D'$, seen as closed subscheme of $X'_0$, is the Cartier divisor with equation $t =0$.
\end{claim}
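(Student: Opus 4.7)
The plan is to work in the chart $X'_0 = \Spec \tilde B$, where $\tilde B := (A[t]/(tg^n - f_0))/(g\textup{-torsion})$, and to argue in three moves: first simplify $I_D \cdot \tilde B$ using the image hypothesis of the lemma, then identify $\pi^{-1}(D)$ with $\{t=0\}$ away from $\pi^{-1}(Z)$, and finally compute $\tilde B/(t)$ explicitly in order to control the scheme-theoretic closure in $X'_0$.

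The image hypothesis yields, for each $i = 1, \dots, r$, an element $h_i \in A$ with $g^n f_i = f_0 h_i$. Substituting $f_0 = tg^n$ in $B := A[t]/(tg^n - f_0)$ gives $g^n(f_i - t h_i) = 0$, so $f_i = t h_i$ holds in the $g$-torsion-free quotient $\tilde B$. Combined with $f_0 = t g^n$, this shows $I_D \cdot \tilde B = t \cdot (g^n, h_1, \dots, h_r) \tilde B$. On $X'_0 \setminus \pi^{-1}(Z)$, where $g^n$ becomes a unit, this ideal collapses to $(t) \tilde B[1/g]$, so $\pi^{-1}(D)$ coincides with $\{t = 0\}$ there.

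For the final move, two properties of $\{t = 0\}$ matter. First, $t$ is a non-zero-divisor in $\tilde B$: the inclusion $\tilde B \subset \tilde B[1/g] = A[1/g]$ sends $t$ to $f_0/g^n$, and $f_0$ is a non-zero-divisor in $A$ (hence in $A[1/g]$) since $D_0$ is Cartier; therefore $\{t = 0\}$ is an effective Cartier divisor on $X'_0$. Second, I claim $\tilde B/(t) \cong A/I_D$. The map $A \to \tilde B/(t)$ is surjective because $\tilde B$ is generated as an $A$-algebra by $t$, and the inclusion $I_D \subseteq \ker$ follows from the first move (both $f_0 = t g^n$ and $f_i = t h_i$ lie in $(t)\tilde B$). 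For the reverse inclusion, suppose $a \in A$ satisfies $a = t \, p(t)$ in $\tilde B$ for some $p(t) = \sum_i c_i t^i \in A[t]$. Translating into $A[1/g]$ via $t \mapsto f_0/g^n$ and clearing denominators yields, up to a $g$-torsion term in $A$, an equality $g^M a = \sum_i c_i f_0^{i+1} g^{M-n(i+1)} \in (f_0) A$ for some $M \ge 0$; hence $g^{M'} a \in (f_0) \subseteq I_D$ for some $M' \ge 0$, and hypothesis (1)—which says precisely that $A/I_D$ is $g$-torsion-free—forces $a \in I_D$.

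Putting everything together, $\tilde B/(t) \cong A/I_D$ is $g$-torsion-free, so $\{t = 0\} \cap (X'_0 \setminus \pi^{-1}(Z))$ is scheme-theoretically dense in $\{t = 0\}$; combined with the second move, $D' \cap X'_0$—the scheme-theoretic closure in $X'_0$ of $\pi^{-1}(D) \cap (X'_0 \setminus \pi^{-1}(Z))$—is exactly $\{t = 0\}$. The main subtlety is the kernel computation above, which requires careful bookkeeping of denominators through the embedding $\tilde B = A[f_0/g^n] \subset A[1/g]$ and a single crucial invocation of hypothesis (1) to promote a divisibility by a power of $g$ into honest membership in $I_D$.
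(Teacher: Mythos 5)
Your argument is correct and is essentially the paper's own proof: you work in the chart $X'_0=\Spec A[f_0/g^n]$, use the hypothesis on $n$ to write $g^nf_i=f_0h_i$, hence $f_i=th_i$ in the chart, and invoke the scheme-theoretic density of $D\cap U$ in $D$ (i.e.\ the $g$-saturation of $I_D$ in $A$) to control the closure. The only repackaging is that you establish the isomorphism $A[f_0/g^n]/(t)\cong A/I_D$ (and explicitly check that $t$ is a nonzerodivisor), whereas the paper shows directly that the ideal $\Ker\bigl(A[f_0/g^n]\to A[f_0/g^n]_g/(f_0)\bigr)$ of the strict transform is generated by $t$; the two formulations are equivalent.
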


\begin{proof}[Proof of the claim] Let $A [f_0/g^n]_{g}$ denote the localization of $A [f_0/g^n]$ with respect to $g$: via the ring homomorphism $A \to A[f_0/g^n]$, it is isomorphic to the localization of $A$ with respect to $g$. The strict transform $D'$ is the scheme-theoretic closure of $\pi^{-1}(D \smallsetminus Z)$ in $X'_0$. Therefore, the ideal $I_{D'} \subset A[f_0/g^n]$ defining $D'$ is
\[ \ker (A[f_0/g^n] \to A[f_0/g^n]_{g} / (f_0)).\]
Proving the claim amounts to showing that $I_{D'}$ is generated by $t$. Of course, $t$ belongs to $I_{D'}$ as $t = f_0 / g^n$ in $A[f_0/g^n]_{g}$. On the other hand, any element of $A[f_0/g^n]$ can be written as $\sum_{i = 0}^d a_i t^i$ for some non-negative integer $d$ and $a_0, \dots, a_d \in A$.  Such an element belongs to the ideal generated by $t$ if and only if $a_0$ belongs to the ideal general by $t$. We are therefore led back to prove the following: each $a\in A$ whose image in $A[f_0/g^n]_{g} /(f_0)$ is $0$ belongs  to the ideal generated by $t$. Since the homomorphism $A_{g} \to A[f_0/g^n]_{g}$ is an isomorphism, $a$ is already $0$ in $A_{g} / (f_0)$. The hypothesis of $D \cap U$ being scheme-theoretically dense in $D$ is rephrased as the equality
\[ \Ker(A \to A_{g} / (f_0)) = f_0 A + \cdots + f_r A.\]
(We used here that the ideals $f_0 A_g$ and $f_0 A_g + \cdots + f_r A_g$ coincide.) It follows that $a$ belongs to the ideal $f_0 A + \cdots + f_r A$. Thus, in order to conclude the proof, it suffices to express $f_i$ in terms of $t$ ($i = 1, \dots, r$). By the choice of $n$, there are $b_1, \dots, b_r \in A$ such that $f_i = b_i f_0 / g^n$. Written otherwise, for $i = 1, \dots, r$, the equality $f_i = b_i t$ holds in $A[f_0 / g^n]$, which concludes the proof.
\end{proof}

The Cartier divisor $\pi^{-1}(D_0)$ is given by the ideal $f_0 \cO_{X'}$. The exceptional divisor $E$ is given by the ideal $g^n A[f_0/g^n]$ on the affine chart $X'_0$, while it is given by the ideal $f_0 A[g^n/f_0]$ on the affine chart $X'_1$. It follows that the effective Cartier divisor $\pi^{-1}(D_0) - E$ is given by the ideal generated by $ f_0 / g^n = t$ on $X'_0$ and by the ideal generated by $f_0 / f_0 = 1$ on $X'_1$. One concludes the proof by comparing this with the description of the strict transform $D'$ given by the above claims.
\end{proof}

\begin{proposition} \label{Prop:StrictTransformBlowUpIntersectionTwoCartier} Let  $D, Z$  be effective Cartier divisors in a scheme $X$ whose intersection $D \cap (X \smallsetminus Z)$   is scheme-theoretically dense in $D$. Let $\pi \colon X' \to X$ be the blow-up of $X$ along $D \cap Z$ and $E \subset X'$ the exceptional divisor. 

Then, the strict transform of $D$ in $X'$ is the effective Cartier divisor $\pi^{-1}(D) - E$.
\end{proposition}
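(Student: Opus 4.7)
The plan is to read the proposition as the special case $D_0 = D$, $n = 1$ of \cref{Lemma:TechnicalBlowingUpLemma}. Since $D$ is itself an effective Cartier divisor, it is trivially contained in the ambient Cartier divisor $D_0 := D$ required in hypothesis~(2) of the lemma, and the closed subscheme $D$ is automatically finitely presented because, locally, it is cut out by a single equation. The ``denominator'' hypothesis of the lemma---that the image of $I_D \to I_D(nZ)$ lie in the image of $I_{D_0}(nZ) \to I_D(nZ)$---degenerates: for $D_0 = D$ and $n = 1$ the second arrow is the identity on $I_D(Z)$, so the inclusion is tautological. The density assumption of the proposition supplies hypothesis~(1) applied to the open complement $U := X \smallsetminus Z$.

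With these identifications in place, the centre of the blow-up appearing in \cref{Lemma:TechnicalBlowingUpLemma}---the scheme-theoretic intersection of $D_0$ and $nZ$---becomes $D \cap Z$, which is exactly the centre of $\pi$ in the present statement. The lemma then concludes that the strict transform of $D$ equals $\pi^{-1}(D_0) - E = \pi^{-1}(D) - E$ as effective Cartier divisors, which is precisely what we had to prove.

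Since the proposition is a direct corollary of the lemma, there is essentially no obstacle to overcome beyond recognizing the translation. The only subtle point worth flagging is that the formal difference $\pi^{-1}(D) - E$ is a well-defined effective Cartier divisor only when $I_{\pi^{-1}(D)} \subset I_E$; this is automatic here, since $D \cap Z$ is a closed subscheme of $D$, and therefore $\pi^{-1}(I_D) \cdot \cO_{X'} \subset \pi^{-1}(I_D + I_Z) \cdot \cO_{X'} = I_E$.
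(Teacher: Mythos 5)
Your proof is correct and is exactly the paper's own argument: the paper proves this proposition in one line by applying \cref{Lemma:TechnicalBlowingUpLemma} with $D_0 = D$ and $n = 1$, which is precisely your reduction. Your additional verifications (finite presentation of $D$, the tautological denominator condition, and the containment $I_D\cO_{X'} \subset I_E$ making the difference $\pi^{-1}(D) - E$ well defined) simply spell out what the paper leaves implicit.
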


\begin{proof} Apply \cref{Lemma:TechnicalBlowingUpLemma} with $D = D_0$ and  $n = 1$.
\end{proof}

\begin{proof}[{Proof of \cref{Thm:ExtendingCartierSchemeVersion}}] Endow the closed subset $Z := X \smallsetminus U$ with its reduced structure. The blow-up $\pi \colon X' \to X$ of $X$ along $Z$ is $U$-admissible because $Z$ is finitely presented. Up to replacing $Z$ by the Cartier divisor $\pi^{-1}(Z)$, and $D$ by $\pi^{-1}(D)$, one may assume that $Z$ is a Cartier divisor. (Note that the composition of $U$-admissible blow-ups is a $U$-admissible blow-up \cite[\href{https://stacks.math.columbia.edu/tag/080L}{Lemma 080L}]{stacks-project}.) Suppose $Z$ is a Cartier divisor. The blow-up $\pi \colon X' \to X$ of $X$ along $D$ is $U$-admissible because $D$ is finitely presented and, since $D \cap U$ is supposed to be a Cartier divisor in $U$, the induced map $\pi \colon \pi^{-1}(U) \to U$ is an isomorphism.  The exceptional divisor  $D'_0 := \pi^{-1}(D)$ is an effective Cartier divisor, as well as the pre-image $Z' = \pi^{-1}(Z)$ of $Z$. Let $D'$ be the scheme-theoretic closure of $D'_0 \smallsetminus Z'$ in $X'$. We may now apply \cref{Lemma:TechnicalBlowingUpLemma} to $X'$, $D'$, $D_0'$ and $Z'$. Indeed, let $U ' = X ' \smallsetminus Z'$. The closed immersion $D' \cap U' \to D_0' \cap U'$ is an isomorphism. On the level of sheaves of ideals, this means that the natural inclusion $I_{D'_0 \rvert U} \to I_{D' \rvert U}$ is an isomorphism. According to \cref{Lemma:ControlDenominators}, there is a non-negative integer $N$ with the following property: given an integer $n \ge N$, the injective homomorphism $I_{D_0'}(n Z') \to I_{D'}(n Z')$ is an isomorphism. In particular, \cref{Lemma:TechnicalBlowingUpLemma} can be applied with every such integer $n$.
\end{proof}

Let $R$ the ring of integers of a field  $K$ endowed with a non-trivial non-Archimedean absolute value. Recall that an $R$-module $M$ is flat if and only if it is torsion-free \cite[\href{https://stacks.math.columbia.edu/tag/0539}{Lemma 0539}]{stacks-project}.

\begin{lemma} \label{Lemma:FinitePresentationAndFlatness}  Let $A$ be a flat $R$-algebra of finite type and $M$ a finitely generated $A$-module flat over $R$.

\begin{enumerate}
\item The $A$-module $M$ is of finite presentation.
\item Let $N \subset M$ be a submodule such that $M / N$ is flat over $R$. Then, $N$ is of finite presentation.
\end{enumerate}
\end{lemma}

\begin{proof} (1) This is a particular case of \cite[theorem 3.4.6]{RaynaudGruson}. 

(2) The $A$-module $M/N$ is finitely generated and flat over $R$ thus finitely presented by (1). Since $M$ is finitely generated and $M/N$ finitely presented, $N$ is finitely generated \cite[\href{https://stacks.math.columbia.edu/tag/0519}{Lemma 0519 (5)}]{stacks-project}. The $A$-module $N$ is finitely generated and flat over $R$, thus, again thanks to (1), it is finitely presented. 
\end{proof}

\begin{proof}[{Proof of \cref{Thm:ModelOfCartierDivisors}}] 
By Nagata's Compactification theorem (see \cite{LutkebohmertNagata},  \cite{ConradNagata} and \cite{ConradNagataErratum}, or \cite{TemkinRiemannZariski}), the morphism of $K$-analytic schemes $\sigma_K \colon X_K \to Y \times_R K$ can be extended to a proper morphism of $R$-schemes $\sigma \colon X \to Y$. Furthermore, up to replacing $X$ by the scheme-theoretic closure of $X_K$ in $X$, one may suppose that $X$ is flat over $R$. Let $D$ be the scheme-theoretic closure of $D_K$. Note that $X$ and $D$ are flat over $R$.  If $K$ is discretely valued, \emph{i.e.} $R$ is Noetherian, then one can blithely apply \cref{Thm:ExtendingCartierSchemeVersion} with $Z$ being the special fibre of $X$. Note that the so-obtained blow-up $\pi \colon X' \to X$ is such that $X'$ is flat over $R$. Moreover, the strict transform of $D$ is flat over $R$ too, being  the scheme-theoretic closure of $\pi^{-1}(D_K)$ in $X'$. In the general case, some argument is needed in order to ensure the hypothesis of finite presentation of the strict transform considered in the proof of \cref{Thm:ExtendingCartierSchemeVersion}.

Let $\varpi \in R \smallsetminus \{ 0\}$ be  topologically nilpotent and let $Z$ be the closed subscheme of $X$ given by the vanishing of $\varpi$. The open subset $U = X \smallsetminus Z$ is the generic fiber $X_K$ of $X$. Let $\pi \colon X' \to X$ the blow-up of $X$ along the first Fitting ideal of $I_D$. Since $D$ is flat and finitely presented over $R$ by \cref{Lemma:FinitePresentationAndFlatness}, the ideal $I_D$ is finitely presented by \cref{Lemma:FinitePresentationAndFlatness}, hence $\Fit_1 I_D$ is finitely presented by \cite[\href{https://stacks.math.columbia.edu/tag/07ZA}{Lemma 07ZA (4)}]{stacks-project} and \cref{Lemma:FinitePresentationAndFlatness}. \Cref{Lemma:BlowUpFittingIdealAlg} below states that $\pi^{-1}(D)$ is an effective Cartier divisor.  Moreover, since $D_K = D \cap U$ is an effective Cartier divisor, the induced morphism of $R$-schemes  $\pi \colon \pi^{-1}(U) \to U$ is an isomorphism. By combining these facts, one deduces that $\pi \colon X' \to X$ is a $U$-admissible blow-up. Let $D_0'$ be the inverse image of $D$ in $X'$: it is an effective Cartier divisor. Let $D'$ be the Zariski of closure of $\pi^{-1}(D_K)$ in $X'$. Again, thanks to \cref{Lemma:FinitePresentationAndFlatness}, the closed subscheme $D'$ is finitely presented and one can apply \cref{Lemma:TechnicalBlowingUpLemma} to $X'$, $D_0'$, $D'$ and the closed subscheme $Z'$ of $X'$ defined by the vanishing of $\varpi$. As in the discretely valued case, the blow-up $\pi' \colon X''\to X'$ given by \cref{Lemma:TechnicalBlowingUpLemma} is such that $X''$ is flat over $R$ as well as the strict transform of $D'$ in $X''$.
\end{proof}

In the above proof we used the following fact:
\begin{lemma} \label{Lemma:BlowUpFittingIdealAlg} Let $X$ be a scheme, and $D \subset X$ a closed subscheme whose ideal sheaf~$I_D$ is finitely presented and whose complement $X \smallsetminus D$ is scheme-theoretically dense. Let $\pi \colon X' \to X$ be the blow-up of $X$ along the first Fitting ideal of $I_D$. Then $\pi^{-1}(D)$ is an effective Cartier divisor in $X'$.
\end{lemma}

\begin{proof} Set $I:= I_D$. By \cite[\href{https://stacks.math.columbia.edu/tag/07ZA}{lemma 07ZA (3)}]{stacks-project} the $k$-th Fitting ideal of $\pi^\ast I$ for $k \ge 0$ is the pull-back of that of $I$:
\[ \Fit_k(\pi^\ast I) = \Fit_k(I)\cO_{X'}.\]
Since $X \smallsetminus D$ is scheme-theoretically dense in $X$, the ideal $\Fit_0(I)$ vanishes altogether, thus so does $\Fit_0(\pi^\ast I)$. On the other hand, by the defining property of the blow-up, the ideal sheaf $\Fit_1(I) \cO_{X'}$ is invertible. It follows that the ideal $J:= \Fit_1(\pi^\ast I)$ is invertible. Set $M:= \pi^\ast I$ and let $M' \subset M$ be the submodule annihilated by~$J$. We claim that the natural morphism $\phi \colon \pi^\ast I \to \cO_{X'}$ induces an isomorphism \[M/M' \stackrel{\sim}{\too} I \cO_{X'}.\] This will conclude the proof, as the $\cO_{X'}$-module $M / M'$ is invertible \cite[\href{https://stacks.math.columbia.edu/tag/0F7M}{Lemma 0F7M}]{stacks-project}. To show the claim, first note that $I \cO_{X'}$ is the image of $\phi$, so it suffices to prove $M' = \ker \phi$.
The inclusion $M' \subset \ker \phi$ is obvious because $\cO_{X'}$ has no torsion with respect to $J$, the latter ideal being invertible. For the converse inclusion, $\pi$ induces an isomorphism outside the exceptional divisor $E \subset X'$ hence $\phi$ is injective outside $E$. The ideal defining $E$ is $J$, hence any local section of $\ker \phi$ is annihilated by some power of~$J$. It follows from \cite[\href{https://stacks.math.columbia.edu/tag/0F7M}{Lemma 0F7M (2)}]{stacks-project} that $\ker \phi$ is actually annihilated by $J$, which concludes the proof.
\end{proof}

\section{Extending Cartier divisors: the formal case} \label{sec:ExtensionCartierDivisorsFormal} Let $K$ be a complete non-trivially valued  non-Archimedean field   and $R \subset K$ its ring of integers.

\subsection{Preliminaries on formal schemes} Recall that a formal scheme over $R$ is \emph{admissible} if it is locally the formal spectrum of a flat $R$-algebra which is topologically of finite presentation. To such a formal scheme $X$ is attached its \emph{(Raynaud) generic fiber}~$X_\eta$ seen as a $K$-analytic space in the sense of Berkovich. The generic fiber of  an affine formal scheme $X= \Spf A$ where $A$ is flat topologically of finite type is the Banach spectrum $X_\eta = \cM(A \otimes_R K)$ of the affinoid $K$-algebra $A \otimes_R K$. Here we recall some facts that are repeatedly used afterwards:

\begin{lemma} \label{Lemma:NoetherianTopologicalSpace} Let $X$ be a quasi-compact formal $R$-scheme topologically of finite type. Then the  topological space underlying $X$ is Noetherian.
\end{lemma}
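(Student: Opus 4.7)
The strategy is to reduce to the affine case by quasi-compactness and then identify the underlying topological space with the spectrum of a finitely generated algebra over the residue field $k = R/\mathfrak{m}$.

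First, since $X$ is quasi-compact, cover it by finitely many open formal subschemes of the form $\Spf A_i$, where each $A_i$ is a flat $R$-algebra topologically of finite presentation (equivalently, for our purposes, topologically of finite type, since over a valuation ring finite generation of the relevant ideals is automatic by \cref{Lemma:FinitePresentationAndFlatness}). A finite union of Noetherian topological spaces is Noetherian, so it suffices to treat the case $X = \Spf A$, with $A = R\langle T_1, \dots, T_n\rangle / I$ for some ideal $I$.

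Next, fix a topologically nilpotent $\varpi \in R \setminus \{0\}$. The underlying topological space $|\Spf A|$ consists of the open prime ideals of $A$, i.e.\ those containing $\varpi A$, so $|\Spf A|$ is canonically homeomorphic to $\Spec(A/\varpi A)$. Now comes the key observation: because $R$ is a rank-one valuation ring, the only nonzero prime of $R$ is the maximal ideal $\mathfrak{m}$; consequently every prime of $R/\varpi R$ contains the image of $\mathfrak{m}$, and hence every prime of $A/\varpi A$ contains $\mathfrak{m} A/\varpi A$. The surjection $A/\varpi A \twoheadrightarrow A/\mathfrak{m} A = A \otimes_R k$ is therefore a homeomorphism on spectra.

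Finally, a standard computation shows that $R\langle T_1, \dots, T_n\rangle \otimes_R k \cong k[T_1, \dots, T_n]$: a series $\sum a_I T^I \in R\langle T_1, \dots, T_n\rangle$ has $|a_I| \to 0$, so all but finitely many coefficients lie in $\mathfrak{m}$ and die modulo $\mathfrak{m}$, leaving a polynomial. Hence $A \otimes_R k$ is a finitely generated $k$-algebra, in particular Noetherian, and $\Spec(A \otimes_R k)$ is a Noetherian topological space. Combining the reductions yields the lemma.

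I do not anticipate a genuine obstacle: the only subtle point is that $R$ and $R/\varpi R$ need not be Noetherian, but the rank-one hypothesis on the valuation rescues us by collapsing the spectrum of $R/\varpi R$ to a single point, which lets us transfer the question to a finite-type algebra over the field $k$.
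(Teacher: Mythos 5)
Your argument is correct and is essentially the paper's proof spelled out in detail: the paper simply notes that $|X|$ coincides with the underlying space of the reduction $X_{\red}$, which is a scheme of finite type over the residue field $k$, hence Noetherian — exactly the content of your reduction to $\Spec(A\otimes_R k)$ via the rank-one collapse of $\Spec(R/\varpi R)$. No further comment is needed.
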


\begin{proof} Let $k$ be the residue field of $R$. The topological space $|X|$ underlying $X$ coincides with the topological space underlying the finite type $k$-scheme $X_{\red}$: in particular $|X|$ is a Noetherian topological space. 
\end{proof}

\begin{lemma} \label{Lemma:FinitePresentationAndFlatnessFormalVersion} Let $A$ be a flat $R$-algebra topologically of finite type and $M$ a finitely generated $A$-module flat over $R$.

\begin{enumerate}
\item The $A$-module $M$ is of finite presentation.
\item If $N \subset M$ is a submodule such that $M / N$ is flat over $R$, then $N$ is of finite presentation.
\item The ring $A$ is coherent.
\end{enumerate}
\end{lemma}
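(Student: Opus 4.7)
The three parts mirror the structure of \cref{Lemma:FinitePresentationAndFlatness} and I would establish them in the same order, with the only new input being the formal-scheme counterpart of Raynaud--Gruson.

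\textbf{Part (1)} is the formal analogue of \cite[theorem 3.4.6]{RaynaudGruson} invoked in \cref{Lemma:FinitePresentationAndFlatness}(1). I would cite the corresponding statement for admissible formal $R$-schemes topologically of finite type (as in Bosch--L\"utkebohmert, or Fujiwara--Kato). The underlying argument chooses a surjection $\phi\colon A^n \twoheadrightarrow M$, sets $N = \ker\phi$, observes that $R$-flatness of $M$ forces $N$ to be $\varpi$-adically separated inside $A^n$, and reduces modulo an appropriate power of the maximal ideal of $R$ to obtain a finite set of generators of $N/\varpi N$ over the Noetherian ring obtained as the special fibre of $A$. The $\varpi$-adic completeness of $A$ then lets one lift those generators to generators of $N$. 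The main obstacle lies exactly here, in the non-Noetherianness of $R$; beyond citing the result, I would not reprove it.

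\textbf{Part (2)} is then a verbatim transposition of the algebraic proof. The quotient $M/N$ is finitely generated and $R$-flat, so it is finitely presented by (1). Since $M$ is finitely generated and $M/N$ finitely presented, a standard fact (Stacks \href{https://stacks.math.columbia.edu/tag/0519}{0519}) implies that $N$ is finitely generated. Finally, $N$ sits inside the $R$-torsion-free module $M$, so it is itself $R$-torsion-free; over the valuation ring $R$ this coincides with $R$-flatness, and a second application of (1) yields the finite presentation of $N$.

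\textbf{Part (3)} asks for the coherence of $A$, \emph{i.e.} finite presentation of every finitely generated ideal $I \subset A$. Here the key observation is that $I$ is a submodule of $A$, and $A$ is $R$-flat, hence $R$-torsion-free; consequently $I$ is $R$-torsion-free, hence $R$-flat by the valuation-ring hypothesis on $R$. As $I$ is finitely generated by assumption, part (1) applied directly to $I$ produces its finite presentation. Thus, once (1) has been recorded, both (2) and (3) fall out by short formal manipulations.
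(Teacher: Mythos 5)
Your proposal is correct and essentially the paper's own proof: the paper simply cites Bosch's \emph{Lectures} (theorem 7.3.4 for (1), lemma 7.3.6 for (2)) and notes that (3) follows, while you cite (1), redo (2) by the same manipulation used in the algebraic \cref{Lemma:FinitePresentationAndFlatness}, and get (3) by applying (1) directly to a finitely generated ideal (the paper deduces (3) from (2) via the kernel of $A^n \to A$; both are immediate). One harmless caveat: your heuristic sketch of the proof of (1) is off in the non-discretely valued case ($A/\varpi A$ is of finite type over $R/\varpi R$, which need not be Noetherian; only the reduction modulo the full maximal ideal is), but since you defer to the citation rather than the sketch, this does not affect the argument.
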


\begin{proof} (1) and (2) are respectively theorem 7.3.4 and lemma 7.3.6 in  \cite{BoschLectures}, and (3) follows from (2).
\end{proof}

\begin{lemma} \label{Lemma:InvertingUniformizerFormalScheme} Let $X$ be a quasi-compact formal $R$-scheme topologically of finite type and $F$ a sheaf of $R$-modules on $X$. The presheaf $U \mapsto \Gamma(U, F) \otimes_R K$ on $X$ is a sheaf.
\end{lemma}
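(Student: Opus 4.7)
The plan is to reduce the sheaf condition to a finite equalizer and then invoke flatness of $K$ over $R$.

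Let $G$ denote the presheaf $U \mapsto \Gamma(U, F) \otimes_R K$ on $X$. To show that $G$ is a sheaf, I will check that for every open $U \subset X$ and every open covering $(U_i)_{i \in I}$ of $U$, the diagram
\[ G(U) \too \prod_{i \in I} G(U_i) \rightrightarrows \prod_{(i,j) \in I \times I} G(U_i \cap U_j) \]
is an equalizer. The first step is to use \cref{Lemma:NoetherianTopologicalSpace}: the underlying topological space $|X|$ is Noetherian, hence every open subset of $X$ is quasi-compact. Therefore the covering $(U_i)_{i \in I}$ admits a finite subcovering, and by a routine argument (passing to a common refinement) it suffices to verify the sheaf condition for \emph{finite} covers.

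So assume $I$ is finite. Now I would use two facts: first, the $R$-module $K = R[1/\varpi]$ is a localization of $R$, hence flat, so the functor $- \otimes_R K$ is exact and commutes with finite limits; second, for a finite index set the canonical map
\[ \Bigl(\prod_{i \in I} \Gamma(U_i, F) \Bigr) \otimes_R K \too \prod_{i \in I} \bigl( \Gamma(U_i, F) \otimes_R K \bigr) \]
is an isomorphism, and similarly for the double products indexed by $I \times I$. Applying the exact functor $- \otimes_R K$ to the equalizer diagram expressing the sheaf condition for $F$ on the finite cover $(U_i)_{i \in I}$ thus yields the desired equalizer diagram for $G$.

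I do not expect any serious obstacle here: the argument is purely formal once Noetherianity of $|X|$ is in hand, with the role of the topological finite-type hypothesis being precisely to guarantee, via \cref{Lemma:NoetherianTopologicalSpace}, that the sheaf axiom can be tested against finite covers. The only mild point to be careful about is that $K$ need not be finitely presented over $R$ (when the valuation is not discrete), so one cannot replace the tensor product by a product in general; but this issue is invisible after the reduction to finite covers, since it is only for \emph{finite} products that tensoring commutes automatically.
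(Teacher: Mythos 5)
Your proof is correct and takes essentially the same route as the paper: Noetherianity of $|X|$ (from \cref{Lemma:NoetherianTopologicalSpace}) reduces the sheaf axiom to finite covers, and then flatness of $K$ over $R$ applied to the \v{C}ech equalizer, together with the compatibility of tensor products with finite products, concludes. The only difference is presentational, in that you spell out the reduction from arbitrary covers to finite ones, which the paper leaves implicit.
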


\begin{proof} Let $U$ be an open subset of $X$. Since the topological space underlying $X$ is Noetherian (\cref{Lemma:NoetherianTopologicalSpace}), the open subset $U$ is quasi-compact. Let $U = \bigcup_{i = 1}^n U_i$ a finite open cover of $U$. The exact sequence
\[ 0 \too F(U) \stackrel{\delta_0}{\too} \prod_{i = 1}^n F(U_i) \stackrel{\delta_1}{\too} \prod_{i, j = 1}^n F(U_i \cap U_j),\]
where $\delta_0$ and $\delta_1$ are the usual coboundary operator of the \v{C}ech cochain complex, stays exact after taking its tensor product with $K$ (because $K$ is a flat $R$-algebra):
\[ 0 \too F(U) \otimes_R K \stackrel{\delta_0}{\too} \left( \prod_{i = 1}^n F(U_i) \right) \otimes_R K \stackrel{\delta_1}{\too} \left( \prod_{i, j = 1}^n F(U_i \cap U_j) \right)\otimes_R K.\]
One concludes by noticing that tensor product commutes with finite products.
\end{proof}

\begin{definition} Let $X$ be a quasi-compact formal $R$-scheme topologically of finite type and $F$ an $\cO_X$-module. We denote the sheaf $U \mapsto \Gamma(U, F) \otimes_R K$ by  $F_K$.
\end{definition}

\subsection{Admissible blow-ups} The central tool in this section is blowing-up in the context of admissible formal schemes. Let us recall a few definitions. Let $X$ be an admissible formal $R$-scheme, $\varpi \in R \smallsetminus \{ 0\}$ topologically nilpotent, $I \subset \cO_X$ a coherent sheaf of ideals such that, locally on $X$, it contains a sheaf of ideals of the form $\mu \cO_X$ for some $\mu \in R \smallsetminus \{ 0\}$. Following \cite[def. 8.2.3]{BoschLectures}, the formal $R$-scheme
\[ X' = \injlim_{n \in \bbN} \Proj \left( \bigoplus_{d \in \bbN} I^d \otimes_{\cO_X} (\cO_X / \varpi^n \cO_X)\right),\]
with structural morphism $\pi \colon X' \to X$, is called the \emph{blow-up of $X$ along $I$}.
The \emph{exceptional divisor} of $\pi$ is the closed formal subscheme of $X'$ defined by the coherent sheaf of ideals $I \cO_{X'}$. The formal scheme $X'$ is admissible by \cite[corollary 8.2.8]{BoschLectures}.

\begin{lemma} \label{Lemma:CoherenceIdealFormalStrictTransform} With the notation above, let $Z \subset X$ be a closed admissible formal subscheme defined by a coherent sheaf of ideals $J \subset \cO_X$. Then,
\[ J' := \Ker( \cO_{X'} \to \cO_{X', K} / J\cO_{X', K} )\]
is a coherent sheaf of ideals of $\cO_{X'}$.
\end{lemma}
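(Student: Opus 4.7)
The plan is to reduce to a commutative-algebra computation on an affine open of $X'$ and invoke \cref{Lemma:FinitePresentationAndFlatnessFormalVersion}. Since coherence is a local property, I would restrict to an affine chart $U = \Spf A'$ on which $A'$ is flat and topologically of finite presentation over $R$, and on which the coherent ideal $J\cO_{X'}$ is generated by finitely many elements $g_1,\dots,g_m$. Set $\mathfrak{b} = (g_1,\dots,g_m) \subset A'$. By flatness of $K$ over $R$ and \cref{Lemma:InvertingUniformizerFormalScheme}, the quotient sheaf $\cO_{X', K}/J\cO_{X', K}$ identifies with $(\cO_{X'}/J\cO_{X'})_K$, whose sections on $U$ are $(A'/\mathfrak{b}) \otimes_R K$. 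Therefore
\[
J'(U) \;=\; \Ker\!\bigl(A' \longrightarrow (A'/\mathfrak{b}) \otimes_R K\bigr).
\]
Since $R$ is a valuation ring and $\varpi$ is topologically nilpotent, every nonzero element of $R$ divides some power of $\varpi$, so this kernel is precisely the preimage in $A'$ of the torsion submodule $M_{\tors}$ of the finitely generated $A'$-module $M := A'/\mathfrak{b}$.

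The crux is then to prove that $M_{\tors}$ is a finitely generated $A'$-module. The quotient $M/M_{\tors}$ is finitely generated over $A'$ and torsion-free over the valuation ring $R$, hence $R$-flat; by \cref{Lemma:FinitePresentationAndFlatnessFormalVersion}(1) it is of finite presentation. From the exact sequence
\[
0 \longrightarrow M_{\tors} \longrightarrow M \longrightarrow M/M_{\tors} \longrightarrow 0,
\]
in which the middle term is finitely generated and the right-hand term is finitely presented, the standard fact that the left-hand term must then also be finitely generated yields that $M_{\tors}$ is finitely generated. Consequently $J'(U)$, being the preimage in $A'$ of $M_{\tors}$, is a finitely generated ideal of $A'$. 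Since $A'$ is coherent by \cref{Lemma:FinitePresentationAndFlatnessFormalVersion}(3), every finitely generated ideal of $A'$ is coherent as an $A'$-module, and hence $J'$ is a coherent $\cO_{X'}$-module.

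The only mildly delicate point I foresee is the sheaf-theoretic identification $\cO_{X', K}/J\cO_{X', K} \simeq (\cO_{X'}/J\cO_{X'})_K$ underlying the local computation. It boils down to exactness of $-\otimes_R K$ combined with \cref{Lemma:InvertingUniformizerFormalScheme}, together with the observation that $J\cO_{X', K}$ coincides with $(J\cO_{X'})_K$ because both are locally generated by the same finitely many elements $g_1,\dots,g_m$. Apart from this piece of bookkeeping, the argument reduces to a direct application of \cref{Lemma:FinitePresentationAndFlatnessFormalVersion}.
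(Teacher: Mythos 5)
Your argument is correct and follows essentially the same route as the paper: both proofs localize to the affine charts of the admissible blow-up, identify $\Gamma(U, J')$ with $\Ker\bigl(A' \to (A'/JA')\otimes_R K\bigr)$, and conclude by \cref{Lemma:FinitePresentationAndFlatnessFormalVersion}. The only difference is that the paper applies part (2) of that lemma directly (the quotient $A'/\Gamma(U,J')$ embeds into a $K$-module, hence is torsion-free and flat over $R$, so the kernel is finitely presented), whereas you re-derive this special case by hand via the $R$-torsion submodule of $A'/JA'$ using parts (1) and (3) together with coherence of $A'$ --- the same computation, just unwound.
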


\begin{proof} The statement is local on $X$, therefore one may assume that $X = \Spf A$ is affine, where $A$ is a topologically finitely generated flat $R$-algebra. In order to simplify notation, identify $I$ (resp. $J$) with the finitely presented ideal $\Gamma(X, I)$ (resp. $\Gamma(X, J)$). Let $f_0, \dots, f_r$ be generators of $I$. The blow-up $X'$ is covered by affine open subsets $X'_i$, $i = 0, \dots, r$,
\begin{align*} 
X'_i = \Spf A \{ f_j / f_i\}_{j \neq i}, 
&& A\{ f_j / f_i\}_{j \neq i} = [A \{ t_{ij}\}_{j \neq i} / (f_j - t_{ij} f_i)_{j \neq i}] / (f_i\textup{-torsion}).
\end{align*}
By definition, 
\[ \Gamma(X'_i, J') = \Ker( A \{ f_j / f_i\}_{j \neq i} \to [A \{ f_j / f_i\}_{j \neq i} / J A \{ f_j / f_i\}_{j \neq i}] \otimes_R K).\]
According to \cref{Lemma:FinitePresentationAndFlatnessFormalVersion},  $\Gamma(X'_i, J')$ is a finitely presented ideal of $A \{ f_j / f_i\}_{j \neq i}$, which concludes the proof.
\end{proof}

With the notation of above lemma the formal closed subscheme $Z' \subset X'$ associated with $J'$ is called the \emph{strict transform} of $Z$.

\subsection{Statements} We are now in position to state the main results of this section. To do this, notice that the concepts in \cref{sec:CartierDivisor} are translated \emph{verbatim} when $X$ is a formal $R$-scheme or a $K$-analytic space. An effective Cartier divisor $D$ on a quasi-compact admissible formal $R$-scheme $X$ is \emph{admissible} if it is admissible as a formal $R$-scheme.

\begin{theorem} \label{Thm:ExtendingCartierFormalVersion}  Let $X$ be a quasi-compact admissible formal $R$-scheme and $D \subset X$ an admissible formal closed subscheme such that $I_{D, K}$ is an invertible $\cO_{X, K}$-module. 

Then, there exists an admissible blow-up $\pi \colon X' \to X$ such that the strict transform of $D$ in $X'$ is an admissible effective Cartier divisor.
\end{theorem}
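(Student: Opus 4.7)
The strategy is to follow the structure of the algebraic proof of \cref{Thm:ExtendingCartierSchemeVersion}, with the admissible Cartier divisor $Z := V(\varpi \cO_X)$ playing the role of the ``closed complement'' and the $\varpi$-saturation operation of \cref{Lemma:CoherenceIdealFormalStrictTransform} replacing scheme-theoretic closure. Along the way I would establish the formal analogues of \cref{Lemma:ControlDenominators} and \cref{Lemma:TechnicalBlowingUpLemma}; both translate without difficulty thanks to the coherence of $\cO_X$ (\cref{Lemma:FinitePresentationAndFlatnessFormalVersion}(3)) and the Noetherianity of the underlying topological space (\cref{Lemma:NoetherianTopologicalSpace}).

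The first admissible blow-up $\pi_1 \colon X_1 \to X$ is obtained from Raynaud's theorem on admissible blow-ups for coherent sheaves with locally free generic fiber (see \cite{BoschLectures}): since $I_{D,K}$ is invertible on $X_K$, there exists $\pi_1$ such that the pullback ideal $I_D \cO_{X_1}$ is invertible on $X_1$, hence defines an effective Cartier divisor $D_{01} \subseteq X_1$ (possibly not admissible, as its quotient may carry $\varpi$-torsion). Let $D_1 \subseteq X_1$ be the strict transform of $D$, admissible by \cref{Lemma:CoherenceIdealFormalStrictTransform}: its defining ideal $I_{D_1}$ is the $\varpi$-saturation of $I_{D_{01}}$, so $I_{D_{01}} \hookrightarrow I_{D_1}$ is an isomorphism on the generic fiber. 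The cokernel is a coherent $\cO_{X_1}$-module supported on the special fiber and, by quasi-compactness, is annihilated by some power $\varpi^N$; this yields $\varpi^N I_{D_1} \subseteq I_{D_{01}}$, which is the ``control'' hypothesis needed for the next step. One then applies the formal analogue of \cref{Lemma:TechnicalBlowingUpLemma} with this $N$: the admissible blow-up $\pi_2 \colon X_2 \to X_1$ of the open ideal $I_{D_{01}} + \varpi^N \cO_{X_1}$ produces a strict transform of $D_1$ equal to the Cartier divisor $\pi_2^\ast D_{01} - E_2$, where $E_2$ is the exceptional divisor of $\pi_2$. The composite $\pi_1 \circ \pi_2 \colon X_2 \to X$ is an admissible blow-up, and a routine check---using that $\varpi$-saturation is idempotent and is compatible with pullback---shows that the strict transform of $D$ along this composite coincides with the strict transform of $D_1$ along $\pi_2$, providing the sought admissible effective Cartier divisor.

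The main obstacle is the formal analogue of \cref{Lemma:TechnicalBlowingUpLemma}, or more precisely the local computation on the two charts $\Spf A\{f_0/\varpi^N\}$ and $\Spf A\{\varpi^N/f_0\}$ of $\pi_2$, where $f_0$ is a local generator of $I_{D_{01}}$. The algebraic argument showing that the candidate strict transform is empty on the second chart and cut out by $t := f_0/\varpi^N$ on the first carries over essentially verbatim, replacing polynomial localizations by their $\varpi$-adically completed, $\varpi$-torsion-free versions. However, the formal strict transform is defined via $\varpi$-saturation, so one must additionally verify that the principal ideal $(t) \subseteq A\{f_0/\varpi^N\}$ is already $\varpi$-saturated, equivalently that the quotient $A\{f_0/\varpi^N\}/(t)$ is $R$-flat. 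This can be checked by exploiting the explicit presentation $A\{f_0/\varpi^N\} \iso (A\{u\}/(f_0 - u\varpi^N))/(\varpi\text{-torsion})$ and identifying the quotient by $t = u$ with $(A/(f_0))/(\varpi\text{-torsion})$, which is flat over $R$ by construction.
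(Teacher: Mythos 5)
Your overall strategy coincides with the paper's for everything after the first blow-up: reduce to the situation where the pulled-back ideal $I_D\cO_{X_1}$ is invertible, observe that the strict transform $D_1$ has the same generic fibre, extract a control estimate $\varpi^N I_{D_1}\subseteq I_D\cO_{X_1}$ from coherence of the cokernel and quasi-compactness (this is exactly \cref{Lemma:ControlDenominatorsFormal}), and finish with the formal analogue of \cref{Lemma:TechnicalBlowingUpLemma}, i.e. \cref{Lemma:TechnicalBlowingUpLemmaFormal}, applied to the blow-up of $I_D\cO_{X_1}+\varpi^N\cO_{X_1}$, plus transitivity of strict transforms under the composite admissible blow-up. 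The genuine divergence is the first step. The paper invokes no flattening theorem: it blows up the first Fitting ideal $\Fit_1(I_D)$, which is open precisely because $I_{D,K}$ is invertible, and deduces from elementary properties of Fitting ideals that $I_D\cO_{X'}$ becomes invertible (\cref{Lemma:BlowUpFittingIdeal}); this is the self-contained device the subsection on Fitting ideals was written for. You instead black-box a Raynaud-type theorem turning a coherent sheaf with locally free generic fibre into a locally free one after admissible blow-up. Such a result exists, but it is the Raynaud--Gruson-style flattening of Bosch--L\"utkebohmert (``Formal and rigid geometry II: flattening techniques''), not a statement contained in the cited lecture notes, and it is a much heavier input; moreover it concerns the strict transform of the module, so you still owe the (easy) identification of that strict transform with the ideal $I_D\cO_{X_1}$, using that the kernel of $\pi_1^\ast I_D\to\cO_{X_1}$ is $\varpi$-power torsion because $\pi_{1,\eta}$ is an isomorphism and $I_{D,K}$ is invertible. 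So your route is viable, but it trades the paper's elementary Fitting-ideal computation for an external big theorem with an imprecise reference.

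One point of care in your final chart verification: the isomorphism $A\{f_0/\varpi^N\}/(t)\cong (A/(f_0))/(\varpi\textup{-torsion})$ is not ``by construction''. There is always a surjection $A/(f_0)\to A\{f_0/\varpi^N\}/(t)$, but the assertion that its kernel is the full $\varpi$-power torsion of $A/(f_0)$ is exactly where the control inclusion $\varpi^N I_{D_1}\subseteq (f_0)$ enters: given $a$ with $\varpi^m a\in (f_0)$, one first concludes $a\in I_{D_1}$ (the saturation), then writes $\varpi^N a=f_0 b$, so that $a-ub$ is $\varpi$-torsion in $A\{u\}/(f_0-u\varpi^N)$ and maps to $\bar a$. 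Since you do have that inclusion at this stage, the check goes through, but it belongs to the proof of the technical lemma (where the paper's admissibility/saturation hypotheses are used) rather than being a formal consequence of the presentation of the chart.
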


\begin{theorem} \label{Thm:FormalModelsOfCartierDivisors} Let $Y$ be an admissible affine formal $R$-scheme, $X_K$ a strict compact quasi-separated $K$-analytic space, $f_K \colon X_K \to Y_\eta$ a proper morphism of $K$-analytic spaces and 
$D_K \subset X_K$ an effective Cartier divisor. 

Then, there are a quasi-compact quasi-separated admissible formal $R$-scheme $X$, a proper morphism $f \colon X \to Y$, an admissible effective Cartier divisor $D$ in $X$, and an isomorphism $X_K \cong X_\eta$ through which $D_K$ is identified with $D_\eta$ and $f_K$ with $f_\eta$.
\end{theorem}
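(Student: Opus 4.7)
The plan is to transpose the strategy of \cref{Thm:ModelOfCartierDivisors} to the formal-analytic setting. The three ingredients are: (i) a formal model of the proper morphism $f_K$ provided by Raynaud's theorem; (ii) an admissible ``closure'' of $D_K$ inside this model; and (iii) an application of \cref{Thm:ExtendingCartierFormalVersion} to turn the closure into a genuine effective Cartier divisor.

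For (i), Raynaud's theorem in the strictly analytic setting (Bosch--L\"utkebohmert) furnishes a quasi-compact quasi-separated admissible formal $R$-scheme $X$, a proper morphism $f \colon X \to Y$, and an isomorphism $X_K \cong X_\eta$ compatible with $f$ and $f_K$. By the same theory, coherent sheaves on the analytic space $X_\eta$ correspond, up to admissible blow-up of $X$, to coherent sheaves on $X$. Applied to the invertible ideal sheaf $I_{D_K}$ of $\cO_{X_K}$ and after replacing $X$ by an admissible blow-up if necessary, this yields a coherent sheaf of ideals $I \subset \cO_X$ whose analytic generic fiber is $I_{D_K}$.

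For (ii), we imitate the closure construction of \cref{Lemma:CoherenceIdealFormalStrictTransform} and set
\[
I_D := \Ker\bigl( \cO_X \to \cO_{X,K} / I_K \bigr).
\]
Working affine-locally on $X = \Spf A$ and invoking \cref{Lemma:FinitePresentationAndFlatnessFormalVersion}, one checks that $I_D$ is finitely presented, hence a coherent sheaf of ideals, and that $I_D \otimes_R K = I_K$ inside $\cO_{X,K}$. The quotient $\cO_X / I_D$ embeds into $\cO_{X,K} / I_K$, whose sections are $K$-vector spaces; as $R$ is a valuation ring, $R$-torsion-freeness is equivalent to $R$-flatness, so $D := V(I_D)$ is an admissible closed formal subscheme of $X$. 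By construction $D_\eta$ is the closed analytic subspace of $X_\eta$ cut out by $I_K$, namely $D_K$, and in particular $I_{D,K}$ is an invertible $\cO_{X,K}$-module.

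We now apply \cref{Thm:ExtendingCartierFormalVersion} to produce an admissible blow-up $\pi \colon X' \to X$ such that the strict transform $D'$ of $D$ is an admissible effective Cartier divisor on $X'$. Since $\pi$ is an admissible blow-up, it is an isomorphism on generic fibers, so $X'_\eta \cong X_\eta \cong X_K$ and $D'_\eta$ is identified with $D_\eta = D_K$. The composition $X' \to X \to Y$ remains proper, so $(X', D', X' \to Y)$ provides the desired formal model. The main nontrivial input, beyond using \cref{Thm:ExtendingCartierFormalVersion} as a black box, is the coherence of $I_D$ in the absence of a Noetherian assumption on $R$, which is exactly what \cref{Lemma:FinitePresentationAndFlatnessFormalVersion} delivers.
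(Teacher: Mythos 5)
Your overall skeleton matches the paper's: take a Raynaud formal model of $f_K$, produce an admissible closed formal subscheme $D$ whose generic-fiber ideal is invertible, and then invoke \cref{Thm:ExtendingCartierFormalVersion}. The gap is in the middle step, precisely at the sentence ``in particular $I_{D,K}$ is an invertible $\cO_{X,K}$-module''. The sheaf $I_{D,K}$ lives on the formal scheme $X$, so invertibility means local freeness of rank one with respect to the \emph{formal} Zariski topology (and indeed the proofs of \cref{Lemma:BlowUpFittingIdeal} and \cref{Thm:ExtendingCartierFormalVersion} reduce to the case where $I_K$ is a principal ideal on an affine formal chart). What you actually know is that the \emph{analytic} sheaf $I_{D_K}$ is invertible, i.e.\ trivialized on some finite G-cover of $X_\eta$ by affinoid domains. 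For an affine formal open $V = \Spf A$ this only gives that $\Gamma(V, I_{D,K}) = \Gamma(V_\eta, I_{D_K})$ is a projective rank-one $A_K$-module; it does not give a cover of $X$ by formal opens on which $I_{D,K}$ becomes free, because the trivializing affinoid domains need not arise as generic fibers of formal opens of your chosen model. So the hypothesis of \cref{Thm:ExtendingCartierFormalVersion} is not verified by your construction as written.

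This is exactly what the paper's \cref{Lemma:ExtensionOfCartierToFormalSchemes} is for: one first chooses a finite affinoid G-cover of $X_K$ on which $D_K$ is principal, then uses \cite[8.4, Lemma 5]{BoschLectures} to perform an admissible blow-up $X' \to X$ admitting a formal open cover whose generic fibers are exactly those affinoid domains, and only then forms the saturated kernel ideal (your $I_D$, defined there directly from the analytic data); principality of $I_K$ on that formal cover is what yields invertibility. Your step (ii) (saturation, coherence via \cref{Lemma:FinitePresentationAndFlatnessFormalVersion}, flatness of the quotient, identification of generic fibers) and your final application of \cref{Thm:ExtendingCartierFormalVersion} are fine; also note, as minor points, that coherent ideals extend from $X_\eta$ to $X$ without any blow-up (the blow-up you really need is the covering-matching one above), and that properness of the formal extension $f$ is not formal from Raynaud's equivalence but is the cited theorem of L\"utkebohmert (discretely valued case) and Temkin (general case).
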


Notice that the hypothesis of $Y$ being an admissible formal $R$-scheme is easily achieved. Indeed let $A_K$ be a strictly affinoid $K$-algebra and $\phi \colon K \{ t_1, \dots, t_n \} \to A_K$ a surjective homomorphism of Banach $K$-algebras. According to \cref{Lemma:FinitePresentationAndFlatnessFormalVersion}, the ideal $I := \ker \phi \cap R \{ t_1, \dots, t_n \}$ is finitely generated. Thus $A := R \{ t_1, \dots, t_n \} / I$ is a flat, topologically finitely presented $R$-algebra and $A \otimes_R K \cong A_K$.

\subsection{Fitting ideals in formal geometry} In the proof of \cref{Thm:ModelOfCartierDivisors} we started by blowing up $D$ so that we could suppose that it was a Cartier divisor. Here it is not the right operation to do, somewhat because a formal scheme does not have a true generic fiber. As we will see the correct transformation to perform is to blow-up $X$ along the first Fitting ideal of the coherent sheaf of ideals defining $D$. For lack of references we recall here the definition of Fitting ideals. Let $X$ be an admissible formal $R$-scheme and $F$ a coherent $\cO_X$-module.

\begin{lemma} \label{Lemma:FormalFittingIdeal} For an integer $r \ge 0$  let $\Fit_rF$ be the sheafification of the presheaf associating to an affine open subset $U$ the $r$-th Fitting ideal of $\Gamma(U, F)$. Then $\Fit_rF$ is a coherent sheaf of ideals and, for an affine open subset $U$,
\[ \Gamma(U, \Fit_r F) = \Fit_r \Gamma(U, F). \]
\end{lemma}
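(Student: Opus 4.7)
The plan is to work locally on $X$ and show that on each affine open the $r$-th Fitting ideal can be described by a finitely generated ideal depending only on the module, compatibly with restriction to smaller affine opens, so that no genuine sheafification is needed on the basis of affine opens.

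First, I will reduce to the affine case. Let $U = \Spf A$ be an affine open of $X$, where $A$ is flat topologically of finite presentation over $R$. A coherent $\cO_U$-module corresponds to a finitely presented $A$-module $M = \Gamma(U, F)$: the finite presentation is guaranteed by the coherence of $A$ provided by \cref{Lemma:FinitePresentationAndFlatnessFormalVersion}(3). Picking an exact sequence
\[ A^m \stackrel{\phi}{\too} A^n \too M \too 0, \]
I define $\Fit_r(M) \subset A$ to be the ideal generated by the $(n-r)\times(n-r)$-minors of the matrix of $\phi$, with the usual conventions $\Fit_r(M) = A$ if $n \le r$ and $\Fit_r(M) = 0$ if $m < n - r$. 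A standard elementary-operations argument, identical to the one in commutative algebra, shows that $\Fit_r(M)$ depends only on $M$ and not on the chosen presentation.

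Next, I will verify compatibility with restriction to affine opens. For $V = \Spf B \subset U$ an affine open, one has $\Gamma(V, F) = M \hotimes_A B$ and the presentation base-changes to $B^m \to B^n \to M \hotimes_A B \to 0$; since the Fitting ideal is defined by minors, this immediately yields
\[ \Fit_r(M \hotimes_A B) = \Fit_r(M) \cdot B. \]
Consequently, on the basis of affine opens inside $U$, the presheaf $V \mapsto \Fit_r \Gamma(V, F)$ is already compatible with restrictions: it coincides with the coherent sheaf of ideals associated to $\Fit_r(M)$. Its sheafification $\Fit_r F$ therefore satisfies $\Gamma(V, \Fit_r F) = \Fit_r \Gamma(V, F)$ for every affine open $V \subset U$, and these local descriptions glue over a cover of $X$ by affine opens.

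Finally, $\Fit_r F$ is coherent because locally it is given by the finitely generated ideal $\Fit_r(M) \subset A$, and finitely generated ideals in the coherent ring $A$ are finitely presented by \cref{Lemma:FinitePresentationAndFlatnessFormalVersion}(3). The main point that requires care is the base-change identity for Fitting ideals in the formal setting; this is where one needs to notice that, thanks to $\Fit_r(M)$ being finitely generated, the completed tensor product $\Fit_r(M) \hotimes_A B$ reduces to ordinary scalar extension $\Fit_r(M) \cdot B$, so the classical Fitting-ideal formalism applies verbatim.
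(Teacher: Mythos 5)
Your proof is correct and follows essentially the same route as the paper: reduce to an affine $\Spf A$, use base change of Fitting ideals to see that on (basic) affine opens the presheaf is the coherent sheaf attached to the finitely generated ideal $\Fit_r \Gamma(U, F)$, with coherence of $A$ giving finite presentation. One small precision: identifying $\Fit_r(M)\cdot B$ with the sections $\Fit_r(M)\otimes_A B$ of that coherent sheaf uses flatness of the restriction maps (the paper records this for $A \to A_{\{f\}}$), not merely the finite generation of $\Fit_r(M)$ that turns $\hotimes$ into $\otimes$.
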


\begin{proof} Let $U = \Spf(A)$ be an affine open subset, where $A$ is a finitely presented flat $R$-algebra, and $M := \Gamma(U, F)$.  We claim that $\Fit_r (F)$ coincides with the $\cO_X$-module associated with the finitely generated ideal $\Fit_r M$. Indeed, for $f \in A$, the following formula holds \cite[\href{https://stacks.math.columbia.edu/tag/07ZA}{lemma 07ZA (3)}]{stacks-project}: 
\[ \Fit_r (M \otimes_A A_{\{f\}}) = (\Fit_r M) A_{\{f\}}. \]
On the other hand $A_{\{f\}}$ is flat\footnote{The ring $A_{\{ f\}}$ is the completion of $A_f$, therefore it is flat over $A_f$ \cite[8.2, lemma 2]{BoschLectures}. The ring $A_f$ is flat over $A$, hence $A_{\{ f\}}$ is flat over $A$.} over $A$, thus the right-hand of the previous equality coincides with $ (\Fit_r M) \otimes_A A_{\{f\}}$. This concludes the proof.
\end{proof}

The sheaf $\Fit_r F$ is called the \emph{$r$-th Fitting ideal of $F$}.

\begin{lemma} \label{Lemma:BlowUpFittingIdeal}  Let $D \subset X$ be an admissible closed formal subscheme such that $I_{D ,K}$ is an invertible $\cO_{X, K}$-module.
\begin{enumerate}
\item The ideal $\Fit_1 I_D$ locally contains a sheaf of the form $\varpi \cO_{X}$ with $\varpi \in R \smallsetminus \{ 0\}$. 
\item If $\pi \colon X' \to X$ is the blow-up of $X$ along $\Fit_1 I_D$, then $I_D \cO_{X'}$ is an invertible $\cO_{X'}$-module  and $\pi^{-1}(D)$ is an effective Cartier divisor.
\item If $D'$ be the strict transform of $D$ in $X'$, then $I_{D} \cO_{X', K} = I_{D', K}$.
\end{enumerate}
\end{lemma}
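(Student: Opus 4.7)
The plan is to prove the three parts in sequence, with~(1) supplying the admissibility of the blow-up used in~(2), and~(3) following formally from the description of the strict transform together with flatness of $K$ over $R$.

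For part~(1), Fitting ideals commute with flat base change, so the proof of \cref{Lemma:FormalFittingIdeal} yields $\Fit_1(I_D)_K = \Fit_1(I_{D,K})$. Since $I_{D,K}$ is invertible, hence locally free of rank~$1$, $\Fit_1(I_{D,K}) = \cO_{X,K}$. On an affine open $\Spf A \subseteq X$ this reads $\Fit_1(\Gamma(\Spf A, I_D)) \otimes_R K = A_K$, and clearing denominators in any expression of $1$ as an $A_K$-linear combination of elements of $\Fit_1(\Gamma(\Spf A, I_D))$ produces some $n \ge 1$ with $\varpi^n \in \Fit_1(\Gamma(\Spf A, I_D))$.

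For part~(2), I would work locally on $\Spf A$. After shrinking and reindexing, I may write $I := \Gamma(\Spf A, I_D) = (f_0, \dots, f_r)$ with $I_K = f_0 A_K$, and clearing denominators produces relations $\varpi^N f_i - b_i f_0 = 0$ in $A$ for $i = 1, \dots, r$ and suitable $b_i \in A$ and $N \ge 0$. Computing the $r \times r$ minors of the submatrix of the relation matrix built from these $r$ rows yields $\varpi^{Nr}$ and $b_j \varpi^{N(r-1)}$ ($j = 1, \dots, r$) as explicit elements of $\Fit_1(I)$. On the chart of $X'$ where $\varpi^{Nr}$ generates the invertible ideal $\Fit_1(I)\cO_{X'}$, the ratios $b_j/\varpi^N$ are regular, and $f_j = (b_j/\varpi^N)f_0$ gives $I\cO_{X'} = f_0\cO_{X'}$. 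On the chart where $b_j\varpi^{N(r-1)}$ is such a generator, the ratios $\varpi^N/b_j$ and $b_i/b_j$ are regular, so $f_0 = (\varpi^N/b_j)f_j$ and $f_i = (b_i/b_j)f_j$ yield $I\cO_{X'} = f_j\cO_{X'}$. On each chart the local generator of $I\cO_{X'}$ also generates the invertible ideal $I_{D,K}\cO_{X',K}$, hence is a non-zero-divisor in $\cO_{X',K}$; and by admissibility of $X'$ supplied by~(1), $\cO_{X'}$ injects into $\cO_{X',K}$, so the generator is a non-zero-divisor in $\cO_{X'}$ as well. Therefore $I_D\cO_{X'}$ is invertible and $\pi^{-1}(D)$, cut out by $I_D\cO_{X'}$, is an effective Cartier divisor.

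For part~(3), \cref{Lemma:CoherenceIdealFormalStrictTransform} identifies $I_{D'}$ with $\Ker(\cO_{X'} \to \cO_{X',K}/I_D\cO_{X',K})$. Since $K$ is flat over $R$, the functor $(-)_K$ is exact (compare \cref{Lemma:InvertingUniformizerFormalScheme}) and hence commutes with kernels, giving
\[ I_{D',K} = \Ker\bigl(\cO_{X',K} \to \cO_{X',K}/I_{D,K}\cO_{X',K}\bigr) = I_{D,K}\cO_{X',K}. \]

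The main obstacle lies in part~(2), and more precisely in ensuring that the charts corresponding to the explicitly identified minors $\varpi^{Nr}$ and $b_j\varpi^{N(r-1)}$ cover all of $X'$, equivalently that these elements generate $\Fit_1(I)$ locally. One may need either to refine the local choice of generators of $I$ and of the relation matrix so that the required minors generate $\Fit_1(I)$, or to repeat the ratio computation on an arbitrary chart where some other generator $g$ of $\Fit_1(I)\cO_{X'}$ is used, cancelling the non-zero-divisor $g$ from the derived identity $(\varpi^{Nr}/g)f_i = (b_i\varpi^{N(r-1)}/g)f_0$. The non-zero-divisor property, though conceptually separate, follows cheaply from admissibility of $X'$ together with invertibility of $I_{D,K}$.
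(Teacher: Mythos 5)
Parts (1) and (3) of your plan are correct and are essentially the paper's own argument: (1) is base change of Fitting ideals plus the fact that $\Fit_1$ of a principal ideal is the unit ideal, followed by clearing denominators; (3) is the definition of $I_{D'}$ from \cref{Lemma:CoherenceIdealFormalStrictTransform} together with exactness of $(-)\otimes_R K$. The genuine problem is the one you flag yourself in part (2), and it is not a presentational issue: the blow-up of $\Fit_1(I_D)$ is covered by charts indexed by a \emph{full} generating set of $\Fit_1(I_D)$, i.e.\ by all $r\times r$ minors of a relation matrix of $I_D$, whereas your Cramer data only produce the minors $\varpi^{Nr}$ and $b_j\varpi^{N(r-1)}$ coming from the relations $\varpi^N f_i=b_if_0$. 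These generate in general a strictly smaller ideal and the corresponding charts need not cover $X'$. Concretely, take $X=\Spf R\{x,y\}$ and $I_D=(\varpi,x,y)$ with $\varpi$ topologically nilpotent: then $I_{D,K}=\cO_{X,K}$, one may take $f_0=\varpi$, $N=1$, $b_1=x$, $b_2=y$, and your minors generate $\varpi\cdot(\varpi,x,y)$, while $\Fit_1(I_D)=(\varpi,x,y)^2$ (the Koszul syzygies of the regular sequence $\varpi,x,y$ contribute $x^2,xy,y^2$). If your three charts covered $X'$, then $x^2$ would lie in $\varpi\cO_{X'}$ everywhere, i.e.\ $x^2/\varpi$ would be a global section of $\cO_{X'}$, which is impossible since it has spectral norm $>1$ on the (unchanged) generic fiber. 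Your second remedy does not repair this: cancelling $g$ from $(\varpi^{Nr}/g)f_i=(b_i\varpi^{N(r-1)}/g)f_0$ only gives $af_i=c_if_0$ with $a=\varpi^{Nr}/g$ possibly a non-unit, which does not make $I_D\cO_{X'}$ principal on that chart; and the first remedy cannot be arranged in the example above.

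What is missing is the adjugate/Cramer step performed with respect to the minor that actually generates on the given chart. If $m$ is an $r\times r$ minor of the full relation matrix omitting the row of the generator $f_{i_0}$, Cramer's rule yields $mf_i=\pm m^{(i)}f_{i_0}$ for all $i$, with $m^{(i)}$ again $r\times r$ minors, hence elements of $\Fit_1(I_D)$; on the chart where $m$ generates the invertible ideal $\Fit_1(I_D)\cO_{X'}$, $m$ is a nonzerodivisor and one deduces $I_D\cO_{X'}=f_{i_0}\cO_{X'}$ there, after which your nonzerodivisor argument (injectivity of $\cO_{X'}\to\cO_{X',K}$ by admissibility, invertibility of the generic ideal) applies verbatim. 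This completion is exactly the content of the references the paper uses instead of computing: $(\Fit_1 I_D)\cO_{X'}$ is invertible by \cite[8.2, proposition 9]{BoschLectures}, equals $\Fit_1(\pi^\ast I_D)$ by compatibility of Fitting ideals with base change, and the Stacks project lemma cited in the paper (tag 080Z) then gives invertibility of $I_D\cO_{X'}$. So your route can be made to work, but as written the chart-covering step is a real gap, not a detail.
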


\begin{proof} All statements are local on $X$, therefore one may assume $X = \Spf A$, where~$A$ is a topologically finitely presented flat $R$-algebra, and $I_K := \Gamma(X, I) \otimes_R K$ is a principal ideal of $A_K := A \otimes_R K$.

(1)  We have $ \Fit_1 I_K = (\Fit_1 I) A_K = (\Fit_1 I) \otimes_R K$ by  \cite[\href{https://stacks.math.columbia.edu/tag/07ZA}{lemma 07ZA (3)}]{stacks-project}  and flatness of the $A$-algebra $A_K$. The ideal $I_K$ is principal, thus $\Fit_1 I_K = A_K$ by \cite[\href{https://stacks.math.columbia.edu/tag/07ZD}{lemma 07ZD}]{stacks-project}. This implies that $\Fit_1 I$ contains some $\varpi \in R \smallsetminus \{ 0\}$.

(2) The proof of \cref{Lemma:BlowUpFittingIdealAlg} can be translated in the formal setting with no changes, and we do not repeat it here.

(3) By definition $I_{D'} = \Ker (\cO_{X'} \to \cO_{X', K} / I_{D}\cO_{X', K})$ whence the statement.
\end{proof}

\subsection{Proofs} After these preliminaries we can finally prove \cref{Thm:ExtendingCartierFormalVersion,,Thm:FormalModelsOfCartierDivisors}. The arguments in the proof for \cref{Thm:ExtendingCartierFormalVersion} are a routine adaptation of those in the proofs of \cref{Thm:ExtendingCartierSchemeVersion,,Thm:ModelOfCartierDivisors}. For an admissible formal $R$-scheme $X$, a coherent $\cO_X$-module $F$ flat over $R$ and $\varpi \in R \smallsetminus \{ 0\}$, let $\frac{1}{\varpi} F$ denote the image of $F$ in $F_K$ through the map $f \mapsto f / \varpi$. The proofs of the following three statements are obtained \emph{mutatis mutandis} respectively from those of \cref{Lemma:ControlDenominators,,Lemma:TechnicalBlowingUpLemma,,Prop:StrictTransformBlowUpIntersectionTwoCartier}:

\begin{lemma} \label{Lemma:ControlDenominatorsFormal} Let $\phi \colon F \to G$ a homomorphism between coherent sheaves on a quasi-compact admissible formal $R$-scheme  $X$. If $\phi_{K} \colon F_{K} \to G_{K}$ is surjective, then there is $\varpi \in R \smallsetminus \{ 0\}$ such that  $\im(G \to G_K)$ is contained in $\im(\frac{1}{\varpi}F \to G_K)$.
\end{lemma}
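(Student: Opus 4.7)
The plan is to transcribe the proof of Lemma \ref{Lemma:ControlDenominators} to the formal setting, with inversion of a single element $\varpi\in R\smallsetminus\{0\}$ playing the role of polar sections along a divisor. The desired inclusion $\im(G\to G_K)\subset\im(\tfrac{1}{\varpi}F\to G_K)$ is a containment of subsheaves of $G_K$, which can be verified on a finite affine open cover of $X$; the global $\varpi$ will be the product of local denominators, one per member of the cover.

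By quasi-compactness of $X$, choose a finite cover $X=U_1\cup\cdots\cup U_k$ with each $U_i=\Spf A_i$ and $A_i$ flat topologically of finite presentation over $R$. Let $C:=\coker\phi$. Since $\cO_X$ is coherent (Lemma \ref{Lemma:FinitePresentationAndFlatnessFormalVersion}(3)), $C$ is a coherent $\cO_X$-module, and tensoring with the flat $R$-module $K$ being exact, $C_K=\coker(\phi_K)=0$ as a sheaf. Under the equivalence between coherent sheaves and finitely presented modules on an affine formal scheme, $C_{\rvert U_i}$ corresponds to the finitely generated $A_i$-module $C_i:=G(U_i)/\phi(F(U_i))$, and Lemma \ref{Lemma:InvertingUniformizerFormalScheme} yields
\[ C_i\otimes_R K \;=\; C(U_i)\otimes_R K \;=\; C_K(U_i) \;=\; 0. \]
Each of the finitely many $A_i$-generators of $C_i$ is thus $R$-torsion, so taking the product of annihilators provides some $\varpi_i\in R\smallsetminus\{0\}$ with $\varpi_i\cdot G(U_i)\subset\phi(F(U_i))$.

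Setting $\varpi:=\prod_{i=1}^k\varpi_i$, for every $g\in G(U_i)$ one may write $\varpi g=\phi(f)$ for some $f\in F(U_i)$, whence $g=\phi(f)/\varpi$ in $G_K(U_i)$ lies in $\im(\tfrac{1}{\varpi}F\to G_K)(U_i)$. Since this holds on every member of the cover, the sheaf-theoretic inclusion follows. There is no serious obstacle: the only point of care, compared with the argument of Lemma \ref{Lemma:ControlDenominators}, is that powers of a local equation of the divisor $Z$ are replaced by a single global element of $R$, which is arranged by the finite product above.
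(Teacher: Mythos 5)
Your proof is correct and follows essentially the paper's own route: the paper gives no separate argument for \cref{Lemma:ControlDenominatorsFormal}, stating only that it is obtained \emph{mutatis mutandis} from \cref{Lemma:ControlDenominators}, whose proof likewise reduces by quasi-compactness to finitely many affine charts, clears denominators on finitely many generators (you use the cokernel and its vanishing after $\otimes_R K$, the paper uses generators of $\Gamma(X,G)$ directly), and combines the local denominators (your product $\varpi=\prod_i\varpi_i$ playing the role of the maximum $N=\max\{n_i+m_i\}$). The passage from the module-level inclusions $\varpi\,G(U_i)\subset\phi(F(U_i))$ to the inclusion of subsheaves of $G_K$ is routine via the correspondence between coherent sheaves and finitely presented modules on the affine charts, so there is no gap.
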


\begin{lemma} \label{Lemma:TechnicalBlowingUpLemmaFormal} Let $D_0$ be an effective Cartier divisor in an admissible formal $R$-scheme and $D \subset D_0$ an admissible formal closed subscheme. Assume that the image of $I_{D}$ in $I_{D, K}$ is contained in $\frac{1}{\varpi}I_{D_0, K}$ for some $\varpi \in R \smallsetminus \{ 0\}$. Let $\pi \colon X' \to X$ be the blow-up along the ideal $\varpi \cO_X + I_{D_0}$ and $E \subset X'$ the exceptional divisor. 

Then, the strict transform of $D$ in $X'$ is the effective Cartier divisor $\pi^{-1}(D_0) - E$.
\end{lemma}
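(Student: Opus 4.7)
The plan is to adapt the proof of \cref{Lemma:TechnicalBlowingUpLemma}, with the admissibility of $D$ taking over the role played there by scheme-theoretic density. First I would reduce to the affine case $X = \Spf A$ with $A$ flat topologically finitely presented over $R$; then $D_0$ is cut out by a non-zero-divisor $f_0 \in A$, and by \cref{Lemma:FinitePresentationAndFlatnessFormalVersion} the ideal $I_D$ is finitely generated, so one can choose generators $f_0, f_1, \ldots, f_r$. Reading the hypothesis as $\varpi I_D \subset (f_0)$ inside $A$, one obtains unique $b_i \in A$ (uniqueness from $f_0$ being a non-zero-divisor) satisfying $\varpi f_i = b_i f_0$ for $i \ge 1$. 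The blow-up $\pi \colon X' \to X$ along $(\varpi, f_0)$ is covered by two admissible affine charts: $X'_0 = \Spf B$ with $B = A\{f_0/\varpi\}$, containing $t = f_0/\varpi$ with $f_0 = \varpi t$; and $X'_1 = \Spf B'$ with $B' = A\{\varpi/f_0\}$, containing $u = \varpi/f_0$ with $\varpi = u f_0$.

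On $X'_1$ the strict transform is trivial: since $\varpi$ is a unit in $B'_K$ and $\varpi = u f_0$, the element $f_0$ is also a unit in $B'_K$, so $I_D B'_K = B'_K$, whence $\Ker(B' \to B'_K/I_D B'_K) = B'$. On $X'_0$, the relation $\varpi f_i = b_i f_0 = b_i \varpi t$ combined with $\varpi$ being a non-zero-divisor on $B$ (flatness over $R$) gives $f_i = b_i t$ in $B$; together with $f_0 = \varpi t$, this yields $I_D B \subset tB$. Passing to the generic fiber, $t = f_0/\varpi \in I_D B_K$ because $\varpi$ is a unit in $B_K$, hence $I_D B_K = tB_K$.

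The heart of the argument is the reverse inclusion $I_{D'} \subset tB$ on $X'_0$, where $I_{D'} = \Ker(B \to B_K / I_D B_K)$. Take $b \in I_{D'}$ and express it as a restricted power series $b = \sum_{i \ge 0} a_i t^i$ with $a_i \in A$ and $a_i \to 0$ in the $\varpi$-adic topology, using the presentation $B = A\{T\}/(f_0 - \varpi T)$ modulo $\varpi$-torsion. Setting $T = 0$ in $B_K = A_K\{T\}/(f_0 - \varpi T)$ identifies $B_K / t B_K$ with $A_K / f_0 A_K$; the image of $b$ in this quotient equals the class of $a_0$, so the vanishing of that image places $a_0$ in $f_0 A_K \cap A$. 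Admissibility of $D$ enters here decisively: flatness of $A/I_D$ over $R$ gives $I_D A_K \cap A = I_D$, and since $f_0 \in I_D$ this forces $a_0 \in I_D$. Writing $a_0 = \sum_{j=0}^r c_j f_j$ with $c_j \in A$ and substituting $f_0 = \varpi t$ and $f_j = b_j t$ ($j \ge 1$) into $b = a_0 + \sum_{i \ge 1} a_i t^i$, one factors out $t$ and obtains $b = t \cdot g$ with $g = c_0 \varpi + \sum_{j=1}^r c_j b_j + \sum_{i \ge 1} a_i t^{i-1}$; the convergence $a_i \to 0$ ensures that $g$ belongs to $B$.

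It then remains to match this with $\pi^{-1}(D_0) - E$: on $X'_0$ the ideals of $\pi^{-1}(D_0)$ and of the exceptional divisor $E$ are $\varpi t B$ and $\varpi B$ respectively, whose ratio in the sense of effective Cartier divisors is $tB$; on $X'_1$ both ideals equal $f_0 B'$, giving the trivial Cartier divisor. The main technical obstacle is the passage from polynomial to restricted power-series expressions for $b$: the convergence $a_i \to 0$ in the $\varpi$-adic topology is precisely what is needed so that the reorganized sum $g$, obtained after factoring out $t$, still lies in $B$ rather than only in $B_K$.
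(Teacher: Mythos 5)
Your proof is correct and is exactly the routine adaptation of \cref{Lemma:TechnicalBlowingUpLemma} that the paper invokes (it gives no separate argument, declaring the formal case \emph{mutatis mutandis}): the same two-chart computation on the blow-up of $(\varpi, f_0)$, with the kernel definition of the formal strict transform and the admissibility of $D$ (via $I_D A_K \cap A = I_D$) playing the role that scheme-theoretic density played in the algebraic proof. Your treatment of the restricted power-series expansion, using $a_i \to 0$ to keep the factored series in $B$ after pulling out $t$, correctly handles the only point where the formal setting genuinely differs from the polynomial computation.
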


\begin{proposition} \label{Prop:StrictTransformBlowUpIntersectionTwoCartierFormal} Let $D$ be an effective Cartier divisor in a quasi-compact admissible formal $R$-scheme $X$, $\varpi \in R \smallsetminus \{ 0\}$, $\pi \colon X' \to X$ the blow-up of $X$ along $I_D + \varpi \cO_X$ and $E \subset X'$ the exceptional divisor. Then, $\pi \colon X' \to X$ is an admisible blow-up and the strict transform of $D$ in $X'$ is the effective Cartier divisor $\pi^{-1}(D) - E$.
\end{proposition}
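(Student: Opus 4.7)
The plan is to derive this proposition as an immediate specialization of \cref{Lemma:TechnicalBlowingUpLemmaFormal}, following the same pattern by which \cref{Prop:StrictTransformBlowUpIntersectionTwoCartier} was obtained from \cref{Lemma:TechnicalBlowingUpLemma} in the algebraic setting. Concretely, I would set $D_0 := D$ in the formal lemma. With this choice the blow-up center $\varpi \cO_X + I_{D_0}$ of the lemma coincides with $\varpi \cO_X + I_D$, which is exactly the center of the blow-up $\pi$ in the statement; likewise, the conclusion $\pi^{-1}(D_0) - E$ of the lemma reads $\pi^{-1}(D) - E$, which is what we want.

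The only thing that needs to be checked is the hypothesis on denominators from \cref{Lemma:TechnicalBlowingUpLemmaFormal}, namely that the image of $I_D$ in $I_{D,K}$ is contained in $\tfrac{1}{\varpi} I_{D_0,K}$. With $D_0 = D$ this asks that the image of $I_D$ in $I_{D,K}$ lie inside $\tfrac{1}{\varpi} I_D \subset I_{D,K}$. For any local section $f$ of $I_D$ one has the tautological equality $f = (\varpi f)/\varpi$ in $I_{D,K}$, and $\varpi f$ is still a local section of $I_D$; hence $f$ lies in the image of the multiplication-by-$1/\varpi$ map $I_D \to I_{D,K}$, which is by definition $\tfrac{1}{\varpi} I_D$. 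The hypothesis is therefore automatic.

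I do not anticipate any genuine obstacle: the substantive content of the proposition has already been absorbed into \cref{Lemma:TechnicalBlowingUpLemmaFormal}, and the role played here by the scalar $\varpi$ mirrors precisely that of the exponent $n = 1$ in the algebraic reduction. Implicitly I would also note that the ideal $\varpi \cO_X + I_D$ locally contains $\varpi \cO_X$, so the blow-up $\pi \colon X' \to X$ is a well-defined admissible blow-up and $X'$ is again an admissible formal $R$-scheme; but this is built into the setup and requires no additional argument.
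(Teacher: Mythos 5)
Your proposal is correct and is exactly the paper's argument: the paper proves this proposition \emph{mutatis mutandis} from \cref{Prop:StrictTransformBlowUpIntersectionTwoCartier}, i.e.\ by applying \cref{Lemma:TechnicalBlowingUpLemmaFormal} with $D_0 = D$ (the formal analogue of taking $n=1$), the denominator hypothesis being trivially satisfied just as you verify.
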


\begin{proof}[{Proof of \cref{Thm:ExtendingCartierFormalVersion}}] Let $\pi \colon X' \to X$ the blow-up along the first Fitting ideal $\Fit_1(I_D)$ of the coherent $\cO_X$-module $I_D$. According to \cref{Lemma:BlowUpFittingIdeal}, the blow-up $\pi \colon X' \to X$ is admissible and $D_0 = \pi^{-1}(D)$ is an effective Cartier divisor in $X'$. Moreover, if $D'$ denotes the strict transform of $D$ in $X'$, then $I_{D', K'} = I_{D_0, K'}$. Thanks to \cref{Lemma:ControlDenominatorsFormal} one can apply \cref{Lemma:TechnicalBlowingUpLemmaFormal} to $X'$, $D'$ and $D_0'$.
\end{proof}

Let $X$ be a quasi-compact admissible formal $R$-scheme and $Z_K \subset X_\eta$ a closed analytic subspace. Then, there is a unique closed formal subscheme $Z$ flat over $R$ such that $Z_\eta = Z_K$. We call $Z$ the \emph{Zariski closure} of $Z_K$ in analogy with the situation for schemes. When $X = \Spf(A)$ is affine, $Z$ is defined by the ideal $I \cap A$ where $I$ is the kernel of the evaluation morphism $A \otimes_R K = \Gamma(X_\eta, \cO_{X_\eta}) \to \Gamma(Z_K, \cO_{X_\eta})$. Note that $I$ is finitely presented by \cref{Lemma:FinitePresentationAndFlatnessFormalVersion}.

\begin{lemma} \label{Lemma:ExtensionOfCartierToFormalSchemes} Let $X$ be a quasi-compact admissible formal $R$-scheme, $D_K \subset X_\eta$ an effective Cartier divisor and $D \subset X$ the Zariski closure of $D_K$. Then, the $\cO_{X, K}$-module $\ker(\cO_X \to \cO_D)_K$ is invertible.
\end{lemma}

\begin{proof} One may suppose that $X = \Spf A$ is affine. The closed subscheme $D$ is then defined by an ideal $I \subset A$. We need to show that $I \otimes_R K$ is invertible knowing that $D_\eta \subset X_\eta = \cM(A \otimes_R K)$ is an effective Cartier divisor. This is a consequence of the faithful flatness of the morphism $\cM(A \otimes_R K) \to \Spec (A \otimes_R K)$, see \cite[theorem 2.1.4]{BerkovichIHES}. 
\end{proof}

\begin{proof}[{Proof of \cref{Thm:FormalModelsOfCartierDivisors}}] Raynaud's point of view on rigid analytic spaces relies upon the insight that every (strict, paracompact, and quasi-separated) $K$-analytic space, and morphisms between, can be obtained as the generic fibre of an admissible formal $R$-scheme \cite[8.4, lemma 4]{BoschLectures}.  Let $X$ be an admissible formal $R$-scheme with generic fibre $X_K$. Up to taking a suitable formal admissible blow-up of $X$ the morphism $f_K$ extends to a morphism of formal $R$-schemes $f \colon X \to Y$. The morphism $f$ is proper by \cite[theorem 3.1]{LutkebohmertProperness} in the discretely valued case and \cite[Corollary 4.4]{TemkinLocalPropertiesI} in the general case.  According to \cref{Lemma:ExtensionOfCartierToFormalSchemes}, there is an admissible blow-up $\pi \colon X' \to X$ and an admissible closed formal subscheme $D'$ in $X'$ with generic fibre $\pi_\eta^{-1}(D_K)$ and such that $I_{D', K}$ is an invertible $\cO_{X', K}$-module. We conclude by applying \cref{Thm:ExtendingCartierFormalVersion} to $X'$ and $D'$.
\end{proof}

\section{Approximating essential singularities by poles: the formal case} \label{sec:GlobalApproximation}

Let $R$ be the ring of integers of a field $K$ complete with respect to a on-trivial non-Archimedean absolute value.

\subsection{Functions with polar singularities} The main result of this section concerns the approximation of formal functions having essential singularities at infinity by those having polar singularities. More precisely, let $D$ be an effective Cartier divisor on a scheme $X$. Given an $\cO_X$-module $F$, for $d \in \bbN$ set $F(dD) := F \otimes_{\cO_X} \cO_X(dD)$ and let
\[ F(\ast D) := \injlim_{d \in \bbN} F(dD)\]
be the sheaf of sections of $F$ with polar singularities along $D$, where the transition maps are induced by the canonical section $s_D \colon \cO_X \to \cO_X(D)$. The $\cO_X$-module $F$ is \emph{$D$-torsion free} if $\id_F \otimes s_D \colon F \to F(D)$ is injective. Let $U = X \smallsetminus D$ and let $j \colon U \to X$ be the open immersion. The canonical section induces an isomorphism $\cO_U \cong \cO_X(D)_{\rvert U}$ hence an isomorphism $j^\ast F \cong j^\ast F(dD)$ for each $d \in \bbN$. By taking the injective limit of the homomorphisms $F(dD) \to j_\ast j^\ast F(dD) \to j_\ast j^\ast F$, the latter map being the inverse of previous isomorphism, one obtains a homomorphism $F(\ast D) \to j_\ast j^\ast F$ that will be referred to as the \emph{restriction map}.

\begin{lemma} Let $D$ be an effective Cartier divisor in a quasi-compact scheme~$X$ and~$F$ a quasi-coherent $\cO_X$-module. Then the restriction map induces an isomorphism
\[ \injlim_{d \in \bbN} \Gamma(X, F(dD)) \stackrel{\sim}{\too} \Gamma(X \smallsetminus D, F).\]
Moreover if $X$ is quasi-separated, then the map $F(\ast D) \to j_\ast j^\ast F$ given by restriction is an isomorphism where $j \colon X\smallsetminus D \to X$ is the open immersion.
\end{lemma}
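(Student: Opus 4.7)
The plan is to reduce to the basic case where $X = \Spec A$ is affine and $\cO_X(D)$ is trivial, say with $D = V(a)$ for a nonzerodivisor $a \in A$. Writing $F = \widetilde M$ with $M = \Gamma(X, F)$, trivializing $\cO_X(dD)|_{X \smallsetminus D}$ by $s_D^d$ identifies the map $\Gamma(X, F(dD)) \to \Gamma(X \smallsetminus D, F)$ with the inclusion $a^{-d} M \into M[a^{-1}]$, and the transition $F(dD) \to F((d+1)D)$ with the inclusion $a^{-d}M \subset a^{-(d+1)}M$ of submodules of $M[a^{-1}]$. The colimit $\injlim_d \Gamma(X, F(dD))$ then tautologically equals $\bigcup_d a^{-d}M = M[a^{-1}] = \Gamma(X\smallsetminus D, F)$.

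\textbf{Globalization.} To handle the global case, I would first establish the following key intermediate observation: on a quasi-compact open $V \subset X$, any $t \in \Gamma(V, F(dD))$ with $t|_{V\smallsetminus D} = 0$ is annihilated by $s_D^n$ for some $n$. This is proved by covering $V$ with finitely many affines trivializing $\cO_X(D)$, invoking the affine case on each to kill $t$ by a local power of the equation of $D$, and taking the maximum. With this tool, fix a finite cover $X = U_1 \cup \cdots \cup U_m$ by affines trivializing $\cO_X(D)$. Injectivity of the map in the statement follows by applying the observation with $V = X$. For surjectivity, given $s \in \Gamma(X \smallsetminus D, F)$, the affine case produces local lifts $\sigma_i \in \Gamma(U_i, F(d_iD))$ of $s$; after replacing each by $s_D^{d - d_i}\sigma_i$ with $d := \max d_i$, I may assume all $\sigma_i$ live in a common $F(dD)$. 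The differences $\sigma_i - \sigma_j$ on overlaps $U_i \cap U_j$ vanish on $(U_i \cap U_j)\smallsetminus D$, and the observation applied to each overlap yields an integer $N$ after which $(s_D^N \sigma_i)_i$ is a cocycle that glues to the sought global section of $F((d+N)D)$.

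\textbf{Second part and main obstacle.} Both $F(\ast D)$ and $j_\ast j^\ast F$ are quasi-coherent on $X$: the former as a filtered colimit of quasi-coherent sheaves, and the latter because the open immersion $j$ is quasi-compact once $X$ is quasi-separated. By the affine calculation, on any affine open trivializing $\cO_X(D)$ both sheaves coincide with the quasi-coherent sheaf associated to the localization $M[a^{-1}]$; since such opens form a basis of $X$, the two sheaves are isomorphic. The principal difficulty is the gluing step in the surjectivity argument: one must uniformly bound the pole orders required to annihilate the cocycle $(\sigma_i - \sigma_j)_{i,j}$. This is precisely where quasi-separatedness enters, since it makes every overlap $U_i \cap U_j$ quasi-compact and so allows the key observation to supply a single integer $N$; in particular, the surjectivity in part one already makes tacit use of this hypothesis.
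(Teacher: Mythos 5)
Your argument is correct, but it takes a more self-contained route than the paper's. For the first assertion the paper glues nothing by hand: it observes that $\injlim_{d}\Gamma(X,F(dD))$ is the homogeneous localization of the graded module $\bigoplus_{d}\Gamma(X,F(dD))$ at the degree-one element $s_D$ and then quotes EGA III, proposition 1.4.5; the sheaf statement is deduced by applying the first part to quasi-compact open subsets $V$, using quasi-separatedness to know that $\Gamma(V,-)$ commutes with the filtered colimit defining $F(\ast D)$, and the fact that such $V$ form a basis. You instead reprove the quoted result from scratch (affine computation, annihilation over quasi-compact opens, \v{C}ech-style gluing with a uniform pole bound) and get the sheaf statement from quasi-coherence of $F(\ast D)$ and $j_\ast j^\ast F$ plus a check on a basis of trivializing affines. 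Your version buys transparency about where each finiteness hypothesis enters, and your closing observation is fair: the surjectivity half genuinely uses quasi-compact pairwise overlaps, i.e.\ quasi-separatedness, a hypothesis also present in the standard formulations of the cited extension statement (the Stacks Project version assumes quasi-compact and quasi-separated), whereas the lemma's first sentence only says quasi-compact; since in all of the paper's applications $X$ is separated (or an affine/quasi-compact quasi-separated open in such), nothing is lost. Two small points to polish: identifying $\Gamma(X,F(dD))$ with the submodule $a^{-d}M\subset M[a^{-1}]$ is an abuse when $M$ has $a$-torsion --- what is literally true is that under the trivializations the transition maps become multiplication by $a$, so the colimit is the localization $M[a^{-1}]$, which is all you need; and the assertion that $F(\ast D)$ has sections $M[a^{-1}]$ over a trivializing affine $U$ tacitly uses that $\Gamma(U,-)$ commutes with filtered colimits of sheaves (valid because $U$ is quasi-compact and quasi-separated as a topological space; this is exactly the Stacks lemma the paper invokes for its second step), so it deserves an explicit word.
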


\begin{proof} For an $\cO_X$-module $E$ let $\Gamma(E, D) := \bigoplus_{d \in \bbN} \Gamma(X,  E(dD))$. With this notation $\injlim_{d \in \bbN} \Gamma(X, F(dD))$ is the homogeneous localization of the $\Gamma(\cO_X, D)$-graded module $\Gamma(F, D)$ with respect to the canonical section $s_D$, which is a homogeneous element of degree $1$. The statement then follows from \cite[proposition 1.4.5]{EGAIII1}. For the latter statement notice that on a quasi-compact scheme, every open cover may be refined by a finite open affine cover. Moreover, if the scheme is quasi-separated, then the intersection of affine open subsets is a finite union of open affine subsets, hence quasi-compact. Thus the natural map
$ \injlim_{d \in \bbN} \Gamma(V, F(dD)) \to  \Gamma(V, F(\ast D))$ is an isomorphism for any quasi-compact open subset $V \subset X$  by \cite[\href{https://stacks.math.columbia.edu/tag/009F}{lemma 009F (4)}]{stacks-project}. Applying statement (1) to $V$,  the composite map
\[  \injlim_{d \in \bbN} \Gamma(V, F(dD)) \stackrel{\sim}{\too}  \Gamma(V, F(\ast D))  \too  \Gamma( V \cap (X \smallsetminus D), F) = \Gamma( V, j_\ast j^\ast F) \]
is an isomorphism. The result follows because quasi-compact open subsets form a basis for the topology of $X$.
\end{proof}

\begin{lemma} \label{Lemma:MeromorphicSectionsFormal} Let $D$ be an effective Cartier divisor in a separated quasi-compact formal $R$-scheme $X$ topologically of finite type. Then, for any sheaf of $\cO_X$-modules~$F$, the maps $F(dD) \to F(\ast D)$ induce an isomorphism
\[ \injlim_{d \in \bbN} \Gamma(X, F(dD)) \stackrel{\sim}{\too} \Gamma(X, F(\ast D)).\]
\end{lemma}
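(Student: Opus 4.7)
My plan is to reduce the assertion to a purely topological statement and then invoke \cref{Lemma:NoetherianTopologicalSpace}. By construction, $F(\ast D) = \injlim_{d \in \bbN} F(dD)$ as a sheaf of $\cO_X$-modules, and the arrow in the statement is precisely the canonical comparison map
\[ \injlim_{d \in \bbN} \Gamma(X, F(dD)) \too \Gamma\bigl(X, \injlim_{d \in \bbN} F(dD)\bigr) = \Gamma(X, F(\ast D)). \]
Therefore it will suffice to show that filtered colimits of sheaves of abelian groups on $X$ commute with the global section functor.

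To this end, I would first appeal to \cref{Lemma:NoetherianTopologicalSpace} to conclude that the underlying topological space $|X|$ is Noetherian, and hence that every open subset of $X$ is quasi-compact. On any topological space in which every open subset is quasi-compact (a fortiori on a Noetherian space), filtered colimits of sheaves of abelian groups commute with taking sections over any open subset; this is the standard fact recorded in \cite[\href{https://stacks.math.columbia.edu/tag/009F}{Lemma 009F}]{stacks-project}. Applying this to the filtered system $\{F(dD)\}_{d \in \bbN}$ and to the open subset $X \subseteq X$ then yields the desired isomorphism.

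I do not anticipate any real obstacle: once \cref{Lemma:NoetherianTopologicalSpace} is cited the argument is essentially formal. In contrast with the scheme-theoretic analogue, no use of the homogeneous-localization formalism of \cite[1.4.5]{EGAIII1} is required here, precisely because the lemma only concerns $\Gamma(X, F(\ast D))$ and does not try to compare it with sections of the restriction of $F$ to an open complement of $D$ — an operation that would be delicate for a formal scheme, where ``$X \smallsetminus D$'' has no immediate meaning as a formal $R$-scheme.
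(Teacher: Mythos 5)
Your argument is correct and is essentially the paper's own proof: the authors likewise invoke \cref{Lemma:NoetherianTopologicalSpace} to see that $|X|$ is Noetherian and then conclude by \cite[\href{https://stacks.math.columbia.edu/tag/009F}{lemma 009F (4)}]{stacks-project}, since $F(\ast D)$ is by definition the filtered colimit of the $F(dD)$. Your closing remark is also accurate — no analogue of the homogeneous-localization argument from \cite[1.4.5]{EGAIII1} is needed here.
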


\begin{proof} Since the topological space underlying $X$ is Noetherian (\cref{Lemma:NoetherianTopologicalSpace}) the statement follows from \cite[\href{https://stacks.math.columbia.edu/tag/009F}{lemma 009F (4)}]{stacks-project}.
\end{proof}

\subsection{Statements} In order to state the main result of this section, say that an admissible formal $R$-scheme $X$ is \emph{proper over affine} if there are an affine admissible formal $R$-scheme $Y$ and a proper morphism of formal $R$-schemes $X \to Y$.

\begin{theorem} \label{Thm:DensityMeromorphicSectionsModel} Let $D$ be an effective Cartier divisor in an admissible formal $R$-scheme $X$ which is proper over affine. Let $F$ be a coherent sheaf on $X$, which is $D$-torsion free, and such that $F$ and $F_{\rvert D}$ are flat over $R$. Then, the natural map 
\[ \Gamma(X, F(\ast D)) \too \Gamma(X \smallsetminus D, F), \]
is injective, and has a dense image. \end{theorem}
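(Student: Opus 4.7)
The plan is to treat injectivity and density separately, bootstrapping both from a local \emph{baby case} on affine opens. Consider a formal affine open $U = \Spf A$ on which $D \cap U$ is cut out by a nonzerodivisor $f \in A$, and set $M := \Gamma(U, F)$. Then $\Gamma(U, F(\ast D)) = M_f$ while $\Gamma(U \setminus D, F) = M \otimes_A A\{f^{-1}\}$, and the baby statement asserts that $M_f \into M \otimes_A A\{f^{-1}\}$ is injective with dense image for the prodiscrete topology: modulo $\varpi^n$ both sides coincide with $M_n[1/f]$ (where $M_n := M/\varpi^n M$), using the flatness of $F$ and the identification $A\{f^{-1}\}/\varpi^n \cong (A/\varpi^n)[1/f]$. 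Injectivity of the global restriction map follows at once: $F$ being $D$-torsion free means that $f$ is locally a nonzerodivisor on $M$, so the sheaf morphism $F(\ast D) \to j_\ast j^\ast F$, with $j \colon X \setminus D \into X$, is already a monomorphism.

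For density I begin with a cohomological reformulation. Since $\Gamma(X \setminus D, F) = \projlim_n \Gamma(X_n \setminus D_n, F_n)$ carries the prodiscrete topology, density amounts to the surjectivity, for every $n \ge 1$, of $\Gamma(X, F(\ast D)) \to \Gamma(X_n \setminus D_n, F_n)$. Applying \cref{Lemma:MeromorphicSectionsFormal} to the formal scheme $X_n$ identifies the target with $\Gamma(X_n, F_n(\ast D_n)) = \injlim_d \Gamma(X_n, F_n(dD_n))$. The short exact sequence
\[ 0 \too F(dD) \stackrel{\varpi^n}{\too} F(dD) \too F_n(dD_n) \too 0, \]
exact by $R$-flatness of $F(dD)$, gives a long exact cohomology sequence; passing to the filtered colimit over $d$---which is exact and compatible with cohomology on the Noetherian space $X$ (\cref{Lemma:NoetherianTopologicalSpace})---yields
\[ 0 \too \Gamma(X, F(\ast D))/\varpi^n \too \Gamma(X_n \setminus D_n, F_n) \too H^1(X, F(\ast D))[\varpi^n] \too 0. \]
Density is therefore equivalent to $H^1(X, F(\ast D))$ having no $R$-torsion.

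To establish this vanishing I would argue via \v{C}ech cohomology. Choose a finite affine formal cover $\{U_i\}$ of $X$ refined so that every intersection $U_S$ is affine with $D \cap U_S$ cut out by a single nonzerodivisor, and such that $\Gamma(U_S, F)$ and $\Gamma(U_S \cap D, F_{\rvert D})$ are finitely presented flat $R$-modules---available by the flatness hypotheses together with \cref{Lemma:FinitePresentationAndFlatnessFormalVersion}. Set $V_i := U_i \setminus D$. The \v{C}ech complexes
\[ C^\bullet := \check C^\bullet(\{U_i\}, F(\ast D)), \qquad C^{\bullet, \an} := \check C^\bullet(\{V_i\}, F), \]
consist of flat $R$-modules, and by the baby case the restriction $C^\bullet \to C^{\bullet, \an}$ is a termwise isomorphism after reduction modulo $\varpi^n$, for every $n$. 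The universal coefficient sequence
\[ 0 \too H^i(C^\bullet)/\varpi^n \too H^i(C^\bullet/\varpi^n) \too H^{i+1}(C^\bullet)[\varpi^n] \too 0 \]
and its counterpart for $C^{\bullet, \an}$ then identify the obstruction to density precisely with the kernel of the restriction $H^1(X, F(\ast D)) \to H^1(X \setminus D, F)$ on $\varpi^n$-torsion. That kernel is controlled by the local cohomology group $H^0(X, \cH^1_D F(\ast D))$, which is a $K$-module---being both $\varpi$-divisible \emph{and} $\varpi$-torsion free, by the baby case---and the remaining diagram chase is facilitated by properness of $X$ over $Y = \Spf A$, which forces each $H^i(X, F(dD))$ to be a finitely generated $A$-module.

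The hardest step will be this last cohomological argument: converting the termwise isomorphism $C^\bullet/\varpi^n \cong C^{\bullet, \an}/\varpi^n$, together with the local computation of $\cH^1_D F(\ast D)$, into genuine vanishing of the $R$-torsion of $H^1(X, F(\ast D))$. It is precisely here that both central hypotheses are used essentially: the $R$-flatness of $F_{\rvert D}$ is what makes the \v{C}ech complexes match modulo $\varpi^n$ (see \cref{Ex:FailureOfDensity} for what goes wrong without it), while properness of $X$ over an affine base supplies the finite generation needed to prevent $R$-torsion from accumulating in the filtered colimit over $d$.
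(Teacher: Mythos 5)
Your proposal breaks at two places, and both are exactly where the paper has to work hardest. First, the injectivity step is wrong as stated: $D$-torsion-freeness of $F$ (i.e.\ the local equation $f$ being a nonzerodivisor on $M$) does \emph{not} imply that $M_f \to M \otimes_A A\{f^{-1}\}$ is injective, and your mod-$\varpi^n$ identifications cannot give injectivity either, since the kernel may be $\varpi$-divisible and hence invisible in every reduction. Concretely, take $X = \Spf R\{x\}$, $D = V(x)$ and $F$ the coherent sheaf attached to $M = R\{x\}/(x-\varpi) \cong R$, on which $x$ acts as multiplication by $\varpi$. Then $F$ is coherent, flat over $R$ and $D$-torsion free, yet $\Gamma(X, F(\ast D)) = M[1/x] \cong K$, while $\Gamma(X \smallsetminus D, F) = 0$ (on the special fibre $x-\varpi \equiv x$, so $\Supp F \subset |D|$; equivalently $x-\varpi$ is a unit in $R\{x, x^{-1}\}$). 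So the restriction map is far from injective, even though both sides agree modulo every $\varpi^n$ (both are $0$). What rescues the theorem is precisely the hypothesis you set aside at this point: flatness of $F_{\rvert D}$ over $R$ (which fails here, $F_{\rvert D} = R/\varpi R$). It guarantees that multiplication by $f$ is injective modulo $\varpi^n$ (\cref{Lemma:KeyInjectivityFormal}), and injectivity is then obtained by the convergence argument on Taylor coefficients of \cref{Lemma:MeromorphicInjectsInHolomorphicFormal}, not by a formal mod-$\varpi^n$ comparison.

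Second, for density you replace the statement by the stronger claim that $\Gamma(X, F(\ast D)) \to \Gamma(X_n \smallsetminus D_n, F_n)$ is surjective for every $n$, i.e.\ that $H^1(X, F(\ast D))$ is $\varpi$-torsion free; this is not equivalent to density (density only requires lifting reductions of sections that already come from $\Gamma(X \smallsetminus D, F)$), and, more importantly, you never prove it. The mechanism you sketch cannot deliver it: since $j$ is an affine open immersion, the obstruction group you must kill is the cokernel of $\Gamma(X \smallsetminus D, F) \to \Gamma(X, \cH^1_D F(\ast D))$, and knowing that $\cH^1_D F(\ast D)$ is a sheaf of $K$-vector spaces (this is \cref{Lemma:MeromorphicInjectsInHolomorphicFormal} (3), which again needs $F_{\rvert D}$ flat) does not make that cokernel torsion free --- a quotient of a $K$-vector space by an $R$-submodule has torsion in general, and showing it has none here is essentially the original problem; likewise, finite generation of each $H^1(X, F(dD))$ over $A$ says nothing about torsion surviving the colimit over $d$. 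The paper closes this gap by a different device, absent from your outline: for the given $f \in \Gamma(X \smallsetminus D, F)$, the meromorphic extensions over the thickenings $X_m$ have pole order bounded, after restriction to $X_n$, by the fixed integer $d_n$ (\cref{Lemma:KeyInjectivityFormal} (2)); properness over the affine base then enters through the Mittag-Leffler property of the system $(\Gamma(X_m, F(d_n D)))_{m \ge n}$ and the isomorphism $\Gamma(X, F(d_n D)) \cong \projlim_m \Gamma(X_m, F(d_n D))$. This uniform control of pole orders along the tower is the actual crux, and nothing in your cohomological reformulation supplies it.
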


Recall that the topology on the global section of a coherent sheaf $F$ on a quasi-compact admissible formal $R$-scheme $X$ is defined as follows. For any topologically nilpotent $\varpi \in R \smallsetminus \{ 0 \}$ we have
\[ \Gamma(X, F) = \projlim_{n \in \bbN} \Gamma(X, F/\varpi^n F) \]
and the prodisrete topology on the left-hand side is given by the discrete topology on each $\Gamma(X, F/\varpi^n F)$. See also \cite[\href{https://stacks.math.columbia.edu/tag/0AHY}{Section 0AHY}]{stacks-project}. When $X$ is affine and $D$ is principal, \cref{Thm:DensityMeromorphicSectionsModel} is rather elementary and boils down to approximating power series by polynomials. The strength of \cref{Thm:DensityMeromorphicSectionsModel} comes from the non-affine case, where the proof becomes more involved. We do not know whether \cref{Thm:DensityMeromorphicSectionsModel} holds without the assumption proper over affine. However, notice that the conclusion of \cref{Thm:DensityMeromorphicSectionsModel}  holds more generally when $X$ can be compactified into a proper admissible formal scheme over $R$, by applying \cref{Thm:DensityMeromorphicSectionsModel} to the compactification. The remainder of this section is devoted to the proof of \cref{Thm:DensityMeromorphicSectionsModel}.

\subsection{Norms and modules} \label{sec:NormsAndModules} We begin by recalling some facts about the topology on an $R$-module $M$. Its \emph{completion} is the $R$-module $\hat{M} := \projlim_{n \in \bbN} M / \varpi^n M$ where $\varpi \in R \smallsetminus \{ 0 \}$ is topologically nilpotent. Notice that $\hat{M}$ does not depend on the chosen $\varpi$.  The $R$-module $M$ is said to be \emph{separated} (resp. \emph{complete}) if the natural map $\epsilon \colon M \to \hat{M}$ is injective (resp. \emph{bijective}). Suppose $M$ torsion-free and identify $M$ with $\im(M \hookrightarrow M \otimes_R K)$. For $m \in M \otimes_R K$, let 
\[ \| m \|_M := \inf \{ |\varpi| : \varpi \in K^\times, m/\varpi \in M\}.\]
This defines a semi-norm $\| \cdot \|_M \colon M \otimes_R K \to \bbR_+$; it is a norm if and only if $M$ is separated. The $R$-module $\hat{M}$ is torsion-free and $\| \epsilon(m) \|_{\hat{M}} = \| m\|_M$ for each $m \in M$. Moreover $\epsilon$ extends to a homeomorphism between the completion of $M$ with respect to the semi-norm $\| \cdot \|_M$ and $\hat{M}$. The construction of the semi-norm $\| \cdot\|_M$ is functorial. Let $f \colon M \to M'$ be a homomorphism between separated torsion-free $R$-modules. Then $\| f(m) \|_{M'} \le \| m \|_M$ for each $m \in M$. If $f$ injective, then $\coker f$ is torsion-free if and only if $\| f(m)\|_{M'} = \| m \|_M$ for each $m \in M$.

\begin{lemma} \label{Lemma:CompletenessGlobalSections}
Let $X$ be an admissible formal $R$-scheme and $F$ a coherent $\cO_X$-module flat over $R$. Then,
\begin{enumerate}
\item the natural topology on $\Gamma(X, F)$ coincides with the $\varpi$-adic one;
 \item $\Gamma(X, F)$ is complete.
\end{enumerate} 
\end{lemma} 

\begin{proof} We prove both statements at the same time. Let $\varpi \in R \smallsetminus \{ 0 \}$ be topologically nilpotent and for $n \in \bbN$ let $X_n$ be the closed subscheme of $X$ defined by the equation $\varpi^n = 0$. By definition,
\[ \Gamma(X, F) = \projlim_{n \in \bbN} \Gamma(X_n, F).\]
Flatness of $F$ over $R$ implies that the sequence
\[ 0 \too \varpi^n F \too F \too F_{\rvert X_n} \too 0\]
is short exact. It follows that the natural map $\Gamma(X, F) / \varpi^n \Gamma(X, F) \to \Gamma(X_n, F)$ is injective, because taking global sections is a left exact functor. Since the projective limit is also left-exact, the natural map
\[ \projlim_{n \in \bbN} \Gamma(X, F) / \varpi^n \Gamma(X, F) \too \projlim_{n \in \bbN} \Gamma(X_n, F).\]
is injective. Therefore the map $\Gamma(X, F) \to \projlim_{n \in \bbN} \Gamma(X, F) / \varpi^n \Gamma(X, F) $ is bijective. This proves both statements.
\end{proof}

\subsection{Proof} Let $S$ be an $R$-scheme, $M$ a quasi-coherent $\cO_X$-module, and $\varpi$ a nonzero topologically nilpotent element of $R$. Then, the sequence of $R / \varpi R$-modules
\[ 0 \too \Gamma(S, M) / \Gamma(S, \varpi M) \too \Gamma(S, M / \varpi M) \too \Gamma(S, M / \varpi M) / \Gamma(S, M) \too 0 \]
is short exact. For a nonzero topologically nilpotent element $\varpi'$ such that $|\varpi'| \le |\varpi|$ and a homomorphism of $\cO_S$-modules $\phi \colon M \to M'$, the above discussion, applied with $S = S_{\varpi'}$, yields the following commutative and exact diagram of $R$-modules:
\begin{center}
\begin{tikzcd}
0 \ar[r] & \displaystyle \frac{\Gamma(S_{\varpi'}, M) }{ \Gamma(S_{\varpi'}, \varpi M)} \ar[r] \ar[d] & \Gamma(S_\varpi, M) \ar[r] \ar[d] & \displaystyle  \frac{ \Gamma(S_\varpi, M) }{ \Gamma(S_{\varpi'}, M)} \ar[r] \ar[d] & 0\\
0 \ar[r] & \displaystyle \frac{\Gamma(S_{\varpi'}, M') }{ \Gamma(S_{\varpi'}, \varpi M')} \ar[r] & \Gamma(S_\varpi, M') \ar[r] &\displaystyle   \frac{\Gamma(S_\varpi, M') }{ \Gamma(S_{\varpi'}, M') }\ar[r] & 0.
\end{tikzcd}
\end{center}

\begin{lemma} \label{Lemma:KeyInjectivityFormal}Let $D$ be an effective Cartier divisor in an admissible formal $R$-scheme $X$ with canonical section $s \colon \cO_X \to \cO_X(D)$. Let $F$ be a coherent $\cO_X$-module which is $D$-torsion free and such that $F$, $F_{\rvert D}$ are flat over $R$. Then, for any $i, d \in\bbN$ and any $\varpi \in R \smallsetminus \{ 0 \}$, 
\begin{enumerate}
\item the map $s^i \colon \Gamma(X_\varpi, F(dD)) \to \Gamma(X_\varpi, F((d+i)D))$ is injective;
\item given $\varpi' \in R \smallsetminus \{0\}$ such that $|\varpi'| \le |\varpi|$, the map $s^i$ induces an injection:
\[ 
s^i \colon \frac{\Gamma(X_{\varpi}, F(dD))}{\Gamma(X_{\varpi'}, F(dD))} \hookrightarrow
\frac{\Gamma(X_{\varpi}, F((d+i)D))}{\Gamma(X_{\varpi'}, F((d+i)D))}.
\]
\end{enumerate}
\end{lemma}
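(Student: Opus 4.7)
The plan hinges on the short exact sequence of $\cO_X$-modules
\[
0 \too F(dD) \stackrel{s^i}{\too} F((d+i)D) \too Q_i \too 0,
\]
where $Q_i := \coker(s^i)$. The first step is to establish that $Q_i$ is flat over $R$. Trivialising $\cO_X(D)$ locally by a local equation of $D$, one sees that $Q_i$ carries a filtration whose successive quotients are $F_{\rvert D} \otimes_{\cO_D} \cO_X((d+j)D)_{\rvert D}$ for $j = 1, \dots, i$. Each of these is a coherent $\cO_D$-module and is flat over $R$, because $F_{\rvert D}$ is flat over $R$ by hypothesis while $\cO_X((d+j)D)_{\rvert D}$ is $\cO_D$-invertible; hence $Q_i$ itself is flat over $R$.

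For part (1), flatness of $Q_i$ over $R$ gives $\operatorname{Tor}_1^R(Q_i, R/\varpi R) = 0$, so the sequence remains short exact upon restriction to $X_\varpi$. Taking global sections yields the left-exact sequence
\[
0 \too \Gamma(X_\varpi, F(dD)) \stackrel{s^i}{\too} \Gamma(X_\varpi, F((d+i)D)) \too \Gamma(X_\varpi, Q_i),
\]
which supplies the desired injectivity.

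For part (2), the plan is to apply the snake lemma to the commutative and exact diagram displayed just before the lemma statement, with $M = F(dD)$ and $M' = F((d+i)D)$; denote by $s^i_A$, $s^i_{\mathrm{mid}}$, $s^i_B$ the three vertical arrows. Part (1) ensures $\ker(s^i_{\mathrm{mid}}) = 0$, and the snake sequence
\[
0 \too \ker(s^i_B) \too \coker(s^i_A) \too \coker(s^i_{\mathrm{mid}})
\]
reduces (2) to the injectivity of the induced map $\coker(s^i_A) \to \coker(s^i_{\mathrm{mid}})$. To establish this, I will embed both cokernels into $\Gamma(X_\varpi, Q_i)$ via the long exact sequences coming from the short exact sequence above on $X_\varpi$ and $X_{\varpi'}$ (which remain short exact by flatness of $Q_i$), producing compatible injections $\coker(s^i_{\mathrm{mid}}) \hookrightarrow \Gamma(X_\varpi, Q_i)$ and $\coker(s^i_{X_{\varpi'}}) \hookrightarrow \Gamma(X_{\varpi'}, Q_i)$. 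A diagram chase through the tautological exact sequence of the text applied to the sheaf $Q_i$ then identifies the map $\coker(s^i_A) \to \coker(s^i_{\mathrm{mid}})$ with the composition $\coker(s^i_A) \to A_{Q_i} \hookrightarrow \Gamma(X_\varpi, Q_i)$, where $A_{Q_i} := \Gamma(X_{\varpi'}, Q_i)/\Gamma(X_{\varpi'}, \varpi Q_i)$, and so as an injection.

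The hardest step will be the diagram chase in part (2). I expect that one must carefully use the flatness of $Q_i$ to interchange restriction with the formation of the various quotients, and that the hypothesis $|\varpi'| \le |\varpi|$ enters crucially via the isomorphism $\varpi Q_i \cong Q_i|_{X_\lambda}$, where $\lambda := \varpi'/\varpi \in R$, which is precisely what allows the map $\coker(s^i_A) \to A_{Q_i}$ to be identified as an injection.
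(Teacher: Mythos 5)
Part (1) of your proposal is correct and is in substance the paper's own argument: the paper reduces to $i=1$, $d=0$ and identifies the cokernel as $F(D)_{\rvert D}$, whereas you keep $i$ general and verify $R$-flatness of $Q_i$ via the filtration whose graded pieces are twists of $F_{\rvert D}$ (note this filtration already uses the $D$-torsion freeness of $F$); either way the point is that the cokernel is $R$-flat, so the sequence stays exact on $X_\varpi$ and left exactness of $\Gamma$ gives the injectivity.

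Part (2) has a genuine gap at its final step. The snake-lemma reduction to the injectivity of $\coker(s^i_A) \to \coker(s^i_{\mathrm{mid}})$ is fine, but the conclusion is a non sequitur: identifying the composite $\coker(s^i_A) \to \coker(s^i_{\mathrm{mid}}) \into \Gamma(X_\varpi, Q_i)$ with $\coker(s^i_A) \to A_{Q_i} \into \Gamma(X_\varpi, Q_i)$ does not make it injective, because a map factoring through an injection is injective only if its first factor is, and the injectivity of $\coker(s^i_A) \to A_{Q_i}$ is neither proved nor formal. Unwinding the definitions, that injectivity says: every $g \in \Gamma(X_{\varpi'}, F((d+i)D))$ whose image in $Q_i$ lies in the subsheaf $\varpi Q_i$ can be written \emph{globally} as $g = s^i u + v$ with $u \in \Gamma(X_{\varpi'}, F(dD))$ and $v \in \Gamma(X_{\varpi'}, \varpi F((d+i)D))$; equivalently, that $\Gamma(X_{\varpi'}, -)$ commutes with the sum of the subsheaves $s^i F(dD)$ and $\varpi F((d+i)D)$ of $F((d+i)D)_{\rvert X_{\varpi'}}$. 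This is a lifting statement obstructed by a class in $\rH^1$ of their intersection (which, by the flatness and torsion-freeness hypotheses, is isomorphic to $F(dD)_{\rvert X_\lambda}$, $\lambda = \varpi'/\varpi$), and an obstruction-class computation shows it is essentially equivalent to statement (2) itself; so your argument is circular exactly at the decisive point, and nothing in your chase beyond part (1) and the tautological sequences is available to supply it. This is also where your route departs from the paper: the paper runs its diagram on the short exact sequence $0 \to F \to F(D) \to F(D)_{\rvert D} \to 0$, so that the sections of $F(D)$ over $D_{\varpi}$ and $D_{\varpi'}$, together with the injectivity of the tautological map for $F(D)_{\rvert D}$ (this is where the flatness of $F_{\rvert D}$ enters), are what carry the crucial step --- an ingredient which in your write-up appears only implicitly through $Q_i$ and is never actually brought to bear on the missing injectivity.
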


\begin{proof} Up to replacing $F$ by $F(dD)$ one may assume $d = 0$. Moreover, by induction on $i$ one reduces to the case $i = 1$.

(1) Since $s \colon F \to F(D)$ is injective by assumption, the following sequence of coherent $\cO_X$-modules is exact:
\begin{equation} \label{SES:RestrictionToDivisor} 0 \too F \stackrel{s}{\too} F(D) \too F(D)_{\rvert D} \too 0.\end{equation}
The coherent sheaf $F(D)_{\rvert D}$ is flat over $R$. Therefore, the previous short exact sequence stays exact after taking its tensor product with any $R$-module $M$. In particular, taking $M = R / \varpi R$ yields the following short exact sequence of coherent $\cO_{X_\varpi}$-modules 
\[ 0 \too F_{\rvert X_\varpi} \stackrel{s}{\too} F(D)_{\rvert X_\varpi} \too F(D)_{\rvert D_\varpi} \too 0. \]
The statement follows by taking global sections.

(2) The short exact sequence of $\cO_X$-modules \eqref{SES:RestrictionToDivisor} yields the following commutative diagram:
\[
\begin{tikzcd}
  \displaystyle \frac{\Gamma(X_{\varpi'}, F)}{ \Gamma(X_{\varpi'}, \varpi F)} \ar[d, hook] \ar[r, "s"] &
 \displaystyle  \frac{\Gamma(X_{\varpi'}, F(D))}{ \Gamma(X_{\varpi'}, \varpi F(D))} \ar[d, hook] \ar[r] &
 \displaystyle  \frac{\Gamma(D_{\varpi'}, F(D))}{ \Gamma(D_{\varpi'}, \varpi F(D))} \ar[d, hook] \\
 \Gamma(X_{\varpi}, F) \ar[r, "s", hook]&  \Gamma(X_{\varpi}, F(D)) \ar[r] & \Gamma(D_{\varpi}, F(D)),
\end{tikzcd} 
\]
where $s \colon  \Gamma(X_{\varpi}, F) \to  \Gamma(X_{\varpi}, F(D))$ is injective because of (1), and the vertical arrows are injective because of the discussion before the lemma. The statement then follows by diagram chasing.
\end{proof}

\begin{lemma} \label{Lemma:MeromorphicInjectsInHolomorphicFormal} Let $D$ be an effective Cartier divisor in a quasi-compact separated admissible formal $R$-scheme $X$. Let $F$ be a $D$-torsion free coherent $\cO_X$-module such that $F$ and $F_{\rvert D}$ are flat over $R$.  Let $j \colon X \smallsetminus D \to X$ be the open immersion. Then,

\begin{enumerate}
\item the natural homomorphisms $\alpha \colon F \to F(\ast D)$, $\beta \colon F \to j_\ast j^\ast F$ are injective with cokernel flat over $R$;
\item the restriction homomorphism $\rho \colon F(\ast D) \to j_\ast j^\ast F$ is injective;
\item the cokernel of $\rho$ is a sheaf of $K$-vector spaces;
\item the restriction map $\rho \colon \Gamma(X, F) \to \Gamma(X \smallsetminus D, F)$  is an isometric embedding with respect to the norms $\| \cdot \|_{\Gamma(X, F)}$ and $\| \cdot \|_{\Gamma(X \smallsetminus D, F)}$ defined in \cref{sec:NormsAndModules}. In particular, $\rho$ induces a homeomorphism of $\Gamma(X, F)$ with a closed subset of $\Gamma(X \smallsetminus D, F)$.
\end{enumerate}
\end{lemma}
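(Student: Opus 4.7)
All assertions will be reduced to local computations on an affine open $U = \Spf A$ where $D \cap U = V(f)$ is principal. Setting $M := \Gamma(U, F)$, the key identifications to use are $\Gamma(U, F(\ast D)) = M_f$ (by \cref{Lemma:MeromorphicSectionsFormal} applied to $U$, using the trivialisation of $\cO_X(D)_{|U}$ by $f^{-1}$) and $\Gamma(U, j_\ast j^\ast F) = M\{1/f\} := M \otimes_A A\{1/f\}$, which is the $\varpi$-adic completion of $M_f$. Both $M$ and $M\{1/f\}$ are $R$-flat, and the $D$-torsion-freeness of $F$ translates locally into the absence of $f$-power-torsion in $M$.

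\emph{Injectivity.} I would treat the injectivity statements first. For (1), $\alpha$ is injective because $D$-torsion-freeness makes each $s^d \colon F \to F(dD)$ injective and $\alpha$ is the filtered colimit of these maps. For (2), each $F(dD) \to j_\ast j^\ast F$ is locally (via the trivialisation of $\cO_X(dD)_{|U}$) the natural map $M \to M\{1/f\}$, whose kernel is the $f$-power-torsion of $M$, zero by $D$-torsion-freeness; filtered colimits preserve injectivity, so $\rho$ is injective. Consequently $\beta = \rho \circ \alpha$ is injective as well.

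\emph{Statement (3).} The central step will be to show that multiplication by any nonzero topologically nilpotent $\varpi \in R$ is bijective on $M\{1/f\}/M_f$. The essential input is the identity
\[ M\{1/f\}/\varpi M\{1/f\} = M_f/\varpi M_f, \]
which follows from $M\{1/f\} = \projlim_n M_f/\varpi^n M_f$ together with the $R$-flatness of $M_f$: multiplication by $\varpi$ induces an iso $M_f/\varpi^{n-1} M_f \cong \varpi M_f/\varpi^n M_f$, whence the kernel of the first projection $\widehat{M_f} \to M_f/\varpi M_f$ equals $\varpi \widehat{M_f}$. Surjectivity of $\varpi$ on the quotient: given $y \in M\{1/f\}$, pick $y_0 \in M_f$ with $y - y_0 \in \varpi M\{1/f\}$ and write $y - y_0 = \varpi y_1$; then $[y] = \varpi[y_1]$. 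Injectivity: if $\varpi y \in M_f$, its class in $M\{1/f\}/\varpi M\{1/f\}$ vanishes, so by the identity $\varpi y \in \varpi M_f$; cancelling $\varpi$ using the $R$-flatness of $M\{1/f\}$ yields $y \in M_f$.

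\emph{Flatness of the cokernels and statement (4).} For the $R$-flatness of $\coker(\alpha)$, I would filter $\coker(\alpha) = F(\ast D)/F$ by the images of $F(dD)/F$; the graded pieces $F(iD)/F((i-1)D) \cong F_{|D} \otimes \cO_X(iD)_{|D}$ are $R$-flat because $F_{|D}$ is and tensoring with an invertible preserves flatness, so their filtered colimit is $R$-flat. The $R$-flatness of $\coker(\beta)$ then follows from the short exact sequence $0 \to \coker(\alpha) \to \coker(\beta) \to \coker(\rho) \to 0$, whose last term is a $K$-vector space by (3). Taking global sections of $0 \to F \to j_\ast j^\ast F \to \coker(\beta) \to 0$ produces
\[ 0 \to \Gamma(X, F) \to \Gamma(X \smallsetminus D, F) \to \Gamma(X, \coker(\beta)), \]
whose last term is $R$-torsion-free since $\coker(\beta)$ is $R$-flat. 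Hence the restriction map is an injection of $R$-torsion-free modules with $R$-torsion-free cokernel; the isometry criterion recalled just before \cref{Lemma:CompletenessGlobalSections} then shows it is an isometric embedding, and closedness of the image will follow from the completeness of $\Gamma(X, F)$ established in \cref{Lemma:CompletenessGlobalSections}. The main technical obstacle is precisely the identity $M\{1/f\}/\varpi M\{1/f\} = M_f/\varpi M_f$ in (3), on which the $K$-vector-space structure of $\coker(\rho)$ — and hence the $R$-flatness of $\coker(\beta)$ and ultimately statement (4) — depend.
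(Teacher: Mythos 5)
Your local set-up (reduce to $U=\Spf A$, $D\cap U=V(f)$, compare $M_f$ with its $\varpi$-adic completion) is the same as the paper's, and several pieces are correct and even pleasantly streamlined: the identity $\widehat{M_f}/\varpi\widehat{M_f}=M_f/\varpi M_f$ for the $R$-torsion-free module $M_f$ is proved correctly and yields (3), and deducing the $R$-flatness of $\coker\alpha$ from the filtration with graded pieces $F(iD)/F((i-1)D)\cong F_{\rvert D}\otimes\cO_X(iD)_{\rvert D}$, then of $\coker\beta$ from $0\to\coker\alpha\to\coker\beta\to\coker\rho\to 0$, is a clean alternative to the paper's power-series computation for $\beta$.

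The genuine gap is in (2), which is the step everything else leans on. You assert that the kernel of $M\to M\{1/f\}$ is the $f$-power torsion of $M$; this is precisely the nontrivial content of (2), not a formal fact. As you yourself note, $M\{1/f\}$ is the $\varpi$-adic completion of $M_f$, so the map factors as $M\to M_f\to\widehat{M_f}$ and its kernel also contains the preimage of $\bigcap_n\varpi^n M_f$; a priori $M_f$ need not be $\varpi$-adically separated, in which case the kernel is strictly larger than the $f$-torsion. This is exactly where the hypothesis that $F_{\rvert D}$ is flat over $R$ must enter, and your injectivity arguments never use it. Indeed the claim fails without it: take $X=\Spf R\{x\}$, $F=\cO_X$, $D=V(\varpi x)$ (an effective Cartier divisor with $\cO_D$ not $R$-flat); then $X\smallsetminus D=\emptyset$ while $M_f=R\{x\}[1/\varpi x]\neq 0$, and $1/x\in\bigcap_n\varpi^n M_f$, so $\rho$ is far from injective. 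The missing argument is the paper's proof of (2): from a relation $m(t)=(1-ft)n(t)$ in $M\{t\}$ one kills the tail of $n(t)$ using that multiplication by $f$ is injective on $M/\varpi M$ (\cref{Lemma:KeyInjectivityFormal}\,(1), which is where the $R$-flatness of $F(D)_{\rvert D}$, i.e.\ of $F_{\rvert D}$, is used) together with the $\varpi$-adic separatedness of $M$ (\cref{Lemma:CompletenessGlobalSections}). Until this is supplied, (2) is unproved, and your (4) — which needs the injectivity and the torsion-freeness of the global-sections cokernel — is incomplete as well. A secondary point to flag: the identifications $\Gamma(U\smallsetminus D,F)=M\otimes_A A\{1/f\}=\widehat{M_f}$ deserve a word of justification, using that $M$ is finitely presented (\cref{Lemma:FinitePresentationAndFlatnessFormalVersion}).
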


\begin{proof} Statements (1), (2) and (3) are local on $X$, therefore we may suppose that $X = \Spf(A)$ is affine, for a topologically finitely presented flat $R$-algebra $A$, and $D$ is a principal Cartier divisor.  Let $f \in A$ be a generator of the ideal of $D$. Let $M:= \Gamma(X, F)$. With this notation, 
\begin{align} \label{Eq:MeromorphicSectionsAndSectionsOnOpenSubets}
\Gamma(X, F(\ast D)) = M [t] / (1 - tf), && \Gamma(U, F) = M \{ t \} / (1 - tf).
\end{align}
We first prove (2), (3) and then (1) under this additional assumption. Finally, we prove (4) for general $X$.

(2) Let $m(t) \in M [t]$. Write $m(t) = \sum_{i = 0}^d m_i t^i$ with $m_i \in M$. Suppose its image in $\Gamma(U, F)$ vanishes. This means that there exists $n(t) \in M \{ t \}$ such that $m(t) = (1 - ft) n(t)$. Write $n(t) = \sum_{i = 0}^\infty n_i t^i$ with $n_i \to 0$ as $i \to \infty$. By comparing Taylor expansions, for $i,j \in \bbN$ with $i\ge d$, $n_{i + j} = f^j n_i$. Let $\varpi \in R \smallsetminus \{ 0 \}$ be topologically nilpotent. By \cref{Lemma:CompletenessGlobalSections} the topology on $M$ coincides with the $\varpi$-adic one. Therefore, because of convergence, there is $I_\varpi \in \bbN$ such that, for $i \ge I_\varpi$,
\[ n_i \equiv 0 \pmod{\varpi M} . \]
On the other hand, because of \cref{Lemma:KeyInjectivityFormal} (1), multiplication by $f$ is injective on $M / \varpi M$. One concludes $n_i = 0$ for all $i \ge d$. Therefore $n(t) \in M[t]$  and the image of $m(t)$ in $\Gamma(X, F(\ast D))$ is $0$.

(3) If $T$ denotes the cokernel of $\rho$, then  the following exact sequence of $A$-modules is exact:
\[ 0 \too \Gamma(X, F(\ast D)) \too \Gamma(U, F) \too \Gamma(X, T) \too 0.\]
Indeed it suffices to show that the homomorphism $\Gamma(U, F) \to \Gamma(X, T)$ is surjective. By quasi-compactness and (quasi-)separation of $X$,
\[ \rH^1(X, F(\ast D)) = \injlim_{d \in \bbN} \rH^1(X, F(dD)).\]
For $d \in \bbN$, since $X$ is affine and $F(dD)$ coherent, the cohomology group $\rH^1(X, F(dD))$ vanishes for each $d \in \bbN$, whence the claimed surjectivity. Equation \eqref{Eq:MeromorphicSectionsAndSectionsOnOpenSubets} can be rewritten as the following commutative and exact diagram of $A$-modules:
\begin{center}
\begin{tikzcd}
0 \ar[r] & M[t] \ar[r, "1 - ft"] \ar[d] & M [t]  \ar[r] \ar[d] & \Gamma(X, F(\ast D)) \ar[d] \ar[r] & 0 \\
0  \ar[r] & M\{ t \} \ar[r, "1 - ft"]  & M \{ t \} \ar[r] & \Gamma(U, F)   \ar[r]& 0
\end{tikzcd}
\end{center}
Since taking global sections is left exact, statement (2) implies that the right-most vertical arrow is injective. According to the previous claim, the Snake Lemma gives the following short exact sequence:
\[ 0 \too M \{t \} / M[t] \stackrel{1 - ft}{\too}  M \{t \} / M[t] \too \Gamma(X, T) \too 0. \]
Let $\varpi \in R \smallsetminus \{ 0 \}$ be topologically nilpotent.  An $R$-module is a $K$-vector space if and only if multiplication by $\varpi$ is bijective. Because of the previous short exact sequence, it suffices to show that $M \{ t \} / M[t]$ is a $K$-vector space.  In order to do so, for $m(t) \in M \{ t \}$, write $m(t) = \sum_{i = 0}^\infty m_i t^i$ with $m_i \in M$ such that $m_i \to 0$ as $i \to \infty$.

\medskip

(Injective.) Suppose $\varpi m(t) \in M[t]$. Let $d \in \bbN$ be its degree. For $i > d$, one has $ \varpi m_i = 0$, thus  $m_i = 0$ because $F$ is flat over $R$. In other words, $m(t)\in  M[t]$.

\medskip

(Surjective.) Because of the convergence of $m(t)$, there is $d$ such that, for $i \ge d$, $m_i$ is divisible by $\varpi$. For $i \ge d$ let $\tilde{m}_i := m_i / \varpi$ and $\tilde{m}(t) = \sum_{i = d}^\infty \tilde{m}_i t^i$. Then,
\[ m(t) \equiv \varpi \tilde{m}(t) \pmod{M[t]},\]
which concludes the proof.

\medskip

(1) Because of statement (2), if $\alpha$ is injective, then $\beta$ is. In order to prove that $\alpha$ in injective, let $m \in M$ and $n(t) \in M[t]$ such that $m = (1 - ft) n(t)$. Write $n (t) = \sum_{i = 0}^d n_i t^i$ with $n_i \in M$. By comparing the Taylor expansions, one obtains $m = n_0$, $n_i = f n_{i-1}$ for $i = 1, \dots, d$ and $f n_d = 0$. Since multiplication by $f$ on $M$ is injective, one obtains $n_0 = 0$, thus $m = 0$. This proves that $\alpha$ is injective.

\medskip

Recall that an $R$-module is flat if and only if it is torsion-free. In view of this, since the natural map $\coker \alpha \to \coker \beta$ is injective by (2), it suffices to prove the statement for $\beta$. In order to do so, suppose there are $m(t) \in M\{t \}$ and $\varpi \in R \smallsetminus \{ 0 \}$ such that $\varpi m(t)$ belongs to $(1 - ft) M\{t \} + M$. Let $n(t) \in M\{ t \}$ and $p \in M$ be such that $\varpi m(t) = p + (1 - ft) n(t)$. Write $m(t) = \sum_{i = 0}^\infty m_i t^i$ and $n(t) = \sum_{i = 0}^\infty n_i t^i$ with $m_i, n_i \in M$. By comparing Taylor expansions, $\varpi m_0 = p + n_0 $ and, for $i \ge 1$, $\varpi m_i = n_i - f n_{i-1}$. In other words, modulo $\varpi M$, the following congruences hold:
\begin{align*}
n_0 \equiv - p , &&
f n_{i-1}  \equiv n_i, \quad (i \ge 1).
\end{align*}
Moreover, by convergence, there is $i_0 \in \bbN$ such that $n_i$ is divisible by $\varpi$. On the other hand, because of \cref{Lemma:KeyInjectivityFormal} (1), multiplication by $f$ is injective on $M / \varpi M$. It follows that $n_i$ is divisible by $\varpi$ for all $i \in \bbN$, thus $p$ is also divisible by $\varpi$. Set $\tilde{p} = p / \varpi$, $\tilde{n}_i = n_i / \varpi$ and $\tilde{n}(t) = \sum_{i = 0}^{\infty} \tilde{n}_i t^i$. (Note that this makes sense because $M$ is torsion free). Since the $R$-module $M\{ t \}$ is torsion-free,
\[ m(t) = \tilde{p} + (1 - ft) \tilde{n}(t),\]
that is, the image of $m(t)$ in the cokernel of $\beta$ is $0$.

(4) Statement (3) implies $\| \rho(f) \|_{\Gamma(U, F)} = \| f \|_{\Gamma(X, F)}$ for each $f \in \Gamma(X, F)$. According to \cref{Lemma:CompletenessGlobalSections}, $\rho$ is an isometric embedding between complete metric spaces. It follows that the image is closed.
\end{proof}

\begin{proof}[{Proof of \cref{Thm:DensityMeromorphicSectionsModel}}] Let $j \colon U \to X$ be the open immersion of $U$ in $X$. Note that the situation here is no longer local, so $X$ cannot be taken to be affine, contrarily to what done in the proof of the previous lemma. In particular, one needs an argument finer than simply approximating power series by polynomials. To do so,
\cref{Lemma:MeromorphicInjectsInHolomorphicFormal} implies that the natural map $F(\ast D) \to j_\ast j^\ast F$ is injective. By taking global sections, one sees that the map $\rho \colon \Gamma(X, F(\ast D)) \to \Gamma(U, F)$ is injective. Let $\varpi \in R \smallsetminus\{ 0\}$ be topologically nilpotent. For an $R$-scheme $S$, let $S_n$ denote its base-change to $R / \varpi^n R$. Let $s \colon \cO_X \to \cO_X(D)$ be the canonical section. In order to show that the image of $\rho$ is dense, one has to prove that given $f \in \Gamma(U, F)$ and an integer $n \in \bbN$, there are a non-negative integer $d \in \bbN$ and $g \in \Gamma(X, F(dD))$ such that $f_{\rvert U_n} = g_{\rvert U_n}$. Let $f \in \Gamma(U, F)$ and $n \in \bbN$. Because of \cref{Lemma:KeyInjectivityFormal} (1), for each $d \in \bbN$, the map $\alpha_{d,n} \colon \Gamma(X_n, F(d D)) \to \Gamma(U_n, F)$ is injective. Let $d_n$ be the smallest integer $d$ such that there is a section $g \in \Gamma(X_n, F(dD))$ mapping to $f_{\rvert U_n}$, and let $f_n := \alpha_{d_n,n}^{-1}(f_{\rvert U_n})$. For non-negative integers $m \ge n$, the commutativity of the diagram
\begin{center}
\begin{tikzcd}
\Gamma(X_m, F(d_m D)) \ar[r, "\alpha_{d, m}"] \ar[d, "\cdot_{\rvert X_n}"] & \Gamma(U_m, F) \ar[d, "\cdot_{\rvert U_n}"]\\
\Gamma(X_n, F(d_m D)) \ar[r, "\alpha_{d, n}"] & \Gamma(U_n, F)
\end{tikzcd}
\end{center}
implies $d_m \ge d_n$. The map $s^{d_m - d_n} \colon \Gamma(X_n, F(d_n D)) \to \Gamma(X_n, F(d_m D))$ is seen to be injective by applying \cref{Lemma:KeyInjectivityFormal} (1) with $d = d_n$ and $i = d_m - d_n$. It follows that
\[s^{d_m - d_n} f_n = f_{m \rvert X_n}.\]

\begin{claim} For  $f \in \Gamma(U, F)$ and $m,n \in \bbN$ with  $m \ge n$, there is $g_m \in \Gamma(X_m, F(d_n D))$ such that $g_{m \rvert X_n} = f_n$.
\end{claim}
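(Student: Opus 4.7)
The plan is to deduce the claim as a one-line consequence of \cref{Lemma:KeyInjectivityFormal}(2). The key input is the identity $f_{m\rvert X_n} = s^{d_m - d_n} f_n$ established just before the claim, which exhibits $s^{d_m - d_n} f_n \in \Gamma(X_n, F(d_m D))$ as the restriction of the section $f_m \in \Gamma(X_m, F(d_m D))$. The strategy is to transfer this visible liftability from the twist $F(d_m D)$ down to the twist $F(d_n D)$.

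Concretely, apply \cref{Lemma:KeyInjectivityFormal}(2) with $\varpi$ replaced by $\varpi^n$, $\varpi'$ replaced by $\varpi^m$ (noting $|\varpi^m| \le |\varpi^n|$), $d = d_n$, and $i = d_m - d_n$, to obtain the injection
\[ s^{d_m - d_n}\colon \frac{\Gamma(X_n, F(d_n D))}{\Gamma(X_m, F(d_n D))} \hookrightarrow \frac{\Gamma(X_n, F(d_m D))}{\Gamma(X_m, F(d_m D))}, \]
where each quotient is to be interpreted, as in the proof of that lemma, as the cokernel of the natural (injective) map induced by restriction from $X_m$ to $X_n$. Now chase the class $[f_n]$ through this injection: its image is $[s^{d_m - d_n} f_n] = [f_{m\rvert X_n}]$, which is zero in the right-hand side precisely because $f_{m\rvert X_n}$ is by definition the restriction of $f_m \in \Gamma(X_m, F(d_m D))$. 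Injectivity forces $[f_n] = 0$, so $f_n$ lies in the image of $\Gamma(X_m, F(d_n D)) \to \Gamma(X_n, F(d_n D))$; any preimage is the desired section $g_m$.

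The only real obstacle is notational bookkeeping: one must keep straight which scheme is ``larger'' and which is ``smaller'' under the convention $|\varpi'| \le |\varpi|$, and verify that the quotient notation in \cref{Lemma:KeyInjectivityFormal}(2) matches the intended cokernel of restriction. Once the setup is fixed, the proof is a single diagram chase, and no further cohomological input is needed beyond \cref{Lemma:KeyInjectivityFormal}.
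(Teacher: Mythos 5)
Your proposal is correct and follows exactly the paper's argument: the paper also applies \cref{Lemma:KeyInjectivityFormal}(2) with $i = d_m - d_n$ to get the injection between the quotients $\Gamma(X_n, F(d_nD))/\Gamma(X_m, F(d_nD)) \hookrightarrow \Gamma(X_n, F(d_mD))/\Gamma(X_m, F(d_mD))$, observes that the class of $s^{d_m-d_n}f_n = f_{m\rvert X_n}$ vanishes on the right, and concludes that $f_n$ lifts to some $g_m \in \Gamma(X_m, F(d_nD))$. The only difference is your explicit bookkeeping of the substitution $\varpi \mapsto \varpi^n$, $\varpi' \mapsto \varpi^m$, which the paper leaves implicit.
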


\begin{proof}[Proof of the claim] Let $i := d_m - d_n$. Because of the equality $f_{m \rvert X_n}= s^i f_n$, the image of the section $s^i f_n$ in $\Gamma(X_{n}, F(d_mD))/ \Gamma(X_{m}, F(d_mD))$ is zero. However, by \cref{Lemma:KeyInjectivityFormal}, the map 
\[ 
s^i \colon \frac{\Gamma(X_{n}, F(d_nD))}{\Gamma(X_{m}, F(d_nD))} \too
\frac{\Gamma(X_{n}, F(d_mD))}{\Gamma(X_{m}, F(d_mD))},
\]
is injective. Therefore, the image of $f_n$ in $\Gamma(X_{n}, F(d_nD))/ \Gamma(X_{m}, F(d_nD))$ is also zero. In other words, there is $g_m \in \Gamma(X_{m}, F(d_nD))$ such that $g_{m \rvert X_n} = f_n$.
\end{proof}

By hypothesis there are an affine admissible formal $R$-scheme $Y$ and proper morphism of formal $R$-schemes $\sigma \colon X \to Y$. Since $\sigma$ is proper and $Y$ is affine, the projective system $(\Gamma(X_m, F(d_nD)))_{m \ge n}$ satisfies the Mittlag-Leffler condition. In the discretely valued case, this is one the key ingredients of the proof of the formal GAGA theorem in \cite[Corollaire 4.1.7]{EGAIII1} (see also \cite[8.2.7, p. 191]{FGAExplained}); the general case can be found in \cite[Corollaire 2.11.7]{AbbesEGR} and \cite[proposition I.11.3.3 and Lemma 0.8.8.5 (2)]{FujiwaraKato}. For $m \ge n$ let \[\rho_m \colon \Gamma(X_m, F(d_nD)) \too \Gamma(X_n, F(d_nD))\] be the restriction map. According to the claim, the set $E_m = \rho_m^{-1}(f_n)$ is non-empty, so that the projective limit of the projective system $(E_m)_{m \ge n}$ is non-empty \cite[\href{https://stacks.math.columbia.edu/tag/0597}{lemma 0597}]{stacks-project}. In other words, there is $g_\infty \in \projlim_{m \in \bbN} \Gamma(X_m, F(d_nD))$ such that $g_{\infty \rvert X_n} = f_n$. To conclude, let $E$ be a flat coherent $\cO_X$-module. The comparison theorem in formal geometry (see \cite[Corollaire 4.1.7]{EGAIII1} or \cite[theorem 8.2.6.1]{FGAExplained} in the Noetherian setting, or \cite[theorem 0.9.2.1]{FujiwaraKato} in the general case) states that the natural map
\[ \phi_E \colon \Gamma(X, E) \too \projlim_{m \ge n} \Gamma(X_m, E),\]
is an isomorphism. One concludes the proof by taking $g$ to be the preimage of $g_\infty$ under the above isomorphism for $E := F(d_nD)$.
\end{proof}

\section{A Remmert factorization theorem with boundary}

Let $K$ be a complete non-trivially valued  non-Archimedean field. The analogue of \cref{Thm:DensityMeromorphicSectionsModel} in the context of analytic spaces is naturally stated for holomorphically convex spaces. In absence of boundary these spaces have been characterized in \cite[theorem 6.1]{notions} as being proper over boundaryless Stein spaces---this is what is called the Remmert factorization theorem in complex analysis. However affinoid spaces do have boundary and in order to include them we present here a version of Remmert factorization for spaces with boundary.

\subsection{Reminder on Stein spaces} \label{sec:SteinSpaces} We begin with a reminder on Stein spaces in the non-Archimedean framework, see also \cite{notions}.~Recall that a $K$-analytic space~$X$ is said to \emph{holomorphically separable} if, for $x, x' \in X$ distinct, there is $f \in \Gamma(X, \cO_X)$ such that $|f(x)| \neq |f(x')|$. It is said \emph{holomorphically convex} if, for each  $C \subset X$ compact, the holomorphically convex hull of $C$,
\[ \hat{C}_X := \{ x \in X : |f(x)| \le \| f \|_C, f \in \Gamma(X, \cO_X)\}\]
 is compact, where $\| f \|_C =  \sup_{x \in C} |f(x)|$.  An \emph{affinoid Stein exhaustion} of $X$ is a G-cover $\{ X_i \}_{i \in \bbN}$ of $X$ made of affinoid domains such that, for $i \in \bbN$, $X_i$ is a Weierstrass domain in $X_{i+1}$.
Here we say that $X$ is  \emph{Stein} if it admits an affinoid Stein exhaustion. This is what is called being being W-exhausted by affinoid domains in \cite{notions}. A $K$-analytic space Stein $X$ is holomorphically separable and holomorphically convex by \cite[theorem 1.1]{notions}.  Let $X$ be a $K$-analytic space admitting an affinoid Stein exhaustion $\{ X_i \}_{i \in \bbN}$. Then, for each coherent $\cO_X$-module $F$, 
\begin{enumerate}
\item for all $i \in \bbN$, the restriction map $\Gamma(X, F) \to \Gamma(X_i, F)$ has dense image;
\item for all $q \ge 1$, the cohomology group $\rH^q(X, F)$ vanishes.
\end{enumerate}
Given a $K$-affinoid space $S$, closed analytic subspaces of $\bbA^{n, \an}_K \times_K S$ are Stein. Conversely, let $X$ be a Stein $K$-analytic space without boundary such that
\[
\sup_{x \in X} \dim_{\cH(x)} \Omega_{X, x} \otimes_{\cO_{X, x}} \cH(x) < +\infty.
\] Then, there exist $n \in \bbN$ and a closed embedding $X \into \bbA^{n, \an}_K$. In particular for a $K$-scheme of finite type $X$ the $K$-analytic space $X^\an$ is Stein if and only if there is a closed embedding closed embedding $X^\an \into \bbA^{n, \an}_K$ for some $n \in \bbN$.

\subsection{Statement} Let $\pi \colon X \to Y$ be a morphism of $K$-analytic spaces. A \emph{Remmert factorization} of $\pi$ is a triple $(S, \sigma, \tilde{\pi})$ made of a Stein $K$-analytic space  $S$ and morphisms $\sigma \colon X \to S$, $\tilde{\pi} \colon S \to Y$ such that $\pi = \tilde{\pi} \circ \sigma$ with the following properties:
\begin{itemize}
\item[RF$_1$.]  the morphism $\sigma$ is proper, surjective, with geometrically connected fibers, and the homomorphism $\sigma^\sharp \colon \cO_S \to \sigma_\ast \cO_X$ induced by $\sigma$ an isomorphism;
\item[RF$_2$.] given a $K$-analytic space $T$ and a morphism of $K$-analytic spaces $\tau \colon X \to T$ set-theoretically constant on the fibers of~$\sigma$, there is a unique morphism of $K$-analytic spaces $\tilde{\tau} \colon S \to T$ such that $\tau = \tilde{\tau} \circ \sigma$.
\end{itemize} 

\begin{lemma} \label{Lemma:RemmertFactorizationFactorsHolSep} Let $\pi \colon X \to Y$ be a morphism of $K$-analytic spaces. Assume there is a Remmert factorization $(S, \sigma, \tilde{\pi})$ of $\pi$. Let $T$ be a holomorphically separable $K$-analytic space, and $\tau \colon X \to T$ a morphism. Then, there exists a unique morphism $\tilde{\tau} \colon S \to T$ such that $\tau = \tilde{\tau} \circ \sigma$.
\end{lemma}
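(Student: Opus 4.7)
The whole game is to reduce to the universal property RF$_2$ of the Remmert factorization: it suffices to show that $\tau \colon X \to T$ is set-theoretically constant on each fibre of $\sigma \colon X \to S$. Uniqueness will then come for free, as two factorizations $\tilde{\tau}, \tilde{\tau}' \colon S \to T$ of $\tau$ through $\sigma$ are already equal as set-theoretic maps by surjectivity of $\sigma$, hence equal as morphisms by the uniqueness clause of RF$_2$ (applied e.g. to $\tau$ itself).

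\textbf{The key step.} Fix $s \in S$; the fibre $\sigma^{-1}(s)$ is non-empty by surjectivity of $\sigma$ and I claim $\tau$ is constant on it. Take any global function $f \in \Gamma(T, \cO_T)$. The pullback $\tau^\sharp f = f \circ \tau$ is a global section of $\cO_X$, and the hypothesis that $\sigma^\sharp \colon \cO_S \to \sigma_\ast \cO_X$ is an isomorphism (clause RF$_1$) yields, upon taking global sections, an isomorphism $\Gamma(S, \cO_S) \stackrel{\sim}{\to} \Gamma(X, \cO_X)$. Hence there is a unique $g \in \Gamma(S, \cO_S)$ with $f \circ \tau = g \circ \sigma$ as sections of $\cO_X$. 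Evaluating at a point $x \in \sigma^{-1}(s)$ and passing to the residue field $\cH(x)$, the relation $f(\tau(x)) = g(\sigma(x)) = g(s)$ in $\cH(x)$ (via the natural embedding $\cH(s) \hookrightarrow \cH(x)$) yields
\[
|f(\tau(x))| \;=\; |g(s)| \qquad \text{for every } x \in \sigma^{-1}(s).
\]
Thus $|f|$ is constant on $\tau(\sigma^{-1}(s))$ for every global holomorphic function $f$ on $T$.

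\textbf{Conclusion.} If $\tau(\sigma^{-1}(s))$ contained two distinct points $t \neq t'$ of $T$, holomorphic separability would provide some $f \in \Gamma(T, \cO_T)$ with $|f(t)| \neq |f(t')|$, contradicting the above. Therefore $\tau$ is set-theoretically constant on every fibre of $\sigma$, and applying RF$_2$ produces the desired morphism $\tilde{\tau} \colon S \to T$ with $\tau = \tilde{\tau} \circ \sigma$, with uniqueness again furnished by RF$_2$.

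\textbf{Expected obstacle.} There is no deep difficulty; the only point that demands some care is the manipulation with Berkovich points and residue fields, namely checking that $\sigma^\sharp$ being a global isomorphism implies the pointwise identity $f(\tau(x)) = g(\sigma(x))$ in $\cH(x)$ (and hence the equality of absolute values). Everything else is formal.
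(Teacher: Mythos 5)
Your proof is correct and follows essentially the same route as the paper: both use the isomorphism $\cO_S \to \sigma_\ast \cO_X$ to write $\tau^\ast f = \sigma^\ast g$ for a global function $g$ on $S$, and then invoke holomorphic separability of $T$ to conclude that $\tau$ is constant on the fibres of $\sigma$, after which RF$_2$ gives both existence and uniqueness of $\tilde{\tau}$. The only cosmetic difference is that the paper argues by contradiction with two points $x, x'$ in a common fibre, whereas you establish constancy of $|f\circ\tau|$ on each fibre directly; the content is identical.
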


\begin{proof} Let $x, x' \in X$ be such that $\sigma(x) = \sigma(x')$. In view of property RF$_2$, it suffices to prove that $\tau(x)$ and $\tau(x')$ coincide. Suppose, by contradiction, that this is not the case. Since $T$ is holomorphically separable, there is a $K$-analytic function $f$ on $T$ such that $|f(\tau(x))| \neq |f(\tau(x'))|$. Because of the isomorphism $\cO_S \to \sigma_\ast \cO_X$ we have  $\tau^\ast f = \sigma^\ast g$  for some $K$-analytic function $g$ on $S$. Then $|g(\sigma(x))| \neq |g(\sigma(x'))|$, whereas $\sigma(x) = \sigma(x')$. Contradiction.
\end{proof}

As usual, if it exists, a Remmert factorization is unique up to a unique isomorphism.

\begin{theorem} \label{thm:Remmert} Let $\pi \colon X \to Y$ be a morphism of $K$-analytic spaces where $X$ is separated, holomorphically convex and countable at infinity, $Y$ is Stein, and $\pi$ is without boundary. 

Then, the Remmert factorization $(S, \sigma, \tilde{\pi})$ of $\pi$ exists and the morphism $\tilde{\pi}$ is without boundary.
\end{theorem}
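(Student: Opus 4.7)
The plan is to reduce the statement to the absolute (boundaryless) version of Remmert's theorem, \cite[Theorem 6.1]{notions}, by means of an exhaustion of $Y$ followed by a gluing argument. Fix an affinoid Stein exhaustion $\{Y_i\}_{i \in \bbN}$ of $Y$, each $Y_i$ being a Weierstrass domain of $Y_{i+1}$, and set $X_i := \pi^{-1}(Y_i)$. As a closed analytic subspace of $X$, the space $X_i$ inherits separation, holomorphic convexity and countability at infinity, and the restriction $\pi_i := \pi_{\rvert X_i} \colon X_i \to Y_i$ remains without boundary.

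On each $X_i$ the Remmert factorization is then built along the lines of the boundaryless case: declare $x \sim_i x'$ iff $\pi_i(x) = \pi_i(x')$ and $f(x) = f(x')$ for every $f \in \Gamma(X_i, \cO_X)$, endow the quotient $S_i := X_i/{\sim_i}$ with $\cO_{S_i} := \sigma_{i, \ast} \cO_{X_i}$, where $\sigma_i \colon X_i \to S_i$ denotes the quotient map, and observe that $\pi_i$ factors uniquely as $\tilde\pi_i \circ \sigma_i$ with $\tilde\pi_i \colon S_i \to Y_i$ continuous. Holomorphic convexity of $X_i$ forces the equivalence classes of $\sim_i$ to be compact, whence $\sigma_i$ is proper, while the no-boundary hypothesis on $\pi_i$ is used to show that $\tilde\pi_i$ is itself without boundary. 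Property RF$_1$ for $(S_i, \sigma_i, \tilde\pi_i)$ is tautological by construction, whereas RF$_2$ follows from the definition of $\sim_i$ together with the isomorphism $\cO_{S_i} \iso \sigma_{i, \ast} \cO_{X_i}$ and the holomorphic separability of any candidate target (as in \cref{Lemma:RemmertFactorizationFactorsHolSep}).

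To glue, apply RF$_2$ to the composition $X_i \hookrightarrow X_{i+1} \stackrel{\sigma_{i+1}}{\to} S_{i+1}$: being constant on $\sim_i$-classes (since $\sim_i$ refines the restriction of $\sim_{i+1}$ to $X_i$), it factors uniquely through $\sigma_i$, producing morphisms $\iota_i \colon S_i \to S_{i+1}$ compatible with the $\tilde\pi_i$'s. One checks that each $\iota_i$ identifies $S_i$ with a Weierstrass subdomain of $S_{i+1}$, inherited through $\tilde\pi_{i+1}$ from the analogous property of $Y_i \subset Y_{i+1}$. The colimit $S := \bigcup_i S_i$ then carries a natural $K$-analytic structure equipped with an affinoid Stein exhaustion, so $S$ is Stein. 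Taking $\sigma := \injlim \sigma_i$ and $\tilde\pi := \injlim \tilde\pi_i$ produces the Remmert factorization of $\pi$, and $\tilde\pi$ is without boundary since each $\tilde\pi_i$ is.

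The principal obstacle will be the second step, namely constructing the local factorization $(S_i, \sigma_i, \tilde\pi_i)$ in the presence of boundary, since the boundaryless theorem of \cite{notions} does not apply verbatim. The heart of the matter is to show that $\sigma_i$ is proper (not merely a topological quotient) and that $S_i$ carries a genuine $K$-analytic structure over the affinoid $Y_i$; this will require adapting the techniques of \emph{loc.\ cit.}\ by substituting the absolute boundarylessness with the relative boundarylessness of $\pi_i$ and by exploiting the Banach $A_i$-algebra structure on $\Gamma(X_i, \cO_X)$, where $A_i := \Gamma(Y_i, \cO_Y)$.
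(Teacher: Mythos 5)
Your global architecture is the same as the paper's: exhaust $Y$ by an affinoid Stein exhaustion $\{Y_i\}$, produce a Remmert factorization of $\pi_{\rvert X_i} \colon X_i \to Y_i$ over each affinoid $Y_i$, identify $S_i$ inside $S_{i+1}$, and glue. The problem is that the two steps you treat as routine are exactly where the content of the theorem lies, and your sketch of them does not work as stated. For the affinoid-base case, defining $S_i$ as the topological quotient $X_i/\!\sim_i$ with structure sheaf $\sigma_{i,\ast}\cO_{X_i}$ does not produce a $K$-analytic space, and the assertion that holomorphic convexity of $X_i$ ``forces the equivalence classes to be compact, whence $\sigma_i$ is proper'' is not an argument: compactness of fibers is neither immediate nor sufficient, and nothing in your construction equips $S_i$ with affinoid charts or makes $\sigma_i$ a morphism of analytic spaces. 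The paper's \cref{Prop:RemmertAffinoidBase} instead builds $S_i$ piecewise: one exhausts $X_i$ by compacta, chooses finitely many global functions that are large on the topological boundary of a relatively compact neighbourhood of the holomorphically convex hull, forms the induced morphism $(f_1,\dots,f_{n},\pi)$ to $\bbA^{n,\an}_Y$, shows it is proper onto a polydisc over $Y$ (using \cref{Lemma:WithoutBoundary} to control the boundary), and then invokes Berkovich's Stein factorization for \emph{proper} morphisms to obtain affinoid pieces which are glued as Weierstrass domains. None of this is recoverable from the quotient-space description, and you yourself flag it as ``the principal obstacle'' without resolving it.

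The second unproved step is the gluing: you assert that the maps $\iota_i \colon S_i \to S_{i+1}$ obtained from RF$_2$ identify $S_i$ with a Weierstrass domain (indeed with $\tilde\pi_{i+1}^{-1}(Y_i)$) of $S_{i+1}$. In the paper this is \cref{Prop:RemmertFactorizationWeierstrassDomain}, and its proof is not formal: it needs \cref{Lemma:PreimageWeierstrssDomainWExhausted} (preimages of Weierstrass domains under a morphism from a Stein space to an affinoid are Stein, with compatible exhaustions), the Open Mapping theorem for Fr\'echet spaces to see that the induced map on global sections is a homeomorphic isomorphism, and the Yoneda-type statement \cref{Prop:YonedaStein} (morphisms into a Stein space are determined by the continuous $K$-algebra homomorphisms on global sections) to convert that into an isomorphism of analytic spaces. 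Without these inputs the colimit $S=\bigcup_i S_i$ cannot be shown to carry an affinoid Stein exhaustion, so the Steinness of $S$ and the universal properties RF$_1$, RF$_2$ remain unestablished. In short, the reduction-and-gluing frame is correct, but the proposal leaves the two load-bearing lemmas unproved, so as it stands it is not a proof.
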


As an immediate consequence of \cref{thm:Remmert} one obtains:

\begin{corollary} \label{Cor:EquivalentNotionsStein} Let $\pi \colon X\to S$ a morphism without boundary of $K$-analytic spaces with $X$ is separated and countable at infinity, and $S$ Stein. Then  $X$ is Stein if and only if it is holomorphically separable and holomorphically convex.
\end{corollary}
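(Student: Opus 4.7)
The forward implication is immediate from the reminder preceding the statement: any Stein $K$-analytic space is both holomorphically separable and holomorphically convex.

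For the converse, assume $X$ is holomorphically separable and holomorphically convex. The plan is to apply \cref{thm:Remmert} to obtain a Remmert factorization $\pi = \tilde{\pi} \circ \sigma$ with $\sigma \colon X \to T$ proper and surjective, $\tilde{\pi} \colon T \to S$ without boundary, and $T$ Stein. It then suffices to show that $\sigma$ is an isomorphism, for then $X \iso T$ is Stein. Since $\tilde{\pi}$ is without boundary with $S$ Stein, the relative version of Stein (\emph{cf.} the reminder) would a posteriori confirm that $X$ is even embeddable in some $\bbA^{n, \an}_K \times_K S$-type space, but the proof requires only the abstract Stein-ness of $T$.

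The key observation is to feed $\id_X$ into \cref{Lemma:RemmertFactorizationFactorsHolSep}. Since $X$ is holomorphically separable by hypothesis, that lemma (applied with target $X$ and morphism $\tau = \id_X$) produces a morphism $r \colon T \to X$ such that $r \circ \sigma = \id_X$. In particular $\sigma$ is set-theoretically injective, and combined with the surjectivity of $\sigma$ from RF$_1$, the map $\sigma$ is bijective on points. To obtain the reverse identity $\sigma \circ r = \id_T$, I would apply the universal property RF$_2$ to the morphism $\sigma \colon X \to T$ itself (trivially set-theoretically constant on its own fibers): both $\id_T$ and $\sigma \circ r$ are endomorphisms of $T$ whose precomposition with $\sigma$ equals $\sigma$, so the uniqueness clause in RF$_2$ forces them to agree. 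Hence $\sigma$ and $r$ are mutually inverse isomorphisms.

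There is no real obstacle here beyond invoking the right tools: holomorphic convexity is consumed by \cref{thm:Remmert} to produce the Stein space $T$, while holomorphic separability is consumed by \cref{Lemma:RemmertFactorizationFactorsHolSep} to produce the retraction. The delicate point, which is entirely encapsulated in the earlier results, is the existence of a Stein model $T$ for $X$ in the relative setting; once this is granted, promoting the proper surjection $\sigma \colon X \to T$ to an isomorphism under holomorphic separability is a purely formal consequence of its universal property.
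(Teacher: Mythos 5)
Your proof is correct and takes essentially the route the paper intends: the corollary is stated as an immediate consequence of \cref{thm:Remmert}, with holomorphic convexity feeding the Remmert factorization and holomorphic separability collapsing $\sigma$. Your way of finishing---extracting the retraction $r$ with $r\circ\sigma=\id_X$ from \cref{Lemma:RemmertFactorizationFactorsHolSep} applied to $\tau=\id_X$, and then $\sigma\circ r=\id$ from the uniqueness clause of RF$_2$---is a clean, purely formal version of the final step that $\sigma$ is an isomorphism.
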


In the rest of this section is devoted to the proof of \cref{thm:Remmert}.

\subsection{Proofs} The proof of \cref{thm:Remmert} follows closely that of \cite[theorem 6.1]{notions}, in turn inspired by the proof of the complex Remmert factorization theorem. In order to control boundary, we will need the following:

\begin{lemma} \label{Lemma:WithoutBoundary} Let $\pi \colon X \to S$ and $f \colon X \to Y$ be morphisms between $K$-analytic spaces, and let $V \subset X$ and $W \subset Y \times_K S$ be open subsets. Consider the morphism of $K$-analytic spaces $F = (\pi , f) \colon X \to Y \times_K S$.  If $X$ is separated and $\pi$ is without boundary, then the induced morphism $F \colon V \cap F^{-1}(W) \to W$ is without boundary.
\end{lemma}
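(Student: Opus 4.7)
My approach exploits the tautological factorization $\pi = p_S \circ F$, where $p_S \colon Y \times_K S \to S$ is the second projection. The key tool is the functorial inclusion of relative interiors for morphisms of $K$-analytic spaces: for any composition $X \xrightarrow{\phi} Z \xrightarrow{\psi} W$, one has $\mathrm{Int}(X/W) \subset \mathrm{Int}(X/Z)$ (established in Berkovich's \emph{\'Etale cohomology} for good analytic spaces, and extended by Temkin to arbitrary $K$-analytic spaces). Applied to $\phi = F$ and $\psi = p_S$, the hypothesis that $\pi$ is without boundary gives $X = \mathrm{Int}(X/S) \subset \mathrm{Int}(X/(Y \times_K S))$, so that $F \colon X \to Y \times_K S$ is itself without boundary. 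Being without boundary is a property local on source and target, so the induced morphism $F \colon V \cap F^{-1}(W) \to W$ inherits it.

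The separation hypothesis on $X$ is visible in a parallel, more hands-on argument: since $\Delta_X \colon X \to X \times_K X$ is then a closed immersion, one has the factorization $F = (f \times \pi) \circ \Delta_X$. After restricting to the open $\Omega = (f \times \pi)^{-1}(W) \cap (V \times_K F^{-1}(W))$, the first factor becomes a closed immersion $V \cap F^{-1}(W) \hookrightarrow \Omega$; the second factor $\Omega \to W$, a restriction of $f \times \pi$, decomposes as a base change of $\pi$ (hence without boundary) followed by a base change of $f$ (not without boundary in general).

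The main obstacle in this elementary route is the base change of $f$: it has to be controlled on the specific open $\Omega$, using the structural fact that $\pi$ is boundaryless to compensate for the lack of control over $f$. The abstract functorial statement, applied as in the first paragraph, bypasses this difficulty entirely, and it is the route I will follow in the proof.
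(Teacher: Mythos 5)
Your argument is correct, and it takes a genuinely different route from the paper's. The paper factors $F$ as $(\pi \times \id_Y) \circ (\id_X, f)$: the graph $(\id_X, f) \colon X \to X \times_K Y$ is declared a closed immersion (this is where the separatedness hypothesis is invoked), hence without boundary, $\pi \times \id_Y$ is without boundary as a base change of $\pi$, and one concludes by stability of boundarylessness under composition and under passing to open subsets of source and target, all quoted from \cite[proposition 3.1.3]{Berkovich90}. You instead read the factorization the other way, $\pi = p_S \circ F$, and invoke the cancellation property $\mathrm{Int}(X/S) \subset \mathrm{Int}\bigl(X/(Y \times_K S)\bigr)$; this inclusion is indeed available (for good spaces it is part of the standard list of properties of relative interiors, cf.\ \cite[1.5.5]{BerkovichIHES}, and Temkin's reduction of germs gives it for arbitrary $K$-analytic spaces), and it is unconditional because being without boundary is a pointwise condition on the source --- in contrast with the scheme-theoretic analogue ``proper composite implies proper first factor'', which needs the second morphism to be separated. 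What your route buys: $F$ itself is shown to be boundaryless and the separatedness of $X$ is never used, so you in effect prove a slightly stronger statement. What the paper's route buys: it stays within the elementary stability properties (closed immersions, base change, composition) of the reference it already uses, at the cost of the graph step --- whose one-line justification, as your second paragraph correctly senses, really rests on separatedness of the \emph{target} of $f$ (or on the weaker observation that a graph is always a locally closed immersion, hence boundaryless), which is harmless in the paper's application where that target is $\bbA^{n,\an}_K$. Your final restriction step is also fine: boundarylessness is insensitive to replacing the target by an open subset containing the image and to precomposing with the open immersion $V \cap F^{-1}(W) \into X$.
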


\begin{proof} This follows by using repeatedly \cite[proposition 3.1.3 (iii)]{Berkovich90}. The morphism $\pi \times \id \colon X \times_K Y \to S \times_K Y$ is without boundary. Since $X$ is separated the graph $(\id, f) \colon X \to X \times_K Y$ is a closed immersion, hence without boundary. Thus the composite morphism $F := (\pi, f)  = (\pi \times \id) \circ (\id, f) \colon X \to S \times_K Y$ is without boundary hence so is $F \colon F^{-1}(W) \to W$. In particular restriction of $\pi$ to $V \cap F^{-1}(W)$ is without boundary too  because an open immersion is without boundary.
\end{proof}

We recall the statement of the Stein factorization theorem \cite[proposition 3.3.7]{Berkovich90} with a slight improvement. Let $f \colon X \to Y$ be a proper morphism of $K$-analytic spaces. Then, the sheaf of $\cO_Y$-algebras $\cA := f_\ast \cO_X$ is coherent. Moreover, there is a (unique up to a unique isomorphism) $K$-analytic space $\tilde{Y}$ together with a finite morphism $\pi \colon \tilde{Y} \to Y$, called the \emph{Stein factorization of $f$}, representing the functor associating to a morphism $g \colon S \to Y$ of $K$-analytic spaces the set of morphisms of $\cO_S$-algebras $g^\ast \cA \to \cO_S$. Concretely, when $Y= \cM(B)$ is affinoid, the $K$-analytic space $\tilde{Y}$ is the Banach spectrum of the finite $B$-algebra $A:= \Gamma(X, \cO_X)$. Let $\tilde{f} \colon X \to \tilde{Y}$ be the morphism corresponding to the morphism $f^\ast f_\ast \cO_X \to \cO_X$. 

\begin{lemma} \label{lemma:SteinFactorization} With the above notation, we have the following properties:
\begin{enumerate}
\item $\tilde{f}$ is proper and surjective, $\pi$ is finite and $f = \pi \circ \tilde{f}$;
\item the morphism $\cO_{\tilde{Y}} \to \tilde{f}_\ast \cO_X$ is an isomorphism;
\item the fibers of $\tilde{f}$ are geometrically connected;
\item given a morphism $g \colon X \to Z$ which is constant on the connected components of the fibers of $f$, there is a unique morphism $\tilde{g} \colon \tilde{Y} \to Z$ such that $g = \tilde{g} \circ \tilde{f}$;
\item the construction of the Stein factorization is compatible with extension of scalars: given a complete valued extension $K'$ of $K$, the Stein factorization of $X \times_K K' \to Y \times_K K'$ is $\tilde{Y} \times_K K' \to Y \times_K K'$. 
\end{enumerate}
\end{lemma}

\begin{proof} Properties (1), (2) and (4) are proved in \cite[proposition 3.3.7]{Berkovich90}. In \emph{loc.cit.} fibers of $\tilde{f}$ are only stated to be connected. We will deduce (3) from  this and the compatibility in (5). 

To prove (5), let $K'$ be a complete valued extension of $K$. Let $X'$, $Y'$ and $\tilde{Y}'$ be the $K'$-analytic spaces obtained respectively from $X$, $Y$ and $\tilde{Y}$ extending scalars to $K'$, and let $f' \colon X' \to Y'$, $\pi \colon \tilde{Y}' \to Y'$ and $\tilde{f} \colon X' \to \tilde{Y}'$ the  morphisms induced by $f$, $\pi$ and $\tilde{f}$ respectively. It suffices to show that the natural morphism $q^\ast f_\ast \cO_X \to f'_\ast \cO_{X'}$
is an isomorphism, where  $q \colon Y' \to Y$ is the base-change morphism. This boils down to prove, when $Y = \cM(B)$ is affinoid, that the natural homomorphism of Banach $B'$-algebras $\phi \colon \Gamma(X, \cO_X) \hotimes_B B' \to \Gamma(X', \cO_{X'})$ is an isomorphism, where $B' = B \hotimes_K K'$. This follows from the natural isomorphism $\Gamma(X', \cO_{X'}) \iso \Gamma(X, \cO_X) \hotimes_K K'$, see for instance \cite[theorem A.5]{notions}. This concludes the proof of (5).

To prove (3), pick a point $y \in \tilde{Y}$ and let $K'$ be the completion of an algebraic closure of the completed residue field $\cH(y)$ at $y$. The point $y$ defines a natural $K'$-point $y'$ of $\tilde{Y}'$. The base-change to $K'$ of $\tilde{f}^{-1}(y)$ is the fiber $\tilde{f}'$ at $y'$, thus we need to show that this is connected. By (5) the Stein factorization of $f'$ is $\pi'$, thus the fiber of $\tilde{f}'$ at $y'$ is connected by \cite[proposition 3.3.7]{Berkovich90}.
\end{proof}

We first prove \cref{thm:Remmert} when the target is affinoid:

\begin{lemma}\label{Prop:RemmertAffinoidBase} Let $\pi \colon X \to Y$ be a morphism of $K$-analytic spaces. Suppose that the $K$-analytic space $X$ is separated, holomorphically convex and countable at infinity, the $K$-analytic space $Y$ is affinoid, and the morphism $\pi$ is without boundary.  Then, the Remmert factorization $(S, \sigma, \tilde{\pi})$ of $\pi$ exists and the morphism $\tilde{\pi}$ is without boundary.
\end{lemma}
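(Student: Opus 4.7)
The plan is to adapt to the relative setting over $Y$ the proof of \cite[theorem 6.1]{notions}, which handles the case of a boundaryless Stein target. Set $B := \Gamma(Y, \cO_Y)$, a strictly $K$-affinoid algebra, and endow $A := \Gamma(X, \cO_X)$ with its natural structure of $B$-algebra via $\pi^\sharp$. Using that $X$ is countable at infinity, fix an increasing sequence of compact subsets $C_1 \subset C_2 \subset \cdots$ covering $X$; since $X$ is holomorphically convex, each $C_n$ may be replaced by its holomorphically convex hull $\hat{C}_n$, which remains compact, and one can arrange that $\hat{C}_n$ lies in the topological interior of $\hat{C}_{n+1}$.

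For each $n$, let $A_n$ be the Hausdorff completion of $A$ with respect to the sup-seminorm $\|\cdot\|_{\hat{C}_n}$, naturally a Banach $B$-algebra. The first technical point is to check that $A_n$ is a strictly $B$-affinoid algebra and that the natural continuous homomorphisms $A_{n+1} \to A_n$ realize $S_n := \cM(A_n)$ as a Weierstrass domain in $S_{n+1}$; the filtered union $S := \bigcup_n S_n$ is then a $K$-analytic space equipped with an affinoid Stein exhaustion, and the $B$-algebra structures assemble into a morphism $\tilde\pi \colon S \to Y$. The morphism $\sigma \colon X \to S$ is constructed exactly as in \cite[theorem 6.1]{notions}: a point $x \in X$ is sent to the bounded multiplicative seminorm on $A_n$ induced by evaluation, for any $n$ such that $x$ lies in a suitable neighbourhood of $\hat{C}_n$. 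Properness of $\sigma$ over each $S_n$ is a direct consequence of the definition of the holomorphically convex hull, and the remaining Remmert properties --- surjectivity, geometric connectedness of the fibers, the isomorphism $\sigma^\sharp \colon \cO_S \stackrel{\sim}{\to} \sigma_\ast \cO_X$, and the universal property --- are verified by repeating \emph{verbatim} the arguments of the boundaryless case, which rely only on the density of $A$ in each $A_n$ and the acyclicity of coherent sheaves on the affinoid pieces $S_n$.

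It remains to see that $\tilde\pi$ is without boundary. The morphism $\sigma$ is proper, hence in particular without relative boundary, while $\pi = \tilde\pi \circ \sigma$ is without boundary by hypothesis. The functoriality of the relative interior \cite[proposition 3.1.3]{Berkovich90} gives
\[ X = \mathrm{Int}(X/Y) = \mathrm{Int}(X/S) \cap \sigma^{-1}(\mathrm{Int}(S/Y)) = \sigma^{-1}(\mathrm{Int}(S/Y)), \]
so the surjectivity of $\sigma$ forces $\mathrm{Int}(S/Y) = S$, as required. The main anticipated obstacle is the first step of the second paragraph: one has to rerun the construction from \cite{notions} $B$-linearly rather than $K$-linearly, controlling the dependence of the Banach $B$-algebra structure on $A_n$ along the exhaustion, in order to exhibit $S$ as a genuine Stein $K$-analytic space over $Y$ at each finite stage.
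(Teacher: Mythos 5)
Your proposal has a genuine gap at its central step. You define $A_n$ as the completion of $\Gamma(X,\cO_X)$ for the sup-seminorm on the hull $\hat{C}_n$ and then assert, as a ``technical point to check'', that $A_n$ is affinoid, that $S_n=\cM(A_n)$ sits as a Weierstrass domain in $S_{n+1}$, and that $\sigma$ is proper over each $S_n$ ``as a direct consequence of the definition of the holomorphically convex hull''. None of this follows from the definitions: there is no reason a priori that the completed algebra of global functions on a holomorphically convex hull is topologically finitely generated, and properness of a morphism of $K$-analytic spaces is not just topological properness---it also requires the map to be without boundary. This is exactly the crux of the theorem, and it is also where the hypothesis that $\pi$ is without boundary must enter; in your write-up that hypothesis is only used at the very end, which signals that the construction as described cannot work.

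The paper's proof supplies precisely the missing mechanism. For each compact $C'_i$ one chooses a relatively compact open $V_i$ containing its hull and, using holomorphic convexity, finitely many global functions whose normalized absolute values exceed $1$ on the topological boundary $\partial V_i$; setting $W_i:=\bbA^{n_i,\an}_Y\smallsetminus F_i(\partial V_i)$ and $U_i:=V_i\cap F_i^{-1}(W_i)$, the map $F_i=(f_1,\dots,f_{n_i},\pi)\colon U_i\to W_i$ is topologically proper by construction and without boundary because $\pi$ is without boundary and $X$ is separated (\cref{Lemma:WithoutBoundary}), hence proper. Restricting over a relative polydisc $\bbD_i\subset W_i$ and applying Berkovich's Stein factorization for \emph{proper} morphisms (\cite[proposition 3.3.7]{Berkovich90}) produces a finite morphism $S_i\to\bbD_i$, so $S_i$ is genuinely affinoid, and properness, connectedness of fibers and $\cO_{S_i}\iso\sigma_{i\ast}\cO_{X_i}$ come with it; only then does one check the Weierstrass-domain claim and glue. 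Your appeal to ``repeating verbatim the arguments of the boundaryless case'' does not close the gap, because those arguments themselves proceed via such proper maps to relative discs, which your construction never produces. (Your final argument that $\tilde{\pi}$ is without boundary, via \cite[proposition 3.1.3]{Berkovich90} and surjectivity of $\sigma$, does agree with the paper and is fine.)
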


\begin{proof} Let $\{ C_i \}_{i \ge 0}$ be an exhaustion of $X$ by compact subsets such that, for each~$i$, $C_{i}$ is the closure of its interior and $C_i$ is contained in the interior of $C_{i + 1}$. Set 
\[n_{-1} = 0 ,\qquad X_{-1} =  U_{-1} = W_{-1} = \emptyset.\]
For $i \ge 0$, we construct inductively an integer $n_i \ge 0$, a relatively compact open subset $U_i$ of $X$, a compact analytic domain~$X_i$ of $U_i$, an open subset $W_i$ of $\bbA^{n_i, \an}_Y$ and $K$-analytic functions $f_{n_{i-1} + 1},  \dots, f_{n_i}$ on~$X$ such that:
\begin{enumerate}
\item the morphism $(f_1, \dots, f_{n_i}, \pi) \colon X \to \bbA^{n_i, \an}_Y$ induces a proper morphism $F_i \colon U_i \to W_i$; 
\item $\bbD_i := \bbD(\| f_1\|_{C_i \cup \overline{U}_{i-1} }, \dots, \| f_{n_i}\|_{C_i \cup \overline{U}_{i-1}})  \times_K Y$ is contained in $W_i$;
\item $X_i := F_i^{-1}(\bbD_i) \cap U_i$ contains the holomorphically convex hull of the compact subset $C_i \cup \overline{U}_{i-1}$.
\end{enumerate}

Set $C'_i := C_i \cup \overline{U}_{i - 1}$ and consider a relatively compact open subset $V_i \subset X$ containing the holomorphically convex hull of $C'_i$ in $X$ (which is compact since~$X$ is holomorphically convex). Let $\partial V_i$ be the topological boundary of $V_i$ in $X$. Since $\partial V_i$ is compact, there are $K$-analytic functions $f_{n_{i -1} + 1}, \dots, f_{n_{i}}$ on $X$ such that, for all $x \in \partial V_i$,
\[ \max_{j = 1, \dots, n_i} \frac{|f_j(x)|}{\| f_j \|_{C'_i}} > 1.\]
Set $W_i := \bbA^{n_i, \an}_Y \smallsetminus F_i(\partial V_i)$ and $U_i = V_i \cap F_i^{-1}(W_i)$. Property (1) holds because the morphism $F_i \colon U_i \to W_i$ induced by $(f_1, \dots, f_{n_i}, \pi) \colon X \to \bbA^{n_i, \an}_Y$ is without boundary (\cref{Lemma:WithoutBoundary}) and $F_i$ is topologically proper by construction, thus proper \cite[p. 50]{Berkovich90}. Properties (2) and (3) hold by construction. Let $X_i$ denote $\bbD_i \times_{W_i} U_i$.  The morphism $X_i \to \bbD_i$ induced by $F_i$ is proper. Let $\tilde{F}_i \colon S_i \to \bbD_i$ be the Stein factorization of the proper morphism $F_i$ and $\sigma_i \colon X_i \to S_i$ the induced morphism. The morphism $\tilde{S}_i$ is finite, hence $S_i$ is affinoid, thus Stein. It follows from \cref{lemma:SteinFactorization} that $(S_i, \sigma_i, \tilde{F}_i)$ is the Remmert factorization of $F_i$.  In particular, for each $i \in \bbN$, the Banach $K$-algebra $\cO(X_i)$ is affinoid. Moreover, for $i \in \bbN$, the restriction map $\cO(X_{i+1}) \to \cO(X_{i})$ is a bounded homomorphism of $K$-affinoid algebras and it induces a morphism of $K$-affinoid spaces $\epsilon_i \colon S_i \to S_{i+1}$. 

\begin{claim} The map $\epsilon_i$ identifies $S_i$ with a Weierstrass domain of $S_{i+1}$. \end{claim}

The proof of the claim is by any means analogous to the that of \cite[claim 6.3]{notions} and we do not repeat it here. Consider the $K$-analytic space $S = \bigcup_{i \in \bbN} S_i$ and the morphism $\sigma \colon X \to S$ obtained by glueing the morphisms $\sigma_i$. The $K$-analytic space $S$ is Stein by definition. Properties (RF$_1$) and (RF$_2$) are deduced from the properties of $\sigma_i$, for $i \in \bbN$. (Note that, for each $i \in \bbN$, the preimage $\sigma^{-1}(S_i)$ of $S_i$ is $X_i$.) The morphism $\tilde{\pi}$ is seen to be without boundary because $\sigma$ is surjective and the morphism $\pi$ is without boundary \cite[proposition 3.1.3 (ii)]{Berkovich90}.
\end{proof}

Let $\Hom_K(Y, X)$ denote the set of morphisms between some $K$-analytic spaces $X$ and $Y$. If $X$ and $Y$ are countable at infinity, let $\Hom_{K\textup{-alg}}(\cO(X), \cO(Y))$ be the set of $K$-algebra homomorphisms $\cO(X) \to \cO(Y)$ that are continuous with respect the natural Fr\'echet topologies on $\cO(X)$ and $\cO(Y)$.

\begin{proposition} \label{Prop:YonedaStein} Let $X$ and $Y$ be $K$-analytic spaces countable at infinity with $X$ Stein. Then the following map is bijective:
\[ \Hom_K(Y, X)  \too  \Hom_{K\textup{-alg}}(\cO(X), \cO(Y)),\quad f \colon Y \to X \longmapsto  f^\sharp \colon \cO(X) \to \cO(Y). \]
\end{proposition}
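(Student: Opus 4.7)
The plan is to reduce both injectivity and surjectivity to the classical Yoneda lemma for affinoid spaces, $\Hom_K(V, \cM(A)) = \Hom^{\textup{cts}}_{K\text{-alg}}(A, \cO(V))$, by using the affinoid Stein exhaustion $\{X_i\}_{i \in \bbN}$ of $X$ to pass from the Fr\'echet algebra $\cO(X)$ to the affinoid Banach algebras $\cO(X_i)$.

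For injectivity I start from the fact that $X$, being Stein, is holomorphically separable: if two morphisms $f, g \colon Y \to X$ share the same global pullback $\phi = f^\sharp = g^\sharp$, then for every $y \in Y$ and every $h \in \cO(X)$ one has $|h(f(y))| = |\phi(h)(y)| = |h(g(y))|$, forcing $f(y) = g(y)$ pointwise. To promote this to an equality of morphisms, I work on each affinoid domain $V \subset Y$: its image is compact, hence contained in some $X_i$, and by continuity of $\phi$ together with the density of the restriction $\cO(X) \to \cO(X_i)$, the homomorphism $\cO(X) \to \cO(V)$ induced by $\phi$ extends uniquely to a bounded $K$-algebra homomorphism $\bar{\phi}_V \colon \cO(X_i) \to \cO(V)$. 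Affinoid Yoneda then recovers $f_{|V}$ and $g_{|V}$ as the same morphism $V \to X_i$.

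For surjectivity, given a continuous $K$-algebra homomorphism $\phi \colon \cO(X) \to \cO(Y)$, I will construct $f \colon Y \to X$ realising $\phi$. For each affinoid domain $V \subset Y$, Fr\'echet continuity of the composition $\phi_V \colon \cO(X) \to \cO(V)$ furnishes an index $i$ and a constant $M > 0$ such that $\|\phi_V(h)\|_{\cO(V)} \le M\, p_i(h)$ for all $h \in \cO(X)$, where $p_i(h) = \sup_{X_i}|h|$. Since $p_i$ is bounded by the Banach norm on $\cO(X_i)$ and the restriction $\cO(X) \to \cO(X_i)$ has dense image by the Stein property, $\phi_V$ extends uniquely to a bounded $K$-algebra homomorphism $\bar{\phi}_V \colon \cO(X_i) \to \cO(V)$; affinoid Yoneda then produces a morphism $f_V \colon V \to X_i \hookrightarrow X$.

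The last step is to glue the family $(f_V)_V$ into a global morphism $f \colon Y \to X$. On an overlap $V \cap V'$, the restrictions of $f_V$ and $f_{V'}$ induce on global sections the same map, so by the injectivity already established (applied affinoid-by-affinoid to the compact quasi-separated domain $V \cap V'$) they coincide. The resulting $f$ then satisfies $(f^\sharp h)_{|V} = f_V^\sharp(h) = \bar{\phi}_V(h_{|X_i}) = \phi(h)_{|V}$ for every $h \in \cO(X)$ and every affinoid $V \subset Y$, hence $f^\sharp = \phi$ by the sheaf property. The main obstacle I expect is the extension step: verifying that the Fr\'echet-continuity bound on $\phi_V$ really produces a bounded homomorphism on the affinoid Banach algebra $\cO(X_i)$. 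This hinges on the comparison between the seminorms $p_i$ defining the Fr\'echet topology on $\cO(X)$ and the Banach norms of the $\cO(X_i)$, together with the density statement built into the Stein property; the gluing that follows is routine.
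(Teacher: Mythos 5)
Your plan is correct and follows essentially the same route as the paper: Fr\'echet continuity of the pullback together with density of $\cO(X)$ in each $\cO(X_i)$ (the Stein property) reduces both injectivity and surjectivity to the affinoid case, and the resulting local morphisms are then glued. The only cosmetic differences are that the paper glues along an increasing exhaustion of $Y$ by compact analytic domains instead of an arbitrary affinoid cover with overlap checks, and that for non-reduced spaces the continuity bound must be phrased with the Banach norms of the $\cO(X_i)$ rather than sup-seminorms --- exactly the comparison you flag as the remaining verification.
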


\begin{proof} By the very definition of morphisms between $K$-analytic spaces, the statements holds when $X$ is a $K$-affinoid space. 

 (Injective.) Let $f_1, f_2 \colon Y \to X$ be morphisms of $K$-analytic spaces. Let $\{ Y_i\}_{i \in \bbN}$ be a G-cover of $Y$ by compact analytic domains such $Y_i \subset Y_{i + 1}$ for all $i \in \bbN$. Let $\{ X_i \}_{i \in \bbN}$ be an affinoid Stein exhaustion of $X$. Since $Y_i$ is compact for any $i \in \bbN$, we may replace $\{ X_i \}_{i \in \bbN}$ by a subsequence such that $f_\alpha(Y_i) \subset X_i$ for any $i \in \bbN$ and $\alpha = 1, 2$. The morphism $f_\alpha$ then induces a morphism $f_{\alpha, i} \colon Y_i \to X_i$. Suppose $f_1^\sharp = f_2^\sharp$. For $i \in \bbN$, the $K$-algebra $\cO(X)$ is dense in the affinoid $K$-algebra $\cO(X_i)$. Therefore, the continuous homomorphisms \[ f_{1, i}^\sharp, f_{2, i}^\sharp \colon \cO(X_i) \too \cO(Y_i),\] induced by $f_{1, i}$ and $f_{2, i}$ coincide. By the affinoid case, the morphisms $f_{1 , i}$ and $f_{2, i}$ are the same. Since $i \in \bbN$ is are arbitrary, one deduces $f_1 = f_2$.
 
 (Surjective.) Let $\phi \colon \cO(X) \to \cO(Y)$ be a continuous $K$-algebra homomorphism.  Let $\{ Y_i\}_{i \in \bbN}$ be a G-cover of $Y$ by compact analytic domains such $Y_i \subset Y_{i + 1}$ for all $i \in \bbN$. For $i \in \bbN$, let $\| \cdot \|_{Y, i}$ be a norm on the Banach $K$-algebra $\cO(Y_i)$ defining its topology. By the definition of the topology on $\cO(X)$ and $\cO(Y)$ and the continuity of $\phi$, for each $i \in \bbN$, there is a real number $R_i > 0$, a compact analytic domain $X_i$ of $X$ and a norm $\| \cdot \|_{X, i}$ defining the topology of $\cO(X_i)$ such that, for all $f \in \cO(X)$,
\[ \| \phi(f)\|_{Y, i} \le R_i \| f\|_{X, i}. \]
Arguing by induction and by possibly enlarging $X_i$, we may assume that the collection $\{ X_i\}_{i \in \bbN}$ is an affinoid Stein exhaustion. By density of $\cO(X)$ in $\cO(X_i)$, the homomorphism $\phi$ induces a continuous homomorphism of Banach $K$-algebras $\phi_i \colon \cO(X_i) \to \cO(Y_i)$. By the affinoid case, for each $i \in \bbN$, there is a unique morphism of $K$-analytic spaces $f_i \colon Y_i \to X_i$ such that the induced homomorphism $f_i^\sharp \colon \cO(X_i) \to \cO(Y_i)$ is $\phi$. Moreover, by density of $\cO(X)$ in $\cO(X_i)$ and $\cO(X_{i+1})$, the following diagram is commutative:
\begin{center}
\begin{tikzcd}
Y_i \ar[r, "f_i"] \ar[d] & X_i \ar[d]\\
Y_{i + 1} \ar[r, "f_{i+1}"] & X_{i + 1},
\end{tikzcd}
\end{center}
where the vertical arrows are the inclusions. Therefore the morphisms $f_i$ glue to a morphism $f \colon Y \to X$ such that $f^\sharp = \phi$.
\end{proof}

\begin{lemma} \label{Lemma:PreimageWeierstrssDomainWExhausted}  Let $\pi \colon X \to Y$ be a morphism of $K$-analytic spaces with $X$ Stein and $Y$ affinoid. Let $Y' \subset Y$ be a  Weierstrass domain and $X':= \pi^{-1}(Y')$. Then,
\begin{enumerate}
\item the $K$-analytic space $X'$ is Stein;
\item for any affinoid Stein exhaustion $\{ X'_i\}_{i \in \bbN}$ of $X'$, there is an affinoid Stein exhaustion $\{ X_i \}_{i \in \bbN}$ of $X$ such that, for each $i \in \bbN$, $X'_i$ is a Weierstrass domain of $X_i$.
\end{enumerate}
\end{lemma}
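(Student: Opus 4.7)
For part (1), the Weierstrass hypothesis gives $g_1, \ldots, g_m \in \cO(Y)$ and $r_1, \ldots, r_m > 0$ with $Y' = \{y \in Y : |g_\ell(y)| \le r_\ell \text{ for all } \ell\}$; setting $\tilde g_\ell := \pi^\sharp g_\ell \in \cO(X)$, we have $X' = \{x \in X : |\tilde g_\ell(x)| \le r_\ell \text{ for all } \ell\}$. Pick any affinoid Stein exhaustion $\{Z_j\}_{j \in \bbN}$ of $X$ and set $X'_j := Z_j \cap X'$: each $X'_j$ is cut out of $Z_j$ by the images of the $\tilde g_\ell$, hence it is a Weierstrass (and thus affinoid) subdomain of $Z_j$. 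Since $Z_j$ is Weierstrass in $Z_{j+1}$, the restriction of the defining functions of $Z_j \subset Z_{j+1}$ realizes $X'_j$ as Weierstrass in $X'_{j+1}$, so $\{X'_j\}$ is an affinoid Stein exhaustion of $X'$.

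For part (2) the argument combines a density statement with the non-Archimedean ultrametric trick. First, I claim $\cO(X)|_{X'_i}$ is dense in $\cO(X'_i)$ for every $i$: the map $\cO(X) \to \cO(Z_j)$ is dense (Stein property of $X$) and $\cO(Z_j) \to \cO(Z_j \cap X')$ is surjective (Weierstrass via the $\tilde g_\ell$, by (1)), so $\cO(X)$ has dense image in $\cO(X')$ for its Fr\'echet topology (which is independent of the chosen exhaustion); composing with the density $\cO(X') \to \cO(X'_i)$ from the Stein property of $X'$ yields the claim. The ultrametric trick is as follows: writing $X'_{i+1}$ as Weierstrass in $X'_{i+2}$ via $X'_{i+1} = \{x \in X'_{i+2} : |f_{i+1,k}(x)| \le s_{i+1,k}, k = 1, \ldots, n\}$ with $f_{i+1,k} \in \cO(X'_{i+2})$, approximate $f_{i+1,k}$ by $\tilde f_{i+1,k} \in \cO(X)$ with $\|\tilde f_{i+1,k} - f_{i+1,k}\|_{X'_{i+2}} < s_{i+1,k}$; since the sup norm is attained on the compact affinoid $X'_{i+2}$ the inequality is pointwise strict, and the identity $\{|a| \le s\} = \{|a+u| \le s\}$ when $|u| < s$ (ultrametric) gives $X'_{i+1} = \{x \in X'_{i+2} : |\tilde f_{i+1,k}(x)| \le s_{i+1,k}\}$, now cut out by functions from $\cO(X)$.

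With these tools, choose for each $i$ an affinoid $V_i \subset X$ with $X'_{i+1} \subset V_i$ and $V_i \cap X' \subset X'_{i+2}$; such $V_i$ exists because $X'_{i+1}$ and $\overline{X' \smallsetminus X'_{i+2}}$ are disjoint closed subsets of the locally compact Hausdorff space $X$ (up to a harmless shift of indices in the exhaustion $\{X'_i\}$ to guarantee the interior containment). Set $X_i := V_i \cap \{|\tilde f_{i+1,k}| \le s_{i+1,k}, k = 1, \ldots, n\}$, a Weierstrass subdomain of $V_i$: the containment $V_i \cap X' \subset X'_{i+2}$ together with the ultrametric trick gives $X_i \cap X' = X'_{i+1}$, so $X'_{i+1}$ is Weierstrass in $X_i$ via the restrictions of the $\tilde g_\ell$, and composing with the given inclusion $X'_i \subset_W X'_{i+1}$ yields $X'_i$ Weierstrass in $X_i$. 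The main obstacle is arranging simultaneously $X_i \subset_W X_{i+1}$ and $\bigcup_i X_i = X$: I handle this by taking the $V_i$ as Weierstrass subdomains of a fixed affinoid Stein exhaustion of $X$ and incorporating into $X_i$ a finite, growing collection of conditions $|\tilde f_{n+1,k}| \le s_{n+1,k}$ with $n$ ranging over an interval matched to the size of $V_i \cap X'$, so that the conditions of $X_{i+1}$ are a subfamily of those of $X_i$ and the resulting affinoids nest as Weierstrass subdomains while exhausting $X$.
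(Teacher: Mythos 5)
Your part (1) is fine and is exactly the paper's argument: pull back the Weierstrass inequalities defining $Y'$ along $\pi$ and intersect them with a chosen affinoid Stein exhaustion $\{W_j\}$ of $X$. Part (2), however, has a genuine gap, and it sits precisely at the point you yourself flag as ``the main obstacle''. Two steps of your construction are not established. First, the existence of the auxiliary affinoid $V_i$ with $X'_{i+1}\subset V_i$ and $V_i\cap X'\subset X'_{i+2}$: the definition of an affinoid Stein exhaustion does not require $X'_{i+1}$ to lie in the topological interior of $X'_{i+2}$ (the terms may even repeat), so $X'_{i+1}$ and $\overline{X'\smallsetminus X'_{i+2}}$ need not be disjoint, and a ``shift of indices'' does not repair this since the statement quantifies over the given exhaustion with matching indices; moreover, even with disjointness, separation in a locally compact Hausdorff space produces open sets, not an affinoid domain of $X$, so the existence of $V_i$ needs a real argument (and note your approximants $\tilde f_{i+1,k}$ only control $f_{i+1,k}$ on $X'_{i+2}$, so the cut $\{|\tilde f_{i+1,k}|\le s_{i+1,k}\}$ can very well contain points of $X'\smallsetminus X'_{i+2}$ — which is exactly why you need $V_i$ first, making the two steps circular). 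Second, the final gluing — arranging simultaneously $X_i$ Weierstrass in $X_{i+1}$ and $\bigcup_i X_i = X$ as a G-cover while each $X_i$ carries cuts by the $\tilde f$'s — is only described as a plan, and as written it is internally inconsistent (a ``growing'' family of conditions for $X_{i+1}$ cannot be a subfamily of those of $X_i$) and unverified: the inequalities for different $i$ involve unrelated functions outside $X'$, so neither the nesting nor the exhaustion property follows. (A smaller slip: the restriction $\cO(Z_j)\to\cO(Z_j\cap X')$ to a Weierstrass subdomain has dense image, not surjective image; density is what you need and is true.)

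The missing idea that makes all of this unnecessary is the standard criterion that an affinoid subdomain $V$ of an affinoid space $W$ is a Weierstrass domain if and only if the restriction $\cO(W)\to\cO(V)$ has dense image. With it, the paper's proof of (2) is short: keep the domains $W'_j = W_j\cap X'$ from part (1), which are Weierstrass in $W_j$; for each $i$ choose (by compactness of $X'_i$) an index $\tau(i)$, increasing in $i$, with $X'_i\subset W'_{\tau(i)}$; since $\{X'_i\}$ is an affinoid Stein exhaustion of $X'$, the map $\cO(X')\to\cO(X'_i)$ has dense image, hence so does $\cO(W'_{\tau(i)})\to\cO(X'_i)$ through which it factors, so $X'_i$ is Weierstrass in $W'_{\tau(i)}$ and, by transitivity of Weierstrass inclusions, in $W_{\tau(i)}$. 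Taking $X_i := W_{\tau(i)}$ then gives the desired exhaustion of $X$, with no need to build explicit Weierstrass presentations or auxiliary affinoids at all. Your approximation-plus-ultrametric device is sound as far as it goes, but it does not by itself deliver the simultaneous nesting, exhaustion, and Weierstrass requirements, so as it stands the proof of (2) is incomplete.
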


\begin{proof} By definition of a Weierstrass domain there are   $f_1, \dots, f_n \in \Gamma(Y,\cO_Y)$ and real numbers $r_1, \dots, r_n >0$ such that $Y' = \{ y \in Y : |f_j(y)| \le r_j , j = 1, \dots, n\}$. Given an affinoid Stein exhaustion $\{ W_i \}_{i \in \bbN}$ of $X$ define, for $i\in \bbN$,
 \[ W'_i = \{ x \in W_i : |f_j(\pi(x))| \le r_j , j = 1, \dots, n\} . \] 

(1) The collection $\{ W'_{i} \}_{i \in \bbN}$ is an affinoid Stein exhaustion of $X'$.

(2) Note that, for each $i \in \bbN$, the $K$-affinoid space $W'_i$ is a Weierstrass domain of $W_i$. Let $\{ X_i' \}_{i \in \bbN}$ be affinoid Stein exhaustion of $X'$. 
Then, there is an increasing function $\tau \colon \bbN \to \bbN$ such that, for each $i \in \bbN$, $X'_i$ is a Weierstrass domain in $W'_{\tau(i)}$. For the latter note that it suffices that $X_i'$ is contained in $W'_{\tau(i)}$ because the restriction map $\cO(X')\to \cO(X'_i)$ has a dense image, thus \emph{a fortiori} $\cO(W'_i)\to \cO(X'_i)$ has a dense image.
In particular, the affinoid Stein exhaustion $\{ W_{\tau(i)}\}_{i \in \bbN}$ does the job.
\end{proof}

\begin{lemma} \label{Prop:RemmertFactorizationWeierstrassDomain} Under the hypotheses of \cref{Prop:RemmertAffinoidBase} let $(S, \sigma, \tilde{\pi})$ be  the Remmert factorization  of $\pi$ and $Y' \subset Y$ a Weierstrass domain.   Then $(S \times_Y Y', \sigma_{\rvert X \times_Y Y'}, \tilde{\pi}_{\rvert S \times_Y Y'})$ is the Remmert factorization of $\pi_{\rvert X \times_X Y'}$.
\end{lemma}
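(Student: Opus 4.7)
The strategy is to verify directly the defining axioms RF$_1$ and RF$_2$ for the triple $(S', \sigma', \tilde{\pi}')$, where
\[ S' := S \times_Y Y' = \tilde{\pi}^{-1}(Y'), \quad X' := X \times_Y Y' = \pi^{-1}(Y'), \quad \sigma' := \sigma_{\rvert X'}, \quad \tilde{\pi}' := \tilde{\pi}_{\rvert S'}.\]
The only genuine geometric input needed is that $S'$ is Stein; this is furnished by \cref{Lemma:PreimageWeierstrassDomainWExhausted} applied to the morphism $\tilde{\pi} \colon S \to Y$, since $S$ is Stein, $Y$ is affinoid, and $Y' \subset Y$ is a Weierstrass domain.

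For RF$_1$, I would check the four conditions on $\sigma'$ one by one. Properness is preserved under base change. Surjectivity is immediate: given $s' \in S'$ with image $s \in S$, the surjectivity of $\sigma$ furnishes some $x \in \sigma^{-1}(s)$, and the identity $\pi(x) = \tilde{\pi}(s) \in Y'$ then forces $x \in X'$ with $\sigma'(x) = s'$. Since $X' \to X$ and $S' \to S$ are inclusions of analytic domains, the fibers of $\sigma'$ coincide with those of $\sigma$ lying over $S'$, hence are geometrically connected. Finally, for the isomorphism $(\sigma')^\sharp \colon \cO_{S'} \iso \sigma'_\ast \cO_{X'}$ one only needs the tautological observation that, for any G-open $U \subset S'$, one has $\sigma^{-1}(U) = (\sigma')^{-1}(U) \subset X'$, so that
\[ (\sigma_\ast \cO_X)(U) = \cO_X(\sigma^{-1}(U)) = \cO_{X'}((\sigma')^{-1}(U)) = (\sigma'_\ast \cO_{X'})(U); \]
thus the restriction of the isomorphism $\sigma^\sharp$ to the analytic domain $S'$ is precisely $(\sigma')^\sharp$.

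With RF$_1$ in hand, RF$_2$ follows by a formal descent argument. Given a morphism $\tau \colon X' \to T$ set-theoretically constant on the fibers of $\sigma'$, the underlying continuous map $|\tau|$ descends uniquely to a continuous map $|\tilde{\tau}| \colon |S'| \to |T|$, since $\sigma'$ is proper surjective and therefore a topological quotient. To upgrade this to a morphism of $K$-analytic spaces, for each G-open $V \subset T$ and $g \in \cO_T(V)$, the section $\tau^\sharp g \in \cO_{X'}((\sigma')^{-1}(\tilde{\tau}^{-1}(V)))$ corresponds, via the isomorphism $(\sigma')^\sharp$, to a section of $\cO_{S'}$ over $\tilde{\tau}^{-1}(V)$; declaring this section to be $\tilde{\tau}^\sharp g$ yields the desired morphism, with uniqueness and compatibility with $\tau = \tilde{\tau} \circ \sigma'$ both automatic. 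No step presents a real obstacle: the argument reduces to bookkeeping about restriction of sheaves to the analytic domain $S' \subset S$, and the single nontrivial external ingredient is \cref{Lemma:PreimageWeierstrassDomainWExhausted}.
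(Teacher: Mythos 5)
Your verification of RF$_1$ is essentially fine: $S\times_Y Y'=\tilde\pi^{-1}(Y')$ is Stein by \cref{Lemma:PreimageWeierstrssDomainWExhausted}, $X\times_Y Y'=\sigma^{-1}(S\times_Y Y')$, and properness, surjectivity, connectedness of fibers and the isomorphism on structure sheaves all restrict as you say. The gap is in RF$_2$, which you dismiss as ``bookkeeping''. In the Berkovich category a morphism of $K$-analytic spaces is \emph{not} the same thing as a continuous map of underlying topological spaces together with a homomorphism of structure sheaves: one must exhibit compatible morphisms of affinoid spaces, i.e.\ show that (a suitable G-cover of) the preimage of each affinoid domain $V\subset T$ under the descended map is an analytic domain of $S\times_Y Y'$ and that the induced maps come from bounded homomorphisms of affinoid algebras. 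Concretely, for $V\subset T$ an affinoid domain, $\tau^{-1}(V)$ is a union of fibers of $\sigma'$, so its image $\sigma'(\tau^{-1}(V))$ is a compact subset of $S\times_Y Y'$ with the right preimage --- but nothing in your argument shows this compact set is an analytic domain, nor that the resulting map to $V=\cM(B)$ is induced by a bounded homomorphism $B\to\cO(\sigma'(\tau^{-1}(V)))$. This is exactly the substantive content of a Stein-factorization/Remmert-reduction statement (compare \cite[proposition 3.3.7]{Berkovich90}), not a formal consequence of RF$_1$; note also that RF$_2$ for $\sigma$ itself cannot be invoked, since a morphism defined only on $X\times_Y Y'$ does not extend to $X$.

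The paper avoids this issue by never verifying RF$_2$ for $S\times_Y Y'$ directly: it applies \cref{Prop:RemmertAffinoidBase} to $\pi'\colon X\times_Y Y'\to Y'$ (legitimate because $X\times_Y Y'$ is a closed subspace of the holomorphically convex space $X$, hence holomorphically convex, and $Y'$ is affinoid) to get a Remmert factorization $(S',\sigma',\tilde\pi')$ in which RF$_2$ holds by construction, and then identifies $S'$ with $S\times_Y Y'$: \cref{Lemma:RemmertFactorizationFactorsHolSep} gives a morphism $\phi\colon S'\to S$ with $\sigma_{\rvert X\times_Y Y'}=\phi\circ\sigma'$, the Open Mapping theorem for Fr\'echet spaces shows $\phi^\sharp\colon\cO(S\times_Y Y')\to\cO(S')$ is an isomorphism of Fr\'echet algebras, and \cref{Prop:YonedaStein} upgrades this to an isomorphism of Stein spaces. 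To repair your proof you should either follow this comparison route, or genuinely prove the descent statement for $\sigma'$ (reduction to affinoid $T$, analyticity of $\sigma'(\tau^{-1}(V))$, boundedness), which amounts to redoing the construction in \cref{Prop:RemmertAffinoidBase}.
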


\begin{proof} By definition of Weierstrass domain, there are $f_1, \dots, f_n \in \Gamma(Y, \cO_Y)$ and $r_1, \dots, r_n > 0$ such that $Y'= \{ y \in Y : |f_i(y)| \le r_i \textup{ for all } i = 1, \dots, r\}$. In particular,
\[ X' := X \times_Y Y' = \{ x \in X : |f_i(\pi(x))| \le r_i \textup{ for all } i = 1, \dots, r \} .\]
The $K$-analytic space $X'$ is holomorphically convex. Indeed, given a compact subset of $C \subset X'$, it follows from the above description that $\hat{C}_X$ is contained in $X'$. On on the other hand, $\hat{C}_X$ is compact because $X$ is holomorphically convex and $\hat{C}_{X'}$ is closed in $\hat{C}_{X} \cap X' = \hat{C}_X$, thus $\hat{C}_{X'}$ is compact.
The morphism $\pi' \colon X' \to Y'$ is without boundary and $Y'$ is affinoid. Therefore, by \cref{Prop:RemmertAffinoidBase}, the Remmert factorization $(S', \sigma', \tilde{\pi}')$ of $\pi'$ exists. By \cref{Lemma:RemmertFactorizationFactorsHolSep}, there is a unique morphism $\phi \colon S' \to S$ such that $\sigma_{\rvert X'}= \phi \circ \sigma'$. By the Open Mapping theorem of Fr\'echet spaces \cite[corollary 8.7]{SchneiderFunctionalAnalysis} the continuous isomorphisms $\sigma_{\rvert X'}^\sharp \colon \cO(S \times_Y Y') \to \cO(X')$ and $\sigma'^\sharp \colon \cO(S') \to \cO(X')$
induced respectively by the proper morphisms $\sigma_{\rvert X'}$ and $\sigma'$ are homeomomorphisms. It follows that the homomorphism $\phi^\sharp \colon \cO(S \times_Y Y') \to \cO(S')$ induced by $\phi$ is an isomorphism of Fr\'echet algebras. Since $S \times_Y Y'$ and $S'$ are Stein (\cref{Lemma:PreimageWeierstrssDomainWExhausted}), \cref{Prop:YonedaStein} implies that $\phi$ is an isomorphism.
\end{proof}

\begin{proof}[{Proof of \cref{thm:Remmert}}] Let $\{ Y_i \}_{i \in \bbN}$ be an affinoid Stein exhaustion of $Y$.  For each $i \in \bbN$, set $X_i := \pi^{-1}(Y_i)$. By \cref{Prop:RemmertAffinoidBase}, the Remmert factorization $(S_i, \sigma_i, \tilde{\pi}_i)$ of $\pi \colon X_i \to Y_i$ exists. Moreover, by \cref{Prop:RemmertFactorizationWeierstrassDomain}, the Remmert factorization $(S_i, \sigma_i, \tilde{\pi}_i)$ is canonically isomorphic to $(S'_i, \sigma_{i + 1 \rvert S'_i}, \tilde{\pi}_{i + 1 \rvert S'_i})$, where $S'_i := \tilde{\pi}^{-1}(Y_{i+1})$. In what follows, we identify $S_i$ with $S_i'$ through the preceding canonical isomorphism. Thanks to \cref{Lemma:PreimageWeierstrssDomainWExhausted}, construct by induction for each $i \in \bbN$, a W-exhaustion by Weierstrass domain $\{ S_{ij} \}_{j \in \bbN}$ of $S_i$ such that, for $i' \le i$ and $j \in \bbN$, the affinoid space $S_{i'j}$ is a Weierstrass domain in $S_{ij}$. It follows that $\{ S_{ii}\}_{i \in \bbN}$ is an affinoid Stein exhaustion of the $K$-analytic space $S = \bigcup_{i \in \bbN} S_i$. Properties (RF$_1$) and (RF$_2$) are deduced from those of $\sigma_i$, for $i \in \bbN$. Analogously, the morphism $\pi$ is seen to be without boundary.
\end{proof}

\section{Approximating essential singularities by poles: the analytic case}

Let $R$ be the ring of integers of a field $K$ complete with respect to a on-trivial non-Archimedean absolute value.

\subsection{Statements} \label{sec:StatementsDensityPolarSection} Given a coherent sheaf $F$ on a $K$-analytic space $X$ the topology on its global sections  is defined as follows. If $X = \cM(A)$ is the Banach spectrum  of a $K$-affinoid algebra $A$, then the $A$-module $\Gamma(X, F)$ is finitely generated. The topology on $\Gamma(X, F)$ is  induced via some surjective homomorphism $A^n \to \Gamma(X, F)$  of $A$-modules by the one on $A$. It does not depend on the choice of said surjection. When $X$ is arbitrary, the topology on $\Gamma(X, F)$ is the coarsest one for which, given an affinoid domain $V \subset X$, the restriction map $\Gamma(X, F) \to \Gamma(V, F)$ is continuous. If $\cV$ is an affinoid $G$-cover of $X$, the restriction map induces a homeomorphism
\[ \Gamma(X, F) \stackrel{\sim}{\too} \projlim_{V \in \cV} \Gamma(V, F),\]
the target being endowed with the usual topology on the projective limit. If $X$ is reduced and $F = \cO_X$, then the so-defined topology coincides with the topology of uniform convergence over compact subsets of $X$. The main result of this section is the analogue of \cref{Thm:DensityMeromorphicSectionsModel} in the context of analytic spaces. In order to state it we need to construct the sheaf of functions with polar singularities. We begin with the following general lemma, which will be used repeatedly:

\begin{lemma} \label{lemma:InjectiveLimitSheavesGTopology} Let $X$ be a compact analytic space and $(F_i, \phi_{ij})_{i, j \in I}$ a directed system of sheaves of sets for the $G$-topology on $X$. Then, the canonical map is bijective
\[ \injlim_{i \in I} \Gamma(X, F_i) \stackrel{\sim}{\too} \Gamma(X, \injlim_{i \in I} F_i).\]
\end{lemma}

\begin{proof} Strictly speaking, one cannot apply directly \cite[\href{https://stacks.math.columbia.edu/tag/009F}{lemma 009F (4)}]{stacks-project} because the statement there is for a honest topological space, instead of a mere site. However, the proof goes through \emph{verbatim} replacing open subsets by compact analytic domains and open covers by $G$-covers.
\end{proof}

We are now in position to define the sheaf of functions with polar singularities:

\begin{lemma} \label{Lemma:MeromorphicSectionsAnalytic} Let $D$ be an effective Cartier divisor in a $K$-analytic space $X$ and $F$ a coherent sheaf of $\cO_X$-modules. Let $j \colon X\smallsetminus D \to X$ be the open immersion. 
\begin{enumerate}
\item If $X$ is compact, then the maps $F(dD) \to F(\ast D)$ induce an isomorphism
\[ \injlim_{d \in \bbN} \Gamma(X, F(dD)) \stackrel{\sim}{\too} \Gamma(X, F(\ast D)).\]

\item The restriction map $F(\ast D) \to j_\ast j^\ast F$ is injective.
\end{enumerate}
\end{lemma}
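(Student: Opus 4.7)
For part (1), my plan is to work locally and pass to stalks. Since the statement is local on $X$, I would reduce to an affinoid neighborhood $V$ of a point $x \in D$ on which $D \cap V$ is cut out by a single nonzerodivisor $f \in \Gamma(V, \cO_X)$, and verify injectivity at stalks of points of $D$ (on $X \smallsetminus D$ both sides are canonically isomorphic to $j^\ast F$). In this local picture the map $F(\ast D)_x \to (j_\ast j^\ast F)_x$ becomes the canonical arrow from the localization $F_x[f_x^{-1}]$ to $\injlim_{V' \ni x} \Gamma(V' \smallsetminus D, F)$. Injectivity then reduces to showing that if a section $s$ of the coherent sheaf $F$ on $V'$ restricts to zero on $V' \smallsetminus D$, then $f^N s$ vanishes in a neighborhood of $x$ for some $N$. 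I would derive this from the Noetherianity of the local ring $\cO_{X,x}$ (on good $K$-analytic spaces) together with finite generation of $F_x$: the support of $s$ lies in $D$, so $f$ belongs to the radical of the annihilator of $s$.

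For part (2) I first dispatch injectivity. Under the $D$-torsion-free hypothesis, each transition map $s_D \otimes \id \colon F(dD) \to F((d+1)D)$ is injective; since filtered colimits of sheaves preserve monomorphisms, every canonical map $F(dD) \hookrightarrow F(\ast D)$ is injective as a sheaf map, and by left-exactness of $\Gamma(X, -)$ also after taking global sections. The injectivity of
\[ \injlim_d \Gamma(X, F(dD)) \too \Gamma(X, F(\ast D)) \]
follows immediately from this.

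For surjectivity I would take $t \in \Gamma(X, F(\ast D))$ and use the construction of $F(\ast D)$ as the sheafification of the presheaf colimit $V \mapsto \injlim_d \Gamma(V, F(dD))$. Local representability of $t$ yields an open cover $\{V_i\}$ of $X$ and sections $t_i \in \Gamma(V_i, F(d_i D))$ mapping to $t_{|V_i}$ under $F(d_i D) \hookrightarrow F(\ast D)$. Compactness of $X$ lets me take the cover finite. Setting $d := \max_i d_i$ and replacing each $t_i$ by $s_D^{d - d_i} t_i \in \Gamma(V_i, F(dD))$, I have sections of the same sheaf $F(dD)$. On each overlap $V_i \cap V_j$ the restrictions $t_{i | V_i \cap V_j}$ and $t_{j | V_i \cap V_j}$ map to $t_{|V_i \cap V_j}$ in $F(\ast D)(V_i \cap V_j)$; by the injectivity of $\Gamma(V_i \cap V_j, F(dD)) \hookrightarrow \Gamma(V_i \cap V_j, F(\ast D))$ they agree already in $\Gamma(V_i \cap V_j, F(dD))$. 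The sheaf property of $F(dD)$ then glues them into a global section in $\Gamma(X, F(dD))$ mapping to $t$.

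The hard part, as often when comparing the algebraic and analytic worlds, is the commutative-algebra claim at the core of part (1): a section of a coherent sheaf whose restriction to the complement of a principal Cartier divisor vanishes must be locally annihilated by a power of the defining equation. On good $K$-analytic spaces this follows from the Noetherianity of the stalks, but some extra care, likely a reduction to the strictly $K$-affinoid case by base change, may be required in the non-good setting.
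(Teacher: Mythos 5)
Your reduction in part (1) is the same as the paper's (localize to an affinoid with $D$ principal, and show that a section of $F$ killed on the complement of $\{f=0\}$ is annihilated by a power of $f$), and your part (2) is correct: it is just a hands-on gluing proof of the quasi-compactness statement that the paper delegates to a Stacks Project lemma (injective transition maps plus quasi-compactness imply that global sections commute with the filtered colimit). The problem is the justification of the key claim in part (1). You assert that, the support of $s$ being contained in $D$, ``$f$ belongs to the radical of the annihilator of $s$,'' and that this follows from Noetherianity of $\cO_{X,x}$ and finite generation of $F_x$. That inference is exactly the point that needs proof, and Noetherianity does not supply it: the hypothesis only gives \emph{set-theoretic} vanishing of $s$ at the analytic points outside $D$, and passing from ``the analytic zero locus of $\Ann(s)$ is contained in $\{f=0\}$'' to ``$f\in\sqrt{\Ann(s)}$'' is a Nullstellensatz-type statement relating points of the Berkovich spectrum to radicals of ideals. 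This is precisely the ingredient the paper invokes: working on the affinoid $X=\cM(A)$ with $J=\Ann_A(t)$, the support of $t$ is $\cM(A/J)$, the function $f$ vanishes identically on it, and then Berkovich's result (\cite[proposition 2.1.4 (i)]{Berkovich90}, i.e.\ an element of an affinoid algebra whose absolute value vanishes at every point of the spectrum is nilpotent, equivalently the spectral seminorm detects nilpotents) gives $f^d\in J$, hence $f^d t=0$. Without this (or an analogue of R\"uckert's Nullstellensatz for the analytic local rings, which again is not a consequence of Noetherianity alone), your argument has a genuine gap at its crux -- which you yourself flag as ``the hard part'' but leave unresolved.

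A secondary remark: your worry about non-good points is unnecessary and points to a less convenient setup. Injectivity of $F(\ast D)\to j_\ast j^\ast F$ can be checked on (G-)local pieces, so one may simply take $X$ affinoid from the start; then it is cleaner to argue with the global finite $A$-module $\Gamma(X,F)$ and the global annihilator, as the paper does, rather than with stalks, and the affinoid-algebra Nullstellensatz cited above applies directly.
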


\begin{proof} (1) This follows from \cref{lemma:InjectiveLimitSheavesGTopology}. (2) The statement is local on $X$, thus we may suppose $X$ affinoid and $I_D$ is principal. Let $A = \Gamma(X, \cO_X)$, $f \in A$ a generator of $\Gamma(X, I_D)$, and $t \in \Gamma(X, F(\ast D)$. According to (1) the section $t$ comes from one of $F(iD)$ for some $i \in \bbN$ that we  denote again by $t$. Suppose that $t$ vanishes identically on $U = X \smallsetminus D$. Let $Y$ be the support of $t$, that is, the support of the coherent $\cO_X$-submodules $t \cO_X$ of $F$. If $J$ is the annihilator ideal of $t \cO_X$, then $Y = \cM(A/J)$. By assumption $Y$ is set-theoretically contained in $D$, hence $f$ vanishes identically on $Y$. According to \cite[proposition 2.1.4 (i)]{Berkovich90}, there is a positive integer $d \ge 1$ such that $f^d \in J$. In other words, $t$ lies in the kernel of the map $\Gamma(X, F(iD)) \to \Gamma(X, F((i + d)D))$, $s \mapsto f^d s$. Therefore the image of $t$ in $\Gamma(X, F(\ast D))$ vanishes.
\end{proof}

 We are now in position to state the main result.  Recall that an $\cO_X$-module over a ringed space $X$ is said to be \emph{semi-reflexive} if the homomorphism  $F \to F^{\vee\vee}$ is injective. 
 
\begin{theorem} \label{Thm:ComplementDivisorHolomorphicallyConvex} Let $D$ be an effective Cartier divisor in a separated holomorphically convex $K$-analytic space $X$. Assume there is a morphism without boundary $X \to S$ of $K$-analytic spaces with $S$ Stein. Then for any semi-reflexive coherent $\cO_{X}$-module~$F$ the natural map
\[ \Gamma(X, F(\ast D)) \too \Gamma(X \smallsetminus D, F)\]
is injective, and has a dense image.
\end{theorem}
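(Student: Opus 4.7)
The plan is to combine the Remmert factorization theorem \cref{thm:Remmert} with the formal density result \cref{Thm:DensityMeromorphicSectionsModel}, applied affinoid-by-affinoid along a Stein exhaustion, and then to glue. Via \cref{thm:Remmert} one factors $\pi \colon X \to S$ as $X \xrightarrow{\sigma} T \xrightarrow{\tilde{\pi}} S$ with $T$ Stein and $\sigma$ proper; since the statement does not involve $S$, one may replace $S$ by $T$ and assume $\pi$ itself is proper. Injectivity then follows by taking global sections in the sheaf-level injection $F(\ast D) \hookrightarrow j_\ast j^\ast F$ of \cref{Lemma:MeromorphicSectionsAnalytic}~(1).

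For density, choose an affinoid Stein exhaustion $\{S_i\}_{i \in \bbN}$ of $S$ and set $X_i := \pi^{-1}(S_i)$, $D_i := D \cap X_i$. Properness of $\pi$ makes each $X_i$ a compact analytic domain of $X$ and $D_i$ an effective Cartier divisor in $X_i$. Since the Fr\'echet topology on $\Gamma(X \smallsetminus D, F)$ is that of uniform convergence on compact subsets, and every compact of $X \smallsetminus D$ lies in some $X_i \smallsetminus D_i$, it suffices to prove, for each~$i$, that the restriction $\Gamma(X, F(\ast D)) \to \Gamma(X_i \smallsetminus D_i, F)$ has dense image.

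Fix $i$ and apply \cref{Thm:FormalModelsOfCartierDivisors} to produce an admissible formal model $\mathfrak{X}_i$ of $X_i$, proper over an admissible affine model $\mathfrak{S}_i$ of $S_i$, together with an admissible effective Cartier divisor $\mathfrak{D}_i \subset \mathfrak{X}_i$ of generic fibre $D_i$. The main obstacle is to extend $F_{\rvert X_i}$ to a coherent $\cO_{\mathfrak{X}_i}$-module $\mathfrak{F}_i$ which is flat over $R$ and whose restriction to $\mathfrak{D}_i$ is also flat over $R$. This is precisely where semi-reflexivity enters, as flagged in the introduction: starting from the inclusion $F \hookrightarrow F^{\vee \vee}$, one first extends the reflexive hull $F^{\vee\vee}$ to a formal model, improves it by further admissible blow-ups preserving $R$-flatness of $\mathfrak{D}_i$ (the role of \cref{Prop:StrictTransformBlowUpIntersectionTwoCartierFormal}) until the restriction to $\mathfrak{D}_i$ becomes $R$-flat, and then realises $F$ as an $R$-flat coherent subsheaf whose restriction to $\mathfrak{D}_i$ is again $R$-flat, by killing $R$-torsion where needed without touching the generic fibres.

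Once $\mathfrak{F}_i$ is in place, \cref{Thm:DensityMeromorphicSectionsModel} yields density of $\Gamma(\mathfrak{X}_i, \mathfrak{F}_i(\ast \mathfrak{D}_i))$ in $\Gamma(\mathfrak{X}_i \smallsetminus \mathfrak{D}_i, \mathfrak{F}_i)$ for the prodiscrete topology. Inverting a topologically nilpotent $\varpi \in R \smallsetminus \{0\}$ translates this, on generic fibres, to density of $\Gamma(X_i, F(\ast D_i))$ in sections of $F$ over the Raynaud generic fibre of $\mathfrak{X}_i \smallsetminus \mathfrak{D}_i$, which is a Laurent-type compact analytic domain of $X_i \smallsetminus D_i$. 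Further admissible blow-ups, still carefully preserving $R$-flatness of $\mathfrak{D}_i$, enlarge this compact domain so that the collection of such compacts exhausts $X_i \smallsetminus D_i$. Finally, every $g \in \Gamma(X_i, F(\ast D_i))$ lies in some $\Gamma(X_i, F(dD_i))$, and because $F(dD)$ is coherent, $\pi$ is proper and $S$ is Stein, the restriction $\Gamma(X, F(dD)) = \Gamma(S, \pi_\ast F(dD)) \to \Gamma(S_i, \pi_\ast F(dD)) = \Gamma(X_i, F(dD_i))$ has dense image, which completes the approximation.
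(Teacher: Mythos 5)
Your route coincides with the paper's: Remmert factorization (\cref{thm:Remmert}) to make $X$ proper over a Stein space, exhaustion by $X_i=\pi^{-1}(S_i)$, the proper-over-affinoid case via formal models, \cref{Thm:DensityMeromorphicSectionsModel} and enlargement of the Raynaud generic fibre by admissible blow-ups (this middle block is exactly the paper's \cref{Thm:DensityMeromorphicSectionsAnalytic}, proved using \cref{Thm:FormalModelsOfCartierDivisors}, \cref{Lemma:ExhaustionByBlowUp} and \cref{Prop:StrictTransformBlowUpIntersectionTwoCartierFormal}), and finally density of $\Gamma(X,F(dD))\to\Gamma(X_i,F(dD))$ via properness of $\pi$ and the Stein exhaustion of $S$. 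The injectivity argument via \cref{Lemma:MeromorphicSectionsAnalytic}~(1) and left exactness of global sections is fine.

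Two steps are left too vague to count as complete. First, the extension of $F_{\rvert X_i}$ to an $R$-flat coherent sheaf on the formal model whose restriction to the divisor is $R$-flat: your proposal to achieve this ``by further admissible blow-ups'' is not justified (it is unclear why blowing up would flatten the restriction to the strict transform while keeping an admissible Cartier divisor with the correct generic fibre), and it is not what the paper does. The paper's device is algebraic: extend arbitrarily to an $R$-flat $E$, pass to $E^{\vee\vee}$, whose restriction to an admissible Cartier divisor is \emph{automatically} $R$-flat (\cref{Lemma:RestrictionOfDualIsFlatFormal}), and then saturate (\cref{Lemma:SaturationOfSemiReflexiveFormal}, \cref{Prop:ExtendingSemiReflexiveModules}); note that the resulting sheaf only \emph{contains} $F$ on the generic fibre, agreeing with it on the complement of $D$ and after allowing poles along $D$ --- your phrase ``without touching the generic fibres'' glosses over exactly the point that makes the density transfer legitimate. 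Second, \cref{Thm:FormalModelsOfCartierDivisors} and Raynaud's theory require \emph{strict} spaces, whereas $X$ and the affinoid Stein exhaustion of $S$ need not be strict; the paper first extends scalars to $K_r$ and then descends density via \cref{Lemma:DescendingDensityAlongScalarExtensions}, a reduction absent from your sketch. Neither point alters the architecture, but the first is precisely where semi-reflexivity does its work and needs the paper's (or an equivalent) argument.
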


It is sometimes useful to combine \cref{Thm:ComplementDivisorHolomorphicallyConvex} with the following non-density result. To state it, say that an open subset $U$ in a $K$-analytic space $X$ is \emph{scheme-theoretically dense} if the homomorphism of $\cO_X$-modules $\cO_X \to j_\ast \cO_U$ is injective, where $j \colon U \to X$ is the open immersion. If this is the case, for any coherent $\cO_X$-module $F$, the natural morphism $F^\vee \to j_\ast j^\ast F^\vee$ is injective. In particular, if $F$ is semi-reflexive, then $F \to j_\ast j^\ast F$ is injective.

\begin{proposition} \label{Thm:ClosednessImage} Let $Z$ be a closed analytic subspace of a Hausdorff $K$-analytic space $X$.  If $X \smallsetminus Z$ is scheme-theoretically dense, then for any  semi-reflexive coherent $\cO_{X}$-module $F$ the restriction map \[\rho \colon \Gamma(X, F) \too \Gamma(X \smallsetminus Z, F)\] is a homeomorphism onto its image and its image is a closed subset.
\end{proposition}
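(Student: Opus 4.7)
A section $s \in \Gamma(X, F)$ embeds, by semi-reflexivity, into $\Gamma(X, F^{\vee\vee}) = \Hom_{\cO_X}(F^\vee, \cO_X)$ as a morphism $\tilde s\colon F^\vee \to \cO_X$. If $s_{|U} = 0$ (where $U := X \smallsetminus Z$), then $\tilde s_{|U} = 0$, so for every local section $t$ of $F^\vee$ the function $\tilde s(t) \in \cO_X$ vanishes on $U$. Scheme-theoretic density of $U$ means $\cO_X \hookrightarrow j_\ast \cO_U$ is injective, forcing $\tilde s(t) = 0$, hence $\tilde s = 0$ and $s = 0$.

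\textbf{Fr\'echet setup and reduction to the affinoid case (Step 2).} Countability at infinity of $X$ makes $\Gamma(X, F)$ and $\Gamma(U, F)$ Fr\'echet spaces, and $\rho$ is continuous. By the open mapping theorem it suffices to prove $\rho$ has closed image---this then automatically gives the topological embedding. Exhausting $X$ by compact affinoid analytic domains $X_i$ and noting that scheme-theoretic density is local (each $X_i \cap U$ is scheme-theoretically dense in $X_i$), it suffices to prove the statement on each $X_i$: a convergent sequence $\rho(s_n) \to t$ in $\Gamma(U, F)$ restricts to convergent sequences in each $\Gamma(X_i \cap U, F)$, which lift to convergent sequences in $\Gamma(X_i, F)$ with compatible limits, and these glue to a global limit.

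\textbf{Reduction from semi-reflexive $F$ to $F = \cO_X$ (Step 3).} Let $X = \cM(A)$ and $M = \Gamma(X, F)$, a finite $A$-module; semi-reflexivity gives $M \hookrightarrow M^{\vee\vee} = \Hom_A(M^\vee, A)$. From a finite presentation $A^p \to A^q \to M^\vee \to 0$ one has $\Hom_A(M^\vee, N) = \ker(N^q \to N^p)$ functorially in $N$. Assuming the statement for $F = \cO_X$---i.e., $A \hookrightarrow \Gamma(U, \cO_X)$ is a closed topological embedding---taking kernels of the induced continuous maps on powers produces a closed topological embedding $M^{\vee\vee} \hookrightarrow \Hom_A(M^\vee, \Gamma(U, \cO_X))$. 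Moreover $M \hookrightarrow M^{\vee\vee}$ is itself a closed topological embedding, since both are finite $A$-modules carrying the unique Banach topology and finitely generated submodules are closed. The factorization $M \hookrightarrow \Gamma(U, F) \hookrightarrow \Hom_A(M^\vee, \Gamma(U, \cO_X))$ (the second inclusion being injective via semi-reflexivity on $U$ combined with the fact that restrictions from $M^\vee$ generate $F^\vee$ on each affinoid subdomain of $U$) exhibits $M$ as closed in the ambient space; pulling back under the continuous second map then places $M$ as a closed subspace of $\Gamma(U, F)$ with the intrinsic topology.

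\textbf{The case $F = \cO_X$ via formal models (Step 4 --- the main obstacle).} Let $J \subset A$ be the defining ideal of $Z$. Scheme-theoretic density forces $J$ to avoid every associated prime of $A$; by prime avoidance (finitely many associated primes, $A$ Noetherian), $J$ contains a non-zerodivisor $f$. Set $D := V(f)$, a principal effective Cartier divisor on $X$ containing $Z$, so that $X \smallsetminus D \subset U$; it suffices to prove $A \hookrightarrow \Gamma(X \smallsetminus D, \cO_X)$ is a closed topological embedding, since $A$ is then the preimage of the closed subspace $A \subset \Gamma(X \smallsetminus D, \cO_X)$ under the continuous restriction $\Gamma(U, \cO_X) \to \Gamma(X \smallsetminus D, \cO_X)$, with the topology squeezed to the intrinsic Banach one. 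Applying \cref{Thm:FormalModelsOfCartierDivisors}, choose a quasi-compact separated admissible formal $R$-scheme $\mathfrak X$ with $\mathfrak X_\eta = X$ together with an admissible effective Cartier divisor $\mathfrak D \subset \mathfrak X$ whose generic fibre is $D$. Since $\cO_{\mathfrak X}$ and $\cO_{\mathfrak X \rvert \mathfrak D}$ are flat over $R$ by admissibility, \cref{Lemma:MeromorphicInjectsInHolomorphicFormal}(4) applies: the map $\Gamma(\mathfrak X, \cO_{\mathfrak X}) \hookrightarrow \Gamma(\mathfrak X \smallsetminus \mathfrak D, \cO_{\mathfrak X})$ is an isometric embedding with closed image. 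Tensoring with $K$---passing to Raynaud generic fibres and using $(\mathfrak X \smallsetminus \mathfrak D)_\eta = X \smallsetminus D$---yields $A \hookrightarrow \Gamma(X \smallsetminus D, \cO_X)$ as a closed topological embedding, concluding the proof. The main obstacle is this descent step: one must verify carefully that an isometric embedding of complete torsion-free $R$-modules with closed image induces a topological embedding onto a closed subspace between the associated Banach/Fr\'echet $K$-vector spaces after inverting a topologically nilpotent $\varpi$, and that the formal open subscheme $\mathfrak X \smallsetminus \mathfrak D$ has the expected Raynaud generic fibre---these compatibilities carry the weight of the proof.
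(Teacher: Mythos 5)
Your overall strategy (reduce to a compact/affinoid piece, then use a formal model with a Cartier divisor and \cref{Lemma:MeromorphicInjectsInHolomorphicFormal}) is in the right family, but two of your reductions do not hold. First, the reduction in Step 2 to the \emph{affinoid} case is not available: a quasi-separated $K$-analytic space countable at infinity is exhausted by \emph{compact analytic domains}, which are in general not affinoid (think of $\bbP^{1,\an}_K$, or of the analytification of a non-quasi-affine variety, which is precisely where this proposition gets applied). Your Steps 3 and 4 are thoroughly affinoid-specific: they use the global Noetherian ring $A=\Gamma(X,\cO_X)$, prime avoidance in $A$ to produce a Cartier divisor $D\supseteq Z$, and finite $A$-modules $M$, $M^\vee$. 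On a general compact domain none of this exists, and $Z$ need not be contained in any Cartier divisor of $X$ without modifying $X$; this is exactly why the paper, after reducing to strict compact $X$ (\cref{Lemma:ClosedenessExtensionOfScalars} -- a strictness reduction you also need before invoking \cref{Thm:FormalModelsOfCartierDivisors}), blows up $X$ in $Z$, replaces $\pi^\ast F$ by its image in the double dual to restore semi-reflexivity, and descends via $\pi_\ast$ and the open mapping theorem, handling general semi-reflexive $F$ through \cref{Prop:ExtendingSemiReflexiveModules} rather than by a reduction to $F=\cO_X$.

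Second, and more seriously, the identification $(\mathfrak{X}\smallsetminus\mathfrak{D})_\eta = X\smallsetminus D$ in Step 4 is false: the Raynaud generic fiber of the open formal subscheme is only the compact ``tube'' of points whose reduction avoids $\mathfrak{D}$. For $\mathfrak{X}=\Spf R\{t\}$ and $\mathfrak{D}=V(t)$ it is $\{|t|=1\}$, not the punctured disc $\{0<|t|\le 1\}$. So inverting $\varpi$ in \cref{Lemma:MeromorphicInjectsInHolomorphicFormal} (4) only gives a closed topological embedding of $A$ into $\Gamma(C',\cO_X)$ for a single compact domain $C'\subsetneq X\smallsetminus D$, and closedness of the image of $A$ in the Fr\'echet space $\Gamma(X\smallsetminus D,\cO_X)$ does not follow: your preimage argument would need the restriction $\Gamma(X\smallsetminus D,\cO_X)\to\Gamma(C',\cO_X)$ to be injective, which can fail (a section supported on an embedded component of $U$ lying off the tube restricts to zero on $C'$). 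The repair is exactly what the paper does in its Step 1: establish the embedding statement for a \emph{cofinal} family of tubes -- given any compact $C\subset X\smallsetminus D$, blow up the model in $I_{\mathfrak{D}}+\varpi\cO_{\mathfrak{X}}$ with $|\varpi|$ small (\cref{Lemma:ExhaustionByBlowUp}), which keeps the strict transform an admissible Cartier divisor (\cref{Prop:StrictTransformBlowUpIntersectionTwoCartierFormal}) and makes the new tube contain $C$ -- and then conclude by the Fr\'echet/sequential limit argument you sketch in Step 2. As written, your Step 4 proves the statement only on one tube, not on $X\smallsetminus D$, and hence not on $X\smallsetminus Z$.
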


The rest of this section is devoted to the proof of these results.

\subsection{Extension of semi-reflexive modules} When $X$ is proper \cref{Thm:ComplementDivisorHolomorphicallyConvex} will be deduced from \cref{Thm:DensityMeromorphicSectionsModel} by passing to the generic fiber. However to do so we need to extend modules from the generic fiber to the formal scheme preserving the hypotheses needed to apply \cref{Thm:DensityMeromorphicSectionsModel}. To start with recall that coherent sheaves can be extended from the generic fiber to the formal scheme. More precisely, let $X$ be a quasi-compact admissible formal $R$-scheme and $F_K$ be a coherent $\cO_{X_\eta}$-module. Then there is a coherent $\cO_X$-module $F$ flat over $R$ and an isomorphism $F_\eta \cong F_K$. See \cite[lemma 2.2]{LutkebohmertProperness} in the discretely valued case and \cite[proposition 5.6]{BoschLutkebohmertFormalRigidI}, \cite[proposition 5.6]{AbbesEGR} or \cite[Corollary II.5.3.3]{FujiwaraKato} in the general case. Note that, for a coherent sheaf $F$ on a $X$, the natural homomorphism $ \Gamma(X, F) \otimes_R K \to \Gamma(X_\eta, F_\eta)$ is an isomorphism. The task here is to preserve the semi-reflexivity and the flatness on the Cartier divisor. We start with: 

\begin{lemma} \label{Lemma:AutomaticallySemiReflexive} Let $F$ be a coherent sheaf flat over $R$ on an admissible formal $R$-scheme $X$. If $F_\eta$ is semi-reflexive, then $F$ is semi-reflexive.
\end{lemma}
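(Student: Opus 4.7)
The plan is to reduce the injectivity of $F \to F^{\vee\vee}$ to that of $F_\eta \to F_\eta^{\vee\vee}$, combining exactness of the generic-fiber functor with the fact that flat $R$-modules over the valuation ring $R$ are torsion-free. Set $T := \ker(F \to F^{\vee\vee})$; the goal is to show $T = 0$.

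First, by \cref{Lemma:FinitePresentationAndFlatnessFormalVersion} the sheaf $F$ is finitely presented and $\cO_X$ is coherent, so $F^\vee = \cHom_{\cO_X}(F, \cO_X)$ and $F^{\vee\vee}$ are coherent $\cO_X$-modules, and consequently $T$ is a coherent subsheaf of $F$.

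Next, I would identify $(F^{\vee\vee})_\eta$ with $(F_\eta)^{\vee\vee}$. Working locally on an affine open $U = \Spf A$ with $M := F(U)$ finitely presented over $A$, the ring map $A \to A_K := A \otimes_R K$ is flat (it is the base change of the flat map $R \to K$), so the standard flat-base-change isomorphism for Hom of finitely presented modules gives
\[
\Hom_A(M, A) \otimes_A A_K \iso \Hom_{A_K}(M \otimes_A A_K, A_K).
\]
This globalises to an isomorphism $(F^\vee)_\eta \iso (F_\eta)^\vee$ compatible with biduality, and iterating yields the analogous identification $(F^{\vee\vee})_\eta \iso (F_\eta)^{\vee\vee}$. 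Since the generic-fiber functor is exact on coherent sheaves (again by flatness of $A \to A_K$), applying $(-)_\eta$ to the defining left-exact sequence of $T$ shows $T_\eta = \ker(F_\eta \to F_\eta^{\vee\vee}) = 0$, where the final equality is the semi-reflexivity hypothesis on $F_\eta$.

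Finally, I would promote $T_\eta = 0$ to $T = 0$. On an affine open $U = \Spf A$, the vanishing $T_\eta = 0$ translates into $T(U) \otimes_R K = 0$, i.e. every element of $T(U)$ is killed by some nonzero element of $R$. But $T(U)$ is a submodule of $F(U)$, and $F(U)$ is torsion-free over the valuation ring $R$ by the flatness assumption on $F$; hence $T(U) = 0$ and thus $T = 0$. The main point requiring verification is the commutation of the dual with passage to the generic fiber; once this is in place, the rest is formal.
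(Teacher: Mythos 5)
Your proof is correct and follows essentially the same route as the paper: after reducing to an affine piece, you use flatness of $A \to A\otimes_R K$ to identify the bidual with that of the generic fiber and then conclude from $R$-torsion-freeness of $F$ (flatness over the valuation ring). The paper phrases this via a commutative square $M \to M^{\vee\vee}$, $M\otimes_R K \to M^{\vee\vee}\otimes_R K$ rather than your kernel sheaf $T$, but the content is identical.
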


\begin{proof} The statement is local on $X$, thus one may suppose that $X = \Spf(A)$ is affine, where $A$ is a topologically finitely presented flat $R$-algebra. Let $M := \Gamma(X, F)$. Then $\Gamma(X_\eta, F_\eta) = M \otimes_R K$ and $\Gamma(X_\eta, F_\eta^{\vee\vee}) = (M \otimes_R K)^{\vee \vee}$.  Since $M$ is finitely generated and $K$ is flat over $R$, the natural homomorphism $M^{\vee \vee} \otimes_R K \to (M \otimes_R K)^{\vee \vee}$ is an isomorphism. Consider the following commutative diagram:
\begin{center}
\begin{tikzcd}
M \ar[r] \ar[d, "\alpha", hook] & M^{\vee\vee} \ar[d] \\
M \otimes_R K \ar[r, "\beta", hook] & M^{\vee\vee} \otimes_R K.
\end{tikzcd}
\end{center}
The homomorphism $\alpha$ is injective because $M$ is flat over $R$, while $\beta$ is injective because $F_\eta$ is semi-reflexive. It follows that $M \to M^{\vee\vee}$ is injective.
\end{proof}

\begin{lemma} \label{Lemma:RestrictionOfDualIsFlatFormal} Let $X$ be a quasi-compact admissible formal $R$-scheme. Let $D$ be an \emph{admissible} effective Cartier divisor in $X$. Let $F$ be a coherent $\cO_X$-module. 

Then, $F^\vee$ is $D$-torsion free, and $F^\vee$ and $F^\vee_{\rvert D}$ are flat over $R$.
\end{lemma}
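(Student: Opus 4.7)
The plan is to work locally on an affine open $\Spf A \subset X$ with $A$ a topologically finitely presented flat $R$-algebra, where $D$ is defined by a nonzerodivisor $f \in A$. By admissibility of $D$, the quotient $A/fA$ is flat over $R$. Setting $M := \Gamma(\Spf A, F)$, coherence of $F$ together with the coherence of $A$ (\cref{Lemma:FinitePresentationAndFlatnessFormalVersion} (3)) yields a finite presentation $A^m \to A^n \to M \to 0$, whose dual $0 \to M^\vee \to A^n \to A^m$ exhibits $M^\vee = \Gamma(\Spf A, F^\vee)$ as an $A$-submodule of $A^n$.

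From this presentation, $D$-torsion-freeness of $F^\vee$ is immediate: multiplication by $f$ is injective on $A$ (since $D$ is Cartier), hence on $A^n$, hence on the submodule $M^\vee$. Since $R$ is a valuation ring, flatness over $R$ is equivalent to being torsion-free over $R$, so the $R$-flatness of $F^\vee$ follows from the inclusion $M^\vee \subset A^n$ together with the $R$-flatness of $A$.

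For the $R$-flatness of $F^\vee_{\rvert D}$, the main point will be the equality $M^\vee \cap fA^n = fM^\vee$, which lets us embed $M^\vee/fM^\vee$ into $(A/fA)^n$. If $x = fy$ with $x \in M^\vee$ and $y \in A^n$, the image of $x$ under $A^n \to A^m$ vanishes, hence the image of $y$ is annihilated by $f$ and therefore zero, since $f$ is a nonzerodivisor on $A^m$; whence $y \in M^\vee$. Identifying $F^\vee_{\rvert D}$ locally with $M^\vee/fM^\vee$, the resulting inclusion $F^\vee_{\rvert D} \hookrightarrow (A/fA)^n$ combined with the $R$-flatness of $A/fA$ yields the desired $R$-torsion-freeness, hence $R$-flatness, of $F^\vee_{\rvert D}$.

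I anticipate no serious obstacle: the whole argument rests on exhibiting $F^\vee$ as a subsheaf of a free sheaf, which allows the relevant torsion-freeness properties to be inherited from those of $\cO_X$ and $\cO_D$. The only substantive verification is the set-theoretic identity $M^\vee \cap fA^n = fM^\vee$, which follows directly from the defining exact sequence of $M^\vee$ together with the nonzerodivisor property of $f$.
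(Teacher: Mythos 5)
Your proof is correct, and it follows the paper's reduction (localize to $\Spf A$ with $D$ principal, defined by a nonzerodivisor $f$, and use that over the valuation ring $R$ flatness is torsion-freeness), but the mechanism for the delicate part --- $R$-flatness of $F^\vee_{\rvert D}$ --- is packaged differently. You dualize a finite presentation to realize $M^\vee = \Hom_A(M,A)$ as the kernel of a map of finite free modules $A^n \to A^m$, prove the saturation identity $M^\vee \cap fA^n = fM^\vee$ (using that $f$ is a nonzerodivisor on $A^m$), and then read off all three assertions by inheriting torsion-freeness from $A$ and from $A/fA$ via the embeddings $M^\vee \subset A^n$ and $M^\vee/fM^\vee \hookrightarrow (A/fA)^n$. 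The paper instead argues directly on homomorphisms: given $\phi \colon M \to A$ with $\varpi\phi = f\psi$, the induced map $\bar\phi \colon M/fM \to A/fA$ is killed by $\varpi$, hence vanishes because multiplication by $\varpi$ on $A/fA$ is injective (this is exactly where admissibility of $D$ enters, as it does in your use of $R$-flatness of $A/fA$), and then $\phi$ is divisible by $f$ since $f$ is a nonzerodivisor on $A$. The two arguments rest on the same two facts; yours is slightly more structural in that everything is reduced to submodules of free modules (at the cost of choosing a presentation, which coherence of $F$ provides), while the paper's is presentation-free and element-wise. Both are complete; the only implicit point common to both, worth being aware of, is the standard compatibility $\Gamma(\Spf A, F^\vee) = \Hom_A(M,A)$ for finitely presented $M$.
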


\begin{proof} The statement is local on $X$. Therefore one may assume that $X = \Spf(A)$ is affine, for a topologically finitely presented flat $R$-algebra $A$, and the divisor $D$ is given by the ideal $f A$ for some nonzerodivisor $f \in A$. Let $M := \Gamma(X, F)$. The $A$-module $\Hom_A(M, A)$ is $D$-torsion free because $f$ is a nonzerodivisor in $A$.  In order to show that the $A$-modules $\Hom_A(M, A)$ and $\Hom_A(M, A) \otimes_A A / fA$ are flat, it suffices to show that they have no $\varpi$-torsion, for any $\varpi \in R \smallsetminus \{ 0 \}$. Let $\varpi \in R \smallsetminus \{ 0 \}$. The $A$-module $\Hom_A(M, A)$ is $\varpi$-torsion-free because $\varpi$ is a nonzerodivisor in $A$.  Concerning $\Hom_A(M, A) \otimes_A A / fA$, given a homomorphism of $A$-modules $\phi \colon M \to A$ such that $\varpi \phi = f \psi$ for some $\psi \in \Hom_A(M, A)$, one needs to show that there is $\phi' \in \Hom_A(M, A)$ such that $\phi = f \phi'$. Let $\bar{\phi} \colon M / fM \to A / fA$ be the homomorphism induced by $\phi$. If $\varpi \phi = f \psi$ for some $\psi \in \Hom_A(M, A)$, then $\varpi \bar{\phi}$ is $0$. However, multiplication by $\varpi$ on $A / f A$ is injective because the Cartier divisor $D$ is admissible. Therefore, the homomorphism $\bar{\phi}$ is $0$. Since the multiplication by $f$ on $A$ is injective, this means that $\phi$ can be written as $f \phi'$ for some $\phi' \in \Hom_A(M, A)$.
\end{proof}

\begin{lemma} \label{Lemma:SaturationOfSemiReflexiveFormal} Let $D$ be an effective Cartier divisor in a quasi-compact admissible formal $R$-scheme $X$. Let $F$ be a coherent $\cO_X$-module which is $D$-torsion free, flat over $R$, and such that $F_{\rvert D}$ is flat over $R$. Let $E \subset F$ be a coherent $\cO_X$-submodule. Then, the $\cO_X$-module \[E^{\sat} := \Ker(F \to [F(\ast D) / E(\ast D)]_K)\]
is coherent, $D$-torsion free, flat over $R$, $E^\sat_{\rvert D}$ is flat over $R$, $E_\eta (\ast D_\eta) \cong E_\eta^\sat(\ast D_\eta)$ and $E_{\rvert (X \smallsetminus D)_\eta} \cong E^\sat_{\rvert (X \smallsetminus D)_\eta}$.
\end{lemma}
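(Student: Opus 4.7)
\emph{Setup and local description.} The statement is local on $X$, so I reduce to $X = \Spf A$ with $A$ a topologically finitely presented flat $R$-algebra and $D$ given by a nonzerodivisor $f \in A$. Set $M = \Gamma(X, F)$, $N = \Gamma(X, E)$. By \cref{Lemma:MeromorphicSectionsFormal} and the vanishing of higher cohomology of coherent sheaves on an affine formal scheme, $\Gamma(X, F(\ast D)) = M_f$, $\Gamma(X, E(\ast D)) = N_f$, and $\Gamma(X, F(\ast D)/E(\ast D)) = M_f/N_f$ (algebraic localizations); \cref{Lemma:InvertingUniformizerFormalScheme} then yields $\Gamma(X, [F(\ast D)/E(\ast D)]_K) = (M_f/N_f) \otimes_R K$. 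Using that $M$ is $f$-torsion free (because $F$ is $D$-torsion free), one obtains the explicit description
\[ N^{\sat} = \bigl\{ m \in M : f^d \varpi m \in N \text{ for some } d \in \bbN,\, \varpi \in R \smallsetminus \{0\} \bigr\}. \]

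\emph{Coherence.} From this description, $M/N^{\sat}$ is both $R$-flat and $f$-torsion free: if $\varpi' m \in N^{\sat}$ (resp.\ $fm \in N^{\sat}$) then $f^d\varpi\varpi' m \in N$ (resp.\ $f^{d+1}\varpi m \in N$) for some $d,\varpi$, witnessing that $m$ itself already lies in $N^{\sat}$. Being finitely generated and flat over $R$, $M/N^{\sat}$ is finitely presented by \cref{Lemma:FinitePresentationAndFlatnessFormalVersion}(1); part~(2) of the same lemma applied to $N^{\sat} \subset M$ yields that $N^{\sat}$ is itself finitely presented. Gluing over a finite affine cover, $E^{\sat}$ is coherent.

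\emph{Flatness properties.} The sub-$\cO_X$-module $E^{\sat}\subset F$ inherits $R$-flatness and $D$-torsion freeness from $F$ (since $R$ is a valuation ring and $f$ is a nonzerodivisor, these properties pass to submodules). To treat $E^{\sat}_{\rvert D}$, I tensor $0 \to E^{\sat} \to F \to F/E^{\sat}\to 0$ with $\cO_D$: because $I_D$ is locally generated by the nonzerodivisor $f$, the group $\Tor_1^{\cO_X}(F/E^{\sat}, \cO_D)$ identifies with the $f$-torsion of $F/E^{\sat}$, which vanishes by the previous paragraph. Hence $E^{\sat}_{\rvert D}$ embeds into $F_{\rvert D}$, and $R$-flatness of the latter passes to the former.

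\emph{Generic fiber.} The inclusion $E \subset E^{\sat}$ yields $E(\ast D) \subset E^{\sat}(\ast D)$, and the identity $m = \varpi^{-1}f^{-d}(f^d\varpi m)$ inside $F_\eta(\ast D_\eta)$ shows the reverse inclusion, giving $E_\eta(\ast D_\eta) \cong E^{\sat}_\eta(\ast D_\eta)$. Restricting this isomorphism to $(X\smallsetminus D)_\eta$, on which $s_D$ is invertible so that the $(\ast D)$-decoration is vacuous, yields $E_{\rvert (X\smallsetminus D)_\eta} \cong E^{\sat}_{\rvert (X\smallsetminus D)_\eta}$. The central difficulty is the coherence step: the detour through $R$-flatness of the quotient (to feed into \cref{Lemma:FinitePresentationAndFlatnessFormalVersion}) seems the cleanest route to finite presentation of the saturation; all other claims follow essentially formally once the description of $N^{\sat}$ is in hand.
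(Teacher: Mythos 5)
Your proof is correct and follows essentially the same route as the paper: reduce to an affine $\Spf A$ with $D$ principal, unwind the kernel into the explicit saturation $\{m : f^d\varpi m \in \Gamma(E)\}$, get coherence from the $R$-flatness of the quotient via \cref{Lemma:FinitePresentationAndFlatnessFormalVersion}, inherit $R$-flatness and $D$-torsion freeness from $F$, and use the common-denominator description for the generic-fiber identifications. Your two local variations --- deducing flatness of $E^{\sat}_{\rvert D}$ from the vanishing of $\Tor_1(F/E^{\sat},\cO_D)$ rather than the paper's element chase, and obtaining the isomorphism over $(X\smallsetminus D)_\eta$ by restricting the $(\ast D)$-level isomorphism to a domain disjoint from $D_\eta$ instead of the paper's direct computation with $A_K\{1/f\}$ --- rest on the same inputs and are both sound.
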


\begin{proof} The statement is local on $X$. Therefore one may assume that $X = \Spf(A)$ is affine, where $A$ is a topologically finitely presented flat $R$-algebra, and $D$ is a principal Cartier divisor whose ideal is $fA$ for a nonzerodivisor $f \in A$. Let $M := \Gamma(X, E)$, $M' := \Gamma(X, E^\sat)$ and $N := \Gamma(X, F)$. Let $A_K := A \otimes_R K$. With this notation,
\[ M' = \Ker(N \to (N/M)\otimes_A A_K[1/f]).\]
The $A$-module $N$ has no $R$-torsion, thus $M'$ has no $R$-torsion too. It follows that $M'$ is flat over $R$. Analogously, the multiplication by $f$ on $N$ is injective, thus it is injective also on $M'$. By definition, $N/M'$ injects into $(N/M)\otimes_A A_K[1/f]$, hence $N/M'$ is flat over $R$. It follows from \cref{Lemma:FinitePresentationAndFlatnessFormalVersion} (2) that $M'$ is finitely generated. It remains to prove that $M' / f M'$ is $R$-torsion free. Suppose there are $m, m' \in M'$ and $\varpi \in R \smallsetminus\{0\}$ such that $\varpi m = f m'$. We want to show that there is $m'' \in M'$ such that $m = f m''$. Now, multiplication by $\varpi$ on $N / f N$ is injective, therefore there is $n \in N$ such that $m = f n$. Since $f$ is invertible in $A_K[1/f]$,  $n = m / f$ belongs to $M \otimes_A A_K[1/f]$. Therefore $n$ belongs to $M'$,  and $m'' = n$ does the job. The first isomorphism follows from the equality $M \otimes_A A_K[1/f] = M' \otimes_A A_K[1/f]$, which holds by definition of $M'$. To prove the second isomorphism, one has to show that the homomorphism $i_K \colon M \otimes_A A_K \{ 1/f\} \to M' \otimes_A A_K \{ 1/f\}$ is an isomorphism. The homomorphism $i_K$ is injective because the $A$-algebra $A_K \{ 1/f\}$ is flat. Let $m' \in M'$. By definition of $M'$, there are $\varpi \in R \smallsetminus \{ 0\}$ and a non-negative integer $n \in \bbN$ such that $\varpi f^n m'$ belongs to $M$. Since both $\varpi$ and $f$ are invertible in $A_K \{ 1/f\}$, one can consider the element $m:= \varpi f^n m' \otimes 1/ (\varpi f^n)$ of $M \otimes_A A_K \{ 1/f\}$. Clearly, $m$ maps to $m'$ via $i_K$. This shows that $i_K$ is surjective.
\end{proof}

\begin{lemma} \label{Prop:ExtendingSemiReflexiveModules} Let $D$ be an admissible effective Cartier divisor in a quasi-compact admissible formal $R$-scheme $X$ and $F_K$ a semi-reflexive coherent sheaf on~$X_\eta$. 

Then, there exist a coherent $\cO_X$-module $F$ flat over $R$ and $D$-torsion free  such that $F_{\rvert D}$ is flat over $R$, and an injective homomorphism of $\cO_{X_\eta}$-modules $F_K \into F_\eta$ inducing isomorphisms $F_K(\ast D_\eta) \cong F_\eta(\ast D_\eta)$ and $F_{K \rvert U_\eta} \cong F_{\eta \rvert U_\eta}$.
\end{lemma}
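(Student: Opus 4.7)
The plan is to realize $F$ as a saturation, inside an extension of the bidual $F_K^{\vee\vee}$, of a coherent extension of $F_K$; that way Lemmas~\ref{Lemma:RestrictionOfDualIsFlatFormal} and~\ref{Lemma:SaturationOfSemiReflexiveFormal} will jointly deliver every flatness assertion. As a first step, extend the coherent $\cO_{X_\eta}$-module $F_K^\vee$ to a coherent $\cO_X$-module $G$ flat over $R$ using the standard extension result recalled just before Lemma~\ref{Lemma:AutomaticallySemiReflexive}, and set $H := G^\vee$. Then $H$ is coherent with $H_\eta = G_\eta^\vee = F_K^{\vee\vee}$, and Lemma~\ref{Lemma:RestrictionOfDualIsFlatFormal} applied to the admissible Cartier divisor $D$ guarantees that $H$ is $D$-torsion free and that $H$ and $H_{\rvert D}$ are flat over $R$.

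For the second step, the semi-reflexivity of $F_K$ gives an injection $\iota\colon F_K \hookrightarrow H_\eta$, and we must realize this as the generic fiber of a coherent subsheaf $E \subset H$. Pick any coherent $\cO_X$-module $F'$ flat over $R$ with $F'_\eta \cong F_K$ (same extension result). Since $F'$ is finitely presented by Lemma~\ref{Lemma:FinitePresentationAndFlatnessFormalVersion}~(1) and $K$ is $R$-flat, the usual Hom--tensor compatibility globalized by quasi-compactness yields
\[ \Hom_{\cO_X}(F', H) \otimes_R K \iso \Hom_{\cO_{X_\eta}}(F_K, H_\eta). \]
After clearing denominators, $\varpi\cdot\iota$ lifts, for a suitable topologically nilpotent $\varpi \in R \smallsetminus\{0\}$, to a morphism $\phi\colon F' \to H$. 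Its image $E := \im \phi \subset H$ is coherent, thanks to the coherence of $\cO_X$ (Lemma~\ref{Lemma:FinitePresentationAndFlatnessFormalVersion}~(3)), and because $\varpi$ is invertible in $\cO_{X_\eta}$ one has $E_\eta = \varpi\cdot F_K = F_K$ as subsheaves of $H_\eta$. Now apply Lemma~\ref{Lemma:SaturationOfSemiReflexiveFormal} to $E \subset H$ and set $F := E^\sat$; the lemma ensures $F$ is coherent, $D$-torsion free, with $F$ and $F_{\rvert D}$ flat over $R$. The containment $E \subset E^\sat$ gives the desired injection $F_K = E_\eta \hookrightarrow E^\sat_\eta = F_\eta$, and the same lemma provides the isomorphisms $F_K(\ast D_\eta) \iso F_\eta(\ast D_\eta)$ and $F_{K\rvert U_\eta} \iso F_{\eta\rvert U_\eta}$.

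The main obstacle is the second step: extending a coherent subsheaf of the generic fiber to a coherent subsheaf of the formal model is not automatic in the non-Noetherian setting, because the naive ``generators times $\varpi$'' construction is intrinsically local and patching it requires care. The cleanest route is to bypass explicit gluing via the Hom--tensor isomorphism displayed above, which encodes the same content globally and rests exactly on the coherence of $\cO_X$. Once $E$ has been produced as the image of a single morphism, the saturation machinery of Section~\ref{sec:ExtensionCartierDivisorsFormal} enforces the flatness at $D$ for free, and no further computation is needed.
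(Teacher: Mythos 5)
Your construction is correct: all the verifications it leaves implicit (coherence of $\cHom_{\cO_X}(F',H)$, the identifications $(G^{\vee})_\eta \cong (G_\eta)^{\vee}$ and $\Hom_{\cO_X}(F',H)\otimes_R K \cong \Hom_{\cO_{X_\eta}}(F_K,H_\eta)$, and $(\im\phi)_\eta=\im(\phi_\eta)$) do hold, because $\cO_X$ is coherent (\cref{Lemma:FinitePresentationAndFlatnessFormalVersion}), coherent modules are finitely presented, and the generic-fibre functor is flat; note also that any nonzero $\varpi\in R$ works for clearing denominators, topological nilpotence is not needed. The route, however, differs from the paper's in the middle step. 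The paper extends $F_K$ itself to a flat coherent $E$ on $X$, observes via \cref{Lemma:AutomaticallySemiReflexive} that $E$ is then automatically semi-reflexive, so the inclusion $E\into E^{\vee\vee}$ already exists at the level of the model; it then applies \cref{Lemma:RestrictionOfDualIsFlatFormal} (to $E^{\vee}$) and saturates $E$ inside $E^{\vee\vee}$ by \cref{Lemma:SaturationOfSemiReflexiveFormal}. You instead extend the dual $F_K^{\vee}$, dualize once on the model to get the ambient $H$ with the right flatness properties, and must then descend the semi-reflexivity map $F_K\into H_\eta$ from the generic fibre by a Hom--tensor base-change argument plus clearing denominators, taking $E=\im\phi$. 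What each approach buys: yours bypasses \cref{Lemma:AutomaticallySemiReflexive} entirely, at the cost of the extra GAGA-type lifting and the attendant compatibility checks; the paper's is shorter precisely because the needed coherent subsheaf of the ambient bidual is produced for free on the model, with no morphism to descend. Both arguments converge on the same final mechanism, \cref{Lemma:RestrictionOfDualIsFlatFormal} for flatness of the ambient module and of its restriction to $D$, and \cref{Lemma:SaturationOfSemiReflexiveFormal} for the saturation giving $D$-torsion freeness, flatness of $F_{\rvert D}$, and the two isomorphisms on the generic fibre.
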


\begin{proof} Let $E$ be a coherent $\cO_X$-module flat over $R$ such that  $E_\eta \cong F_K$. By \cref{Lemma:AutomaticallySemiReflexive} $E$ is semi-reflexive because so is $F_K$. By \cref{Lemma:RestrictionOfDualIsFlatFormal} the coherent $\cO_X$-module $E^{\vee \vee}$ is $D$-torsion free, flat over $R$, and $E^{\vee\vee}_{\rvert D}$ is flat over $R$. According to \cref{Lemma:SaturationOfSemiReflexiveFormal}, the coherent $\cO_X$-module $F := \Ker(E^{\vee\vee} \to [E^{\vee\vee}(\ast D) / E(\ast D)]_K)$ does the job.
\end{proof}

\subsection{Proof of \cref{Thm:ClosednessImage}} We begin with the proof of \cref{Thm:ClosednessImage} which is somewhat easier and does not make use of \cref{Thm:DensityMeromorphicSectionsModel}. To reduce to strict spaces we will use the following:

\begin{lemma} \label{Lemma:ClosedenessExtensionOfScalars} Let $F$ be a coherent sheaf on a $K$-analytic space $X$ and $K'$ a complete valued extension of $K$. Then, the natural homomorphism $\Gamma(X, F) \to \Gamma(X', F')$, where $X'$ and $F'$ are obtained from $X$ and $F$ by extending scalars, is a homeomorphism onto its image and its image is closed.
\end{lemma}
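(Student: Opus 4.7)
The plan is to reduce to the affinoid case and then to a base-change property for Banach modules over Tate algebras.

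\emph{Reduction.} Pick a G-covering $\{V_i\}_{i \in I}$ of $X$ by affinoid domains; its base change $\{V_i'\}_{i \in I}$, with $V_i' := V_i \hotimes_K K'$, is an affinoid G-covering of $X'$. By the description given just above of the topology on global sections as initial for restrictions to affinoid domains, combined with the sheaf property of $F$, the space $\Gamma(X, F)$ sits as a closed subspace of $\prod_i \Gamma(V_i, F)$ (the subspace of families compatible on intersections), and analogously for $\Gamma(X', F')$. Since products and kernels of compatible closed topological embeddings between Hausdorff topological abelian groups are again closed topological embeddings, the problem reduces to the case where $X$ is affinoid.

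\emph{Affinoid case.} Write $X = \cM(A)$ and let $M := \Gamma(X, F)$ be the finitely generated $A$-module corresponding to $F$. Then $X' = \cM(A')$ with $A' = A \hotimes_K K'$, and $\Gamma(X', F') = M \hotimes_A A' = M \hotimes_K K'$. The statement thus reduces to: for any $K$-affinoid algebra $A$ and finitely generated $A$-module $M$, the natural map $M \to M \hotimes_K K'$ is a homeomorphism onto a closed subspace.

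\emph{Banach-theoretic core.} For the Tate algebra $A = K\{T_1, \ldots, T_n\}$ with Gauss norm, the base change is $A' = K'\{T_1, \ldots, T_n\}$ and the inclusion $A \hookrightarrow A'$ is a closed isometric embedding: a power series with coefficients in $K'$ lies in $K\{T\}$ iff each coefficient lies in the closed subfield $K \subset K'$, and the Gauss norms match. For a general affinoid algebra and finitely generated module obtained from surjections $K\{T\} \twoheadrightarrow A$ and $A^n \twoheadrightarrow M$, one base-changes these presentations and combines the resulting commutative squares with the open mapping theorem for Banach modules to transfer the closed-topological-embedding property to $M \to M \hotimes_K K'$. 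The main obstacle is precisely this transfer step: it requires that the closed submodule of $A^n$ defining $M$ remains closed after base change and coincides with its closure in $(A')^n$, which holds because finitely generated submodules of Banach modules over affinoid algebras are automatically closed, and in the free case closedness can be checked coefficient-by-coefficient as in the Tate-algebra computation above.
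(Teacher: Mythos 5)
Your proposal is correct and follows essentially the same route as the paper: reduce to the affinoid case via an affinoid G-cover, then to the Gauss-norm computation $K\{t_1/r_1,\dots,t_n/r_n\}\hookrightarrow K'\{t_1/r_1,\dots,t_n/r_n\}$ by choosing presentations, the paper organizing this slightly differently (closed immersion of the affinoid into a polydisc and a surjection $\cO_X^r\to F$) but with the same content. The transfer from the free case to a general finitely generated module is left at the same level of detail in both arguments, so there is nothing essentially new or missing relative to the paper's own proof.
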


\begin{proof} One may assume $F= \cO_X$ replacing $X$ by the total space $p \colon \bbV(F) \to X$ of the dual of $F$, i.e. the analytic space representing the functor associating to a morphism of $K$-analytic spaces $f \colon Y \to X$ the set of morphisms $F \to f_\ast \cO_Y$ of $\cO_X$-modules. Indeed, the universal property applied to $p$ yields a  morphism $F \to p_\ast \cO_{\bbV(F)}$ which identifies $\Gamma(X, F)$ with a closed subset of $\Gamma(\bbV(F), \cO_{\bbV(F)})$. To see this last claim, consider the morphisms $\sigma, \pr_2 \colon \bbA^1_K \times \bbV(F) \to \bbV(F)$ where $\sigma$ is the natural action by homotheties on fibers and $\pr_2$ the second projection. Then $F$ can be identified with the kernel of the morphism $f \mapsto \sigma^\ast f - z \pr_2^\ast f$, where $z$ is the coordinate on the affine line. Now, because of the definition of the topology on the global sections of a coherent sheaf, by taking an affinoid G-cover one reduces to the case where $X$ is affinoid. Once $X$ is supposed  to be affinoid, by embedding it as a closed subspace of a closed disc, one reduces to the case $X = \bbD(r_1, \dots, r_n)$ for $r_1, \dots, r_n \in \bbR_{> 0}$. The statement for the map
\[ \Gamma(X, \cO_X) = K \{ t_1 / r_1, \dots, t_n / r_n \} \too \Gamma(X', \cO_X') = K' \{ t_1 / r_1, \dots, t_n / r_n \},\]
is clearly true, the topology on both sides being defined by Gauss norm with respect to the radii $r_1, \dots, r_n$. This concludes the proof.
\end{proof}

The following lemma permits to enlarge your domain:

\begin{lemma} \label{Lemma:ExhaustionByBlowUp} Let $X$ be a quasi-compact admissible formal $R$-scheme. Let $Z$ be a closed admissible formal subscheme of $X$. Let $C$ be a compact subset of $X_\eta \smallsetminus Z_\eta$.

Then, there is $\epsilon > 0$ such that every $\varpi \in R \smallsetminus \{ 0\}$ with $|\varpi| < \epsilon$ has the following property: if $\pi \colon X' \to X$ denotes the admissible blow-up of $X$ along $\varpi \cO_{X} + I_Z$ and $Z'$ the strict transform of $Z$ in $X'$, then $\pi_\eta^{-1}(C)\subset (X' \smallsetminus Z')_\eta$.
\end{lemma}

\begin{proof} By taking a finite open affine covering of $X$, one reduces to the case where $X = \Spf(A)$ is the formal spectrum of a flat topologically finitely presented $R$-algebra $A$. Let  $f_1, \dots, f_r$ be generators of $I:= \Gamma(X, I_Z)$. Given $x \in X_\eta \smallsetminus Z_\eta$ there is at least one among $f_1, \dots, f_r$ which does not vanishes at $x$, thus by compactness of~$C$ we have $\epsilon := \min_{x \in C}  \max \{ |f_1(x)|, \dots, |f_r(x)|\} > 0$. Let $\varpi \in R \smallsetminus \{ 0\}$ be such that $|\varpi| < \epsilon$. Let $\pi \colon X' \to X$ be the admissible blow-up of $X$ along $\varpi \cO_{X} + I_Z$. By writing $f_0 = \varpi$ the blow-up $X'$ is covered by the affine open subsets $X'_i = \Spf A_i$ with
\[
A_i := A \{ f_j / f_i\}_{j \neq i} = [A \{ t_{ij} \}_{j \neq i} / (f_j - t_{ij} f_i)_{j \neq i}] / (f_i\textup{-torsion}).
\]
Let $Z'$ be the strict transform of $Z$ in $X'$.

\begin{claim} The strict transform $Z'$ is contained in $X_0'$. \end{claim}

\begin{proof}[Proof of the claim] The intersection of $Z'$ with $X_i'$ is the closed subscheme corresponding to the ideal $I_i = \Ker( A_i \to [A_i / I A_i] \otimes_R K)$. If $i \ge 1$, then $\varpi$ belongs to $I_i$ because it can be written as $\varpi = t_{i0} f_i \in I_i$. On the other hand,  $\varpi$ is invertible in $A_i \otimes_R K$, thus $1$ belongs to $I_i$. This means that $Z'$ does not meet $X_i'$ for all $i \ge 1$ and, consequently, $Z'$ is contained in $X'_0$.
\end{proof}

\begin{claim} The compact subset $\pi_\eta^{-1}(C)$ does not meet $(X'_0)_\eta$.
\end{claim}

\begin{proof}[Proof of the claim] For  $x \in (X'_0)_\eta$, one has $|f_i(x)| \le |\varpi| < \epsilon$ for all $i \ge 1$. Instead, by the choice of $\epsilon$, given a point $x \in \pi_\eta^{-1}(C)$ there exists $i \in \{ 1, \dots, n\}$ such that $|f_i(x)| \ge \epsilon$.
\end{proof}

From the previous claims one draws the following chain of inclusions:
\[ \pi_\eta^{-1}(C) \subset (X'_1 \cup \cdots \cup X'_r)_\eta \subset (X' \smallsetminus Z')_\eta, \]
which proves the statement.
\end{proof}

\begin{proof}[{Proof of \cref{Thm:ClosednessImage}}] According to \cref{Lemma:ClosedenessExtensionOfScalars} one may suppose that the $K$-analytic space $X$ is strict. In this case the proof consists of three steps of increasing generality.

\medskip

\emph{Step 1.} Suppose $X$ compact and $Z$ an effective Cartier divisor. In this step write $X_K$, $Z_K$ and $F_K$ instead of $X$, $Z$ and $F$. It suffices to prove the following: given a compact analytic domain $C \subset X_K \smallsetminus Z_K$, there is a compact analytic domain $C' \subset X_K \smallsetminus Z_K$ containing $C$ such that the restriction map $\Gamma(X_K, F_K) \to \Gamma(C', F_K)$
is a homeomorphism onto its image. Indeed, the image will be automatically closed because $\Gamma(X_K, F_K)$ and $\Gamma(C', F_K)$ are Banach spaces. According to \cref{Thm:FormalModelsOfCartierDivisors}, there are a quasi-compact quasi-separated admissible formal $R$-scheme $X$ and an admissible effective Cartier divisor $D$ in $X$ whose generic fibers are respectively $X_K$ and $Z_K$. Up to replacing $X$ by its blow-up along $\varpi \cO_X + I_D$ for some $\varpi \in R$ non-zero and $D$ by its strict transform, by \cref{Lemma:ExhaustionByBlowUp} and \cref{Prop:StrictTransformBlowUpIntersectionTwoCartierFormal} one may assume $C = U_\eta$ where $U = X \smallsetminus D$. According to \cref{Prop:ExtendingSemiReflexiveModules}, there are a coherent $\cO_{X}$-module $F$ and an injective homomorphism of $\cO_{X_K}$-modules $\iota \colon F_K \to F_{\eta}$ inducing an isomorphism of $\cO_{U_{\eta}}$-modules $F_{K \rvert U_{\eta}} \iso F_{\eta \rvert U_{\eta}}$. Now we have $\Gamma(X_K, F_{\eta})  = \Gamma(X, F) \otimes_R K$ and $\Gamma(U_{ \eta}, F_{\eta})  = \Gamma(U, F) \otimes_R K$ because $X$ and $U$ are quasi-compact and quasi-separated. Therefore by \cref{Lemma:MeromorphicInjectsInHolomorphicFormal} (4) the restriction homomorphism $\Gamma(X_K, F_{\eta}) \to \Gamma(U_{\eta}, F_{\eta})$
is an isometric embedding. On the other hand, the exact sequence
\[ 0 \too \Gamma(X_K, F_K) \too \Gamma(X_K, F_{\eta}) \too \Gamma(X_K, \coker \iota)\]
shows that the image of $\iota \colon \Gamma(X_K, F_K) \to \Gamma(X_K, F_{\eta})$ is closed. Banach's Open Mapping theorem implies $\iota$ maps $\Gamma(X_K, F_K)$ into a closed subset of $\Gamma(X_K, F_{\eta})$. This concludes the proof in this case.

\medskip

\emph{Step 2.} Suppose $X$ compact. Let $\pi \colon X' \to X$ be the blow-up of $X$ along $Z$. Let $Z' = \pi^{-1}(Z)$, and $U' = X' \smallsetminus Z'$, so that the induced map $\pi \colon U' \to U$ is an isomorphism. The coherent $\cO_{X'}$-module $\pi^\ast F$ may fail to be semi-reflexive. Nonetheless its image $F'$ in $(\pi^\ast F)^{\vee \vee}$ is semi-reflexive, being a sub-$\cO_{X'}$-module of a semi-reflexive one. The composite homomorphism of $\cO_{X}$-modules $\phi \colon F \to \pi_\ast \pi^\ast F \to \pi_\ast F'$ is injective. Indeed, let $j \colon U \to X$, $j' \colon U' \to X'$ be the open immersions. Then the following diagram of $\cO_{X}$-modules is commutative
\begin{center}
\begin{tikzcd}
F \ar[r] \ar[d, hook] & \pi_\ast F' \ar[d,hook]\\
j_\ast j^\ast F \ar[r, "\sim"]& \pi_\ast j'_\ast j'^\ast F'
\end{tikzcd}
\end{center}
where the vertical arrows are injective because $U$ (resp. $U'$) is scheme-theoretically dense in $X$ (resp. $X'$) and $F$ (resp. $F'$) is semi-reflexive, and the lower horizontal arrow is an isomorphism because $\pi \colon U' \to U$ is. It follows that $F \to \pi_\ast F'$ is injective. Taking global sections, one obtains the following commutative diagram
\begin{center}
\begin{tikzcd}
\Gamma(X, F) \ar[r, hook] \ar[d, hook] & \Gamma(X', F') \ar[d,hook]\\
\Gamma(U, F) \ar[r, "\sim"]& \Gamma(U', F')
\end{tikzcd}
\end{center}
where the lower horizontal arrow is a homeomorphism and the rightmost vertical is a homeomorphism onto its image which is closed in the target. To conclude, it suffices to show that $\Gamma(X, F) \to \Gamma(X', F')$ is a homeomorphism onto its image and its image is closed. To do so, remark that the $\cO_{X}$-module $\pi_\ast F'$ is coherent because $\pi$ is proper. From the exact sequence
 \[ 0 \too \Gamma(X, F) \too \Gamma(X', F') \too \Gamma(X, \coker \phi)\]
one deduces that  $\Gamma(X, F) \to \Gamma(X', F')$ is injective and has closed image. By Banach's Open Mapping theorem, it induces a homeomorphism of $\Gamma(X, F)$ with its image.

\medskip

\emph{Step 3}. Let $\cV$ be an affinoid G-cover of $X$. Then $\{ V \smallsetminus Z \}_{V \in \cV}$ is a G-cover by analytic domains of $X \smallsetminus Z$ and
\begin{align*}
\Gamma(X, F) &= \projlim_{V \in \cV} \Gamma(V, F), \\
\Gamma(X \smallsetminus Z, F) &= \projlim_{V \in \cV} \Gamma(V \smallsetminus Z, F).
\end{align*}
The statement follows by applying the preceding case.
\end{proof}

\subsection{Proof of \cref{Thm:ComplementDivisorHolomorphicallyConvex} in the compact case} \label{sec:ProofComplementDivisorCompactCase} We will first prove \cref{Thm:ComplementDivisorHolomorphicallyConvex} when the $K$-analytic space is compact. To do this it is convenient to say that a $K$-analytic space $X$ is \emph{proper over affinoid} if there is  a proper morphism of $K$-analytic spaces $X \to Y$ with $Y$ affinoid. Here we will prove the following:

\begin{theorem} \label{Thm:DensityMeromorphicSectionsAnalytic} Let $D$ be an effective Cartier divisor in a $K$-analytic space $X$ which is proper over affinoid. Then, for any semi-reflexive coherent $\cO_{X}$-module~$F$, the natural map
\[ \Gamma(X, F(\ast D)) \too \Gamma(X \smallsetminus D, F)\]
is injective, and has a dense image.
\end{theorem}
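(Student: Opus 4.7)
\emph{Injectivity.} A semi-reflexive coherent sheaf $F$ is automatically $D$-torsion free: its double dual $F^{\vee\vee}$ is $D$-torsion free since a local equation of the Cartier divisor $D$ is a nonzerodivisor on $\cO_X$, and the injection $F \hookrightarrow F^{\vee\vee}$ transports this property. Thus \cref{Lemma:MeromorphicSectionsAnalytic}(1) yields that $F(\ast D) \to j_\ast j^\ast F$ is injective, whence so is the induced map on global sections.

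\emph{Density via formal models.} The plan is to reduce to the formal counterpart \cref{Thm:DensityMeromorphicSectionsModel} by working with an admissible formal model of $X$ adapted to each compact subdomain of $U := X \smallsetminus D$ on which we wish to approximate. Let $X \to Y$ be the given proper morphism onto a $K$-affinoid space; choose an admissible affine formal model $\mathfrak{Y} = \Spf(A)$ of $Y$, and apply \cref{Thm:FormalModelsOfCartierDivisors} to obtain a quasi-compact admissible formal $R$-scheme $\mathfrak{X}$ proper over $\mathfrak{Y}$ together with an admissible Cartier divisor $\mathfrak{D} \subset \mathfrak{X}$ whose generic fibers recover $X$ and $D$. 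To prove density it suffices to fix $h \in \Gamma(U, F)$, a compact analytic domain $V \subset U$, and $\varepsilon > 0$, and produce $g \in \Gamma(X, F(\ast D))$ with $\|g_{\rvert V} - h_{\rvert V}\| < \varepsilon$ in any chosen Banach norm on $\Gamma(V, F_{\rvert V})$. Combining \cref{Lemma:ExhaustionByBlowUp} with \cref{Prop:StrictTransformBlowUpIntersectionTwoCartierFormal}, applied to a blow-up of an ideal of the form $I_\mathfrak{D} + \varpi^N \cO_\mathfrak{X}$ for suitable $N$, one may replace $\mathfrak{X}$ by an admissible blow-up $\mathfrak{X}'$ whose strict transform $\mathfrak{D}'$ of $\mathfrak{D}$ is again an admissible Cartier divisor \emph{and} such that the tube $(\mathfrak{X}' \smallsetminus \mathfrak{D}')_\eta$ coincides with $V$. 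Then \cref{Prop:ExtendingSemiReflexiveModules} applied to $(\mathfrak{X}', \mathfrak{D}', F)$ furnishes a coherent $\cO_{\mathfrak{X}'}$-module $\mathfrak{F}'$ that is flat over $R$ and $\mathfrak{D}'$-torsion free, with $\mathfrak{F}'_{\rvert \mathfrak{D}'}$ flat over $R$, and such that $\mathfrak{F}'_\eta(\ast D) \simeq F(\ast D)$ together with $\mathfrak{F}'_{\eta\rvert U} \simeq F_{\rvert U}$.

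\emph{Translating the formal density.} Since $\mathfrak{X}' \to \mathfrak{Y}$ remains proper and $\mathfrak{Y}$ is affine, \cref{Thm:DensityMeromorphicSectionsModel} applies and $\Gamma(\mathfrak{X}', \mathfrak{F}'(\ast \mathfrak{D}'))$ is dense in $\Gamma(\mathfrak{X}' \smallsetminus \mathfrak{D}', \mathfrak{F}')$ for the prodiscrete topology. Relative formal GAGA for $\mathfrak{X}' \to \mathfrak{Y}$, combined with \cref{Lemma:MeromorphicSectionsFormal} and \cref{Lemma:MeromorphicSectionsAnalytic}(2), identifies $\Gamma(\mathfrak{X}', \mathfrak{F}'(\ast \mathfrak{D}')) \otimes_R K$ with $\Gamma(X, F(\ast D))$; likewise \cref{Lemma:InvertingUniformizerFormalScheme} identifies $\Gamma(\mathfrak{X}' \smallsetminus \mathfrak{D}', \mathfrak{F}') \otimes_R K$ with $\Gamma(V, F_{\rvert V})$. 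The integral structure from the formal side induces a Banach norm on the right-hand side in which the $\varpi^n$-multiples form neighborhoods of $0$ of radius $|\varpi|^n$. Choosing $n$ with $|\varpi|^n < \varepsilon$, the formal density produces $g_0 \in \Gamma(\mathfrak{X}', \mathfrak{F}'(\ast \mathfrak{D}'))$ with $g_0 - h_{\rvert V} \in \varpi^n \Gamma(\mathfrak{X}' \smallsetminus \mathfrak{D}', \mathfrak{F}')$; its image $g \in \Gamma(X, F(\ast D))$ then satisfies $\|g_{\rvert V} - h_{\rvert V}\| \le |\varpi|^n < \varepsilon$, concluding the argument.

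\emph{Main obstacle.} The delicate point is the simultaneous enlargement of the tube and preservation of $R$-flatness of the Cartier divisor. A naive blow-up enlarging $(\mathfrak{X} \smallsetminus \mathfrak{D})_\eta$ typically produces a strict transform containing special-fiber components, violating the $R$-flatness hypothesis of \cref{Thm:DensityMeromorphicSectionsModel} (as already illustrated by \cref{Ex:FailureOfDensity}). The resolution rests precisely on the choice of blow-up center in \cref{Lemma:ExhaustionByBlowUp} together with the flatness guarantee of \cref{Prop:StrictTransformBlowUpIntersectionTwoCartierFormal}; the remaining ingredients---relative formal GAGA and the topological translation from prodiscrete to Banach---are comparatively routine.
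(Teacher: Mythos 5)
Your overall strategy coincides with the paper's: build a formal model with an admissible Cartier divisor via \cref{Thm:FormalModelsOfCartierDivisors}, extend $F$ to an $R$-flat, $D$-torsion free coherent sheaf whose restriction to the divisor is $R$-flat via \cref{Prop:ExtendingSemiReflexiveModules}, enlarge the tube by blowing up $\varpi\cO_X+I_D$ using \cref{Lemma:ExhaustionByBlowUp} together with \cref{Prop:StrictTransformBlowUpIntersectionTwoCartierFormal}, apply \cref{Thm:DensityMeromorphicSectionsModel}, and pass to the generic fibre. Your reordering (blow up first, then extend the sheaf) is harmless and even avoids worrying about $\pi^\ast F$; the small imprecisions are repairable: \cref{Lemma:ExhaustionByBlowUp} only gives $\pi_\eta^{-1}(C)\subset(\mathfrak X'\smallsetminus\mathfrak D')_\eta$, not equality, so one approximates on the (possibly larger) tube and restricts; $h$ must first be rescaled by a power of $\varpi$ to lie in the integral lattice before invoking prodiscrete density; and no ``formal GAGA'' is needed for the identification $\Gamma(\mathfrak X',\mathfrak F')\otimes_R K\cong\Gamma(\mathfrak X'_\eta,\mathfrak F'_\eta)$, which is the standard comparison for coherent sheaves on admissible formal schemes. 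The injectivity argument via \cref{Lemma:MeromorphicSectionsAnalytic} (1) is correct (and in fact needs no semi-reflexivity).

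The genuine gap is at the very first step. The theorem is stated for $X$ proper over an affinoid space in Berkovich's sense, which need not be strict, whereas \cref{Thm:FormalModelsOfCartierDivisors} --- and the whole Raynaud formal-model apparatus --- applies only to \emph{strict} compact quasi-separated spaces mapping to the generic fibre of an admissible affine formal $R$-scheme. A non-strictly affinoid $Y$ admits no such formal model, so your opening move ``choose an admissible affine formal model $\Spf(A)$ of $Y$'' fails in general. The paper deals with this by first extending scalars to the field $K_r$, with $r_1,\dots,r_n$ linearly independent in $\bbR_{>0}/\sqrt{|K^\times|}$ chosen so that everything becomes strict over $K_r$, and then descending density along the scalar extension by means of \cref{Lemma:DescendingDensityAlongScalarExtensions}, which rests on the explicit description of the norm on $\Gamma(X_r,F_r)=\Gamma(X,F)\hotimes_K K_r$ and the extraction of the constant coefficient $g_0$ of an approximant $\sum_a g_a t^a$. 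Without this reduction (or some equivalent descent argument), your proof establishes the theorem only when $X$ and the affinoid target are strict.
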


We reduce first of all to case where the $K$-analytic space in question is strict. For, consider $r_1, \dots, r_n > 0$ linearly independent in the $\bbQ$-vector space $\bbR_{> 0} / \sqrt{|K^\times|}$. Then the completion  $K_r$ of the $K$-algebra $K[t_1^{\pm}, \dots, t_n^{\pm}]$ with respect to the norm
\[ \left\|\sum_{a \in \bbZ^n } c_a t^a  \right\|_r := \max_{a \in \bbZ^n} |c_a| r^a \] 
is a field by \cite[p. 22]{Berkovich90} where $t^a= t_1^{a_1} \cdots t_n^{a_n}$ and $r^a = r_1^{a_1} \cdots r_n^{a_n}$. For a $K$-analytic space $X$ let $X_r$ be the $K_r$-analytic space obtained from $X$ by extending scalars to $K_r$. Given a coherent sheaf $F$ on $X$ let $F_r$ the coherent sheaf on $X_r$ deduced from $F$. Suppose $X$ compact and let $\| \cdot \|_X$ be a norm defining the topology of $\Gamma(X, F)$. Then and $\Gamma(X_r, F_r) = \Gamma(X, F) \hotimes_K K_r$ and  a norm $\| \cdot \|_{X_r}$ defining the topology of $\Gamma(X_r, F_r)$ is given by
\[ \left\| \sum_{a \in \bbZ^n } f_a t^a \right\|_{X_r} = \max_{a \in \bbZ^n} \|f_a\|_{X} r^a. \]
To reduce to strict spaces we need to be able to descend density in global sections:

\begin{lemma} \label{Lemma:DescendingDensityAlongScalarExtensions} Let $D$ be an effective Cartier divisor in a compact $K$-analytic space~$X$ and $F$ a coherent $\cO_X$-module. If the restriction $\Gamma(X_r, F_r(\ast D_r)) \to \Gamma(X_r \smallsetminus D_r, F_r)$ is injective and has a dense image, then so does  $\Gamma(X, F(\ast D)) \to \Gamma(X \smallsetminus D, F)$.
\end{lemma}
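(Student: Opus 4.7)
The strategy is to exploit the explicit Schauder-type expansion of $K_r$ over $K$ recalled in the excerpt: every element of $\Gamma(X_r,F_r) = \Gamma(X,F) \hotimes_K K_r$ writes uniquely as a convergent series $\sum_{a \in \bbZ^n} f_a t^a$ with $f_a \in \Gamma(X,F)$ and $\|f_a\|_X r^a \to 0$, and the norm satisfies $\|\sum_a f_a t^a\|_{X_r} = \max_a \|f_a\|_X r^a$. In particular, the ``constant-term'' map $\pi \colon \sum_a f_a t^a \mapsto f_0$ is a continuous $K$-linear retraction of the scalar-extension inclusion $\iota \colon \Gamma(X,F) \to \Gamma(X_r,F_r)$, of operator norm $\le 1$, functorial in $X$ and $F$; in particular it commutes both with restriction to analytic domains and with multiplication by the canonical section $s_D$.

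I would start by using \cref{Lemma:MeromorphicSectionsAnalytic}~(2) to identify $\Gamma(X,F(\ast D)) = \varinjlim_d \Gamma(X,F(dD))$, and similarly for $X_r$. Because $X$ is compact and each $F(dD)$ is coherent, the identity $\Gamma(X_r,F_r(dD_r)) = \Gamma(X,F(dD))\hotimes_K K_r$ holds at each stage, so the constant-term retraction $\pi$ is defined at each level and is compatible with the transition maps $s_D$. For \emph{injectivity}, let $f \in \Gamma(X,F(\ast D))$ restrict to zero on $X\smallsetminus D$. Choose a representative $f_d \in \Gamma(X,F(dD))$. Its extension $f_d \otimes 1 \in \Gamma(X_r,F_r(dD_r))$ restricts to zero on $X_r \smallsetminus D_r$ (extension of scalars of the zero section), so by hypothesis $f \otimes 1 = 0$ in $\Gamma(X_r,F_r(\ast D_r))$; this means there exists $d' \ge d$ such that $s_D^{d'-d}(f_d \otimes 1) = 0$ in $\Gamma(X,F(d'D)) \hotimes_K K_r$. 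Applying $\pi$ yields $s_D^{d'-d} f_d = 0$ in $\Gamma(X,F(d'D))$, so $f = 0$ in $\Gamma(X,F(\ast D))$.

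For \emph{density}, a basic neighborhood of $g \in \Gamma(X\smallsetminus D,F)$ is of the form $U = \{g' : \|g' - g\|_V < \epsilon\}$ for some compact analytic domain $V \subset X\smallsetminus D$; its scalar extension $V_r$ is a compact analytic domain of $X_r \smallsetminus D_r$. By hypothesis, pick $h \in \Gamma(X_r,F_r(\ast D_r))$ with $\|h|_{V_r} - g \otimes 1\|_{V_r} < \epsilon$, and represent $h$ as $\sum_a h_a t^a \in \Gamma(X,F(dD)) \hotimes_K K_r$. Applying the constant-term retraction on $\Gamma(V_r,F_r)$, which is norm-contracting, gives $\|h_0|_V - g\|_V \le \|h|_{V_r} - g \otimes 1\|_{V_r} < \epsilon$, so $h_0 \in \Gamma(X,F(dD)) \subset \Gamma(X,F(\ast D))$ restricts into $U$.

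The main obstacle is the bookkeeping around the ``constant-term retraction'': one must check that the formula for the norm on $\Gamma(X_r,F_r)$ recalled in the excerpt really furnishes a well-defined continuous $K$-linear projection $\pi$ onto the subspace $\Gamma(X,F) \otimes t^0$, compatibly with localization to a compact analytic domain $V \subset X$ and with the multiplication $s_D$ that defines the transition maps in the colimit computing $F(\ast D)$. Once the existence and naturality of $\pi$ are established (via the Schauder basis $\{t^a\}_{a \in \bbZ^n}$ of $K_r$ over $K$), the descent of both injectivity and density is immediate.
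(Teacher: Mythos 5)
Your proposal is correct and follows essentially the same route as the paper: identify $\Gamma(X_r,F_r(\ast D_r))$ with $\varinjlim_d\Gamma(X,F(dD))\hotimes_K K_r$ via compactness, expand an approximant in the Schauder basis $\{t^a\}$ of $K_r$ over $K$, and use the max-norm formula (your contracting constant-term retraction is exactly the paper's coefficient estimate $\|f-g\|_{C_r}=\max\{\|f-g_0\|_C,\|g_a\|_C r^a\}$) to see that the constant term $g_0$ already approximates on the compact domain. The only cosmetic difference is that you spell out the (trivial) descent of injectivity, which the paper leaves implicit.
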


\begin{proof} Let $U = X \smallsetminus D$, $f \in \Gamma(U, F)$ and $C \subset U$ a compact analytic domain. For $d \in \bbN$ let $\| \cdot \|_{X, d}$ and $\| \cdot \|_{C} $ be norms defining the topology respectively on $\Gamma(X, F(dD))$ and on $\Gamma(C, F)$. Via the isomorphism $\Gamma(C, F) \cong \Gamma(C, F(dD))$
one deduces a norm on $\Gamma(C, F(dD))$ defining its topology: with an abuse of notation, denote it again by $\| \cdot \|_C$. Let $\epsilon > 0$. By hypotheses, there exists $g \in \Gamma(X_r, F_r(\ast D_r))$ such that $\| f - g\|_{C_r} < \epsilon$. Now $g \in\Gamma(X_r, F_r(d D_r))$ for some $d \in \bbN$ because
\[ \Gamma(X_r, F_r(\ast D_r)) = \injlim_{d \in \bbN} \Gamma(X_r, F(d D_r))\]
by compactness of $X_r$ and \cref{lemma:InjectiveLimitSheavesGTopology}. Write  $g = \sum_{a \in \bbZ^n} g_a t^a$
with sections $g_a \in \Gamma(X, F(dD))$ such that $ \|g_a\|_{X, d} r^a \to 0$ as $|a| \to \infty$. Since the restriction map $\Gamma(X, F(dD)) \to \Gamma(C, F(dD))$ is continuous, there is a real number $R > 0$, such that $\| g_a \|_{C} \le R \| g_a \|_{X, d} $ for all $a \in \bbZ^n$. Thus the series $\sum_{a \in \bbZ^n} g_a t^a$ converges on~$C_r$ and 
\[ \| f - g\|_{C_r} = \max_{a \in \bbZ^n \smallsetminus \{ 0 \}} \left \{ \| f- g_0 \|_C, \| g_a \|_C r^a \right\}. \]
In particular $\| f - g\|_{C_r} < \epsilon$ implies $\| f- g_0 \|_C < \epsilon$. This shows that $\Gamma(X, F(\ast D))$ is dense in $\Gamma(U, F)$.
\end{proof}

\begin{proof}[{Proof of \cref{Thm:DensityMeromorphicSectionsAnalytic}}] Write $X_K$ instead of $X$ and similarly for $D$ and $F$. Thanks to \cref{Lemma:DescendingDensityAlongScalarExtensions}, up to extending scalars, one may suppose that $X_K$ admits a proper morphism with strictly affinoid target. Let $C \subset X_K \smallsetminus D_K$ be a compact analytic domain. We want to show that $\Gamma(X_K, F_K(\ast D_K))$ is dense in $\Gamma(X_K \smallsetminus D_K, F_K)$ with respect to a norm $\| \cdot \|_C$ defining the topology of $\Gamma(C, F_K)$. According to \cref{Thm:FormalModelsOfCartierDivisors} there are a proper over affine admissible formal $R$-scheme, an admissible effective Cartier divisor $D$ in $X$ and an isomorphism of $K$-analytic spaces $\phi \colon X_\eta \to X_K$ inducing an isomorphism $D_\eta \cong D_K$. Let $U = X \smallsetminus D$. Thanks to \cref{Prop:ExtendingSemiReflexiveModules}, there are a coherent $\cO_{X}$-module $F$, flat over $R$, $D$-torsion free and such that $F_{\rvert D}$ is flat over $R$, and an injective homomorphism of $\cO_{X_\eta}$-modules $F_K \into F_\eta$ inducing isomorphisms $F_K(\ast D_\eta) \cong F_\eta(\ast D_\eta)$ and $F_{K \rvert U_\eta} \cong F_{\eta \rvert U_\eta}$. For $d \in \bbN$, the natural homomorphisms
\[
\Gamma(X, F(dD)) \otimes_R K \too \Gamma(X_K, F_K(dD_K)), \quad
\Gamma(U, F) \otimes_R K  \too \Gamma(U_\eta, F_K),
\]
are isomorphisms. In what follows, these identifications are understood.

\begin{claim}
Up to replacing $X$ by its blow-up along $\varpi \cO_X + I_D$ for some $\varpi \in R \smallsetminus \{ 0 \}$, one may assume $C = U_\eta$ and $\| \cdot \|_C$ to be the norm induced by the $R$-module $\Gamma(U, F)$.
\end{claim}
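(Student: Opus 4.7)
The plan is to reduce to the situation where the Raynaud generic fiber $U_\eta = (X \smallsetminus D)_\eta$ already contains the compact analytic domain $C$, by performing a suitable admissible blow-up of $X$. First, I would apply \cref{Lemma:ExhaustionByBlowUp} to the compact subset $C \subset X_\eta \smallsetminus D_\eta$: there exists $\epsilon > 0$ such that, for any topologically nilpotent $\varpi \in R \smallsetminus \{0\}$ with $|\varpi| < \epsilon$, the admissible blow-up $\pi \colon X' \to X$ of $X$ in $\varpi \cO_X + I_D$ has the property that $\pi_\eta^{-1}(C) \subset (X' \smallsetminus D')_\eta$, where $D'$ denotes the strict transform of $D$ in $X'$.

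Second, I would check that $X'$ retains every property needed to continue the proof. By \cref{Prop:StrictTransformBlowUpIntersectionTwoCartierFormal}, $D'$ is the effective Cartier divisor $\pi^{-1}(D) - E$, and being a closed admissible formal subscheme of the admissible $X'$ it is an admissible effective Cartier divisor. The composition $X' \stackrel{\pi}{\to} X \to Y$ (with $Y$ affine admissible) stays proper, so $X'$ remains proper over affine. Next I would apply \cref{Prop:ExtendingSemiReflexiveModules} to the semi-reflexive coherent sheaf $F_K$ viewed on $X'_\eta$ via the isomorphism $\pi_\eta$; this produces a coherent $\cO_{X'}$-module $F'$ that is flat over $R$, $D'$-torsion free, with $F'_{\rvert D'}$ flat over $R$, together with an injection $F_K \hookrightarrow F'_\eta$ that is an isomorphism over $(X' \smallsetminus D')_\eta$.

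Finally, after replacing $X, D, F, U$ by $X', D', F', U' := X' \smallsetminus D'$, one has $\pi_\eta^{-1}(C) \subseteq U_\eta$. Since the restriction map $\Gamma(U_\eta, F_K) \to \Gamma(C, F_K)$ is continuous, density of the image of $\Gamma(X, F(\ast D))$ in $\Gamma(U_\eta, F_K)$ with respect to the (larger) norm induced by the $R$-lattice $\Gamma(U, F)$ already suffices, a fortiori, for density with respect to $\|\cdot\|_C$; one may therefore enlarge $C$ to $U_\eta$ and replace $\|\cdot\|_C$ by the natural norm attached to $\Gamma(U, F)$.

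The main obstacle, in my view, is the second step---ensuring that after blowing up one still has a coherent sheaf on $X'$ satisfying all the flatness hypotheses required by \cref{Thm:DensityMeromorphicSectionsModel}. A naive pull-back $\pi^\ast F$ could easily fail to be $D'$-torsion free or to have flat restriction to $D'$; one genuinely needs the saturation construction built into \cref{Prop:ExtendingSemiReflexiveModules} (which relies on \cref{Lemma:SaturationOfSemiReflexiveFormal}) to circumvent this.
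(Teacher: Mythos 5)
Your proposal is correct and is essentially the paper's own argument: \cref{Lemma:ExhaustionByBlowUp} to choose $\varpi$ with $\pi_\eta^{-1}(C)\subset U'_\eta$, \cref{Prop:StrictTransformBlowUpIntersectionTwoCartierFormal} to see that the strict transform $D'$ is again an admissible effective Cartier divisor, and continuity of the restriction $\Gamma(U'_\eta, F_K)\to\Gamma(\pi_\eta^{-1}(C), F_K)$ to replace $\|\cdot\|_C$ by the norm coming from the model lattice. The one deviation is that you re-run \cref{Prop:ExtendingSemiReflexiveModules} on the blow-up to get a fresh model sheaf with the required flatness/torsion-freeness, whereas the paper just invokes the lattice $\Gamma(U',\pi^\ast F)$ to define the norm; your version is harmless and in fact spells out what is needed for the subsequent appeal to \cref{Thm:DensityMeromorphicSectionsModel}, though note that admissibility of $D'$ comes from the saturated definition of the strict transform (its ideal is the kernel of $\cO_{X'}\to\cO_{X',K}/I_D\cO_{X',K}$, so $\cO_{D'}$ is $R$-torsion free), not merely from $D'$ being a closed formal subscheme of the admissible $X'$.
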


\begin{proof}[Proof of the claim]
According to \cref{Lemma:ExhaustionByBlowUp}, there is $\varpi \in R \smallsetminus \{ 0 \}$ with the following property. Let $\pi \colon X' \to X$ be the admissible blow-up of $X$ along $\varpi \cO_X + I_D$. Let $D' \subset X'$ be the strict transform of $D$ and $U' = X' \smallsetminus D'$. Then $\pi_\eta^{-1}(C)$ is contained in $U'_\eta$. Since the restriction map $\Gamma(U'_\eta, \pi^\ast F_K) \to \Gamma(\pi_\eta^{-1}(C), \pi^\ast F_K)$ is continuous, one may suppose $\pi_\eta^{-1}(C) = U'_\eta$. Moreover, by \cref{Prop:StrictTransformBlowUpIntersectionTwoCartierFormal}, the strict transform $D'$ of $D$ in $X'$ in an admissible effective Cartier divisor.  Concerning the choice of the norm, density in $\Gamma(U'_\eta, \pi^\ast F_K)$ only depends on its topology. Therefore, it suffices to test density for a norm defining its topology: the norm induced by the $R$-module $\Gamma(U', \pi^\ast F)$ definitely falls into this category. 
\end{proof}

Henceforth suppose $C = U_\eta$. The natural map $\Gamma(X, F(\ast D)) \to \Gamma(U, F)$
is injective and has dense image (\cref{Thm:DensityMeromorphicSectionsModel}). By extending scalars to $K$, one deduces that the map $\Gamma(X_K, F_K(\ast D_K)) \to \Gamma(U_K, F_K)$ is injective with dense image. This concludes the proof.
\end{proof}

\subsection{Proof of \cref{Thm:ComplementDivisorHolomorphicallyConvex} in the general case} We can now complete the proof of \cref{Thm:ComplementDivisorHolomorphicallyConvex}.

\begin{lemma} \label{Lemma:InjectivityRestrictionMapAnalyticSpace} Let $D$ be an effective Cartier divisor in a $K$-analytic space $X$.  Then the restriction homomorphism $\rho \colon \Gamma(X, F(\ast D)) \to \Gamma(X \smallsetminus D, F)$ is injective for any semi-reflexive $\cO_X$-module $F$.
\end{lemma}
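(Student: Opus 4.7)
The plan is to derive this as a direct consequence of \cref{Lemma:MeromorphicSectionsAnalytic}~(1), which already provides the sheaf-level version of the same statement: namely, the injectivity of the morphism of sheaves $F(\ast D) \to j_\ast j^\ast F$ on $X$, where $j \colon X \smallsetminus D \to X$ is the open immersion. Since the functor $\Gamma(X, -)$ is left exact, applying it to this injection yields injectivity of $\Gamma(X, F(\ast D)) \to \Gamma(X, j_\ast j^\ast F)$. By the very definition of $j_\ast$ one has $\Gamma(X, j_\ast j^\ast F) = \Gamma(X \smallsetminus D, F)$, and the composition with the canonical map is precisely $\rho$. This gives the desired injectivity.

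For the reader's benefit it is worth recalling how \cref{Lemma:MeromorphicSectionsAnalytic}~(1) works: the question is local on $X$, so one reduces to $X = \cM(A)$ affinoid with $I_D = fA$ principal; a section of $F(\ast D)$ over such a chart is represented as $m/f^d$ with $m \in M := \Gamma(X, F)$ and $d \ge 0$; if this section vanishes on $X \smallsetminus D$, then since $f$ is invertible there, $m$ itself must vanish on $X \smallsetminus D$, whence its support is set-theoretically contained in $D = V(f)$; Berkovich's analytic Nullstellensatz \cite[Prop.~2.1.4(i)]{Berkovich90} then yields $f^e \in \Ann(m)$ for some $e \ge 1$, so that $m/f^d = f^e m / f^{d+e} = 0$ in $\injlim_k \Gamma(X, F(kD))$.

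There is no real obstacle in the argument: all substantive work has already been done in \cref{Lemma:MeromorphicSectionsAnalytic}~(1), and the present lemma is a bookkeeping restatement at the level of global sections. In particular, the semi-reflexivity hypothesis on $F$ is not used for the injectivity claim—it appears in the statement only for uniformity with the hypotheses of \cref{Thm:ComplementDivisorHolomorphicallyConvex}, where semi-reflexivity is required for the more delicate density portion of the argument.
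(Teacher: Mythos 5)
Your proof is correct, but it takes a genuinely different route from the paper's. You obtain the injectivity by applying the left-exact functor $\Gamma(X,-)$ to the sheaf-level injection $F(\ast D) \to j_\ast j^\ast F$ of \cref{Lemma:MeromorphicSectionsAnalytic}~(1) and identifying $\Gamma(X, j_\ast j^\ast F)$ with $\Gamma(X \smallsetminus D, F)$; since kernels of sheaf morphisms are computed section-wise, this is immediate, and your recollection of how \cref{Lemma:MeromorphicSectionsAnalytic}~(1) is proved (support of a section, annihilator ideal, Berkovich's analytic Nullstellensatz) is accurate. The paper instead reduces to compact $X$ by an affinoid G-cover, identifies $\Gamma(X, F(\ast D))$ with $\injlim_{d} \Gamma(X, F(dD))$ by quasi-compactness, observes that each $F(dD)$ is again semi-reflexive, deduces injectivity of each $\rho_d \colon \Gamma(X, F(dD)) \to \Gamma(X \smallsetminus D, F(dD))$ from \cref{Thm:ClosednessImage} applied to the scheme-theoretically dense open $X \smallsetminus D$, and then passes to the filtered colimit. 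So the paper does genuinely use semi-reflexivity in its proof, through \cref{Thm:ClosednessImage}, whereas your argument shows that for this particular lemma it is dispensable (your guess that it is there ``only for uniformity'' is a fair reading of the statement, but it is not how the paper's own proof proceeds). Your route is lighter, resting only on the local Nullstellensatz argument already encapsulated in \cref{Lemma:MeromorphicSectionsAnalytic}, while the paper's route stays at the level of global sections and records the injectivity of the individual maps $\rho_d$, in the same spirit in which \cref{Thm:ClosednessImage} is used elsewhere. One small caution: your reduction requires $F$ to be coherent (as it implicitly is throughout this section and in \cref{Thm:ComplementDivisorHolomorphicallyConvex}), since \cref{Lemma:MeromorphicSectionsAnalytic}~(1) is stated and proved for coherent sheaves.
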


\begin{proof} By taking an affinoid G-cover it suffices to prove the statement when $X$ is compact. In this case $ \Gamma(X, F(\ast D)) = \injlim_{d \in \bbN} \Gamma(X, F(d D))$ by compactness of $X$ and \cref{lemma:InjectiveLimitSheavesGTopology}. For $d \in \bbN$ the $\cO_X$-module $F(dD) = F \otimes \cO_X(dD)$ is semi-reflexive because $F$ is. The complement $U:= X \smallsetminus D$ is scheme-theoretically dense in $X$ thus the restriction homomorphism $\rho_d \colon \Gamma(X, F(dD)) \to \Gamma(U, F(dD))$ is injective. For an integer $d' \ge d$, one has the following commutative diagram
\begin{center}
\begin{tikzcd}
\Gamma(X, F(dD)) \ar[r] \ar[d, "\rho_{d}", hook] & \Gamma(X, F(d'D)) \ar[d, "\rho_{d'}", hook]\\
\Gamma(U, F(dD)) \ar[r, "\sim"] & \Gamma(U, F(d'D)),
\end{tikzcd}
\end{center}
the lower horizontal arrow being an isomorphism. By taking the injective limit one sees that $\rho$ is injective. This concludes the proof.
\end{proof}

\begin{proof}[{Proof of \cref{Thm:ComplementDivisorHolomorphicallyConvex}}] The restriction homomorphism $\Gamma(X, F(\ast D)) \to \Gamma(X \smallsetminus D, F)$ is injective by \cref{Lemma:InjectivityRestrictionMapAnalyticSpace}. Let $U := X \smallsetminus D$, $s \in \Gamma(U, F)$, $C \subset U$ a compact analytic domain,  $\| \cdot \|$ a norm on $\Gamma(C, F)$ defining its topology. In order to prove the statement we have to show that for each $\epsilon > 0$ there are $d \in \bbN$ and $t \in \Gamma(X, F(dD))$ such that $\| s - t\| < \epsilon$.  According to \cref{thm:Remmert}, there are a Stein $K$-analytic space $S$ and a proper morphism $\sigma \colon X \to S$. Let $\{ S_i\}_{i \in \bbN}$ be an affinoid Stein exhaustion of $S$. For $i \in \bbN$ set $X_i := X \times_S S_i$. For $i \in \bbN$ big enough the analytic domain $X_i$ contains~$C$. Therefore, we may suppose $C = X_i$. The induced morphism $\sigma \colon X_i \to S_i$ is proper. The closed analytic subspace $D_i := D \times_X X_i$ of $X_i$ is an effective Cartier divisor. This allows to apply \cref{Thm:DensityMeromorphicSectionsAnalytic} to $X_i$. It follows that there are $d \in \bbN$ and $t \in \Gamma(X_i, F(dD))$ such that $\| t - s\| < \epsilon$. In order to conclude, note that the $\cO_X$-module $M:= \sigma_\ast F(dD)$ is coherent because the morphism $\sigma$ is proper. Then, as recalled above, the restriction map
\[ \Gamma(X, F(dD)) = \Gamma(S, M) \too \Gamma(S_i, M) = \Gamma(X_i, F(dD)) \]
has a dense image, because $S$ is Stein. This concludes the proof.
\end{proof}

\section{Approximating analytic functions by rational functions}

Let $K$ be a complete non-trivially valued  non-Archimedean field.

\subsection{Statements} In this section we deduce density results of algebraic functions by using the results of the previous sections. Let $X$ be a separated finite type scheme over an affinoid $K$-algebra~$A$ and $F$ a semi-reflexive coherent $\cO_X$-module. The main result is the following:

\begin{theorem} \label{Thm:DensityAlgebraicSections} The subset $\Gamma(X, F) \subset \Gamma(X^\an, F^\an)$ is dense.
\end{theorem}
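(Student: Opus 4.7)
The plan is to realise $X$ as the complement of an effective Cartier divisor in a proper $A$-scheme $\bar{X}$, extend $F$ to a suitable semi-reflexive coherent $\cO_{\bar{X}}$-module $\bar{F}$, and then invoke \cref{Thm:DensityMeromorphicSectionsAnalytic} together with relative GAGA in order to transfer density from meromorphic analytic sections to algebraic ones.

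First I would apply Nagata compactification to the separated finite-type morphism $X \to \Spec A$, embedding $X$ as an open subscheme of some proper $A$-scheme $\bar{X}_0$. Endowing the complement $Z := \bar{X}_0 \smallsetminus X$ with its reduced structure and letting $\pi \colon \bar{X} \to \bar{X}_0$ be the blow-up of $\bar{X}_0$ in $Z$, the exceptional divisor $E := \pi^{-1}(Z)$ is an effective Cartier divisor in the proper $A$-scheme $\bar{X}$, and $\pi$ restricts to an isomorphism $\bar{X} \smallsetminus E \iso X$.

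Next I would construct the extension $\bar{F}$. Since $\bar{X}$ is Noetherian, some coherent $\cO_{\bar{X}}$-module $\bar{F}_0$ extends $F$; writing $j \colon X \into \bar{X}$ for the open immersion, I set $\bar{F}$ to be the image of the natural map $\bar{F}_0 \to j_\ast F$. Then $\bar{F}$ is coherent (image of a coherent sheaf inside a quasi-coherent one), $\bar{F}_{\rvert X} = F$, and $\bar{F}$ is $E$-torsion free, since a local equation of $E$ is invertible in $j_\ast \cO_X$. Moreover $\bar{F}$ is semi-reflexive: the kernel of $\bar{F} \to \bar{F}^{\vee\vee}$ is supported on $E$, because over $X$ it coincides with the injection $F \into F^{\vee\vee}$ afforded by the semi-reflexivity of $F$, and any section supported on $E$ must vanish by $E$-torsion freeness.

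Finally I would invoke the density machinery. The $K$-analytic space $\bar{X}^\an$ is proper over the affinoid space $\cM(A)$, hence proper over affinoid; $E^\an$ is an effective Cartier divisor in $\bar{X}^\an$ and $\bar{F}^\an$ is semi-reflexive. \Cref{Thm:DensityMeromorphicSectionsAnalytic} then supplies the density of the restriction $\Gamma(\bar{X}^\an, \bar{F}^\an(\ast E^\an)) \to \Gamma(X^\an, F^\an)$. Since $\bar{X}^\an$ is compact and $\bar{F}^\an$ is $E^\an$-torsion free, \cref{Lemma:MeromorphicSectionsAnalytic}(2) identifies the source with $\injlim_d \Gamma(\bar{X}^\an, \bar{F}^\an(dE^\an))$; relative GAGA for proper morphisms over $\cM(A)$ rewrites each term as $\Gamma(\bar{X}, \bar{F}(dE))$; and the scheme-theoretic counterpart (the first lemma of \cref{sec:GlobalApproximation}) identifies $\injlim_d \Gamma(\bar{X}, \bar{F}(dE))$ with $\Gamma(X, F)$. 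The resulting factorisation exhibits $\Gamma(X, F) \to \Gamma(X^\an, F^\an)$ as a dense map, which is the theorem. The main technical hurdle is the second step---producing an extension $\bar{F}$ that is simultaneously semi-reflexive, $E$-torsion free, and restricts to $F$---and this is precisely the point where the semi-reflexivity hypothesis on $F$ is genuinely used; the subsheaf construction through $j_\ast F$ packages these requirements uniformly.
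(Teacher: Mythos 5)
Your proposal is correct and follows essentially the same route as the paper: Nagata compactification plus a blow-up making the boundary an effective Cartier divisor, a saturated semi-reflexive coherent extension of $F$, and then \cref{Thm:DensityMeromorphicSectionsAnalytic}, \cref{Lemma:MeromorphicSectionsAnalytic} and GAGA over the affinoid base. The only deviation is cosmetic: where you take the image of a coherent extension inside $j_\ast F$ and check semi-reflexivity and $E$-torsion freeness directly, the paper uses the saturation $\Ker(E^{\vee\vee} \to (E^{\vee\vee}/E)_{\rvert U})$ of \cref{Lemma:ExtendingSemiReflexiveToCompactification}; the two constructions play the same role.
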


\begin{corollary} \label{Cor:SectionsAffineOpen} For a a scheme-theoretically dense open subset $U \subset X$ the following are equivalent:
\begin{enumerate}
\item $\Gamma(X, F) \subset \Gamma(U^\an, F^\an)$ is dense;
\item $\Gamma(X^\an, F^\an) \subset \Gamma(U^\an, F^\an)$ is dense;
\item $\Gamma(X^\an, F^\an) = \Gamma(U^\an, F^\an)$;
\item $\Gamma(X, F) = \Gamma(U, F)$.
\end{enumerate}
\end{corollary}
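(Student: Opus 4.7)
I propose to analyze the commutative square of injective $K$-linear maps
\[
\begin{tikzcd}
\Gamma(X, F) \ar[r, "c"] \ar[d, "a"] & \Gamma(X^\an, F^\an) \ar[d, "d"] \\
\Gamma(U, F) \ar[r, "b"] & \Gamma(U^\an, F^\an),
\end{tikzcd}
\]
where $b,c$ are analytifications and $a,d$ are restrictions. Injectivity of $a$ and $d$ uses semi-reflexivity of $F$ together with the scheme-theoretic density of $U \subseteq X$, which transfers to $U^\an \subseteq X^\an$ by flatness of analytification on stalks; injectivity of $b$ and $c$ is a standard property of analytification of coherent sheaves on separated finite-type $A$-schemes. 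The right column carries its natural Fr\'echet topology, inducing topologies on the left column via the injections $b$ and $c$.

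Two inputs from the paper power the argument: \cref{Thm:DensityAlgebraicSections}, applied to both $X$ and $U$, makes $c$ and $b$ have dense image; and \cref{Thm:ClosednessImage}, applied to the pair $(X^\an, X^\an \smallsetminus U^\an)$, makes $d$ a homeomorphism onto a closed subspace. These at once yield: $(2) \Leftrightarrow (3)$ because a closed dense subspace is the whole space; $(1) \Rightarrow (2)$ trivially since $\im(d \circ c) \subseteq \im d$; $(2) \Rightarrow (1)$ because under $(3)$ the map $d$ becomes a homeomorphism onto $\Gamma(U^\an, F^\an)$, transporting density of $\im c$ in $\Gamma(X^\an, F^\an)$ to density of $\im(d\circ c) = \im(b \circ a)$; and $(4) \Rightarrow (1)$ because surjectivity of $a$ forces $\im(b \circ a) = \im b$, dense by \cref{Thm:DensityAlgebraicSections} applied to $U$.

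The main obstacle is $(3) \Rightarrow (4)$. Applying the snake lemma to the above square (completing each row with its cokernel) and using $\coker d = 0$ from $(3)$ yields the short exact sequence
\[
0 \to \coker a \to \Gamma(X^\an, F^\an)/\im c \to \Gamma(U^\an, F^\an)/\im b \to 0,
\]
so $(4)$ amounts to the injectivity of the $d$-induced map on cokernels---concretely, to the descent principle that an analytic section of $F^\an$ on $X^\an$ whose restriction to $U^\an$ is the analytification of a section in $\Gamma(U, F)$ must itself be the analytification of a section in $\Gamma(X, F)$. Density alone cannot yield this, since $\im c \subseteq \Gamma(X^\an, F^\an)$ is dense but not closed. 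I expect this descent step to require the most care: my plan is to pass through local cohomology with supports in $Z := X \smallsetminus U$, embedding $\coker a \hookrightarrow \Gamma(X, \cH^1_Z F)$ and $\coker d \hookrightarrow \Gamma(X^\an, \cH^1_{Z^\an} F^\an)$ compatibly via the respective long exact sequences (using that $\cH^0_Z F = 0$ by semi-reflexivity), and then deducing the injectivity $\coker a \hookrightarrow \coker d$ from the injectivity of the natural map on these local-cohomology groups, which itself follows from the faithful flatness of the analytification morphism on stalks combined with the computation of $\cH^1_Z$ as the colimit $\varinjlim_n \mathcal{E}xt^1(\cO_X/I_Z^n, F)$.
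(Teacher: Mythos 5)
Your handling of (1)$\Leftrightarrow$(2)$\Leftrightarrow$(3) and of (4)$\Rightarrow$(1) is correct and agrees with the paper's proof: density comes from \cref{Thm:DensityAlgebraicSections} and closedness of the image from \cref{Thm:ClosednessImage}. The crux is (3)$\Rightarrow$(4), and there your plan has a genuine gap. The descent statement you isolate is exactly \cref{Cor:HolomorphicExtensionOfMeromorphicSections}, which is what the paper invokes at this point; its proof rests on the injectivity of $\theta_{j^\ast F, j}\colon (j_\ast j^\ast F)^\an \to j^\an_\ast j^{\an\ast}F^\an$ (\cref{Prop:InjectivityPushingForward}), obtained via Nagata compactification, GAGA and the polar-singularity analysis of \cref{Lemma:MeromorphicSectionsAnalytic}. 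Your local-cohomology reformulation does not dispose of this point: since $\cH^1_Z F = \coker(F \to j_\ast j^\ast F)$ and $\cH^1_{Z^\an}F^\an = \coker(F^\an \to j^\an_\ast j^{\an\ast}F^\an)$ (granting the vanishing of the $\cH^0$'s), the snake lemma shows that the injectivity of the comparison $(\cH^1_Z F)^\an \to \cH^1_{Z^\an}F^\an$ that your argument needs is \emph{equivalent} to the injectivity of $\theta_{j^\ast F, j}$. Faithful flatness on stalks only gives injectivity of $\Gamma(X, \cH^1_Z F) \to \Gamma(X^\an, (\cH^1_Z F)^\an)$, a comparison with the \emph{analytification} of the algebraic local cohomology; and the formula $\cH^1_Z F = \injlim_n \mathcal{E}xt^1(\cO_X/I_Z^n, F)$ is valid on the Noetherian scheme $X$ but emphatically not for $\cH^1_{Z^\an}F^\an$ on $X^\an$---the discrepancy between these two sheaves (polar versus essential singularities) is precisely the phenomenon the paper must control. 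The same issue occurs earlier when you assert that scheme-theoretic density of $U$ in $X$ ``transfers to $U^\an \subset X^\an$ by flatness on stalks'': $j^\an_\ast \cO_{U^\an}$ is not $(j_\ast \cO_U)^\an$, and the paper deduces this density from \cref{Cor:InjectivityPushForward}, again via $\theta$-injectivity.

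That said, your route can in fact be completed without the compactification/GAGA input, but the missing step must be supplied explicitly. One first shows that $F^\an$ has no nonzero local sections supported on $Z^\an$: by the affinoid Nullstellensatz such a section is locally killed by a power of $I_Z\cO_{X^\an}$, hence lies in $(\injlim_n \cHom(\cO_X/I_Z^n, F))^\an = 0$. Then, writing $j_\ast j^\ast F = \injlim_n \cHom_{\cO_X}(I_Z^n, F)$ (Deligne's formula on the Noetherian scheme $X$) and using flat base change for $\cHom$ of coherent sheaves, one checks that a local homomorphism $(I_Z\cO_{X^\an})^n \to F^\an$ vanishing on $U^\an$ takes all its values in sections supported on $Z^\an$, hence is zero; this gives the injectivity of $(j_\ast j^\ast F)^\an \to j^\an_\ast j^{\an\ast}F^\an$ and therefore of your comparison on $\cH^1$'s. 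As written, however, neither this argument nor an appeal to \cref{Cor:HolomorphicExtensionOfMeromorphicSections} appears in the proposal, so the implication (3)$\Rightarrow$(4) is not yet proved.
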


\subsection{Scheme-theoretic closure and analytification} We need some results on the analytification of quasi-coherent sheaves. In this section let $f \colon X \to Y$ be a separated morphism between $A$-schemes of finite type. For an $\cO_X$-module $E$ define~$E^\an$ as the pull-back of $E$ along the canonical morphism $X^\an \to X$ of locally ringed spaces. Also, let
\[ \theta_{E, f} \colon (f_\ast E)^\an \too f^\an_\ast E^\an\] 
be the canonical homomorphism of $\cO_{Y^\an}$-modules.

\begin{lemma} \label{Lemma:PushForwardDirectLimitOfCoherent} Let $\{ E_i \}_{i \in I}$ be a directed system of $\cO_X$-modules and $E = \injlim E_i$. If all transition maps are injective and~$\theta_{E_i, f}$ is injective for all $i\in I$, then $\theta_{E, f}$ is injective.
\end{lemma}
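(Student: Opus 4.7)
The plan is to argue that the natural transformation $\theta_{-, f}$ is compatible with filtered colimits, so that $\theta_{E, f}$ identifies with the filtered colimit of the $\theta_{E_i, f}$, and then to conclude by exactness of filtered colimits in the category of sheaves of $\cO_{Y^\an}$-modules.

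First I would record two preliminary facts. Analytification $(-)^\an = \pi_X^\ast$, being left adjoint to the direct image along the analytification morphism $\pi_X \colon X^\an \to X$, commutes with all colimits, and in particular with filtered ones. Since $f \colon X \to Y$ is separated of finite type between $A$-schemes of finite type, it is quasi-compact and quasi-separated, and therefore $f_\ast$ commutes with filtered colimits of $\cO_X$-modules (sections over a quasi-compact open in $Y$ are computed on a quasi-compact quasi-separated open of $X$). Combining these observations yields a canonical isomorphism $(f_\ast E)^\an \cong \injlim_i (f_\ast E_i)^\an$, and by functoriality of $\theta_{-, f}$ the various $\theta_{E_i, f}$ are intertwined with $\theta_{E, f}$ through this isomorphism.

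To establish injectivity, let $\sigma \in \Gamma(V, (f_\ast E)^\an)$ be a section with $\theta_{E, f}(\sigma) = 0$, and pick $V$ in a basis of compact analytic domains of $Y^\an$. Shrinking $V$ if necessary, the colimit description lets us write $\sigma$ as the image of some $\sigma_i \in \Gamma(V, (f_\ast E_i)^\an)$. Its image $\tau_i := \theta_{E_i, f}(\sigma_i) \in \Gamma((f^\an)^{-1}(V), E_i^\an)$ vanishes in $\Gamma((f^\an)^{-1}(V), \injlim_j E_j^\an)$. Because $f$ is of finite type, $(f^\an)^{-1}(V)$ is a quasi-compact analytic subset of $X^\an$; covering it by finitely many affinoid domains and using that stalks of sheaves always commute with filtered colimits, one can find a single index $j \ge i$ such that $\tau_i$ is already zero in $\Gamma((f^\an)^{-1}(V), E_j^\an)$. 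The hypothesis that $\theta_{E_j, f}$ is injective then forces $\sigma_j = 0$, where $\sigma_j$ is the image of $\sigma_i$ in $(f_\ast E_j)^\an$, so $\sigma = 0$ in the colimit.

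The main technical point is precisely the quasi-compactness reduction that yields a single $j$ valid uniformly on $(f^\an)^{-1}(V)$; it rests on $f^\an$ pulling back compact analytic domains of $Y^\an$ to quasi-compact analytic subsets of $X^\an$, which in turn is provided by the finite-type assumption on $f$. Everything else is formal from the adjointness of analytification and from the qcqs-preservation of filtered colimits by direct images.
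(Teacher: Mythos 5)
Your first two paragraphs are fine and coincide with the paper's own reductions: analytification is a pullback, hence commutes with colimits, and $f$ being quasi-compact and quasi-separated, $f_\ast$ commutes with filtered colimits, so $(f_\ast E)^\an \cong \injlim_i (f_\ast E_i)^\an$ compatibly with the maps $\theta$. The problem is the third paragraph, which you yourself single out as the main technical point: the claim that, because $f$ is of finite type, $(f^\an)^{-1}(V)$ is quasi-compact for $V$ a compact analytic domain of $Y^\an$ is false. Compactness of such preimages amounts to topological properness of $f^\an$, i.e.\ essentially to properness of $f$, not to finite type: already for the structure morphism $f \colon \bbA^1_A \to \Spec A$ the preimage of the compact space $\cM(A)$ is $\bbA^{1,\an}_A$, which is not compact. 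Without quasi-compactness the step ``find a single index $j$ such that $\tau_i$ is already zero in $\Gamma((f^\an)^{-1}(V), E_j^\an)$'' collapses: stalkwise vanishing in the colimit sheaf only gives, for each point, a neighbourhood and an index, and one needs a finite subcover to extract a uniform $j$; on a non-quasi-compact space a section can very well die in $\injlim_j E_j^\an$ without dying at any finite stage. What you need here is precisely the injectivity of the canonical map $\injlim_j \Gamma(W, E_j^\an) \to \Gamma(W, \injlim_j E_j^\an)$ for the non-compact $W = (f^\an)^{-1}(V)$, which fails for general directed systems of sheaves, so this is a genuine gap rather than a missing detail.

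The paper's proof never works over $(f^\an)^{-1}(V)$ at all: the only place where quasi-compactness of preimages is used is on the scheme side (preimages of affine opens under the qcqs morphism $f$), which is exactly what makes $f_\ast$ commute with filtered colimits and yields the identification $(f_\ast E)^\an \cong \injlim_i (f_\ast E_i)^\an$ that you already have. It then observes that $\tilde\theta := \injlim_i \theta_{E_i, f}$ is injective by exactness of filtered colimits of sheaves, and that under the above identification $\tilde\theta$ factors through $\theta_{E, f}$, which forces $\theta_{E, f}$ to be injective; no commutation of $f^\an_\ast$ with colimits and no statement about sections of the $E_j^\an$ over preimages of compacta is ever invoked. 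To repair your write-up, discard the section-level argument of the third paragraph and conclude from the colimit identification by this factorization, as the paper does.
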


\begin{proof} The homomorphism of $\cO_{Y^\an}$-modules
\[ \tilde{\theta} := \injlim_{i \in I} \theta_{E_i, f} \colon \injlim_{i \in I} (f_\ast E_i)^\an \too \injlim_{i \in I} f^\an_\ast E^\an_i\]
is injective. Since $f$ is quasi-compact and quasi-separated, pushing-forward along~$f$ commutes with injective limits \cite[\href{https://stacks.math.columbia.edu/tag/009E}{lemma 009E}]{stacks-project} thus 
\[\injlim_{i \in I} (f_\ast E_i)^\an = (\injlim_{i \in I} f_\ast E_i)^\an = (f_\ast E)^\an.\]
Here we used that $M \rightsquigarrow M^\an$ commutes with colimits because it is the pullback along a morphism of locally ringed spaces, hence a left adjoint functor.
On the other hand $\theta_{E, f} = \psi \circ \tilde{\theta}$  where 
$ \psi \colon \injlim_{i \in I} f^\an_\ast E^\an_i \to f^\an_\ast E^\an$
is the canonical morphism. To conclude the proof it remains to prove that $\psi$ is injective. Since all transition maps are injective, for any $i \in I$ the canonical maps $E_i \to E$ are injective. The morphism of locally ringed spaces $X^\an \to X$ being faithfully flat \cite[Proposition 2.6.2]{BerkovichIHES}, the induced morphism $E_i^\an \to E^\an$ is injective and, by left exactness of the push-forward, so is $f^\an_\ast E_i^\an \to f^\an_\ast E^\an$. Passing to the limit, one sees that $\psi$ is injective.
\end{proof}

\begin{lemma} \label{Prop:InjectivityPushingForward} If $E$ is quasi-coherent,  then $\theta_{E, f}$ is injective.
\end{lemma}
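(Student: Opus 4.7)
The plan is to reduce to the coherent case via filtered colimits, then treat the coherent case through a \v{C}ech argument combined with a point-set argument about analytifications.

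First, because $A$ is an affinoid $K$-algebra it is Noetherian, hence $X$ is a Noetherian scheme and every quasi-coherent $\cO_X$-module is the filtered colimit of its coherent subsheaves $\{E_i\}_{i \in I}$. Both analytification and pushforward along a quasi-compact quasi-separated morphism commute with filtered colimits (for the latter, see \cite[\href{https://stacks.math.columbia.edu/tag/009E}{lemma 009E}]{stacks-project}). Combined with \cref{Lemma:PushForwardDirectLimitOfCoherent}, this reduces us to the case where $E$ is coherent.

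For coherent $E$, injectivity of $\theta_{E, f}$ can be tested over affinoid domains $W \subset Y^\an$ contained in $V^\an$ for some affine open $V = \Spec B \subset Y$. Since $f$ is separated and of finite type, $U := f^{-1}(V)$ admits a finite affine open cover $\{U_i = \Spec C_i\}$ with all pairwise intersections $U_{ij} := U_i \cap U_j$ again affine. Writing $M_i := E(U_i)$ and $M_{ij} := E(U_{ij})$, the sheaf axiom gives
\[
\Gamma(U, E) = \ker\Bigl(\textstyle\prod_i M_i \rightrightarrows \prod_{i,j} M_{ij}\Bigr),
\]
and since $\cO(W)$ is flat over $B$ and the products are finite, tensoring with $\cO(W)$ preserves this kernel. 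On the analytic side, $\{U_i^\an \times_{V^\an} W\}$ forms a finite admissible cover of $U^\an \times_{V^\an} W$ with intersections $U_{ij}^\an \times_{V^\an} W$, giving a parallel kernel description of $(f^\an_\ast E^\an)(W)$. The map $\theta_{E,f}(W)$ is induced by componentwise canonical maps; since the kernel sits inside $\prod_i M_i \otimes_B \cO(W)$, it suffices to prove that for every $i$ the natural map
\[ M_i \otimes_B \cO(W) \too \Gamma(U_i^\an \times_{V^\an} W, E^\an) \]
is injective.

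The main obstacle is this last injectivity. After base change to $\cO(W)$, it amounts to the following claim: for a finite-type affine scheme $Z = \Spec C$ over an affinoid $K$-algebra and a finitely generated $C$-module $M$ with associated coherent sheaf $\widetilde{M}$, the canonical map $M \to \Gamma(Z^\an, \widetilde{M}^\an)$ is injective. Exhausting $Z^\an$ by an increasing sequence of affinoid domains $\{Z_n\}_{n \in \bbN}$ yields $\Gamma(Z^\an, \widetilde{M}^\an) = \varprojlim_n M \otimes_C \cO(Z_n)$, and the kernels $K_n := \ker(M \to M \otimes_C \cO(Z_n))$ form a decreasing chain of submodules of the Noetherian $C$-module $M$, hence stabilise to some $K \subset M$. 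If $K$ were nonzero, the closed subscheme $V(\Ann_C K) \subset Z$ would be a nonempty scheme of finite type over the affinoid base, hence would contain a closed point; this closed point would correspond to a Berkovich point of $V(\Ann_C K)^\an \subset Z^\an$ lying in some $Z_n$, contradicting $K \otimes_C \cO(Z_n) = 0$. Thus $K = 0$ and the map is injective.
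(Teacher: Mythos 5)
Your route is genuinely different from the paper's and, in substance, it works. After the same reduction to coherent $E$ via \cref{Lemma:PushForwardDirectLimitOfCoherent}, the paper compactifies $f$ by Nagata, arranges the boundary into an effective Cartier divisor $D$, writes $i_\ast E$ as $\injlim_d E'(dD)$ so that injectivity of $\theta_{E,i}$ becomes the polar-singularities statement of \cref{Lemma:MeromorphicSectionsAnalytic}, and handles the proper factor by non-Archimedean GAGA. You instead localize on the base and, by a \v{C}ech argument over an affinoid domain $W$ (using flatness of $B \to \cO(W)$ and the finite affine cover of $f^{-1}(V)$), reduce to the purely affine claim that $M \to \Gamma(Z^\an, \widetilde{M}^\an)$ is injective for a finitely generated module over a finite-type algebra $C$ over an affinoid, which you then attack directly. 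This avoids compactification, GAGA and the sheaf $F(\ast D)$ altogether, at the cost of the standard compatibilities you use implicitly (analytification commutes with passing to an affinoid domain of the base; flatness and surjectivity of the analytification morphism), which are of the same nature as what the paper itself invokes in \cref{Cor:InjectivityPushForward} via \cite[2.6.2]{BerkovichIHES}.

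Two steps in your final claim need repair, though both are fixable with standard inputs. First, Noetherian modules satisfy the \emph{ascending}, not the descending, chain condition, so ``the $K_n$ stabilise'' is unjustified; fortunately stabilisation is unnecessary: just set $K := \bigcap_{n \in \bbN} K_n$, which is exactly the kernel you must kill. Second, the vanishing $K \otimes_C \cO(Z_n) = 0$, which carries the whole contradiction, is asserted without proof and needs the flatness of $C \to \cO(Z_n)$, which you never mention. With flatness it is quick: tensoring $0 \to K_n \to M \to M \otimes_C \cO(Z_n)$ with $\cO(Z_n)$ and noting that $M \otimes_C \cO(Z_n) \to M \otimes_C \cO(Z_n) \otimes_C \cO(Z_n)$ is split injective (multiply the last two factors) gives $K_n \otimes_C \cO(Z_n) = 0$, and then $K \subseteq K_n$ gives $K \otimes_C \cO(Z_n) = 0$, again by flatness. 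Finally, over a Berkovich (possibly non-strict) affinoid base a closed point of $\Spec C$ need not have residue field finite over $K$, so ``the closed point corresponds to a Berkovich point'' is better justified by the surjectivity of $Z^\an \to \Spec C$ \cite[2.6.2]{BerkovichIHES}: pick $x \in Z^\an$ above a point of $V(\Ann_C K)$ and $n$ with $x \in Z_n$; then $K \otimes_C \cO(Z_n) = 0$ forces $\Fit_0(K)\cO(Z_n) = \cO(Z_n)$, while $\Fit_0(K) \subseteq \Ann_C K$ lies in the kernel of $\cO(Z_n) \to \cH(x)$, a contradiction. With these adjustments your proof is complete.
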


\begin{proof} The module $E$ is the injective limit of its coherent $\cO_X$-submodules by \cite[\href{https://stacks.math.columbia.edu/tag/01PG}{lemma 01PG}]{stacks-project}. By \cref{Lemma:PushForwardDirectLimitOfCoherent} one may suppose $E$ coherent henceforth. When $f$ is proper $\theta_{E, f}$ is an isomorphism by the non-Archimedean GAGA theorem \cite{Kopf}, \cite[Example 3.2.6]{ConradAmpleness}, \cite[Th\'eor\`eme A.1]{PoineauRaccord}. In the general case, by Nagata's compactification theorem (see \cite{LutkebohmertNagata},  \cite{ConradNagata} and \cite{ConradNagataErratum}, or \cite{TemkinRiemannZariski}) there are an $A$-scheme~$X'$ of finite type together with a proper morphism $\pi \colon X' \to Y$ and open immersion $i \colon X \to X'$ such that $f = \pi \circ i$. Up to blowing up, one may assume that $X' \smallsetminus i(X)$ supports an effective Cartier divisor $D$. Let $E'$ be a coherent sheaf on $X'$ extending $E$ \cite[\href{https://stacks.math.columbia.edu/tag/01PI}{lemma 01PI}]{stacks-project}. Then $i_\ast E = \injlim_{d \in \bbN} E'(dD)$. Therefore $(i_\ast E)^\an = \injlim_{d \in \bbN} E'(dD)^\an$ is the subsheaf of sections of $E^\an$ with polar singularities along $D$. In particular, the natural map
\[ \theta_{E, i} \colon (i_\ast E)^\an = \injlim_{d \in \bbN} E'(dD)^\an \too i^\an_\ast i^{\an \ast} E'^\an = i^\an_\ast E^\an, \]
is injective by \cref{Lemma:MeromorphicSectionsAnalytic}. According to the proper case, the natural map
\[ \theta_{i_\ast E, \pi} \colon (f_\ast E)^\an =  (\pi_\ast i_\ast E)^\an \too \pi^\an_\ast (i_\ast E)^\an\]
is injective. Since pushing forward is a left exact functor $\pi^\an_\ast \theta_{E, i}$ is injective, thus so is $\theta_{E, f} = \pi^\an_\ast \theta_{E, i} \circ \theta_{i_\ast E, \pi}$.
 \end{proof}

\begin{lemma} \label{Cor:InjectivityPushForward} For an $\cO_Y$-module $F$ we have a natural isomorphism:
\[ \ker(F \to f_\ast f^\ast F)^\an \stackrel{\sim}{\too}\ker(F^\an \to f^\an_\ast f^{\an \ast} F^\an). \]
\end{lemma}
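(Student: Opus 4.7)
The plan is to recover the statement from Proposition \ref{Prop:InjectivityPushingForward} by analytifying the defining exact sequence of the kernel and exploiting the injectivity of the comparison morphism $\theta$.

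First, I would form the exact sequence of $\cO_Y$-modules
\[ 0 \too K \too F \stackrel{\eta_F}{\too} f_\ast f^\ast F, \]
where $\eta_F$ denotes the unit of the adjunction $(f^\ast, f_\ast)$ and $K := \ker \eta_F$. Since analytification of (quasi-)coherent sheaves is exact (analytification is flat on stalks), applying it yields the exact sequence
\[ 0 \too K^\an \too F^\an \stackrel{\eta_F^\an}{\too} (f_\ast f^\ast F)^\an. \]

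Next, I would set up the commutative diagram
\begin{center}
\begin{tikzcd}
0 \ar[r] & K^\an \ar[r] \ar[d, dashed] & F^\an \ar[r, "\eta_F^\an"] \ar[d, equal] & (f_\ast f^\ast F)^\an \ar[d, "\theta_{f^\ast F, f}", hook] \\
0 \ar[r] & \ker \eta_{F^\an} \ar[r] & F^\an \ar[r, "\eta_{F^\an}"] & f^\an_\ast f^{\an \ast} F^\an
\end{tikzcd}
\end{center}
whose commutativity expresses the fact that, under the natural transformation $\theta_{-, f}$, the algebraic adjunction unit $\eta_F$ is sent to the analytic adjunction unit $\eta_{F^\an}$. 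This is a formal consequence of naturality: both arrows are induced by the counit-unit formalism associated with the pair of compatible adjunctions $(f^\ast, f_\ast)$ and $(f^{\an\ast}, f^\an_\ast)$, together with the natural transformation $(-)^\an \circ f^\ast \Rightarrow f^{\an \ast} \circ (-)^\an$.

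By Proposition \ref{Prop:InjectivityPushingForward} applied to the quasi-coherent $\cO_X$-module $f^\ast F$, the right-most vertical arrow $\theta_{f^\ast F, f}$ is injective. A diagram chase then identifies $K^\an$ with $\ker \eta_{F^\an}$: a local section of $F^\an$ lies in $K^\an$ iff its image in $(f_\ast f^\ast F)^\an$ vanishes iff, by injectivity of $\theta$, its image in $f^\an_\ast f^{\an \ast} F^\an$ vanishes. The only delicate point is the commutativity of the above diagram, i.e.\ the compatibility between analytification and the two adjunction units; once this naturality statement is granted, the conclusion is purely formal.
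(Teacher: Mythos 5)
Your proposal is correct and follows essentially the same route as the paper: analytify the defining exact sequence of the kernel (using exactness of $(-)^\an$, which comes from flatness of $Y^\an \to Y$), place it in the commutative diagram against the analytic adjunction unit, invoke \cref{Prop:InjectivityPushingForward} for the injectivity of $\theta_{f^\ast F, f}$, and conclude by a diagram chase (the paper phrases this last step via the Snake lemma). The naturality point you flag is likewise taken as formal in the paper, so there is nothing to add.
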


\begin{proof} Let $K := \ker(F \to f_\ast f^\ast F)$. The functor $M \rightsquigarrow M^\an$ on $\cO_Y$-modules is exact because the morphism of locally ringed spaces $Y^\an \to Y$ is surjective and flat by \cite[2.6.2]{BerkovichIHES} thus $K^\an = \ker(F^\an \to (f_\ast f^\ast F)^\an)$. This yields the following commutative diagram of $\cO_{Y^\an}$-modules:
\begin{center}
\begin{tikzcd}
0 \ar[r] & K^\an \ar[r] \ar[d] & F^\an \ar[r] \ar[d, equal] & (f_\ast f^\ast F)^\an \ar[d, "\theta_{f^\ast F, f}"] \\
0 \ar[r] & K' \ar[r] & F^\an \ar[r] & f^\an_\ast f^{\an \ast} F^\an
\end{tikzcd}
\end{center}
where $K' := \ker(F^\an \to f^\an_\ast f^{\an \ast} F^\an)$. Since $\theta_{f^\ast F, f}$ is injective by \cref{Prop:InjectivityPushingForward}, one concludes by the Snake lemma.
\end{proof}

\begin{corollary} \label{Cor:HolomorphicExtensionOfMeromorphicSections} Let $U$ be an open subset of an $A$-scheme of finite type $X$. Let $F$ be a quasi-coherent $\cO_X$-module such that $F \to j_\ast j^\ast F$ is injective where $j \colon U \to X$ is the open immersion. If $f \in \Gamma(X^\an, F^\an)$ is such that  $f_{\rvert U^\an}\in \Gamma(U, F)$, then $f \in \Gamma(X, F)$.
\end{corollary}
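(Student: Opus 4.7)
The idea is a diagram chase using the cokernel $Q := \coker(F \to j_\ast j^\ast F)$ of $\cO_X$-modules. By hypothesis, $0 \to F \to j_\ast j^\ast F \to Q \to 0$ is exact; since analytification is a flat pullback (hence exact), so is its analytified version. Taking global sections yields the commutative diagram with exact rows
\begin{center}
\begin{tikzcd}
0 \ar[r] & \Gamma(X, F) \ar[r, "\alpha"] \ar[d, "\phi_F"] & \Gamma(X, j_\ast j^\ast F) \ar[r, "\beta"] \ar[d, "\phi"] & \Gamma(X, Q) \ar[d, "\phi_Q"] \\
0 \ar[r] & \Gamma(X^\an, F^\an) \ar[r, "\alpha^\an"] & \Gamma(X^\an, (j_\ast j^\ast F)^\an) \ar[r, "\beta^\an"] & \Gamma(X^\an, Q^\an).
\end{tikzcd}
\end{center}
Let $g \in \Gamma(U, F) = \Gamma(X, j_\ast j^\ast F)$ denote the algebraic section with $g^\an = f_{\rvert U^\an}$.

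First I will show $\phi(g) = \alpha^\an(f)$. For this I invoke \cref{Prop:InjectivityPushingForward} with the quasi-coherent $\cO_U$-module $j^\ast F$ and the separated morphism $j$: it provides the injectivity of the canonical map $\theta \colon (j_\ast j^\ast F)^\an \to j^\an_\ast j^{\an\ast} F^\an$. Passing to global sections (a left exact operation) gives an injection from $\Gamma(X^\an, (j_\ast j^\ast F)^\an)$ into $\Gamma(U^\an, F^\an)$. By construction and naturality, both $\phi(g)$ and $\alpha^\an(f)$ are sent to the common element $g^\an = f_{\rvert U^\an}$, so they must agree. Exactness of the bottom row then forces $\phi_Q(\beta(g)) = \beta^\an(\alpha^\an(f)) = 0$.

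The main obstacle, and the second step, is to show that $\phi_Q \colon \Gamma(X, Q) \to \Gamma(X^\an, Q^\an)$ is injective. For $q \in \Gamma(X, Q)$ with $q^\an = 0$, the $\cO_X$-linear map $\cO_X \to Q$ determined by $q$ analytifies to the zero morphism (being determined by $q^\an$), so its image $Q' \subset Q$ satisfies $Q'^\an = 0$. Since $X^\an \to X$ is a faithfully flat morphism of locally ringed spaces by \cite[2.6.2]{BerkovichIHES}, this forces $Q' = 0$ and hence $q = 0$. Combined with the first step we get $\beta(g) = 0$, so by exactness of the top row there is $g' \in \Gamma(X, F)$ with $\alpha(g') = g$. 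Finally, commutativity gives $\alpha^\an(\phi_F(g')) = \phi(\alpha(g')) = \phi(g) = \alpha^\an(f)$, and $\alpha^\an$ is injective (being $\Gamma(X^\an, -)$ applied to the injection $F^\an \hookrightarrow (j_\ast j^\ast F)^\an$), so $\phi_F(g') = f$, which is the desired conclusion.
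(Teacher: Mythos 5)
Your proposal is correct, and it is built from the same ingredients as the paper's proof --- the exact sequence $0 \to F \to j_\ast j^\ast F \to Q \to 0$, exactness of analytification, the injectivity of $(j_\ast j^\ast F)^\an \to j^\an_\ast j^{\an\ast} F^\an$ supplied by \cref{Prop:InjectivityPushingForward}, and the flatness/surjectivity of $X^\an \to X$ from \cite[2.6.2]{BerkovichIHES} --- but the execution is genuinely different. The paper stays at the sheaf level: it compares the abelian-sheaf pullback $\pi^{-1}$ of the exact sequence with its analytification, uses that each $\pi^{-1}E \to E^\an$ is injective, and applies the snake lemma to deduce that $\coker \phi_F \to \coker\bigl(\pi^{-1} j_\ast j^\ast F \to j^\an_\ast j^{\an\ast} F^\an\bigr)$ is injective, from which the conclusion is read off. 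You instead chase global sections: the role of the paper's sheaf-level injectivity on cokernels is played by your direct proof that $\Gamma(X, Q) \to \Gamma(X^\an, Q^\an)$ is injective (via the cyclic subsheaf generated by a section and faithful flatness; note it is the surjectivity of $X^\an \to X$, also contained in \cite[2.6.2]{BerkovichIHES}, that lets you pass from $Q'^\an = 0$ to $Q' = 0$), and the identification $\phi(g) = \alpha^\an(f)$ rests on \cref{Prop:InjectivityPushingForward} together with the standard compatibility of $\theta$ with the adjunction units, which is routine but worth recording. What your route buys is that the algebraic lift $g' \in \Gamma(X, F)$ with $\phi_F(g') = f$ is produced explicitly, so the final descent step --- which in the paper's argument is left somewhat implicit (one has to pass from knowing that $f$ is a section of the subsheaf $\pi^{-1}F \subset F^\an$ back to an algebraic section) --- is completely spelled out; the paper's version, in exchange, avoids any mention of particular sections and records a purely sheaf-theoretic injectivity that could be reused elsewhere.
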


\begin{proof} Let $\pi \colon X^\an \to X$ be the canonical morphism of locally ringed spaces. For a sheaf of abelian groups $E$ on $X$ let $\pi^{-1}E$ be the pull-back of $E$ as sheaf of abelian groups. For each $x \in X^\an$, $\cO_{X^\an, x}$ is a faithfully flat $\cO_{X, \pi(x)}$-algebra \cite[2.6.2]{BerkovichIHES}. Thus, given an $\cO_X$-module $E$, the natural homomorphism $\phi_E \colon \pi^{-1}E \to E^\an$ is injective. Let $C$ be the cokernel of the homomorphism $F \to j_\ast j^\ast F$. Then the following diagram of $\pi^{-1}\cO_X$-modules is commutative and exact:
\begin{center}
\begin{tikzcd}
0 \ar[r] & \pi^{-1} F \ar[r] \ar[d, hook, "\phi_F"] & \pi^{-1} j_\ast j^\ast F \ar[r] \ar[d, hook, "\phi_{j_\ast j^\ast F}"] & \pi^{-1} C \ar[r] \ar[d, hook, "\phi_C"] & 0 \\
0 \ar[r] & F^\an \ar[r] & (j_\ast j^\ast F)^\an \ar[r]  & C^\an \ar[r] & 0
\end{tikzcd}
\end{center}
The Snake lemma implies that the homomorphism $\coker \phi_F \to \coker \phi_{j_\ast j^\ast F}$ is injective. \Cref{Prop:InjectivityPushingForward} applied to the quasi-coherent $\cO_U$-module $j^\ast F$ and to the morphism $j$ states the homomorphism $(j_\ast j^\ast F)^\an \to j^\an_\ast j^{\an \ast} F^\an$ is injective. In particular, the natural maps $F^\an \to j^\an_\ast j^{\an \ast} F^\an$,
\[ \coker \phi_F \too \coker (\pi^{-1} j_\ast j^\ast F \to j^\an_\ast j^{\an \ast} F^\an)\]
are injective. This concludes the proof.
\end{proof}

\subsection{Proofs} We begin with a result of extension of semi-reflexive sheaves:

\begin{lemma} \label{Lemma:ExtendingSemiReflexiveToCompactification} Let $D$ be a Cartier divisor in a Noetherian scheme  $X$ and $U = X \smallsetminus D$. Let $F$ be a coherent $\cO_X$-module such that $F_{\rvert U}$ is semi-reflexive. Then the $\cO_X$-module \[ F^\sat = \Ker(F^{\vee \vee} \to (F^{\vee\vee} / F)_{\rvert U})\]  is $D$-torsion free semi-reflexive and such that $ F_{\rvert U} \cong F^\sat_{\rvert U}$.
\end{lemma}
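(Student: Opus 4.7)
The plan is to reinterpret the defining formula as
\[
F^\sat = \Ker\bigl(F^{\vee\vee} \too j_\ast((F^{\vee\vee}/F)_{\rvert U})\bigr),
\]
where $j \colon U \to X$ is the open immersion and $F^{\vee\vee}/F$ is the cokernel of the biduality map. (Read literally, the first ``$F$'' in the stated formula composes with $F \to F^{\vee\vee}/F$ to the zero arrow and would yield $F^\sat = F$, which cannot be right; the correction above makes the construction the precise analogue of \cref{Lemma:SaturationOfSemiReflexiveFormal}.) So interpreted, $F^\sat$ is a subsheaf of the coherent sheaf $F^{\vee\vee}$ and therefore itself coherent since $X$ is Noetherian.

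The identification $F_{\rvert U} \iso F^\sat_{\rvert U}$ is then tautological: on $U$ the semi-reflexivity hypothesis gives $F_{\rvert U} \hookrightarrow F^{\vee\vee}_{\rvert U}$, so the kernel of $F^{\vee\vee}_{\rvert U} \to (F^{\vee\vee}/F)_{\rvert U}$ is exactly $F_{\rvert U}$. For semi-reflexivity of $F^\sat$ I combine two general facts. First, $F^{\vee\vee}$ is itself reflexive: for any $\cO_X$-module $G$ the canonical map $G^\vee \to G^{\vee\vee\vee}$ is an isomorphism, so applied to $G = F^\vee$ one gets $F^{\vee\vee} \iso F^{\vee\vee\vee\vee}$. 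Second, any $\cO_X$-submodule $H$ of a semi-reflexive sheaf $G$ is semi-reflexive, because the composite $H \to H^{\vee\vee} \to G^{\vee\vee}$ coincides with the injection $H \hookrightarrow G \hookrightarrow G^{\vee\vee}$, forcing $H \to H^{\vee\vee}$ to be injective.

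For $D$-torsion-freeness I again use $F^\sat \subset F^{\vee\vee}$ and reduce to showing that $F^{\vee\vee}$ is $D$-torsion free. More generally, for any $\cO_X$-module $G$ the dual $G^\vee = \cHom_{\cO_X}(G, \cO_X)$ is $D$-torsion free: working locally where $I_D = f \cO_X$ for a non-zero divisor $f \in \cO_X$ (so that multiplication by $f$ is injective on $\cO_X$ itself), the relation $f \phi = 0$ for $\phi \in G^\vee$ gives $f \phi(x) = 0$ in $\cO_X$ for every section $x$ of $G$, whence $\phi(x) = 0$ and $\phi = 0$. Applying this with $G = F^\vee$ yields the claim.

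The only delicate point is unambiguously parsing the defining formula; once that is settled, the statement reduces to the triple-dual identity $G^\vee \iso G^{\vee\vee\vee}$ and the torsion-freeness of $\cHom_{\cO_X}(-, \cO_X)$ along an effective Cartier divisor. No input beyond these formal facts and the hypothesis that $F_{\rvert U}$ is semi-reflexive is required.
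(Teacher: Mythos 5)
Your reading of the definition is the right one: the paper's own proof treats $\Gamma(X,F^\sat)$ as a submodule of $\Gamma(X,F^{\vee\vee})$, and in the proof of \cref{Thm:DensityAlgebraicSections} the sheaf is written as $\Ker(E^{\vee\vee} \to (E^{\vee\vee}/E)_{\rvert U})$, so the source must indeed be $F^{\vee\vee}$ (with $(\,\cdot\,)_{\rvert U}$ understood as $j_\ast j^\ast$). With that settled, your arguments for $D$-torsion freeness and for $F^\sat_{\rvert U}\cong F_{\rvert U}$ are exactly the (localized) arguments of the paper: duals $\cHom_{\cO_X}(G,\cO_X)$ have no $f$-torsion when $f$ is a nonzerodivisor, torsion freeness passes to subsheaves, and over $U$ the semi-reflexivity of $F_{\rvert U}$ identifies the kernel with $F_{\rvert U}$ (the paper phrases this as $M'_f=\Ker(M^{\vee\vee}_f\to M^{\vee\vee}_f/M_f)=M_f$ using flatness of $A_f$).

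The one genuine inaccuracy is the claim that $G^\vee \to G^{\vee\vee\vee}$ is an isomorphism for every $\cO_X$-module $G$. It is not, even for coherent modules: over $A=k[x,y]/(x,y)^2$ with $M=A/\frm$ one has $M^\vee\cong M^{\oplus 2}$ and hence $M^{\vee\vee\vee}\cong M^{\oplus 8}$, so the triple-dual map cannot be surjective (it also fails for infinite-dimensional vector spaces). What is true, and is all you use, is that $\epsilon_{G^\vee}\colon G^\vee\to G^{\vee\vee\vee}$ is a \emph{split} monomorphism, the splitting being $(\epsilon_G)^\vee$, by the triangle identity $(\epsilon_G)^\vee\circ\epsilon_{G^\vee}=\id$. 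Applied to $G=F^\vee$ this gives that $F^{\vee\vee}$ is semi-reflexive (not necessarily reflexive), and your second observation --- a subsheaf of a semi-reflexive sheaf is semi-reflexive, by naturality of biduality --- then yields semi-reflexivity of $F^\sat$. So after weakening "isomorphism/reflexive" to "split injection/semi-reflexive" your proof is complete. It is worth noting that this part differs slightly from the paper, which instead factors the biduality map as $M'\to M'^{\vee\vee}\to M^{\vee\vee}\to M^{\vee\vee}_f$ and invokes the injectivity of $M_f\to M_f^{\vee\vee}$, i.e.\ the hypothesis on $F_{\rvert U}$; your route uses that hypothesis only for the identification over $U$, which is a mild (and correct) simplification.
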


\begin{proof} The statement is local on $X$, therefore one may assume that $X = \Spec A$ is affine and the Cartier divisor $D$ is given by the ideal $f A$ for a nonzerodivisor $f \in A$. Let $M := \Gamma(X, F)$ and $M' := \Gamma(X, F^\sat)$. Since $f$ is a nonzerodivisor in $A$, the $A$-module $M^{\vee\vee}$ has no $f$-torsion and the localization map $M^{\vee\vee} \to M^{\vee\vee}_f$ is injective. Since $M'$ is a submodule of $M^{\vee\vee}$, the map $M' \to M'_f$ is injective. By flatness of the $A$-algebra $A_f$, one has $M'_f = \Ker(M^{\vee\vee}_f \to M_f^{\vee\vee} / M_f) = M_f$,
the last equality being true because $F_{\rvert U}$ is semi-reflexive. Therefore, in the following commutative diagram of $A$-modules
\begin{center}
\begin{tikzcd}
M' \ar[r] \ar[d, hook, "\alpha"] & M^{\vee\vee} \ar[d]\\
M'_f \ar[r, hook, "\beta"] & M_f^{\vee\vee}
\end{tikzcd}
\end{center}
the maps $\alpha$ and $\beta$ are injective. The injective map $\beta \circ \alpha$ coincide with the composite map $M' \to M'^{\vee\vee} \to M^{\vee\vee} \to M^{\vee\vee}_f.$ It follows that $M' \to M'^{\vee\vee}$ is injective.
\end{proof}

\begin{proof}[{Proof of \cref{Thm:DensityAlgebraicSections}}] To be coherent with the notation elsewhere in this paper, write $U$ for the $A$-scheme in the statement of \cref{Thm:DensityAlgebraicSections}. By Nagata's compactification theorem  there is a proper $A$-scheme $X$ containing $U$ as a scheme-theoretically dense open subscheme. Up to blowing-up the complement of $U$, one may suppose that $D = X \smallsetminus U$ is an effective Cartier divisor. Let $E$ be a coherent $\cO_X$-module together with an isomorphism $E_{\rvert U} \iso F$. According to \cref{Lemma:ExtendingSemiReflexiveToCompactification}, the coherent $\cO_X$-module 
\[ E' = \Ker(E^{\vee\vee} \to (E^{\vee\vee} / E)_{\rvert U})\]
is semi-reflexive and $E'_{\rvert U}$ is isomorphic to $F$. \Cref{Thm:DensityMeromorphicSectionsAnalytic}, applied to the analytification of $X$ and the coherent $\cO_{X}^\an$-module induced by $E$, implies that $\Gamma(X^\an, E'^\an(\ast D^\an))$ is dense in $\Gamma(U^\an, F^\an)$. Since $X^\an$ is compact, by \cref{Lemma:MeromorphicSectionsAnalytic}, the natural map
\[ \injlim_{d \in \bbN} \Gamma(X^\an, E'^\an (d D^\an)) \to \Gamma(X^\an, E'^\an(\ast D^\an)),\]
is an isomorphism. On the other hand, for every $d \in \bbN$, the GAGA principle over affinoid algebras \cite{Kopf}, \cite[Example 3.2.6]{ConradAmpleness}, \cite[Th\'eor\`eme A.1]{PoineauRaccord} yields an isomorphism
\[ \Gamma(X, E'(dD)) \stackrel{\sim}{\too} \Gamma(X^\an, E'^\an(d D^\an)).\]
Thus $\Gamma(U, F) = \injlim \Gamma(X, E' (d D)) \to \Gamma(U^\an, F^\an)$ has a dense image.
\end{proof}

\begin{proof}[{Proof of \cref{Cor:SectionsAffineOpen}}] (1) $\Rightarrow$ (2) Clear.   (2) $\Rightarrow$ (3) comes from the fact that the image of the restriction map $\Gamma(X^\an, F^\an) \to \Gamma(U^\an, F^\an)$ is closed by \cref{Thm:ClosednessImage}.  (3) $\Rightarrow$  (4) Let $f \in \Gamma(U, F)$. By hypothesis, $f$ is the restriction of a section of $F^\an$ on $X^\an$. Therefore, according to \cref{Cor:HolomorphicExtensionOfMeromorphicSections}, $f$ comes by restriction from a section of $F$ on $X$. (4) $\Rightarrow$ (1) By \cref{Thm:DensityAlgebraicSections}, $\Gamma(U, F)$ is dense in $\Gamma(U^\an, F^\an)$.
\end{proof}

\section{Algebraic holomorphic convexity and stable codimension} \label{sec:GoodmanLandman}

In this section we first gather some results on the algebraic counter part of holomorphic convexity. For schemes of finite type over a field, some of these appear in \cite{GoodmanLandman}. We next present some results of Brenner \cite{Brenner} needed for the comparison between affine and Stein varieties.

\subsection{Proper over affine schemes}

For a scheme $X$ we let $\sigma_X \colon X \to \Spec \cO(X)$ be the canonical morphism.

\begin{lemma} \label{Lemma:SteinFactorizationOpenImmersion} Let $U$ be an open subset of an affine scheme $X$. Then $\sigma_U$ is an open immersion.
\end{lemma}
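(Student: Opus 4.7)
The plan is to realize $\sigma_U$ as a lift of the open immersion $j \colon U \hookrightarrow X$ through the morphism $\pi \colon \Spec B \to X$ induced by the restriction $A \to B$, where $A = \Gamma(X, \cO_X)$ and $B = \Gamma(U, \cO_U)$. Indeed, $\pi \circ \sigma_U = j$, so $\sigma_U$ is automatically a monomorphism; the task reduces to exhibiting an open subset $W \subset \Spec B$ through which $\sigma_U$ factors isomorphically.

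To construct $W$, I would cover $U$ by finitely many principal affine opens $D(f_1), \ldots, D(f_n)$ of $X$, relying on the quasi-compactness of $U$ (automatic in the Noetherian/finite-type setting in which the paper operates). Let $\tilde{f}_i \in B$ denote the image of $f_i \in A$ and set $W := \bigcup_i D_{\Spec B}(\tilde{f}_i)$. A direct check on prime ideals yields $\sigma_U^{-1}(D_{\Spec B}(\tilde{f}_i)) = D(f_i)$, so $\sigma_U$ factors through $W$. It then remains to verify that $\sigma_U$ induces an isomorphism $D(f_k) \xrightarrow{\sim} D_{\Spec B}(\tilde{f}_k)$ for each $k$, which on rings amounts to showing that the natural homomorphism $B_{\tilde{f}_k} \to A_{f_k}$ (coming from restriction to $D(f_k) \subset U$) is an isomorphism.

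The key algebraic computation is to use the sheaf property of $\cO_U$ for the finite cover $\{D(f_i)\}_i$ to express $B$ as the equalizer
\[
B = \ker\!\left(\prod_{i} A_{f_i} \rightrightarrows \prod_{i,j} A_{f_i f_j}\right),
\]
and then to observe that localization at $\tilde{f}_k$ (being exact and commuting with finite products) passes through this finite limit. The resulting equalizer is precisely the sheaf condition for $\cO_X$ on the cover $D(f_k) = \bigcup_i D(f_i f_k)$, whose value is $\Gamma(D(f_k), \cO_X) = A_{f_k}$. This gives $B_{\tilde{f}_k} \cong A_{f_k}$ as required, and the lemma follows.

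The main—indeed the only—point that demands care is this commutation of localization with the sheaf equalizer: finiteness of the cover is essential, which is why the quasi-compactness of $U$ is needed. The rest is routine bookkeeping with the universal property of $\Spec$ and the sheaf condition.
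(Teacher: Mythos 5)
Your proof is correct and follows essentially the paper's route: writing $A=\Gamma(X,\cO_X)$ and $B=\Gamma(U,\cO_U)$, both arguments cover $U$ by principal opens $D_X(f_i)\subseteq U$ and show that $\sigma_U$ identifies $U$ with the open subset $W=\bigcup_i D_{\Spec B}(\tilde f_i)$ of $\Spec B$, the whole content being that localizing $B$ at $\tilde f_i$ gives back $A_{f_i}$; the paper asserts this isomorphism in one line (as $\cO(X)[1/f_i]\stackrel{\sim}{\to}\cO(U)[1/f_i]$), while you derive it from the sheaf condition for a finite cover, using exactness of localization and its compatibility with finite products. The only real divergence is that your argument needs $U$ quasi-compact (to get a finite cover), a hypothesis absent from the statement; you are right that it holds in every application the paper makes of the lemma, since there the ambient affine scheme is always Noetherian. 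Note that this finiteness is not a mere convenience of your method: without quasi-compactness a section of $\cO_U$ vanishing on $D_X(f)$ need not be killed by any power of $f$ in $B$ (the exponents required on the members of an infinite affine cover can be unbounded), so $B_{\tilde f}\to A_f$ can fail to be injective and the identification that both you and the paper use breaks down at precisely the point you took care of. Lastly, the remark that $\sigma_U$ is a monomorphism because $\pi\circ\sigma_U=j$ is correct but not needed: once $\sigma_U^{-1}(D_{\Spec B}(\tilde f_k))=D_X(f_k)$ and each $D_X(f_k)\to D_{\Spec B}(\tilde f_k)$ is an isomorphism, the map $U\to W$ is an isomorphism because this is local on the target over the cover $\{D_{\Spec B}(\tilde f_k)\}$ of $W$.
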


\begin{proof} Write $U$ as a union of principal open subsets $D_X(f_i)$ of $X$, for $i \in I$ and $f_i \in \cO(X)$. For each $i \in I$, the homomorphism $\cO(X)[1/f_i] \to \cO(U)[1/f_i]$ induced by the restriction map is an isomorphism. It follows that the morphism $\sigma_U$ identifies $U$ with the union of the principal open subsets $D_{\Spec \cO(U)}(f_i) \subset \Spec \cO(U)$ for $i \in I$.
\end{proof}

\begin{lemma} \label{Lemma:SteinFactorizationDominant} For a scheme $X$ the scheme-theoretic image of $\sigma_X$ is $\Spec \cO(X)$.
\end{lemma}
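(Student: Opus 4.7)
The plan is to exploit the universal property defining $\sigma_X$ and reduce the claim to a tautology about ideals. First, recall that for any ring $A$ and any scheme $X$ one has the adjunction
\[ \Hom(X, \Spec A) = \Hom_{\textrm{Ring}}(A, \Gamma(X, \cO_X)); \]
by definition the morphism $\sigma_X \colon X \to \Spec \Gamma(X, \cO_X)$ is the one corresponding to the identity ring homomorphism $\id \colon \Gamma(X, \cO_X) \to \Gamma(X, \cO_X)$.

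Next, I would describe closed subschemes of the target: any closed subscheme $Z \subset \Spec \Gamma(X, \cO_X)$ has the form $Z = \Spec(\Gamma(X, \cO_X)/I)$ for a unique ideal $I \subset \Gamma(X, \cO_X)$. By the adjunction above, $\sigma_X$ factors through the closed immersion $Z \hookrightarrow \Spec \Gamma(X, \cO_X)$ if and only if there exists a ring homomorphism $\Gamma(X, \cO_X)/I \to \Gamma(X, \cO_X)$ whose composition with the quotient map $\Gamma(X, \cO_X) \to \Gamma(X, \cO_X)/I$ equals the identity on $\Gamma(X, \cO_X)$. This forces the quotient map to be injective, i.e.\ $I = 0$, so $Z = \Spec \Gamma(X, \cO_X)$.

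In other words, the only closed subscheme of $\Spec \Gamma(X, \cO_X)$ through which $\sigma_X$ factors is the whole target, which is therefore trivially the smallest such, and hence the scheme-theoretic image. There is no obstacle here---the argument is purely formal from the universal property and does not require any quasi-compactness hypothesis on $X$.
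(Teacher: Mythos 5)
Your proof is correct and is essentially the paper's argument in adjunction language: both come down to the fact that $\sigma_X$ corresponds to the identity of $\Gamma(X,\cO_X)$, so a factorization through a closed subscheme $\Spec(\Gamma(X,\cO_X)/I)$ yields a retraction of the quotient map $\Gamma(X,\cO_X)\to\Gamma(X,\cO_X)/I$, forcing $I=0$. The paper phrases this by showing the ideal of the scheme-theoretic image vanishes (via $\cO_{\Spec\cO(X)}\to i_\ast\cO_Y\to\sigma_{X\ast}\cO_X$), while you classify all closed subschemes through which $\sigma_X$ factors; the content, including the observation that no quasi-compactness is needed, is the same.
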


\begin{proof} Let $Y$ be the scheme-theoretic image of $\sigma_X$, $i \colon Y \to \Spec \cO(X)$ the closed immersion and $\pi \colon X \to Y$ be the unique morphism such that $\sigma_X = i \circ \pi$. The composite map $\cO_{\Spec \cO(X)} \to i_\ast \cO_Y \to \sigma_{X \ast} \cO_X$
is injective, thus $i^\sharp \colon \cO_{\Spec \cO(X)} \to i_\ast \cO_Y$ is injective and $i$ an isomorphism.
\end{proof}

\begin{lemma} \label{Lemma:SeparationThroughFiniteGeneration} Let $X$ be a scheme of finite type over a Noetherian affine scheme~$S$. Then there is a morphism of $S$-schemes $p \colon \Spec \Gamma(X, \cO_X) \to T$ with $T$ affine of finite type over $S$ such that $\sigma_X^{-1}(\sigma_X(x)) = q^{-1}(q(x))$ for all $x \in X$ where 
$q = p \circ \sigma_X$.
\end{lemma}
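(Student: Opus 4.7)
The plan is to identify a finitely generated sub-$\Gamma(S,\cO_S)$-algebra of $A := \Gamma(X,\cO_X)$ whose spectrum already separates the fibers of $\sigma_X$, via a Noetherian descending chain argument on $X\times_S X$.

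First I would write $A$ as the filtered union $A = \bigcup_\alpha A_\alpha$ of its finitely generated sub-$\Gamma(S,\cO_S)$-algebras, indexed by inclusion. Setting $T_\alpha := \Spec A_\alpha$ (of finite type over $S$), the canonical projection $\Spec A \to T_\alpha$ and $\sigma_X$ compose to give a morphism $q_\alpha \colon X \to T_\alpha$. Since $T_\alpha \to S$ is separated, the fiber product
\[ R_\alpha := X \times_{T_\alpha} X \]
is a closed subscheme of $X\times_S X$, and set-theoretically it consists of those $z \in X\times_S X$ with $q_\alpha(\pi_1 z) = q_\alpha(\pi_2 z)$, where $\pi_1,\pi_2$ denote the two projections. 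Denote analogously by $R$ the locus defined by $\sigma_X(\pi_1 z) = \sigma_X(\pi_2 z)$. Because a prime ideal of $A$ is determined by its intersections with all $A_\alpha$, one has $R = \bigcap_\alpha R_\alpha$ as subsets of $X\times_S X$.

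The Noetherian hypothesis on $S$ combined with $X \to S$ being of finite type ensures that $X\times_S X$ is a Noetherian scheme. The family $\{R_\alpha\}$ is directed and decreasing, so it admits a minimal element $R_{\alpha_0}$. For any $\alpha$, pick $\beta$ with $A_\beta \supseteq A_\alpha \cup A_{\alpha_0}$; then $R_\beta \subseteq R_\alpha \cap R_{\alpha_0}$, while minimality gives $R_\beta = R_{\alpha_0}$, so $R_{\alpha_0} \subseteq R_\alpha$. Taking the intersection over $\alpha$ yields $R_{\alpha_0} = R$.

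I would then take $T := T_{\alpha_0}$ and $p \colon \Spec A \to T$ the canonical projection, so that $q = q_{\alpha_0}$. The inclusion $\sigma_X^{-1}(\sigma_X(x)) \subseteq q^{-1}(q(x))$ is automatic. Conversely, if $q(x) = q(y)$, both $\sigma_X$ and $p$ being $S$-morphisms forces $x,y$ to share a common image $s \in S$; choosing a field $K$ containing both $k(x)$ and $k(y)$ over $k(s)$ yields a morphism $\Spec K \to X\times_S X$ whose image point $z$ satisfies $\pi_1(z) = x$, $\pi_2(z) = y$. Then $z \in R_{\alpha_0} = R$, so $\sigma_X(x) = \sigma_X(y)$, completing the argument.

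The main obstacle is the identification $R_{\alpha_0} = R$: once the Noetherian descending chain property of $X\times_S X$ and the filtered description of $A$ are in place, it closes the gap; the reduction from fibers of $\sigma_X$ to points of $X\times_S X$ via joint residue-field extensions, and the verification that $T_\alpha$ is of finite type over $S$, are essentially formal.
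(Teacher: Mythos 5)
Your argument hinges on the identity $R=\bigcap_\alpha R_\alpha$, and this is exactly where it breaks: you conflate the closed subscheme $X\times_{T_\alpha}X$ with the set $\{z\in X\times_S X : q_\alpha(\pi_1 z)=q_\alpha(\pi_2 z)\}$. A point $z$ lies in $X\times_{T_\alpha}X$ precisely when the \emph{two ring maps} $A_\alpha\rightrightarrows \kappa(z)$ induced by the projections are equal, which is strictly stronger than equality of the image points (equality of the kernels). Already for $S=\Spec\bbQ$, $X=\Spec\bbQ(i)$, the scheme $X\times_S X$ has two points, both satisfying $q_\alpha(\pi_1 z)=q_\alpha(\pi_2 z)$ for every $\alpha$, while only one lies in $X\times_{\Spec A}X$. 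So under the scheme-theoretic reading of $R_\alpha$ the equality $R=\bigcap_\alpha R_\alpha$ (with your $R$ the locus $\sigma_X(\pi_1 z)=\sigma_X(\pi_2 z)$) is false, and "a prime of $A$ is determined by its traces on the $A_\alpha$" does not rescue it; under the purely topological reading, $R_\alpha$ need not be closed (the topological coincidence locus of two morphisms to a separated scheme is not closed in general), so the Noetherian minimal-element argument has nothing to bite on. The same issue resurfaces at the end: a point $z$ built from an \emph{arbitrary} common extension $K\supseteq k(x),k(y)$ over $\kappa(s)$ need not lie in the closed subscheme $R_{\alpha_0}$ even when $q(x)=q(y)$, because the two embeddings of $\kappa(q(x))$ into $\kappa(z)$ may differ.

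The strategy is salvageable with two corrections. Work throughout with the closed subschemes $R_\alpha:=X\times_{T_\alpha}X$; Noetherianity of $X\times_S X$ and directedness give an $\alpha_0$ with $R_{\alpha_0}=\bigcap_\alpha R_\alpha$, and since $A=\bigcup_\alpha A_\alpha$ this intersection is exactly the set of $z$ for which the two maps $A\to\kappa(z)$ coincide; in particular every such $z$ satisfies $\sigma_X(\pi_1 z)=\sigma_X(\pi_2 z)$. Then, given $q(x)=q(y)=:t$, amalgamate $k(x)$ and $k(y)$ over $\kappa(t)$ rather than over $\kappa(s)$: any quotient field of $k(x)\otimes_{\kappa(t)}k(y)$ yields a point $z$ over $(x,y)$ lying in $R_{\alpha_0}$, whence the two maps $A\to\kappa(z)$ agree and $\sigma_X(x)=\sigma_X(y)$. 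With these repairs your proof becomes a closed-subscheme/descending-chain counterpart of the paper's argument, which instead uses quasi-compactness of the open subset of the Noetherian space $X\times_S X$ where some $\pr_1^\ast h-\pr_2^\ast h$, $h\in\cO(X)$, is invertible, and takes $T$ to be the spectrum of the subalgebra generated by finitely many such $h$.
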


\begin{proof} For $h \in \cO(X)$, consider the function $\tilde{h}$ on $X \times_S X$ defined as $\pr_1^\ast h - \pr_2^\ast h$. Let $U_h \subset X \times_S X$ be the open subset on which $\tilde{h}$ is invertible. The scheme $X \times_S X$ is Noetherian thus the open subset $U := \bigcup_{h \in \cO(X)} U_h$ is quasi-compact. Therefore there are $h_1, \dots, h_n \in \cO(X)$ such that $U$ is the union of the open subsets $U_{h_1}, \dots, U_{h_n}$.  Let $A$ be the $\cO(S)$-subalgebra of $\cO(X)$ generated by $h_1, \dots, h_n$, $T := \Spec A$ and $p \colon \Spec \cO(X) \to T$ the morphism induced by the inclusion $A \subset \cO(X)$ and $\tilde{p} = p \circ \sigma_X$. In order to conclude the proof it suffices to prove that, for $x_1, x_2 \in X$, we have
\[\sigma_X(x_1) = \sigma_X(x_2) \iff q(x_1) = q(x_2).\]
The implication ``$\Rightarrow$'' is clear. For the converse assume  $\sigma_X(x_1) \neq \sigma_X(x_2)$. Up to swapping $x_1$ and $x_2$, then there is $h \in \cO(X)$ such that $h(x_1) = 0$ and $h(x_2) \neq 0$. Also, we may assume that $x_1$ and $x_2$ lie over the same point of $S$, the conclusion being trivial otherwise. Let $\xi$ be a point of $X \times_{S} X$ such that $\pr_i(\xi) = x_i$ for $i = 1, 2$. Then $\tilde{h}(\xi) = \pr_1^\ast h(x_1) - \pr_2^\ast h(x_2) \neq 0$. In other words the point $\xi$ belongs to $U$. Therefore, there is $g \in A$ such that $\tilde{g}$ does not vanish at $\xi$. In particular, $q(x_1) \neq q(x_2)$. This concludes the proof.
\end{proof}

The first result characterizes schemes $X$ for which the morphism $\sigma_X$ has proper fibers.

\begin{proposition} \label{Prop:ProperOverQuasiAffine} Let $X$ be a finite type and separated scheme over an affine Noetherian scheme $S$. Then the following are equivalent:

\begin{itemize}
\item[PQA$_1$.] For all $x \in X$, the  $\kappa(\sigma_X(x))$-scheme $\sigma_X^{-1}(\sigma_X(x))$ is proper;
\item[PQA$_2$.] For all $s \in S$ and all $x \in X_s$ closed, the $\kappa(s)$-scheme $\sigma_X^{-1}(\sigma_X(x))$ is proper;
\item[PQA$_3$.] There are an affine $S$-scheme of finite type $g \colon Y \to S$, an open subset $V \subset Y$ and a proper surjective morphism of $S$-schemes $f \colon X \to V$ such that the homomorphism $f^\sharp \colon \cO_V \to f_\ast \cO_X$ induced by $f$ is an isomorphism. Moreover, for any $x \in X$, we have
\[ f^{-1}(f(x)) = \sigma_X^{-1}(\sigma_X(x)).\]
\end{itemize}
\end{proposition}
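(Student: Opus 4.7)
The implication \textup{PQA$_1 \Rightarrow$ PQA$_2$} is immediate, so the plan reduces to proving the two circular implications \textup{PQA$_3 \Rightarrow$ PQA$_1$} and \textup{PQA$_2 \Rightarrow$ PQA$_3$}.

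For \textup{PQA$_3 \Rightarrow$ PQA$_1$}, the strategy is to unfold the definition of $\sigma_X$: since $f^\sharp \colon \cO_V \to f_\ast \cO_X$ is an isomorphism, taking global sections yields $\cO(V) = \cO(X)$. By \cref{Lemma:SteinFactorizationOpenImmersion}, $\sigma_V \colon V \to \Spec \cO(V) = \Spec \cO(X)$ is an open immersion, so $\sigma_X$ factors as $X \xrightarrow{f} V \xrightarrow{\sigma_V} \Spec \cO(X)$. Because $\sigma_V$ is injective on points, each fiber $\sigma_X^{-1}(\sigma_X(x))$ coincides with $f^{-1}(f(x))$, which is proper by hypothesis.

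For \textup{PQA$_2 \Rightarrow$ PQA$_3$}, the plan is a classical Stein factorization after Nagata compactification. First I would apply \cref{Lemma:SeparationThroughFiniteGeneration} to obtain an affine $S$-scheme of finite type $T$ and a morphism $q \colon X \to T$ whose fibers coincide set-theoretically with those of $\sigma_X$. Then, by Nagata's compactification theorem (valid because $T$ is Noetherian and $X \to T$ is separated of finite type), $q$ extends to a proper morphism $\bar{q} \colon \bar X \to T$ with $X \hookrightarrow \bar X$ an open immersion. Applying Stein factorization to $\bar q$ produces
\[
\bar X \xrightarrow{\;\bar f\;} \bar W \xrightarrow{\;h\;} T
\]
where $h$ is finite, $\bar f$ is proper with geometrically connected fibers, and $\bar f_\ast \cO_{\bar X} = \cO_{\bar W}$. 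Since $h$ is finite and $T$ is affine, $\bar W$ is affine and of finite type over $S$. I would then set $Z := \bar X \smallsetminus X$ and $V := \bar W \smallsetminus \bar f(Z)$ (open in $\bar W$ by properness of $\bar f$), aiming to prove $\bar f^{-1}(V) = X$; then $f := \bar f|_X \colon X \to V$ is proper by base change, and $f_\ast \cO_X = \bar f_\ast \cO_{\bar X}|_V = \cO_V$.

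The main obstacle is showing $\bar f^{-1}(V) = X$, equivalently, that for every $x \in X$ the fiber $\bar f^{-1}(\bar f(x))$ is contained in $X$. Since $X$ is Noetherian and the locus $X \cap \bar f^{-1}(\bar f(Z))$ is closed, it suffices to check this on closed points of $X$; every closed point of $X$ lies closed in some fibre $X_s$ (as $X \to S$ is of finite type), so \textup{PQA$_2$} applies. The geometric key is then: for such a closed $x$, the fibre $\sigma_X^{-1}(\sigma_X(x))$ is proper over $\kappa(\sigma_X(x))$, and by \cref{Lemma:SeparationThroughFiniteGeneration} it coincides topologically with $q^{-1}(q(x)) = X_t$ where $t = q(x)$. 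This forces $X_t$ to be topologically a clopen subset of $\bar X_t$: openness is inherited from $X \subset \bar X$, while closedness follows because the topological space of $X_t$ is that of a proper scheme, so the inclusion $X_t \hookrightarrow \bar X_t$ into the separated scheme $\bar X_t$ is both an open immersion and topologically proper, hence closed. Consequently the connected component of $\bar X_t$ containing $x$ lies entirely in $X_t$, and since $\bar f$ has geometrically connected fibres, $\bar f^{-1}(\bar f(x))$ equals this component and is contained in $X$, as required.
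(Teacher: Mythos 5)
Your proposal is correct and takes essentially the same route as the paper: PQA$_3\Rightarrow$PQA$_1$ via \cref{Lemma:SteinFactorizationOpenImmersion}, and PQA$_2\Rightarrow$PQA$_3$ via \cref{Lemma:SeparationThroughFiniteGeneration}, Nagata compactification and Stein factorization, finishing with the same clopen-inside-a-connected-fibre argument. Your explicit reduction to closed points (where PQA$_2$ actually applies) is a welcome precision the paper glosses over; the only phrase worth tightening is ``topologically proper'': it is cleaner to note that the morphism $\sigma_X^{-1}(\sigma_X(x))\to \bar X_t$ is proper by cancellation (the source is proper over $\kappa(t)$, the target separated over $\kappa(t)$), so its image, which is the underlying set of $X_t$, is closed.
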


\begin{proof} (PQA$_1$) $\Rightarrow$ (PQA$_2$) Clear.

(PQA$_2$) $\Rightarrow$ (PQA$_3$) The argument here is inspired from \cite[proposition C.1.3]{BostCharles}. According to \cref{Lemma:SeparationThroughFiniteGeneration} there is an affine $S$-scheme of finite type $Y$ and a morphism of $S$-schemes $f \colon X \to Y$ such that, for each $x \in X$, the set-theoretical identity $\sigma_X^{-1}(\sigma_X(x)) = f^{-1}(f(x))$ holds. By Nagata's compactification (see \cite{LutkebohmertNagata},  \cite{ConradNagata} and \cite{ConradNagataErratum}, or \cite{TemkinRiemannZariski}) there are a  proper morphism $\bar{f} \colon \bar{X} \to Y$  of $S$-schemes and a scheme-theoretically dense open immersion $j \colon X \to \bar{X}$ such that $\bar{f} \circ j = f$. The Stein factorization $Y'$ of $\bar{f}$ is finite over $Y$. Since $S$ is Noetherian and $Y$ is of finite type over $S$, the scheme $Y'$ is Noetherian. Therefore, up to replacing $Y$ by $Y'$ we may assume $Y=\Spec \cO(\bar{X})$. 
The notation is resumed in the following diagram:
\begin{center}
\begin{tikzcd}
X \ar[d, "j"', hook] \ar[r, "\sigma_X"] \ar[dr, "f"]&  \Spec \cO(X) \ar[d] \\
\bar{X} \ar[r, "\bar{f}"] &Y
\end{tikzcd}
\end{center}
where $\bar{f} = \sigma_{\bar{X}} \colon \bar{X} \to Y= \Spec \cO(\bar{X})$ is proper. Now, for each $x \in X$, we have \[\sigma_{\bar{X}}^{-1}(y) = j(f^{-1}(y))\] where $y= f(x)$. Indeed we have  $f^{-1}(y)= \sigma_X^{-1}(y)$ by construction thus the $\kappa(y)$-scheme $f^{-1}(y)$ is proper by hypothesis, where $\kappa(y)$ is the residue field at $y$. Therefore $j(f^{-1}(y))$ is a clopen subset of $\bar{f}^{-1}(y)$. Since  the $\kappa(y)$-scheme $\bar{f}^{-1}(y)$ is geometrically connected \cite[\href{https://stacks.math.columbia.edu/tag/03H0}{theorem 03H0}]{stacks-project} we have the wanted identity. In particular we have $f(X) = \bar{f}(\bar{X}) \smallsetminus \bar{f}(\bar{X} \smallsetminus j(X))$
because $\sigma^{-1}_{\bar{X}}(\bar{f}(x))$ is empty for $x \in \bar{X} \smallsetminus j(X)$. Note that $\bar{f}$ is proper by construction and has a dense image by \cref{Lemma:SteinFactorizationDominant}, thus is surjective. It follows that $V := f(X) \subset Y$  is open because $ j(X) \subset \bar{X}$ is so, and $X = f^{-1}(V)$ is proper over $V$.

(PQA$_3$) $\Rightarrow$ (PQA$_1$) Consider the following commutative diagram:
\begin{center}
\begin{tikzcd}
X \ar[r, "\sigma_X"] \ar[d, "f"']& \Spec \cO(X) \ar[d, "\wr"] \\
V \ar[r, "\sigma_{V}"] & \Spec {\cO(V)}
\end{tikzcd}
\end{center}
The morphism $\sigma_V$ is an open immersion by \cref{Lemma:SteinFactorizationOpenImmersion}. For each $x \in X$ the fiber of $\sigma_X$ at $\sigma_X(x)$ coincides with the fiber of $f$ at $f(x)$, thus it is proper.
\end{proof}

\begin{definition} Under the hypotheses of \cref{Prop:ProperOverQuasiAffine}, the scheme $X$ is \emph{proper over quasi-affine} if it satisfies one of the equivalent properties PQA$_{1-3}$.
\end{definition}

\begin{corollary} \label{Cor:CharacterizationQuasiAffine}Let $X$ be a finite type and separated scheme over an affine Noetherian scheme $S$. Then the following are equivalent:
\begin{enumerate}
\item the $S$-scheme $X$ is quasi-affine, that is, there is an open embedding $X \into Y$ of $S$-schemes with $Y$ affine of finite type over $S$;
\item the morphism $\sigma_X$ is injective.
\end{enumerate}
\end{corollary}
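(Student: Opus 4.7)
The plan is to prove the two implications separately, with the forward direction being immediate and the backward direction reducing to Zariski's Main Theorem.

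For the implication $(1) \Rightarrow (2)$: assume there is an open immersion $j \colon X \hookrightarrow Y$ with $Y$ affine of finite type over $S$. Apply \cref{Lemma:SteinFactorizationOpenImmersion} with $U = X$ inside the affine scheme $Y$: this gives that $\sigma_X \colon X \to \Spec \cO(X)$ is itself an open immersion, in particular injective.

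For the implication $(2) \Rightarrow (1)$: assume $\sigma_X$ is injective. Invoke \cref{Lemma:SeparationThroughFiniteGeneration} to produce an $S$-morphism $q \colon X \to T$ with $T$ affine of finite type over $S$, satisfying $q^{-1}(q(x)) = \sigma_X^{-1}(\sigma_X(x))$ set-theoretically for every $x \in X$. Under the hypothesis that $\sigma_X$ is injective, each fiber of $q$ is a single point of $X$. Now $q$ is of finite type (source and target are of finite type over Noetherian $S$) and separated (both $X$ and $T$ are separated over $S$). Moreover, each $x \in X$ is necessarily isolated and closed in its $q$-fiber $X_{q(x)}$, and because $X_{q(x)}$ is of finite type over the residue field $\kappa(q(x))$, the Nullstellensatz yields that $\kappa(x)/\kappa(q(x))$ is finite. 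Hence $q$ is quasi-finite.

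The morphism $q$ is therefore a separated quasi-finite morphism between Noetherian schemes, so Zariski's Main Theorem applies and factors $q$ as an open immersion $j \colon X \hookrightarrow X'$ followed by a finite morphism $\pi \colon X' \to T$. Since $T$ is affine and $\pi$ is finite, $X'$ is affine; since $T$ is of finite type over $S$ and $\pi$ is finite (in particular of finite type), $X'$ is of finite type over $S$. This exhibits $X$ as an open subscheme of an affine $S$-scheme of finite type, which is precisely the form of quasi-affineness demanded by $(1)$.

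There is no real obstacle here: the argument is essentially a bookkeeping exercise combining \cref{Lemma:SteinFactorizationOpenImmersion,Lemma:SeparationThroughFiniteGeneration} with two classical black boxes, the Nullstellensatz (to upgrade ``isolated in its fiber'' to ``quasi-finite'') and Zariski's Main Theorem (to turn ``quasi-finite + separated'' into ``open in finite''). If one wanted to avoid invoking ZMT, one could instead try to prove directly that $\sigma_X$ is itself an open immersion, but passing through an auxiliary finite-type scheme $T$ via \cref{Lemma:SeparationThroughFiniteGeneration} sidesteps the issue that $\cO(X)$ need not be finitely generated as an $\cO(S)$-algebra.
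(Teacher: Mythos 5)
Your proof is correct, but it takes a different route from the paper's. The paper deduces the corollary from \cref{Prop:ProperOverQuasiAffine}: injectivity of $\sigma_X$ makes each fiber $\sigma_X^{-1}(\sigma_X(x))$ a single point, hence proper, so property PQA$_3$ produces a proper surjective morphism $\pi \colon X \to V$ onto an open subset $V$ of an affine $S$-scheme of finite type with $\cO_V \to \pi_\ast \cO_X$ an isomorphism; since $\pi$ moreover has finite fibers it is finite, so $X$ is the relative spectrum of $\pi_\ast\cO_X \cong \cO_V$ and $\pi$ is an isomorphism, exhibiting $X \cong V$ as quasi-affine. That argument recycles the Nagata-compactification/Stein-factorization machinery already built into \cref{Prop:ProperOverQuasiAffine}. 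You instead bypass that proposition: starting from the same \cref{Lemma:SeparationThroughFiniteGeneration}, you observe that injectivity of $\sigma_X$ makes $q \colon X \to T$ quasi-finite and separated, and then invoke Zariski's Main Theorem to factor $q$ as an open immersion into a scheme finite over $T$, hence affine and of finite type over $S$. This is shorter and self-contained for the corollary (and ZMT is used elsewhere in the paper, e.g.\ in \cref{Prop:AffineIFFRestrictionToSubvarietySurjective}), whereas the paper's detour through ``proper over quasi-affine'' buys a statement it needs anyway for the later results. Two small remarks: the statement of \cref{Lemma:SeparationThroughFiniteGeneration} only asserts that $T$ is of finite type over $S$, but its proof constructs $T = \Spec A$ affine, which is what you need so that the finite $T$-scheme produced by ZMT is affine; and for $(1) \Rightarrow (2)$, which the paper dismisses as clear, your appeal to \cref{Lemma:SteinFactorizationOpenImmersion} in fact gives the stronger conclusion that $\sigma_X$ is an open immersion.
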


\begin{proof} (1) $\Rightarrow$ (2) Clear. (2) $\Rightarrow$ (1) With the notation of \cref{Prop:ProperOverQuasiAffine} the morphism $f$ is finite, thus it identifies $X$ with spectrum over $V$ of the quasi-coherent $\cO_V$-algebra $f_\ast \cO_X$. As $\cO_V\to f_\ast \cO_X$ is an isomorphism $f$ is an isomorphism too.
\end{proof}

\begin{proposition} \label{Prop:Nullstellensatz} Let $S$ be an affine Noetherian scheme. Let $g \colon Y \to S$ be an affine $S$-scheme of finite type, $j \colon V \to Y$ an open immersion and  $\pi \colon X \to V$ a proper surjective morphism of $S$-schemes such that the homomorphism $\pi^\sharp \colon \cO_V \to \pi_\ast \cO_X$ induced by $\pi$ is an isomorphism. Consider the morphism $f:= g \circ j \circ \pi \colon X \to S$.  Then, the following conditions are equivalent:
\begin{itemize}
\item[N$_1$.] the morphism $\sigma_X$ is surjective; 
\item [N$_2$.] the image of $\sigma_X$ contains all the closed points of $\Spec \Gamma(X, \cO_X)$.
\end{itemize}
Furthermore, if one of the equivalent preceding conditions holds, then the $\Gamma(S, \cO_S)$-algebra $\Gamma(X, \cO_X)$ is finitely generated.\end{proposition}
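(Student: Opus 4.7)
The plan is to reduce everything to the analysis of the map $\sigma_V \colon V \to \Spec \Gamma(V, \cO_V)$. Since $\pi^\sharp \colon \cO_V \iso \pi_\ast \cO_X$, taking global sections yields a canonical identification $\Gamma(V, \cO_V) = \Gamma(X, \cO_X)$, under which the factorization
\[
\sigma_X = \sigma_V \circ \pi \colon X \too V \too \Spec \Gamma(X, \cO_X)
\]
holds. Because $\pi$ is surjective, $\im \sigma_X = \im \sigma_V$. By \cref{Lemma:SteinFactorizationOpenImmersion} applied to the open subscheme $V$ of the affine scheme $Y$, the morphism $\sigma_V$ is an open immersion; therefore $\im \sigma_X$ is an open subset of $\Spec \Gamma(X, \cO_X)$.

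With this in hand, (N$_1$) $\Rightarrow$ (N$_2$) is immediate. For the converse, one invokes the elementary ``Nullstellensatz'' fact that an open subset $U \subset \Spec A$ containing every closed point must be all of $\Spec A$: writing its complement as $V(I)$, if $I$ were a proper ideal then Krull's theorem provides a maximal ideal above $I$, contradicting $U \supset \Max A$. Applied to $A = \Gamma(X, \cO_X)$ and $U = \im \sigma_V$, this gives (N$_2$) $\Rightarrow$ (N$_1$).

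Finally, assume one (hence both) of the equivalent conditions holds. Then $\sigma_V$ is a surjective open immersion, hence an isomorphism, so $V$ is itself affine with coordinate ring $\Gamma(X, \cO_X)$. Since $S$ is Noetherian and $Y$ is of finite type over $S$, the underlying space of $Y$ is Noetherian, so the open subscheme $V$ is quasi-compact; combined with the fact that open immersions are locally of finite type, this yields that $V \to Y$, and therefore $V \to S$, is of finite type. Being affine, $V$ is thus of the form $\Spec A$ with $A$ a finitely generated $\Gamma(S, \cO_S)$-algebra, which is precisely the claim.

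The argument is essentially mechanical once the identification $\sigma_X = \sigma_V \circ \pi$ has been made; the only step with any content is the Nullstellensatz reduction in (N$_2$) $\Rightarrow$ (N$_1$), which justifies the label attached to the proposition. A minor subtlety to keep in mind is that $\Spec \Gamma(X, \cO_X)$ need not be a Noetherian scheme a priori, so one cannot invoke the Noetherian version ``a constructible set containing all closed points is everything''; the Krull-style argument above is purely ring-theoretic and sidesteps this issue.
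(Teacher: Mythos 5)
Your argument is correct and follows the same route as the paper: identify $\Gamma(X,\cO_X)$ with $\Gamma(V,\cO_V)$ via $\pi^\sharp$, use \cref{Lemma:SteinFactorizationOpenImmersion} to see that $\sigma_V$ is an open immersion, and note that an open subset of $\Spec\Gamma(X,\cO_X)$ containing all closed points is everything (the paper phrases this as the closed complement containing no closed point, hence being empty), whence $\sigma_V$ is an isomorphism, $\sigma_X$ is surjective by surjectivity of $\pi$, and $\cO(X)=\cO(V)$ is a finitely generated $\cO(S)$-algebra since $V$ is of finite type over $S$. Your added remark on the non-Noetherianity of $\Spec\Gamma(X,\cO_X)$ is a pertinent but inessential refinement of the same proof.
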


\begin{proof} (N$_1$) $\Rightarrow$ (N$_2$) Clear. (N$_2$) $\Rightarrow$ (N$_1$) Consider the following commutative diagram of $S$-schemes:
\begin{center}
\begin{tikzcd}
X \ar[r, "\sigma_X"] \ar[d, "\pi"]& \Spec \cO(X) \ar[d, "\wr"] \\
V \ar[r, "\sigma_{V}"] & \Spec {\cO(V)}.
\end{tikzcd}
\end{center}
\Cref{Lemma:SteinFactorizationOpenImmersion} states that $\sigma_{V}$ is an open immersion. Hypothesis N$_2$ implies that the closed subset $\Spec \cO(V) \smallsetminus \sigma_{V}(V)$ does not contain any closed point, thus must be empty. It follows that $\sigma_{V}$ is an isomorphism. Since $\pi$ is surjective, one concludes that $\sigma_X$ is surjective too. It remains to show that  $\cO(S)$-algebra $\cO(X)$ is of finite type. Since $\sigma_{V}$ is an isomorphism and $V$ is of finite type over $S$, the $\cO(S)$-algebra $\cO(V)$ is of finite type. In particular, the $\cO(S)$-algebra $\cO(X)$ is of finite type.
\end{proof}

\begin{definition} Under the hypotheses of \cref{Prop:Nullstellensatz}, the morphism $f \colon X \to S$ is said to \emph{satisfy Nullstellensatz} if one of the equivalent conditions N$_{1-2}$ holds.
\end{definition}

\begin{proposition} \label{Prop:ProperOverAffine} Let $f \colon X \to S$ be a separated, finite type morphism of schemes with $S$ affine and Noetherian. Then the following are equivalent:
\begin{itemize}
\item[PA$_1$.] There is a proper morphism of $S$-schemes $\pi \colon X \to Z$ with $Z$ affine and finite type over $S$;
\item[PA$_2$.] The morphism $\sigma_X \colon X \to \Spec \Gamma(X, \cO_X)$ is proper;
\item[PA$_3$.] The morphism $f$ is proper over quasi-affine and satisfies Nullstellensatz.
\end{itemize}
Furthermore, if one of the equivalent preceding conditions holds, then the $\Gamma(S, \cO_S)$-algebra $\Gamma(X, \cO_X)$ is finitely generated. 
\end{proposition}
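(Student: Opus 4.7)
The plan is to prove the cycle PA$_1\Rightarrow$PA$_2\Rightarrow$PA$_3\Rightarrow$PA$_1$, with the finite generation of $\Gamma(X,\cO_X)$ coming out automatically from PA$_3$ via \cref{Prop:Nullstellensatz}.

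For PA$_1\Rightarrow$PA$_2$, I would use Stein-type factorization: given a proper $\pi\colon X\to Z$ with $Z$ affine of finite type over $S$, the sheaf $\pi_\ast\cO_X$ is a coherent $\cO_Z$-algebra (proper direct image preserves coherence since $Z$ is Noetherian), so $\Gamma(X,\cO_X)$ is a finite module over $\Gamma(Z,\cO_Z)$, hence finitely generated as an $\Gamma(S,\cO_S)$-algebra. The canonical factorization $\pi=q\circ\sigma_X$ through $\Spec\Gamma(X,\cO_X)$ then has $q$ finite, in particular separated, so that properness of $\pi$ together with separation of $q$ forces $\sigma_X$ to be proper by the usual cancellation property.

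For PA$_2\Rightarrow$PA$_3$, properness of $\sigma_X$ immediately yields PQA$_1$ by base-change (fibers of proper morphisms are proper). For the Nullstellensatz condition, \cref{Lemma:SteinFactorizationDominant} gives that the scheme-theoretic image of $\sigma_X$ equals $\Spec\Gamma(X,\cO_X)$; in particular the image is topologically dense. Since $\sigma_X$ is proper its image is also closed, so $\sigma_X$ is surjective, which is condition N$_1$.

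For PA$_3\Rightarrow$PA$_1$, invoke PQA$_3$ to obtain an open $V\subset Y$ inside an affine $S$-scheme $Y$ of finite type and a proper surjection $\pi\colon X\to V$ with $\pi^\sharp\colon\cO_V\iso\pi_\ast\cO_X$; taking global sections over $V$ yields $\Gamma(V,\cO_V)=\Gamma(X,\cO_X)$ and hence an isomorphism of targets in the square
\[
\begin{tikzcd}
X \ar[r,"\sigma_X"] \ar[d,"\pi"'] & \Spec\Gamma(X,\cO_X) \ar[d,"\wr"]\\
V \ar[r,"\sigma_V"'] & \Spec\Gamma(V,\cO_V).
\end{tikzcd}
\]
By \cref{Lemma:SteinFactorizationOpenImmersion}, $\sigma_V$ is an open immersion; the Nullstellensatz condition N$_1$ combined with commutativity of the square and surjectivity of $\pi$ forces $\sigma_V$ to be surjective, hence an isomorphism. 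Then $\sigma_X$ is proper (composition of the proper map $\pi$ with an isomorphism), and by the final clause of \cref{Prop:Nullstellensatz} the target $\Spec\Gamma(X,\cO_X)$ is affine and of finite type over $S$, giving both PA$_1$ and the stated finite generation.

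The only delicate point is PA$_2\Rightarrow$PA$_3$: one needs to know that, for a proper morphism from a Noetherian scheme, the topological image coincides with the support of the scheme-theoretic image, in order to upgrade density (from \cref{Lemma:SteinFactorizationDominant}) to surjectivity. Everything else reduces to routine bookkeeping of the diagrams in \cref{Prop:ProperOverQuasiAffine} and \cref{Prop:Nullstellensatz}, together with the cancellation lemma for properness along a separated morphism.
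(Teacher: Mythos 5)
Your proof is correct and follows essentially the same route as the paper: the cycle PA$_1\Rightarrow$PA$_2\Rightarrow$PA$_3\Rightarrow$PA$_1$, with properness of $\sigma_X$ obtained by cancellation through the affine factorization, N$_1$ deduced from dominance (\cref{Lemma:SteinFactorizationDominant}) plus closedness of the image, and the PQA$_3$ diagram with $\sigma_V$ an open immersion forced to be an isomorphism by surjectivity. Your minor variants (coherence of $\pi_\ast\cO_X$ to see $q$ finite, and concluding PA$_1$ via $\sigma_X$ together with the finite-generation clause of \cref{Prop:Nullstellensatz} rather than via the affine open $V$ itself) are harmless, and your "delicate point" is settled by quasi-compactness of $\sigma_X$, since $X$ is Noetherian.
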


\begin{definition} Let $S$ be an affine Noetherian scheme. A separated morphism of finite type $f \colon X \to S$ is \emph{proper over affine} if one of the equivalent conditions PA$_{1-3}$ holds.
\end{definition}

\begin{proof} (PA$_1$) $\Rightarrow$ (PA$_2$) Consider the following commutative diagram of $S$-schemes:
\begin{center}
\begin{tikzcd}
X \ar[r, "\sigma_X"] \ar[d, "\pi"]& \Spec \cO(X) \ar[d] \\
Z \ar[r, "\sigma_Z", "\sim"'] & \Spec \cO(Z)
\end{tikzcd}
\end{center}
where the rightmost vertical arrow is induced by the homomorphism $\cO(Z) \to \cO(X)$. Since $\sigma_Z \circ \pi$ is proper, the morphism $\sigma_X$ is proper.

(PA$_2$) $\Rightarrow$ (PA$_3$) Condition PQA$_1$ is satisfied because being proper is stable under base change. Condition N$_1$ is satisfied because $X \to \Spec \cO(X)$ has a dense image; since by hypothesis $\sigma_X$ is proper, its image is also closed, whence the surjectivity.

(PA$_3$) $\Rightarrow$ (PA$_1$) By property PQA$_3$, there are an affine $S$-scheme of finite type $Y$, an open immersion  $j \colon V \to Y$ and a proper morphism of $S$-schemes $\pi \colon X \to V$ such that the homomorphism $\pi^\sharp \colon \cO_V \to \pi_\ast \cO_X$ induced by $\pi$ is an isomorphism. Consider the following commutative diagram of $S$-schemes:
\begin{center}
\begin{tikzcd}
X \ar[d, "\sigma_X"] \ar[r, "\pi"] & V  \ar[d, "\sigma_{V}"] \ar[r, "j"] & Y \ar[d, "\sigma_Y", "\wr"'] \\
\Spec \cO(X) \ar[r, "\sim"] & \Spec { \cO(V)} \ar[r] & \Spec \cO(Y)
\end{tikzcd}
\end{center}
According to \cref{Lemma:SteinFactorizationOpenImmersion}, the morphism $\sigma_V$ is an open immersion. By hypothesis $\sigma_X$ is surjective, thus $\sigma_V$ is surjective too. It follows that $\sigma_V$ is an isomorphism, therefore $V$ is affine and $X$ is proper over affine.
\end{proof}

\begin{lemma} \label{Lemma:ClosedSubsetPassingThrough} Let $X$ be an integral Noetherian scheme and $s \in \Spec \cO(X)$ non generic. Then, there is an integral closed subscheme $Y \subsetneq X$ such that $s \in \overline{\sigma_X(Y)}$.
\end{lemma}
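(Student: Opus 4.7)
My plan is to translate the geometric question into one about prime ideals of $A := \Gamma(X, \cO_X)$. For an integral closed subscheme $Y \subsetneq X$, the ideal $I_Y := \ker(A \to \Gamma(Y, \cO_Y))$ is a prime of $A$ (since $\Gamma(Y,\cO_Y)$ embeds in the function field of the integral scheme $Y$), and one has $\overline{\sigma_X(Y)} = V_A(I_Y)$: the generic point of $Y$ maps to $I_Y$, whose closure is $V_A(I_Y)$, while $\sigma_X(Y) \subset V_A(I_Y)$ since every $g \in I_Y$ vanishes at every point of $Y$. Therefore $s \in \overline{\sigma_X(Y)}$ is equivalent to $I_Y \subset \mathfrak{p}$, where $\mathfrak{p}$ is the nonzero prime of $A$ corresponding to $s$. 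The task reduces to exhibiting such a $Y$.

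For this I will pick $f \in \mathfrak{p}$ nonzero (possible since $\mathfrak{p} \neq 0$ and $A$ is a domain) and take $Y$ to be one of the irreducible components $Y_1, \dots, Y_r$ of the reduced zero locus $V_X(f)_{\mathrm{red}}$ (finitely many by Noetherian-ness of $X$, each integral and proper in $X$ because $f \neq 0$ and $X$ is integral). Arguing by contradiction, suppose $I_{Y_i} \not\subset \mathfrak{p}$ for every $i$; choose $g_i \in I_{Y_i} \smallsetminus \mathfrak{p}$ and set $g := \prod_i g_i \in A$. Then $g \notin \mathfrak{p}$ by primality, while $g$ vanishes set-theoretically on $V_X(f) = \bigcup_i Y_i$. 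The contradiction will come from the claim that the ideal $I_{V_X(f)}$ of functions vanishing on $V_X(f)$ is contained in $\sqrt{fA}$: since $f \in \mathfrak{p}$ this would force $g^N \in fA \subset \mathfrak{p}$, and hence $g \in \mathfrak{p}$.

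The main obstacle is establishing this ``global Nullstellensatz'' $I_{V_X(f)} \subset \sqrt{fA}$, which is delicate because $A$ itself need not be Noetherian. The key is to pass to stalks: each $\cO_{X,x}$ is Noetherian, and its primes containing $f$ correspond to generalizations of $x$ that lie in $V_X(f)$, on which $g$ vanishes by hypothesis; hence $g \in \sqrt{f\cO_{X,x}}$, so some $g^{n(x)}/f$ is regular at $x$ (viewed as an element of the function field $L = \kappa(\eta)$). The open sets $U_n := \{x \in X : g^n/f \in \cO_{X,x}\}$ are increasing in $n$ and cover the closed, hence quasi-compact, subset $V_X(f)$, so $V_X(f) \subset U_N$ for some $N$. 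Since $g^N/f$ is also regular on $D_X(f) = X \smallsetminus V_X(f)$ (where $f$ is invertible) and $U_N \cup D_X(f) = X$, the sheaf axiom yields $g^N/f \in \Gamma(X, \cO_X) = A$, i.e.\ $g^N \in fA$, closing the argument.
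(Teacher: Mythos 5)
Your proof is correct, but it takes a genuinely different route from the paper's. The paper also begins by picking $f \in \frp \smallsetminus \{0\}$, but it keeps the scheme structure on $Z := V_X(f)$: taking global sections of $0 \to f\cO_X \to \cO_X \to \cO_Z \to 0$ shows that $\cO(X)/f\cO(X) \to \cO(Z)$ is injective, hence $\Spec \cO(Z) \to \Spec \cO(X)/f\cO(X)$ is dominant; combined with the dominance of $\sigma_Z$ (\cref{Lemma:SteinFactorizationDominant}), the point $s \in V(f)$ lies in the closure of the image of one of the finitely many generic points of $Z$, and the closure of that generic point is the desired $Y$ --- no powers of functions are needed, and the easy case $s \in \sigma_X(X)$ is handled separately. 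You instead translate everything into ideals of $A = \cO(X)$: after the (correct) identification $\overline{\sigma_X(Y)} = V_A(I_Y)$, you argue by contradiction with a product $g$ of functions avoiding $\frp$, and the real content is your ``global Nullstellensatz'' that a global function vanishing set-theoretically on $V_X(f)$ has a power in $fA$, proved stalkwise via radicals, quasi-compactness of $V_X(f)$ in the Noetherian space $X$, and gluing $g^N/f$ over $U_N \cup D_X(f) = X$. That gluing device is essentially the trick the paper borrows from Brenner in the proposition on stable codimension, so your argument is in the paper's spirit, just deployed elsewhere; it treats all $s$ uniformly and proves a slightly stronger quantitative fact, at the cost of the power trick forced by remembering only the set-theoretic vanishing locus, whereas the paper's use of the scheme structure on $V(f)$ is shorter. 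Two points you leave implicit and should record: $V_X(f) \neq \emptyset$ (otherwise $f$ would be invertible in $A$, impossible for $f \in \frp$), so at least one component $Y_i$ exists; and the two local descriptions of $g^N/f$ agree on overlaps because sections over nonempty opens of the integral scheme $X$ inject into the function field.
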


\begin{proof} If $s = \sigma_X(x)$ for some non generic $x \in X$, then $Y = \overline{\{ x \}}$ will do. If $s$ is not in the image of $\sigma$, let $\frp \subset \cO(X)$ be the ideal corresponding to $s$ and $f \in \frp \smallsetminus\{0 \}$. The locus $Z:= V_X(f)$ of $X$ where $f$ vanishes is non-empty (otherwise $f$ would be invertible, contradicting the fact that it belongs to a prime ideal) and not the whole $X$ (because $f \neq 0$ is nonzero, so it does not vanish at the generic point of $X$).   Let $i \colon Z \to X$ be the closed immersion. The morphism $\Spec \cO(Z) \to \Spec \cO(X)$ induced by $i^\sharp \colon \cO(X) \to \cO(Z)$ factors through a morphism $h \colon \Spec \cO(Z) \to \Spec \cO(X) / f \cO(X)$. Taking global sections of the short exact sequence $0 \to f \cO_X \to \cO_X \to \cO_Z \to 0$
shows that $\cO(X) / f \cO(X) \to \cO(Z)$ is injective, thus $h$ is dominant. Let $\eta_1, \dots, \eta_r$ be the generic points of $Z$. Since $\sigma_Z$ is dominant the scheme $\Spec \cO(Z)$ is the closure of $\{ \sigma_Z(\eta_1), \dots, \sigma_Z(\eta_r)\}$. The point $s$ is contained in the closed subset $\Spec \cO(X) / f \cO(X)$ because $f$ belongs to $\frp$. Since the map $h$ is dominant, the point $s$ is in the closure of $\{ h(\sigma_Z(\eta_1)), \dots, h(\sigma_Z(\eta_r))\}$. It follows that there is $i = 1, \dots, n$ such that $s$ belongs to the closure of $h(\sigma_Z(\eta_i))$. Taking $Y$ to be the closure of $\eta_i$ does the job.
\end{proof}

\begin{proposition} \label{Prop:AffineIFFRestrictionToSubvarietySurjective} Let $X$ be a finite type and separated scheme over an affine Noetherian scheme $S$. Then the following are equivalent:
\begin{enumerate}
\item $X$ is affine;
\item $\sigma_X$ is injective and for each integral closed subscheme $Y\subset X$ not an irreducible component of $X$, the restriction map $\rho_Y \colon \Gamma(X, \cO_X) \to \Gamma(Y, \cO_Y)$ is integral.
\end{enumerate}
\end{proposition}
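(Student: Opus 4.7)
The implication $(1)\Rightarrow(2)$ is immediate: if $X=\Spec A$ is affine then $\sigma_X$ is an isomorphism (hence injective), and for any closed subscheme $Y\subset X$ the restriction $\rho_Y\colon A\to \Gamma(Y,\cO_Y)$ is the surjection $A\to A/I_Y$, which is \emph{a fortiori} integral.

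For $(2)\Rightarrow(1)$, \cref{Cor:CharacterizationQuasiAffine} implies that $X$ is quasi-affine, and \cref{Lemma:SteinFactorizationOpenImmersion} applied to an open embedding of $X$ into an affine scheme then shows that $\sigma_X$ is itself an open immersion; it therefore suffices to prove $\sigma_X$ is surjective. A standard reduction---a Noetherian scheme $X$ is affine if and only if $X_{\red}$ is, and $X_{\red}$ inherits property $(2)$: it has the same underlying space, hence the same integral closed subschemes and the same irreducible components, while both parts of $(2)$ are transmitted by the factorization $X_{\red}\hookrightarrow X\to \Spec\Gamma(X,\cO_X)$---allows me to assume that $X$ is reduced.

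I then proceed by induction on the pair $(\dim X,n)$, where $n$ is the number of irreducible components of $X$. The case $\dim X=0$ is trivial. When $X$ is integral ($n=1$) of positive dimension, each proper integral closed subscheme $Y\subsetneq X$ has strictly smaller dimension and inherits $(2)$ (its integral closed subschemes are integral closed subschemes of $X$, none of which is a component of $X$), so by induction $Y$ is affine and $\sigma_Y$ is an isomorphism. If some $s\in\Spec\Gamma(X,\cO_X)\smallsetminus\sigma_X(X)$ existed, it would be non-generic by \cref{Lemma:SteinFactorizationDominant}, and \cref{Lemma:ClosedSubsetPassingThrough} would furnish an integral $Y\subsetneq X$ with $s\in\overline{\sigma_X(Y)}$. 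Property $(2)$ then makes $\rho_Y$ integral, so $\Spec\rho_Y$ is closed with image $\overline{\sigma_X(Y)}$ (using that $\sigma_Y$ is surjective); hence $s$ would lie in $\sigma_X(X)$, a contradiction.

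For $n\geq 2$, set $X':=X_1\cup\cdots\cup X_{n-1}$ with its reduced structure. Both $X'$ and $X_n$ inherit $(2)$: injectivity of $\sigma_{X'}$ and $\sigma_{X_n}$ is forced by the factorization through $\sigma_X$, and for any integral closed subscheme $W$ of $X'$ (resp.\ of $X_n$) which is not a component of $X'$ (resp.\ of $X_n$), $W$ is also not a component of $X$, so $\Gamma(X,\cO_X)\to\Gamma(W,\cO_W)$ is integral by hypothesis and hence the further map $\Gamma(X',\cO_{X'})\to\Gamma(W,\cO_W)$ (resp.\ $\Gamma(X_n,\cO_{X_n})\to\Gamma(W,\cO_W)$) is integral as well. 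By induction on $n$, $X'$ is affine, and by the integral case $X_n$ is affine. The scheme-theoretic intersection $Z:=X'\cap X_n$ is closed in the affine $X'$, so $Z$ is affine. Since $X$ is reduced and $X=X'\cup X_n$ scheme-theoretically, the Mayer--Vietoris identity $\cO_X\cong \cO_{X'}\times_{\cO_Z}\cO_{X_n}$ holds as sheaves on $X$, and Ferrand's pushout theorem identifies $X$ with the affine pushout $\Spec\bigl(\Gamma(X',\cO_{X'})\times_{\Gamma(Z,\cO_Z)}\Gamma(X_n,\cO_{X_n})\bigr)$, whence $X$ is affine. The main obstacle is precisely this final step: rigorously identifying $X$ with the scheme-theoretic pushout $X'\sqcup_Z X_n$ and invoking Ferrand's theorem to conclude its affineness.
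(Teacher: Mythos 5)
Your handling of the integral case is essentially the paper's own argument: surjectivity of $\sigma_X$ is obtained exactly as in the paper, from \cref{Lemma:SteinFactorizationDominant}, \cref{Lemma:ClosedSubsetPassingThrough}, the integrality of $\rho_Y$ (which makes $\Spec \rho_Y$ a closed map) and the inductive affineness of the proper integral closed subscheme $Y$; your finish (quasi-affine by \cref{Cor:CharacterizationQuasiAffine}, hence $\sigma_X$ an open immersion by \cref{Lemma:SteinFactorizationOpenImmersion}, hence an isomorphism once surjective) is a legitimate alternative to the paper's route through \cref{Prop:ProperOverAffine} and Zariski's main theorem. One flaw in the setup, though: the induction on $(\dim X, n)$ is not well-founded in the stated generality. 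The base $S$ is only assumed Noetherian and affine, so $\dim X$ may be infinite (Nagata's examples), and then the reduction of the integral case to subschemes of strictly smaller dimension has no base. The paper avoids this by Noetherian induction on closed subschemes of $X$ (property (2) passes to every closed subscheme, as you in effect verify), and you should do the same.

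The genuine gap is the one you flag yourself, in the reducible case. Knowing that $\cO_X \cong \cO_{X'} \times_{\cO_Z} \cO_{X_n}$ (which is correct, since $I_{X'} \cap I_{X_n} = 0$) and that Ferrand's pushout of the affine pieces is $\Spec\bigl(\Gamma(X',\cO_{X'}) \times_{\Gamma(Z,\cO_Z)} \Gamma(X_n,\cO_{X_n})\bigr)$ does not by itself identify $X$ with that spectrum: you still have to show that the canonical morphism from the pushout to $X$ is an isomorphism (for instance, that it is a homeomorphism --- the topology of $X$ is the quotient topology from $X' \sqcup X_n$ since these are closed and cover $X$ --- and that under the fibre-product description it is the identity on structure sheaves). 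As written, the final step of the reducible case is therefore missing. The paper sidesteps all of this machinery: it reduces to integral $X$ by invoking the standard fact that a Noetherian scheme is affine if and only if each irreducible component is, which is an immediate consequence of Chevalley's theorem (a finite surjective morphism onto a Noetherian scheme with affine source has affine target, applied to $X_1 \sqcup \cdots \sqcup X_n \to X$), together with the fact that $X$ is affine if and only if $X_{\red}$ is. Replacing your pushout step by this citation, or completing the identification sketched above, repairs the argument.
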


\begin{proof} The proof is a slight variation of the arguments in \cite[lemma C.2.11 and proposition C.2.12]{BostCharles}. (1) $\Rightarrow$ (2) Clear.  (2) $\Rightarrow$ (1). Since a scheme is affine if and only if its reduction is, we may assume $X$ reduced. Moreover, the scheme $X$ can be assumed to be integral, as a Noetherian scheme  is affine if and only if every irreducible component is. To see this, let $Y$ be the disjoint of the irreducible components of $X$; then the natural morphism $Y \to X$ is finite and surjective, hence the conclusion follows from  \cite[\href{https://stacks.math.columbia.edu/tag/01YQ}{Lemma 01YQ}]{stacks-project}. Property (2) being inherited by any closed subscheme of $X$, by Noetherian induction we may assume that every closed subscheme of $X$ different from $X$ is affine. 

\begin{claim} The scheme $X$ is proper over affine. \end{claim}

\begin{proof}[Proof of the claim.] According to \cref{Prop:ProperOverAffine}, it suffices to show that $f$ is proper over quasi-affine and it satisfies Nullstellensatz. Since $\sigma_X$ is injective, the morphism $f$ satisfies condition PQA$_1$ and it is therefore proper over quasi-affine. In order to see that $\sigma_X$ is surjective, let $s$ be a point in $\Spec \cO(X)$. Since $\sigma_X$ is dominant, the image of the generic point of $X$ is that of $\Spec \cO(X)$. Thus, if $s$ is the generic point of $\Spec \cO(X)$ we are done. If this is not the case, according to \cref{Lemma:ClosedSubsetPassingThrough}, there is an integral closed subscheme $Y \neq X$ such that $s$ belongs to the closure of $\sigma_X(Y)$. Consider the commutative diagram of $S$-schemes
\begin{center}
\begin{tikzcd}
Y \ar[r, "\sigma_Y", "\sim"'] \ar[d] & \Spec \cO(Y) \ar[d, "\Spec \rho_Y"]\\
X \ar[r, "\sigma_X"]& \Spec \cO(X),
\end{tikzcd}
\end{center}
the upper horizontal arrow being an isomorphism because $Y$ is affine by Noetherian induction. By hypothesis the restriction map $\rho_Y \colon \Gamma(X, \cO_X) \to \Gamma(Y, \cO_Y)$ is integral, thus the corresponding morphism $\Spec \rho_Y$ between spectra is closed. Since $s$ belongs to the closure of $\sigma_X(Y)$, it belongs to the image of $\Spec \rho_Y$. The morphism $\sigma_Y$ being bijective, there is $y \in Y$ such that $s = \Spec \rho_Y(\sigma_Y(y))$, thus $\sigma_X(y) =s$.
\end{proof}

By Zariski's main theorem \cite[\href{https://stacks.math.columbia.edu/tag/05K0}{lemma 05K0}]{stacks-project}, there is a finite morphism $\pi \colon Z \to \Spec \cO(X)$ and a scheme-theoretically dense open immersion $i \colon X \to Z$ such that $\sigma_X = \pi \circ i$. Consider the following commutative diagram:
\[
\begin{tikzcd}
X \ar[r, "i"]  \ar[d, "\sigma_X"] & Z \ar[r, "\pi"] \ar[d, "\sigma_Z", "\wr"'] & \Spec \cO(X) \ar[d, equal]\\
\Spec \cO(X) \ar[r, "h"] & \Spec \cO(Z) \ar[r, "\pi"] & \Spec \cO(X)
\end{tikzcd}
\]
where $h$ is the morphism induced by the ring homomorphism $i^\sharp \colon \cO(Z) \to \cO(X)$ induced by $i$. Note that the scheme $Z$ is affine because the morphism $\pi$ is finite. Since $\pi \circ h$ is the identity, the morphism $h$ is a closed immersion. Therefore $i = h \circ \sigma_X$ is proper because $X$ is proper over affine, hence an isomorphism because $i$ is a scheme-theoretically dense open immersion. Therefore the scheme $X$ is affine because $Z$ is affine. 
\end{proof}

\subsection{Stable codimension and affineness}

Let $X$ be a Noetherian scheme. The codimension $\codim_X(x)$ of a point $x \in X$ is the dimension of the local ring $\cO_{X, x}$. Let $Z$ be a closed subset of $X$. A generic point of $Z$ is a generic point of one of its irreducible components. The codimension of $Z$ in $X$ is supremum of the codimension of the generic points:
\[ \codim_X(Z) := \sup \{ \codim_X(x) : x \in Z \textup{ generic}\},\]
with the convention that it is $-\infty$ when $Z$ is empty.

\begin{definition} Let $X$ be a Noetherian scheme and $Z$ a closed subscheme of $X$.
\begin{enumerate}
\item The \emph{stable codimension} of $Z$ in $X$ is  $\codim^\s_X(Z) := \sup  \codim_{X'}(\pi^{-1}(Z))$ the supremum ranging on the morphisms $\pi \colon X' \to X$ with $X'$ Noetherian. 
\item The \emph{finite type stable codimension} is $ \codim^\sft_X(Z) := \sup  \codim_{X'}(\pi^{-1}(Z))$ the supremum ranging on the finite type morphisms $\pi \colon X' \to X$.
\end{enumerate}
\end{definition}
For affine Noetherian schemes in the literature the stable codimension (resp. the finite type stable codimension) is called the super height of the ideal defining the closed subset (with the reduced structure, say). Notwithstanding we lean for geometric-flavoured term codimension. Given a closed subset $Z$ of a Noetherian scheme $X$ and a finite open cover $X= \bigcup_{i=1}^n X_i$ we have 
\[
\codim^\star_X(Z) = \max_{i =1, \dots, n} \codim^\star_{X_i}(Z \cap X_i) , 
\quad \star \in \{ \s,\sft  \}.
\]
Moreover the stable codimension $\codim^\s_X(Z)$ of $Z$ equals $\sup \dim A$ the supremum ranging on Noetherian complete local rings $A$ with a morphism  $\pi \colon \Spec A \to X$ such that $\pi^{-1}(Z)$ is the unique closed point of $\Spec A$. Needless to say, \[\codim_X(Z) \le \codim^\sft_X(Z) \le \codim^\s_X(Z).\] 
The first inequality is in general strict, as the example of a non-irreducible scheme $X$ shows with $Z$ being one of the irreducible components. Instead, the key result is a theorem of Koh \cite[theorem 1]{Koh} saying that the second is an equality for varieties over a field:

\begin{theorem} \label{thm:Koh}If $X$ is of finite type over a field, then $\codim^\sft_X(Z) = \codim^\s_X(Z)$.
\end{theorem}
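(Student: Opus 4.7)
The inequality $\codim^\sft_X(Z) \le \codim^\s_X(Z)$ is tautological, so I focus on the reverse. Reducing to $X = \Spec R$ affine and $Z = V(\mathfrak{a})$, I invoke the characterization of $\codim^\s_X(Z)$ recalled just above the statement: it equals the supremum of $\dim A$ over Noetherian complete local rings $(A, \mathfrak{m}_A)$ endowed with a $k$-algebra homomorphism $\phi \colon R \to A$ for which $\mathfrak{a} A$ is $\mathfrak{m}_A$-primary. The problem thus becomes: for every such $A$ of dimension $d$, produce a finite type $R$-algebra $B$ together with a prime $\mathfrak{q} \subset B$ minimal over $\mathfrak{a} B$ satisfying $\height_B \mathfrak{q} \ge d$.

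Two standard reductions simplify the setup. First, replacing $A$ by $A/\mathfrak{q}_0$ for a minimal prime $\mathfrak{q}_0 \subset A$ realizing $\dim A/\mathfrak{q}_0 = d$, and $R$ by $R/\phi^{-1}(\mathfrak{q}_0)$ (which remains of finite type over $k$), one may assume that $A$ is an integral domain and $\phi$ is injective. Second, setting $\mathfrak{p} := \phi^{-1}(\mathfrak{m}_A)$ and $\delta := \dim(A/\mathfrak{p} A)$, one adjoins indeterminates $T_1, \ldots, T_\delta$ to $R$ and extends $\phi$ by sending each $T_i$ to an element of $\mathfrak{m}_A$ whose class in $A/\mathfrak{p} A$ belongs to a system of parameters; this replaces $R$ with a polynomial ring over it (still of finite type over $k$) and reduces to the case where the fiber dimension is zero, i.e.\ $\mathfrak{p} A$ itself is $\mathfrak{m}_A$-primary. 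The standard local dimension inequality then yields $\height_R \mathfrak{p} \ge d$.

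The core step is Artin approximation applied to the map $R_\mathfrak{p} \to A$. Since $R$ is of finite type over a field, the local ring $R_\mathfrak{p}$ is essentially of finite type over $k$, hence excellent, and its Henselization $R_\mathfrak{p}^h$ enjoys the Artin approximation property. The conditions that $A$ is a complete Noetherian $R_\mathfrak{p}$-algebra of dimension $d$ and that $\mathfrak{a} A$ is $\mathfrak{m}_A$-primary can both be encoded as a polynomial system over $R_\mathfrak{p}$ admitting a solution in $\widehat{R_\mathfrak{p}}$, via auxiliary elements of $A$ witnessing that high powers of generators of $\mathfrak{p} A$ lie in $\mathfrak{a} A$. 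Approximating this solution in $R_\mathfrak{p}^h$ to a sufficiently high $\mathfrak{p}$-adic precision preserves the primary condition by a Nakayama-type argument. Because $R_\mathfrak{p}^h$ is a filtered colimit of local rings of étale $R_\mathfrak{p}$-algebras, and such algebras descend to étale $R[1/f]$-algebras for some $f \not\in \mathfrak{p}$, the approximate solution defines a finite type $R$-algebra $B$ and a prime $\mathfrak{q} \subset B$ with $\mathfrak{a} B_\mathfrak{q}$ primary to $\mathfrak{q} B_\mathfrak{q}$ and $\height_B \mathfrak{q} = \height_R \mathfrak{p} \ge d$, as required.

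The main obstacle is carrying out the Artin approximation step: the approximation order must be chosen carefully to preserve simultaneously the primary character of $\mathfrak{a} B_\mathfrak{q}$ and the desired height, and catenarity of finite type $k$-algebras is needed for the height bookkeeping. The hypothesis that $X$ is of finite type over a field enters precisely here, through the excellence of $R_\mathfrak{p}$ (on which Artin approximation depends) and the catenarity of $R$; without these, $\codim^\sft$ and $\codim^\s$ genuinely differ, as the example preceding the theorem statement suggests.
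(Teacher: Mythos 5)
The paper itself does not prove this statement: it is quoted from Koh, so your proposal has to stand on its own, and as written it does not. The preliminary reductions are fine (affine case, passing to a quotient so that $A$ is a complete local domain, factoring through $R_\frp$, and the dimension inequality $\dim A \le \height_R\frp + \dim A/\frp A$ giving $\height_R\frp \ge d$; note in passing that your adjunction of the variables $T_i$ is vacuous, since $\fra \subseteq \frp$ and $\fra A$ being $\frm_A$-primary already force $\dim A/\frp A = 0$). The problem is the core step. First, the claim that ``$A$ is a complete Noetherian $R_\frp$-algebra of dimension $d$ with $\fra A$ $\frm_A$-primary'' can be encoded as a polynomial system over $R_\frp$ with a solution in $\widehat{R_\frp}$ is never substantiated and is not plausible: the witnesses (the coefficients expressing powers of generators of $\frm_A$ in terms of $\fra$) are elements of $A$, not of $\widehat{R_\frp}$, $A$ is not a quotient or subring of $\widehat{R_\frp}$ in any controlled way, and ``$\dim A = d$'' is not a condition on a finite tuple of elements satisfying polynomial equations. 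Artin approximation compares solutions of a fixed system in $\widehat{R_\frp}$ with solutions in $R_\frp^h$; no such system is exhibited here.

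Second, and decisively, the object your final step claims to produce cannot exist except in the trivial case. If $B$ is an \'etale $R[1/f]$-algebra and $\frq \subset B$ lies over $\frp$, then $R_\frp \to B_\frq$ is a faithfully flat local homomorphism with zero-dimensional closed fibre, so $\dim\bigl(B_\frq/\fra B_\frq\bigr) = \dim\bigl(R_\frp/\fra R_\frp\bigr)$. Hence $\fra B_\frq$ being $\frq B_\frq$-primary forces $\fra R_\frp$ to be $\frp R_\frp$-primary, i.e.\ $\frp$ is already minimal over $\fra$ in $R$ and $R$ itself witnesses $\codim^\sft \ge \height\frp \ge d$. But the whole content of the theorem lies in the opposite case, where the test map $R \to A$ collapses enough that $\fra A$ becomes $\frm_A$-primary although $\frp$ is \emph{not} minimal over $\fra$; in that case no (essentially) \'etale neighbourhood of $R$ at $\frp$ can carry the primariness, so your descent through $R_\frp^h$ structurally cannot yield the required $B$. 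The witnessing finite type $R$-algebra must be allowed to be a genuinely non-flat quotient (morally, a finite type approximation of $A$ itself), and the real difficulty --- arranging that the relevant prime of this approximation is \emph{minimal} over the expansion of $\fra$ while its height stays $\ge d$, rather than merely containing $\fra$ --- is exactly what your sketch does not address. This is the substance of Koh's theorem, and it is missing here.
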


Suppose $X$ Noetherian and let $U \subset X$ be an open subset. The open immersion $i \colon U \to X$ induces a  open immersion $U_{\red} \to X':= \Spec i_\ast \cO_{U_{\red}}$ which is scheme-theoretically dense. We have $X ' \smallsetminus U_{\red} = \pi^{-1}(X \smallsetminus U)$ where $\pi \colon X' \to X$ is the natural morphism.

\begin{proposition} Suppose $X$ and $X'$ Noetherian. If $i$ is not an affine morphism, then $\codim_{X'} (X' \smallsetminus U_{\red}) \ge 2$. In particular $\codim^\s_X(X \smallsetminus U) \ge 2$.
\end{proposition}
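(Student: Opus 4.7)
The plan is to observe first that $i$ is affine if and only if $X' \smallsetminus U$ is empty, and then deduce the codimension bound from a local cohomology computation on affine opens of $X'$. For the equivalence: $\pi \colon X' \to X$ is affine by construction, so for any affine open $V = \Spec A \subset X$ one has $\pi^{-1}(V) = \Spec A'$ with $A' = \Gamma(V \cap U, \cO_U)$; when $i$ is affine, $i^{-1}(V) = V \cap U$ is itself affine with the same coordinate ring $A'$, and the factorisation $i = \pi \circ j$ forces $j|_{V \cap U}$ to be the identity on $\Spec A'$, whence $U = X'$ after patching over an affine cover of $X$.

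Assume therefore $X' \smallsetminus U \neq \emptyset$ and let $\eta$ be the generic point of one of its irreducible components. Since codimension is a local notion, I would verify $\codim_{X'}(\eta) \ge 2$ in the affine neighbourhood $\Spec A' = \pi^{-1}(\Spec A) \subset X'$ for some affine $\Spec A \subset X$ with $\pi(\eta) \in \Spec A$. The decisive feature, immediate from $X' = \Spec_X i_\ast \cO_U$, is the tautological identity
\[ \Gamma(\Spec A', \cO_{X'}) = A' = \Gamma(\Spec A' \cap U, \cO_U). \]
Write $I \subset A'$ for the ideal defining $\Spec A' \cap (X' \smallsetminus U)$. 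Scheme-theoretic density of $U$ in $X'$ yields $H^0_I(A') = 0$, and the long exact sequence of local cohomology with supports on the affine Noetherian scheme $\Spec A'$
\[ 0 \too H^0_I(A') \too A' \too \Gamma(\Spec A' \smallsetminus V(I), \cO_{X'}) \too H^1_I(A') \too 0, \]
combined with the equality of its two middle terms, forces $H^1_I(A') = 0$ as well. Hence $I$ contains an $A'$-regular sequence of length two, so $\height(I) \ge 2$; since the prime $\frp_\eta \subset A'$ contains $I$, one obtains $\codim_{X'}(\eta) = \height(\frp_\eta) \ge 2$.

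Taking the supremum over generic points of $X' \smallsetminus U$ establishes the first claim, and the bound on $\codim^\s_X(X \smallsetminus U)$ follows immediately upon using the Noetherian morphism $\pi$ as a witness in the defining supremum, since $\pi^{-1}(X \smallsetminus U) = X' \smallsetminus U$. The one delicate point is ensuring that the \emph{global} equality $\cO(X') = \cO(U)$ is indeed inherited by each affine piece $\pi^{-1}(\Spec A)$: this is precisely what the affineness of $\pi$ delivers, via the chain $\Gamma(\pi^{-1}(\Spec A), \cO_{X'}) = \Gamma(\Spec A, \pi_\ast \cO_{X'}) = \Gamma(\Spec A \cap U, \cO_U)$, and without it one would not be able to convert the global hypothesis into the local vanishing that drives the proof.
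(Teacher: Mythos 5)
Your proof is correct, but it takes a genuinely different route from the paper's. Both arguments pivot on the same tautological input---the restriction $\Gamma(\pi^{-1}(\Spec A),\cO_{X'})\to\Gamma(\Spec A\cap U,\cO_U)$ is bijective because $\pi_\ast\cO_{X'}=i_\ast\cO_U$---but you convert it into the vanishing $H^0_I(A')=H^1_I(A')=0$, invoke the characterization of depth as the least non-vanishing local cohomology, and conclude with the grade--height inequality (equivalently, localize your length-two regular sequence at $\frp_\eta$ and use $\operatorname{depth} A'_{\frp_\eta}\le\dim A'_{\frp_\eta}$). The paper instead follows Brenner's elementary computation: after reducing to $X$ affine and to the reduced case, it supposes a height-one prime $\frp$ of $A=\Gamma(U,\cO_U)$ lies outside $U$, chooses a nonzerodivisor $f\in\frp$, applies Krull's Hauptidealsatz to write the generators' powers as multiples of $f$ up to elements outside $\frp$, glues the resulting quotients to a section over $X'\smallsetminus V(\frp)$, and gets a contradiction from $\Gamma(X',\cO_{X'})\cong\Gamma(X'\smallsetminus V(\frp),\cO_{X'})$. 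Your version is shorter and more conceptual, needs neither the reduction to the reduced case (which the paper uses only to produce the nonzerodivisor) nor the reduction to $X$ affine, at the price of quoting local cohomology and the depth/grade machinery; the paper's argument is self-contained commutative algebra and in fact shows the slightly stronger statement that \emph{every} point of $X'\smallsetminus U$, not just every generic point, has codimension $\ge 2$. Two minor remarks: the direction of your preliminary equivalence that the main argument actually uses is the trivial one ($X'\smallsetminus U=\emptyset$ forces $U=X'$, so $i=\pi$ is affine), whereas you prove the converse in detail; and in your four-term exact sequence what matters is that the middle arrow is the restriction map, which is precisely what your highlighted identity supplies, as you correctly point out at the end.
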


\begin{proof}  The argument is borrowed from \cite[theorem 3.2]{Brenner}. One can clearly suppose that $X$, hence $U$, are reduced. Also, one may suppose $X$ affine. Under this hypothesis, the scheme $X'$ being Noetherian is equivalent to the ring $A:= \Gamma(U, \cO_U)$ being Noetherian. Note also that $X' \smallsetminus U$ is non-empty because $U$ is not affine.
Since $U$ is dense in $X'$, it contains all of its the generic points.  Let $\frp = (f_1, \dots, f_n)$ be a prime ideal of height $1$ of $A$. Suppose by contradiction that $\frp$ does not belong to $U$, that is, the open subset $V:= X' \smallsetminus V(\frp)$ contains $U$.  Let $f \in \frp$ be a nonzerodivisor: it exists because $\frp$ is not contained in the union of the minimal primes of $A$. The ring $A_\frp$ is local Noetherian of dimension $1$ thus $\sqrt{f A_\frp} = \frp A_\frp$. Therefore there is $N \in \bbN$ and, for each $i = 1, \dots, n$, there are $g_i \in A$, $h_i, r_i \in A \smallsetminus \frp$ such that $r_i(h_i f_i^N - g_i f) = 0$. Upon setting $h := h_1 \cdots h_n$ and $r = r_1 \cdots r_n$, one can write 
\begin{equation} \label{eq:BrennerElement}r(h f_i^N - g_i' f) = 0\end{equation}
for some $g_i' \in A$.  Since $f$ is a nonzerodivisor, the homomorphism $\cO_{X'} \to \cO_{X'}$ given by the multiplication by $f$ is injective. In particular, for each $i = 1, \dots, n$, the multiplication-by-$f$ map on $\Gamma(D(f_i), \cO_{X'})$ is injective. Coupled with the equality \eqref{eq:BrennerElement}, this shows that there is a unique $a_i \in \Gamma(D(f_i), \cO_{X'})$ such that $f a_i = rh$. By uniqueness, the functions $a_i$ glue to give a function $a \in \Gamma(V, \cO_{X'})$ such that $af = rh$. On the other hand, the restriction map $A = \Gamma(X', \cO_{X'}) \to \Gamma(V, \cO_{X'})$ is an isomorphism because the restriction map $\Gamma(V, \cO_{X'}) \to \Gamma(U, \cO_{U})$ is injective (by density of $U$ in $X'$) and the restriction map $A = \Gamma(X', \cO_{X'}) \to \Gamma(U, \cO_{U}) = A$ is the identity. This implies that the function $a$ belongs to $A$, thus $rh = af$ belongs to $\frp$, contradicting the fact that $r$ and $h$ do not belong to $\frp$.
\end{proof}

Let $K$ be a non-trivially valued complete non-Archimedean field, $X$ be a $K$-analytic space and $Z \subset X$ a closed analytic subset defined by a coherent sheaf of ideals $I$.  The codimension $\codim_X (Z)$ of $Z$ in $X$ is
\[ \codim_{X}(Z) = \sup_D \codim_{\Spec \Gamma(D, \cO_X)}(  \Spec \Gamma(D, \cO_X)  / \Gamma(D, I)), \]
the supremum ranging on affinoid domains $D$ of $X$.

\begin{definition} The \emph{stable codimension} of $Z$ is $\codim^{\s}_X(Z) = \sup \codim_{X'} (\pi^{-1}(Z))$ the supremum ranging on the morphisms $\pi \colon X' \to X$ of $K$-analytic spaces.
\end{definition}

\begin{proposition}[{\cite[theorem 4.2]{Brenner}, \cite[theorem 5.2]{Neeman}}] Let $X$ be a $K$-analytic space equipped with a morphism without boundary $X \to \cM(A)$ with $A$ an affinoid $K$-algebra. Let $Z$ be a closed analytic subset of $X$. If the open subset $X \smallsetminus Z$ is Stein, then $\codim^{\s}_X(Z) \le 1$.
\end{proposition}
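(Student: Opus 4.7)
The plan is to adapt the algebraic argument of Brenner reproduced above (the proof of the statement preceding \cref{Prop:ProperOverQuasiAffine}) to the analytic setting, with the Stein hypothesis on $U:=X\smallsetminus Z$ and the density/closedness theorems of the preceding sections playing the role of the scheme-theoretic density used in that proof.

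First I would unpack the definition of stable codimension. By definition, it suffices to prove that for every morphism of $K$-analytic spaces $\pi\colon X'\to X$ and every affinoid domain $D\subset X'$, writing $\sigma\colon D\to X$ for the composition, the ideal $I\subset B:=\Gamma(D,\cO_D)$ defining $\sigma^{-1}(Z)\cap D$ has no prime of height $\ge 2$ containing it. Since $B$ is a Noetherian affinoid algebra, from this point on the argument takes place inside the Noetherian ring $B$ and is purely algebraic except for the final extension step.

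Next I would run Brenner's Hauptidealsatz argument verbatim. Supposing for contradiction that $\mathfrak{p}\supset I$ has $\height\mathfrak{p}\ge 2$, pick a nonzerodivisor $f\in\mathfrak{p}$ with $\sqrt{fB_\mathfrak{p}}\neq\mathfrak{p} B_\mathfrak{p}$, and for generators $f_1,\dots,f_n$ of $\mathfrak{p}$ use Krull to obtain relations $r(hf_i^N-g_i f)=0$ with $r,h\in B\smallsetminus\mathfrak{p}$. These assemble into a section $a\in\Gamma(V,\cO_{\Spec B})$ over $V:=\Spec B\smallsetminus V(\mathfrak{p})$ satisfying $af=rh$; the desired contradiction $rh\in\mathfrak{p}$ follows as soon as we know that $a\in B$.

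The final, and only genuinely analytic, step is this extension $a\in B$. In Brenner's setting it is immediate from the coincidence $\Gamma(V,\cO)=\Gamma(\Spec B\smallsetminus V(I),\cO)=B$, itself a consequence of $\codim_{\Spec B}V(I)\ge 2$ together with a scheme-theoretic density argument. Analytically, the corresponding extension is provided by \cref{Thm:ClosednessImage}: since $\codim_{\Spec B}V(I)\ge 2$ and $\cO_D$ is semi-reflexive, the analytic open subset $D\smallsetminus Z_D$ is scheme-theoretically dense in $D$, so the restriction $B=\Gamma(D,\cO_D)\to\Gamma(D\smallsetminus Z_D,\cO_D)$ is a homeomorphism onto a closed subspace. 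The Stein hypothesis on $U$ enters through the pull-back $\sigma^{-1}(U)=D\smallsetminus Z_D\to U$: combined with \cref{Thm:DensityAlgebraicSections}, it forces the analytic incarnation of $a$, a priori living in $\Gamma(D\smallsetminus Z_D,\cO_D)$, to lie in the closed image of $B$, and hence in $B$ itself. The main obstacle is precisely this transfer from the algebraic construction of $a$ on $V\subset\Spec B$ to its analytic interpretation on $D\smallsetminus Z_D$, where one must verify that no new analytic functions appear beyond those already in $B$; this is exactly what the combination of \cref{Thm:DensityAlgebraicSections,Thm:ClosednessImage} is designed to provide.
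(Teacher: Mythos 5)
Your plan has two genuine gaps, and the central one is that the Hauptidealsatz step runs backwards. In the Brenner argument you are trying to copy, the prime $\frp$ has height \emph{one}, and it is precisely Krull's equality $\sqrt{fB_\frp}=\frp B_\frp$ that produces the relations $r(hf_i^N-g_if)=0$, hence the section $a$ on $\Spec B\smallsetminus V(\frp)$ with $af=rh$, hence the contradiction $rh=af\in\frp$ once $a$ is known to lie in $B$. For a prime with $\height\frp\ge 2$, Krull gives the \emph{strict} inclusion $\sqrt{fB_\frp}\subsetneq\frp B_\frp$ (minimal primes over a principal ideal have height $\le 1$), so the relations you invoke do not exist, the function $a$ cannot be constructed, and there is nothing whose membership in $B$ could yield a contradiction. (Also, what must be bounded is the height of the \emph{minimal} primes over $I$, not of all primes containing $I$.) A further structural mismatch: Brenner's argument is tailored to $X'=\Spec\Gamma(U,\cO_U)$, where the schematic density of $U$ forces $\Gamma(V,\cO_{X'})=\Gamma(X',\cO_{X'})$ for every open $V\supseteq U$; it proves the complementary statement (codimension $\ge 2$ of the complement in the affinization when the immersion is not affine) and does not transplant to an arbitrary affinoid test domain $D$, where $B=\Gamma(D,\cO_D)$ has no a priori relation to the functions on $D\smallsetminus Z_D$.

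The second gap is that the Stein hypothesis on $U=X\smallsetminus Z$ never actually does any work in your plan. \cref{Thm:ClosednessImage} and \cref{Thm:DensityAlgebraicSections} only say that restriction is a closed topological embedding, respectively that algebraic sections are dense; by themselves they cannot force an analytic function on $D\smallsetminus Z_D$ into $B$, nor produce a contradiction if it did. The paper's proof uses Steinness in an essentially different way: one reduces to $X'=\cM(B)$ reduced, irreducible and \emph{normal} with $\pi^{-1}(Z)$ nonempty of codimension $\ge 2$; algebraic Hartogs for the normal Noetherian ring $B$ shows that $\rho\colon\Gamma(X',\cO_{X'})\to\Gamma(X'\smallsetminus\pi^{-1}(Z),\cO_{X'})$ is an isomorphism, and \cref{Thm:ClosednessImage} upgrades it to a homeomorphism; then $\pi^{-1}(U)=X'\times_X U$ is Stein, being a closed subspace of the Stein space $X'\times_A U$; finally \cref{Prop:YonedaStein} converts the isomorphism of Fr\'echet algebras into an isomorphism of analytic spaces $X'\smallsetminus\pi^{-1}(Z)\cong X'$, contradicting $\pi^{-1}(Z)\neq\emptyset$ (equivalently, one needs a non-extendable function on the Stein open subset, as in \cref{Lemma:NonExtensionOfAnalyticFunctions}). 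Your proposal is missing both the normalization-plus-Hartogs reduction and this rigidity step, and the Hauptidealsatz computation cannot substitute for them.
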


\begin{proof} Suppose, by contradiction, that the stable codimension of $Z$ is $\ge 2$. Then, there is a morphism of $K$-analytic spaces $\pi \colon X' \to X$ such that the codimension of $\pi^{-1}(Z)$ in $X'$ is $\ge 2$. Since the codimension is just a local matter, one may assume $X' = \cM(B)$ for an affinoid $K$-algebra  $B$ and $\pi^{-1}(Z)$ non-empty and purely of codimension $\ge 2$. Moreover, the affinoid $K$-algebra $B$ may be assumed to be reduced, for the codimension only depends on the underlying reduced space. Let $X'_1, \dots, X'_n$ denote the normalization of the irreducible components of $X'$. The morphism  $ \bigsqcup_{i = 1}^n X_i' \to X'$ being finite, one may treat each irreducible component separately and, finally, suppose $X'$ reduced, irreducible and normal, and $\pi^{-1}(Z)$ non-empty and purely of codimension $\ge 2$. By Hartogs' phenomenon \cite{Bartenwerfer}, the restriction map $\rho \colon \Gamma(X', \cO_{X'}) \to \Gamma(X' \smallsetminus \pi^{-1}(Z), \cO_{X'})$ is an isomorphism. Because of \cref{Thm:ClosednessImage}, it is actually a homeomorphism. On the other hand, the open subset $\pi^{-1}(X \smallsetminus Z) = X' \times_X (X \smallsetminus Z)$ is a closed analytic subspace of the product $X' \times_A (X \smallsetminus Z)$. The product of Stein spaces being Stein, and closed analytic subspaces of Stein spaces being Stein, the $K$-analytic space $X ' \smallsetminus \pi^{-1}(Z)$ is Stein. Since the restriction map $\rho$ is a homeomorphic isomorphism, \cref{Prop:YonedaStein} implies that the open immersion $X' \smallsetminus \pi^{-1}(Z) \to X'$ is an isomorphism, contradicting the non-vacuity of $\pi^{-1}(Z)$.
\end{proof}

\begin{proposition} Let $Z$ be a closed subset of an $A$-scheme of finite type $X$. Then, \[\codim_X^{\sft}(Z) \le \codim_{X^\an}^{\s}(Z^\an) \le \codim_{X}^\s(Z).\]
\end{proposition}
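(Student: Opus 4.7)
The right-hand inequality $\codim_X^{\sft}(Z) \le \codim_X^s(Z)$ is immediate from the definitions, since every finite-type morphism of Noetherian schemes is in particular an arbitrary Noetherian morphism.

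For the equality $\codim_X^{\sft}(Z) = \codim_{X^\an}^s(Z^\an)$ I would argue in both directions via local-ring comparisons. To show $(\le)$, given a witness $\pi\colon X' \to X$ of finite type with $\codim_{X'}(\pi^{-1}Z) = n$, I would analytify to $\pi^\an\colon X'^\an \to X^\an$ and note $(\pi^\an)^{-1}(Z^\an) = (\pi^{-1}Z)^\an$. Localizing to $X' = \Spec R$ affine, I pick a minimal prime $\frp$ of the ideal of $\pi^{-1}Z$ with $\height_R \frp = n$, a maximal ideal $\frm \supset \frp$ of $R$, a rigid point $x$ of $X'^\an$ above the corresponding closed point $x_0$, and an affinoid neighborhood $D = \cM(B)$ of $x$. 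The structural input is the canonical isomorphism $\widehat{R_\frm} \iso \widehat{B_{\frm_x}}$ between completions (where $\frm_x \subset B$ is the maximal ideal associated with $x$), which is height-preserving through faithful flatness of completion. A saturated chain of length $n$ in $R_\frm$ ending at $\frp R_\frm$ then lifts to a chain in $B_{\frm_x}$ whose terminus is a minimal prime $\frq$ of $I_Z B_{\frm_x}$---the minimality transferring from that of $\frp$ by going-down through the completion. Contracting $\frq$ to $B$ yields $\codim_{\Spec B}(V(I_Z B)) \ge n$, and hence $\codim_{X^\an}^s(Z^\an) \ge n$.

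For $(\ge)$, given a morphism $\pi\colon X'' \to X^\an$ witnessing $\codim^s_{X^\an}(Z^\an) \ge n$, the definition of the analytic codimension as a supremum over affinoid domains lets me assume $X'' = D = \cM(B)$ is affinoid and that there is a minimal prime $\frq \subset B$ of $I_Z B$ with $\height_B \frq = n$. The universal property of analytification for finite-type schemes over $A$ identifies $\pi$ with a morphism of schemes $\tilde\pi\colon \Spec B \to X$; since $B$ is Noetherian, this already witnesses $\codim_X^s(Z) \ge n$. To upgrade to $\codim_X^{\sft}(Z) \ge n$ I would pick a saturated chain $\frq_0 \subsetneq \cdots \subsetneq \frq_n = \frq$ in $B$, choose $f_i \in \frq_i \smallsetminus \frq_{i-1}$ for $i = 1,\dots,n$, and form a finite-type $A$-subalgebra $R \subset B$ generated over the image of $\Gamma(X, \cO_X)$ by the $f_i$ (together with further auxiliary elements, as needed). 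The induced finite-type morphism $\Spec R \to X$ then has $\frq \cap R$ as a prime containing $I_Z R$ of height at least $n$, by virtue of the strict chain $\frq_i \cap R$.

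The main obstacle is the last step of $(\ge)$: ensuring that $\frq \cap R$ is actually a \emph{minimal} prime of $I_Z R$ and not merely a prime of height $\ge n$ containing it---otherwise the algebraic codimension of $\tilde\pi^{-1}Z \subset \Spec R$ could be strictly less than $n$. This is a Koh-type approximation question. Any intermediate prime $\mathfrak{s}$ of $R$ with $I_Z R \subset \mathfrak{s} \subsetneq \frq \cap R$ would, by a suitably chosen flat base change or a further enlargement of $R$ that separates it from $\frq \cap R$, lift to a prime of $B$ sitting strictly between $I_Z B$ and $\frq$, contradicting the minimality of $\frq$. Making this argument uniform amounts either to an extension of Koh's theorem from schemes of finite type over a field to those over an affinoid $K$-algebra (via excellence of affinoid algebras) or to a direct inductive construction of $R$ that blocks the finitely many intermediate primes that could appear.
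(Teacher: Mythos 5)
Your two easy steps are fine, and in fact one of them is the paper's key point: your remark in the $(\ge)$ paragraph that an affinoid witness $\cM(B)$ yields, via the induced scheme morphism $\Spec B\to X$ with $B$ Noetherian, a witness for $\codim^{\s}_X(Z)$ is exactly how the paper proves the inequality $\codim^{\s}_{X^\an}(Z^\an)\le\codim^{\s}_X(Z)$ (the analytic codimension is by definition a supremum over affinoid domains, and affinoid algebras are Noetherian); note that \emph{this} is the right-hand inequality of the statement, not $\codim^{\sft}_X(Z)\le\codim^{\s}_X(Z)$, so it is this observation, not your ``immediate'' one, that is actually needed. Your argument for $\codim^{\sft}_X(Z)\le\codim^{\s}_{X^\an}(Z^\an)$ via the isomorphism $\widehat{R_\frm}\simeq\widehat{B_{\frm_x}}$ at rigid points and flat chain-lifting is correct in substance (the paper dismisses this direction as clear). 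The genuine gap is the remaining direction $\codim^{\s}_{X^\an}(Z^\an)\le\codim^{\sft}_X(Z)$, which you leave unresolved: descending from $B$ to a finite-type $A$-subalgebra $R$ while keeping $\frq\cap R$ a \emph{minimal} prime of $I_ZR$ of the right height is not a matter of blocking finitely many intermediate primes (the relevant primes change each time you enlarge $R$); it is precisely the content of a Koh-type superheight theorem, which is a nontrivial result. As written, your proposal therefore does not prove the asserted equality.

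The paper never attacks that inequality head-on either. Its proof only establishes the chain $\codim^{\sft}_X(Z)\le\codim^{\s}_{X^\an}(Z^\an)\le\codim^{\s}_X(Z)$, and the equality is then closed up by invoking Koh's theorem --- the result displayed immediately before the proposition, giving $\codim^{\sft}=\codim^{\s}$ for schemes of finite type over a field --- which is quoted as a black box; this is also exactly how the subsequent corollary for $K$-schemes (all three codimensions agree) is obtained. So the intended route is to sandwich with $\codim^{\s}_X(Z)$ and cite Koh, not to reprove a descent statement; your instinct that over a general affinoid base one would need an extension of Koh's theorem (via excellence of affinoid algebras) is sound and mirrors the paper's own caution about pushing these statements beyond the case $A=K$, but simply rerouting through $\codim^{\s}$ and Koh is what replaces the step where your construction gets stuck.
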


\begin{proof} The inequality $\codim_X^{\sft}(Z) \le \codim_{X^\an}^{\s}(Z^\an)$ is clear, so that only the inequality $\codim_{X^\an}^{\s}(Z^\an) \le \codim_{X}^\s(Z)$ is left to prove. Let $\pi \colon X' \to X^\an$ be a morphism of $K$-analytic spaces. Let $I$ be the coherent sheaf of ideals on $X'$ defining the closed analytic subspace $\pi^{-1}(Z^\an)$ with its reduced structure. By definition,
\[ \codim_{X'} (\pi^{-1}(Z^\an)) = \sup_V \codim_{\Spec \Gamma(V, \cO_X)}(  \Spec \Gamma(V, \cO_X)  / \Gamma(D, I)), \]
the supremum ranging on affinoid domains $V$ of $X'$. Since affinoid $K$-algebras are Noetherian rings, the wanted inequality follows.
\end{proof}

Together with \cref{thm:Koh} the preceding result yields a non-Archimedean analogue of \cite[theorem 4.2]{Brenner}:

\begin{corollary} Let $Z$ be a closed subset of  a $K$-scheme of finite type $X$. Then \[\codim_X^{\sft}(Z) = \codim_{X^\an}^{\s}(Z^\an) = \codim_{X}^\s(Z).\]
\end{corollary}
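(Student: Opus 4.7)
The plan is to combine the previous proposition with Koh's theorem stated earlier in the section. The previous proposition gives us the equality of finite-type stable codimension with the stable codimension of the analytification, together with the inequality
\[
\codim_X^{\sft}(Z) = \codim_{X^\an}^{\s}(Z^\an) \le \codim_X^{\s}(Z).
\]
To upgrade the inequality to an equality, I would invoke the theorem of Koh which asserts that for a scheme $X$ of finite type over a field, the finite-type stable codimension and the (full) stable codimension of a closed subset coincide. Since here $X$ is a $K$-scheme of finite type, Koh's theorem yields $\codim_X^{\sft}(Z) = \codim_X^{\s}(Z)$, and chaining with the previous proposition gives the three equalities.

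So the proof is essentially a one-line deduction: write
\[
\codim_X^{\s}(Z) \overset{\text{Koh}}{=} \codim_X^{\sft}(Z) = \codim_{X^\an}^{\s}(Z^\an) \le \codim_X^{\s}(Z),
\]
forcing all terms to be equal. No obstacle is anticipated beyond correctly citing the two inputs, since the nontrivial content (the analytic-to-algebraic comparison and Koh's super-height equality) has already been established. This corollary simply records that, over a field, Brenner's equivalence between the analytic stable codimension and the algebraic stable codimension of a closed subset of a finite-type scheme holds without having to distinguish $\s$ from $\sft$ on the algebraic side.
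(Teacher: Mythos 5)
Your deduction is correct and is exactly the argument the paper intends: the preceding proposition gives $\codim_X^{\sft}(Z) = \codim_{X^\an}^{\s}(Z^\an) \le \codim_X^{\s}(Z)$, and Koh's theorem (quoted in the paper as the key input, applicable here since $A=K$ is a field) gives $\codim_X^{\sft}(Z) = \codim_X^{\s}(Z)$, forcing all three quantities to agree. Nothing further is needed.
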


\begin{corollary} \label{Cor:QuasiAffineSteinNoetherianIsAffine} Let $U$ be an open subset of a $K$-scheme of finite type $X$ such that $U^\an$ is Stein. Then $\codim_X^\s (X \smallsetminus U) \le 1$. Moreover, if $X$ is affine and the ring $\Gamma(U_{\red}, \cO_U)$ is Noetherian, then $U$ is affine.
\end{corollary}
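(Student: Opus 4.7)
The proof splits into two parts: the codimension bound $\codim_X^\s(X\smallsetminus U)\le 1$, and its consequence that $U$ is affine when $\Gamma(U,\cO_U)$ is Noetherian.

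The codimension bound is an immediate consequence of the analytic Brenner--Neeman proposition established earlier in this section, combined with the preceding corollary identifying algebraic and analytic stable codimension for finite-type $K$-schemes. Indeed, since $X$ is of finite type over $K$, the structural morphism $X^\an\to\cM(K)$ is without boundary, and the Stein hypothesis on $U^\an$ makes the closed analytic subset $(X\smallsetminus U)^\an\subset X^\an$ have Stein complement. The analytic proposition then yields $\codim_{X^\an}^\s((X\smallsetminus U)^\an)\le 1$, and the preceding corollary transports this bound to $\codim_X^\s(X\smallsetminus U)$.

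For the ``in particular'' assertion, suppose $\Gamma(U,\cO_U)$ is Noetherian. By \cref{Thm:WhatSteinImpliesIntro} in the introduction, $U^\an$ Stein implies $U$ is quasi-affine, and \cref{Cor:CharacterizationQuasiAffine} then yields a scheme-theoretically dense open immersion $\sigma_U\colon U\hookrightarrow\tilde U:=\Spec\Gamma(U,\cO_U)$. The Noetherian hypothesis makes $\tilde U$ a Noetherian affine scheme; being affine, the relative Spec $\Spec_{\tilde U}(\sigma_{U\ast}\cO_U)$ is $\tilde U$ itself, so the algebraic Brenner proposition applies to the pair $U\subset\tilde U$. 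An open immersion into an affine scheme is affine as a morphism exactly when its source is affine; hence if $U$ were not affine, $\sigma_U$ would not be an affine morphism, and the proposition would give $\codim_{\tilde U}(\tilde U\smallsetminus U)\ge 2$.

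To conclude, one must derive a contradiction from this lower bound in $\tilde U$ and the upper bound $\codim^\s_X(X\smallsetminus U)\le 1$ of the first part. The main obstacle is that $\tilde U$ does not in general admit a morphism to $X$, so the two stable codimensions are not directly comparable via their definitions. The natural route I would follow is to apply the algebraic Brenner proposition instead to the pair $U\subset X$ with the affine $X$-scheme $X':=\Spec_X(i_\ast\cO_U)$, whose global sections $\Gamma(X',\cO_{X'})=\Gamma(U,\cO_U)$ are Noetherian by hypothesis. The delicate step is verifying that $X'$ is a Noetherian scheme: this must be checked locally over affine opens $V\subset X$, where $X'\times_X V=\Spec\Gamma(V\cap U,\cO_U)$, and requires leveraging both the Noetherianness of $\Gamma(U,\cO_U)$ and the quasi-affineness of $U$ to control sections on quasi-compact opens. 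Granted this, non-affineness of $U$ forces $i$ not to be an affine morphism (again via the universal property of $\tilde U=\Spec\Gamma(X',\cO_{X'})$), and the proposition then produces $\codim^\s_X(X\smallsetminus U)\ge 2$, contradicting the first assertion.
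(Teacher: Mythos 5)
Your treatment of the first assertion is fine and is exactly what the surrounding results are set up for: $X^\an\to\cM(K)$ is without boundary, the analytic proposition of Brenner--Neeman type gives $\codim^\s_{X^\an}((X\smallsetminus U)^\an)\le 1$, and the comparison corollary for finite-type $K$-schemes transports this to $\codim^\s_X(X\smallsetminus U)\le 1$. The ``in particular'' part, however, has a genuine gap. Your first route (ambient $\tilde U=\Spec\Gamma(U,\cO_U)$) is abandoned, as you yourself note, without resolving the comparison of codimensions, and the substitute route does not work. First, the step ``non-affineness of $U$ forces $i\colon U\to X$ not to be an affine morphism'' is false when $X$ is not affine: an affine morphism with non-affine target can have a non-affine, even quasi-affine, source (for instance $\bbA^2\smallsetminus\{0\}$ is affine over the blow-up of $\bbA^2$ at the origin), so the contrapositive of the algebraic proposition applied to $U\subset X$ only yields that $i$ is an affine morphism, which does not imply that $U$ is affine. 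Second, the Noetherianness of $X'=\Spec_X(i_\ast\cO_U)$, which that proposition requires, is a condition local over $X$ (one needs $\Gamma(V\cap U,\cO_U)$ Noetherian for every affine open $V\subset X$); you acknowledge this is delicate and simply grant it, but it does not follow from $\Gamma(U,\cO_U)$ being Noetherian, so a hypothesis of the proposition is left unverified.

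The repair is to change the ambient scheme rather than the factorization. Steinness of $U^\an$ gives quasi-affineness of $U$ by \cref{Thm:SteinImpliesQuasiAffine} (cite that theorem rather than \cref{Thm:WhatSteinImpliesIntro}, whose Noetherian clause is proved using the present corollary; there is no circularity, since \cref{Thm:SteinImpliesQuasiAffine} does not depend on it). Choose an open immersion $U\subset Y$ with $Y$ affine of finite type over $K$. The first assertion applies verbatim to the pair $(U,Y)$, because it only used that the ambient scheme is of finite type over $K$ and that $U^\an$ is Stein; hence $\codim^\s_Y(Y\smallsetminus U)\le 1$. Now apply the algebraic proposition with ambient $Y$: there $X'=\Spec_Y(i_\ast\cO_U)=\Spec\Gamma(U,\cO_U)$ is Noetherian by hypothesis, and, since $Y$ is affine, $i\colon U\to Y$ is an affine morphism if and only if $U$ is affine. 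If $U$ were not affine one would obtain $\codim^\s_Y(Y\smallsetminus U)\ge 2$, a contradiction; hence $U$ is affine. Note that this also rescues your first route: with ambient $Y$ the scheme $X'$ is exactly your $\tilde U$, and its structural morphism $\pi\colon\tilde U\to Y$ satisfies $\pi^{-1}(Y\smallsetminus U)=\tilde U\smallsetminus U$, which is precisely the comparison you were missing.
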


\section{Stein \emph{vs.} affine varieties} Let $K$ be a complete non-Archimedean field and $A$ a strictly affinoid $K$-algebra.

\subsection{Statements} We arrive at last with the original theme of this paper. The main results of this section concern the comparison between affine and Stein varieties. More precisely, for a separated $A$-scheme of finite type $X$ we have :

\begin{theorem} \label{Thm:SteinImpliesQuasiAffine} The $A$-scheme $X$ is quasi-affine iff $X^\an$ is holomorphically separable.
\end{theorem}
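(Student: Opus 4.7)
The plan is to prove both directions of the equivalence. For the easy direction ($X$ quasi-affine $\Rightarrow X^\an$ holomorphically separable), I would use that $X$ embeds as an open subscheme of some affine $A$-scheme of finite type $Y = \Spec B$; choosing an $A$-algebra surjection $A[t_1, \dots, t_n] \twoheadrightarrow B$ yields a closed immersion $Y \hookrightarrow \bbA^n_A$ and hence $X^\an \hookrightarrow \bbA^{n, \an}_A$. Any two distinct points of $\bbA^{n, \an}_A$ correspond to distinct bounded multiplicative seminorms on $A[t_1, \dots, t_n]$ and are therefore separated in absolute value by some polynomial; its restriction to $X$ is a global regular function, exhibiting $X^\an$ as holomorphically separable.

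For the converse I would invoke \cref{Cor:CharacterizationQuasiAffine}, reducing the statement to showing that $\sigma_X \colon X \to \Spec \Gamma(X, \cO_X)$ is set-theoretically injective. Arguing by contradiction, assume $x_1, x_2 \in X$ are distinct scheme points with $\sigma_X(x_1) = \sigma_X(x_2) = \frp$. The strategy is to construct distinct analytic lifts $\tilde{x}_1, \tilde{x}_2 \in X^\an$ of $x_1, x_2$ at which every global algebraic function takes the same absolute value. Set $\fra := \frp \cap A$ and $K_\frp := \Frac(\Gamma(X, \cO_X)/\frp)$; the hypothesis $\sigma_X(x_i) = \frp$ provides embeddings $\Gamma(X, \cO_X)/\frp \hookrightarrow \kappa(x_i)$ and hence a chain $\kappa(\fra) \hookrightarrow K_\frp \hookrightarrow \kappa(x_i)$. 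I would pick a bounded multiplicative seminorm $|\cdot|_\fra$ on $A$ with kernel $\fra$ (a point of $\cM(A)$ lying over $\fra$), view it as an absolute value on $\kappa(\fra)$, and extend it successively to absolute values $|\cdot|_\frp$ on $K_\frp$ and $|\cdot|_i$ on $\kappa(x_i)$. The pair $(x_i, |\cdot|_i)$ then defines an analytic point $\tilde{x}_i \in X^\an$ whose restriction to $A$ is $|\cdot|_\fra$ and hence bounded, and by construction $|g(\tilde{x}_1)| = |g(\tilde{x}_2)| = |\bar{g}|_\frp$ for every $g \in \Gamma(X, \cO_X)$, where $\bar{g}$ denotes the image of $g$ in $K_\frp$.

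Applying \cref{Thm:DensityAlgebraicSections} to the semi-reflexive sheaf $\cO_X$, the subspace $\Gamma(X, \cO_X) \subset \Gamma(X^\an, \cO_X^\an)$ is dense, and evaluation at $\tilde{x}_i$ is continuous since it factors through restriction to any affinoid neighborhood; therefore the equality propagates to $|f(\tilde{x}_1)| = |f(\tilde{x}_2)|$ for every analytic $f$. But $\tilde{x}_1 \neq \tilde{x}_2$ in $X^\an$ because they lie above distinct scheme points, contradicting the holomorphic separability of $X^\an$. The main obstacle is the construction of the compatible lifts: one must propagate a single absolute value through the tower $\kappa(\fra) \hookrightarrow K_\frp \hookrightarrow \kappa(x_i)$ while ensuring the induced seminorm on the affinoid $K$-algebra $A$ stays bounded. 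Once this is achieved, the contradiction is obtained through a routine application of the density theorem and the continuity of evaluation at a point.
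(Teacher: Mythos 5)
Your proof is correct, but the hard direction runs along a genuinely different route than the paper's. You share the same first reduction (via \cref{Cor:CharacterizationQuasiAffine}, injectivity of $\sigma_X$, and an appeal to \cref{Thm:DensityAlgebraicSections}), but after that you diverge: the paper does not work with arbitrary scheme points and valuations at all. Instead it converts non-injectivity of $\sigma_X$ into non-surjectivity of the diagonal $X \to X \times_{\Spec \cO(X)} X$ (universal injectivity, \emph{cf.} the cited Stacks Project lemma), picks a \emph{closed} point off the diagonal, so that $x_1, x_2$ are rigid points of $X^\an$, and then uses holomorphic separability in a refined form (a section of the analytified ideal sheaf $I^\an$ of $x_1$ not vanishing at $x_2$) together with density of $\Gamma(X, I)$ in $\Gamma(X^\an, I^\an)$ — this is where the semi-reflexive-sheaf generality of \cref{Thm:DensityAlgebraicSections} is really used, applied to $I$ rather than to $\cO_X$. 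Your construction of two Berkovich points $\tilde x_1 \neq \tilde x_2$ over the scheme points $x_i$ carrying a common absolute value on $K_\frp$ lets you get away with density for $\cO_X$ only, plus continuity of $f \mapsto |f(\tilde x_i)|$, and it contradicts separability directly at possibly non-rigid points; the price is two outside ingredients you should pin down: the existence of a point of $\cM(A)$ with kernel exactly $\fra$ (this is the surjectivity of the analytification on points, which the paper itself cites as \cite[proposition 2.1.1]{BerkovichIHES} in the proof of \cref{Lemma:SteinsDontContainProper}), and the extension of rank-one absolute values along the arbitrary field extensions $\kappa(\fra) \into K_\frp \into \kappa(x_i)$ (standard: Gauss norms on a transcendence basis, then extension to algebraic extensions). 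Note also that the boundedness issue you flag as the main obstacle is in fact automatic: the restriction of $|\cdot|_i$ to the image of $A$ factors through $\kappa(\fra)$ and is by construction the chosen point of $\cM(A)$, hence bounded. With those two standard facts supplied, your argument is complete and slightly more self-contained on the sheaf-theoretic side, while the paper's stays entirely with rigid points and scheme-theoretic reductions at the cost of invoking the density theorem for ideal sheaves and the refined separation lemma.
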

 
Recall that by \cref{Cor:CharacterizationQuasiAffine} the $A$-scheme $X$ is quasi-affine as an $A$-scheme if there is an open embedding $X \into Y$ of $A$-schemes with $Y$ affine of finite type.

\begin{theorem} \label{Thm:AffineIfFiniteType} Suppose $X^\an$ is Stein and (at least) one of the following:
\begin{enumerate}
\item the $A$-algebra $\Gamma(X, \cO_X)$ is of finite type;
\item $A = K$ and the ring $\Gamma(X_{\red}, \cO_X)$ is Noetherian.
\end{enumerate}
Then  $X$ is affine.
\end{theorem}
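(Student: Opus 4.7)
The plan is to deduce both cases from the groundwork in \cref{sec:GoodmanLandman}. A Stein space is holomorphically separable, so Theorem~\ref{Thm:SteinImpliesQuasiAffine} immediately gives that $X$ is quasi-affine; writing $R := \Gamma(X, \cO_X)$ and $Y := \Spec R$, the canonical $\sigma_X \colon X \to Y$ is a quasi-compact open immersion with scheme-theoretically dense image (\cref{Lemma:SteinFactorizationOpenImmersion,,Lemma:SteinFactorizationDominant}). The task thus reduces to showing that $\sigma_X$ is surjective.

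Case~(2), where $A = K$ and $R$ is Noetherian, is exactly what \cref{Cor:QuasiAffineSteinNoetherianIsAffine} delivers, taking the open subset there to be $U = X$ inside the ambient $K$-scheme of finite type $X$ itself; its hypotheses ($U^\an = X^\an$ Stein and $\Gamma(U, \cO_U) = R$ Noetherian) are exactly those given to us.

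In case~(1), $R$ is a finitely generated $A$-algebra, so $Y$ is an affine $A$-scheme of finite type, in particular Noetherian, and its analytification $Y^\an$ carries a morphism without boundary to $\cM(A)$. Let $Z \subset Y$ be the reduced closed complement of $X$. Since $Y^\an \smallsetminus Z^\an = X^\an$ is Stein by hypothesis, the analytic Brenner codimension result from \cref{sec:GoodmanLandman} gives $\codim^{\s}_{Y^\an}(Z^\an) \le 1$. Via the identity $\codim^{\sft}_{Y}(Z) = \codim^{\s}_{Y^\an}(Z^\an)$ and the obvious bound $\codim_{Y}(Z) \le \codim^{\sft}_{Y}(Z)$, this translates to $\codim_{Y}(Z) \le 1$. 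Since $\Spec \sigma_{X*}\cO_X = \Spec R = Y$, the scheme-theoretic Brenner proposition in \cref{sec:GoodmanLandman} forces $\sigma_X$ to be an affine morphism (otherwise the codimension would be at least $2$), and $Y$ being affine then forces $X$ to be affine as well.

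I anticipate essentially no obstacle beyond bookkeeping: every ingredient is already set up in earlier sections. The only point that deserves some care is that the codimension comparison proposition in \cref{sec:GoodmanLandman} is needed in its $A$-affinoid generality rather than merely for $K$-schemes of finite type, which is precisely what allows case~(1) to cover arbitrary strictly affinoid bases $A$ rather than only $A = K$.
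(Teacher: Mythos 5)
Your proposal is correct. Case (2) is handled exactly as in the paper, whose proof states that under hypothesis (2) the theorem is the conjunction of \cref{Thm:SteinImpliesQuasiAffine} and \cref{Cor:QuasiAffineSteinNoetherianIsAffine}; note only that the honest reading of the corollary's ``in particular'' clause runs through an affine ambient scheme, which your opening paragraph supplies via quasi-affineness (in case (2) the scheme $\Spec\Gamma(X,\cO_X)$ itself need not be of finite type over $K$). For case (1), however, you take a genuinely different route. The paper, after reducing as you do to the scheme-theoretically dense open immersion $\sigma_X\colon X\to Y=\Spec\Gamma(X,\cO_X)$ with $\cO(Y)\to\cO(X)$ an isomorphism, concludes by the maximality statement \cref{Prop:Maximality}: a scheme-theoretically dense open subscheme whose analytification is Stein and on which the restriction of global functions is surjective must be everything; this rests on \cref{Lemma:NonExtensionOfAnalyticFunctions} (a function unbounded along a sequence converging to a boundary point cannot extend) together with \cref{Thm:DensityAlgebraicSections} and \cref{Thm:ClosednessImage}. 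You instead run the Neeman--Brenner codimension machinery of \cref{sec:GoodmanLandman} over the affinoid base: since $Y$ is of finite type over $A$ in case (1), $Y^\an\to\cM(A)$ is without boundary, the analytic proposition gives $\codim^{\s}_{Y^\an}(Z^\an)\le 1$, the comparison proposition in its $A$-affinoid form (which the paper does state) gives $\codim_Y(Z)\le\codim^{\sft}_Y(Z)\le 1$, and the contrapositive of the algebraic Brenner proposition, applied with $\Spec\Gamma(X,\cO_X)=Y$ Noetherian, forces $\sigma_X$ to be an affine morphism (its proof in fact shows $Z$ has no point of codimension $\le 1$, so $Z=\emptyset$), whence $X$ is affine since $Y$ is. Both arguments are legitimate: yours makes case (1) structurally parallel to case (2) and stays entirely inside \cref{sec:GoodmanLandman} once quasi-affineness is known, at the cost of invoking the codimension comparison in its affinoid generality; the paper's maximality argument bypasses the codimension apparatus altogether in favour of the explicit non-extension lemma.
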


Under hypothesis (2) \cref{Thm:AffineIfFiniteType} is the conjunction of \cref{Thm:SteinImpliesQuasiAffine} and \cref{Cor:QuasiAffineSteinNoetherianIsAffine}. It is plausible that \cref{thm:Koh}  holds for excellent rings and \cref{Cor:QuasiAffineSteinNoetherianIsAffine} holds over affinoid $K$-algebras \cite[Th\'eor\`eme 2.13]{DucrosExcellent}. However, in the case of an arbitrary affinoid $K$-algebra, we prefer not to go down this road and content ourselves with the finite generation hypothesis, sticking to techniques much closer to the ones already present in this article. As just recalled, an affinoid $K$-algebra $A$ is an excellent ring  thus is universally catenary (so that we can speak about the Krull dimension of $A$-algebras of finite type) and universally japanese \cite[\href{https://stacks.math.columbia.edu/tag/07QV}{lemma 07QV}, \href{https://stacks.math.columbia.edu/tag/0334}{proposition 0334}]{stacks-project}.

\begin{theorem} \label{Thm:RestrictionToCurvesIsSurjective}If $X^\an$ is Stein, then the restriction map $\Gamma(X, \cO_X) \to \Gamma(Y, \cO_Y)$ is surjective for all closed subscheme $Y \subset X$ dimension $\le 1$.
\end{theorem}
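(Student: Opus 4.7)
My plan combines the Stein property of $X^\an$ (yielding analytic surjectivity) with the density theorem \cref{Thm:DensityAlgebraicSectionsIntro} (turning analytic into algebraic), and exploits $\dim Y \le 1$ to close the argument.

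Let $I \subset \cO_X$ be the ideal sheaf of $Y$. The Stein hypothesis gives $\rH^1(X^\an, I^\an)=0$, so the exact sequence $0 \to I^\an \to \cO_{X^\an} \to i^\an_\ast \cO_{Y^\an} \to 0$ yields a surjection $r^\an \colon \Gamma(X^\an, \cO_{X^\an}) \twoheadrightarrow \Gamma(Y^\an, \cO_{Y^\an})$. Given $f \in \Gamma(Y, \cO_Y)$, lift its analytification $f^\an$ to some $F \in \Gamma(X^\an, \cO_{X^\an})$. By \cref{Thm:DensityAlgebraicSectionsIntro} applied to $\cO_X$ and to the semi-reflexive coherent ideal sheaf $I$, both $\Gamma(X, \cO_X) \subset \Gamma(X^\an, \cO_{X^\an})$ and $\Gamma(X, I) \subset \Gamma(X^\an, I^\an)$ are dense. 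Since $r^\an$ is a continuous surjection between Fr\'echet spaces (hence open by Banach), the image $H$ of the composition $\Gamma(X, \cO_X) \to \Gamma(Y, \cO_Y) \hookrightarrow \Gamma(Y^\an, \cO_{Y^\an})$ is dense in the target, and lies in $\Gamma(Y, \cO_Y)$.

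Reduce to $Y$ integral by treating irreducible components separately. If $\dim Y = 0$, then $Y$ is finite over $A$ and $\Gamma(Y, \cO_Y) = \Gamma(Y^\an, \cO_{Y^\an})$ is an affinoid $K$-algebra finite over the Noetherian ring $A$; the $A$-submodule $H$ of this finite $A$-module is automatically finitely generated and closed, and closed $+$ dense forces $H = \Gamma(Y, \cO_Y)$. If $\dim Y = 1$, then $Y^\an$ is Stein (closed in the Stein space $X^\an$), which forbids a proper irreducible component of $Y$ (its analytification would be a positive-dimensional compact Stein space, hence finite), so for a reduced separated one-dimensional scheme of finite type over $A$ this forces $Y$ to be affine.

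The \textbf{main obstacle} is finishing the $\dim Y = 1$ case: unlike in the zero-dimensional case, $\Gamma(Y, \cO_Y)$ is dense but not closed in $\Gamma(Y^\an, \cO_{Y^\an})$, so the closed-plus-dense trick of the previous paragraph fails. The plan is an iterative correction exploiting both densities simultaneously: starting from an algebraic approximation $g_n \in \Gamma(X, \cO_X)$ of $F$, the residue $f - g_n|_Y$ is small; lift it analytically with norm control via Banach's open mapping on $r^\an$, approximate the lift algebraically, and correct by an element of $\Gamma(X, I)$ (available by its density in $\Gamma(X^\an, I^\an)$) so that the updated algebraic function has a geometrically smaller residue along $Y$. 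The technical heart is to ensure this sequence of algebraic sections stabilizes to a genuine $g \in \Gamma(X, \cO_X)$ with $g|_Y = f$, rather than producing merely a formal analytic limit in $\Gamma(X^\an, \cO_{X^\an})$; for this, \cref{Thm:ClosednessImage} is invoked to control the interaction between analytic convergence and the algebraic subring.
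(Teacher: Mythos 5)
Your first half is sound and parallels the paper: extend $f$ analytically using that $Y^\an$ is a closed analytic subspace of the Stein space $X^\an$, and use \cref{Thm:DensityAlgebraicSections} to see that $H=\im\bigl(\Gamma(X,\cO_X)\to\Gamma(Y,\cO_Y)\bigr)$ is dense in $\Gamma(Y^\an,\cO_{Y^\an})$ (the paper packages exactly this as \cref{Prop:DensityOnClosedSubvarieties}); your sketch that $Y$ is affine is also essentially the paper's route (\cref{Lemma:NonProperAffinoidCurvesAreAffine} together with \cref{Lemma:SteinsDontContainProper}, which handle the fibers over $\Spec A$ and the non-reduced case, points you leave vague). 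The genuine gap is the endgame in dimension $1$. Your ``iterative correction'' cannot close the argument: any successive-approximation scheme produces at best a convergent sequence of algebraic sections, and its limit is only an element of $\Gamma(X^\an,\cO_{X^\an})$ (resp.\ of $\Gamma(Y^\an,\cO_{Y^\an})$); since $\Gamma(X,\cO_X)$ and $H$ are dense but \emph{not} closed in these Fr\'echet spaces (already for $X=\bbA^1_K$ the polynomials are dense and not closed among entire functions), nothing forces the limit to be algebraic, and there is no mechanism making the sequence stabilize. Invoking \cref{Thm:ClosednessImage} does not help: that statement concerns the restriction map $\Gamma(X,F)\to\Gamma(X\smallsetminus Z,F)$ between spaces of \emph{analytic} sections and says nothing about the algebraic sections forming a closed subset of the analytic ones, which is the closedness your plan would need.

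What is missing is precisely the algebraic input the paper isolates as \cref{Lem:DenseSubalgebrasCurves}: for an affine $A$-scheme $Y$ of finite type of dimension $\le 1$, an $A$-subalgebra $S\subset\cO(Y)$ that is dense in $\cO(Y^\an)$ must equal $\cO(Y)$. This is not a functional-analytic fact but a statement about curves: its proof uses \cref{Prop:SteinImpliesQuasiAffine} to produce from $S$ a finitely generated subalgebra $T$ with $Y\to\Spec T$ a dense open immersion, then normalization and the absence of nontrivial affine modifications in dimension one (\cref{lemma:NoBlowUpInCurves}, resting on the fact that closed points of a Dedekind scheme are Cartier divisors), together with \cref{Cor:SectionsAffineOpen}, and a separate d\'evissage for the reduced and nilpotent structure (your ``treat irreducible components separately'' is itself not immediate for surjectivity and is handled there). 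Applied to $S=H$ it gives $H=\cO(Y)$, which is the conclusion. Without this dimension-one rigidity statement, density of $H$ alone is strictly weaker than surjectivity, as the complex counterexamples in the introduction suggest, so your proposal as it stands does not prove the theorem.
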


Combined with proposition
\ref{Prop:AffineIFFRestrictionToSubvarietySurjective} this yields:

\begin{corollary} \label{Thm:SteinIFFAffineSurfaces} If $X^\an$ is Stein and $\dim X \le 2$, then is $X$ is affine. In particular, an algebraic surface $S$ over $K$ is affine if and only if $S^\an$ is Stein.
\end{corollary}

The rest of this section is devoted to the proof of \cref{Thm:SteinImpliesQuasiAffine,Thm:AffineIfFiniteType,Thm:RestrictionToCurvesIsSurjective,Thm:SteinIFFAffineSurfaces}. But before that, a last remark on the strictness assumption for $A$. Faithful flatness of scalar extension \cite[lemma 2.1.2]{BerkovichIHES} and fpqc descent \cite[\href{https://stacks.math.columbia.edu/tag/041Z}{Lemma 041Z}]{stacks-project}, permits to prove \cref{Thm:AffineIfFiniteType} when $A$ is not necessarily strict. However, in the non strict case, the difference between Krull and analytic dimension prevents from proving \cref{Thm:RestrictionToCurvesIsSurjective,,Thm:SteinIFFAffineSurfaces} with the current techniques.

\subsection{Proof of \cref{Thm:SteinImpliesQuasiAffine}} The implication  (1) $\Rightarrow$ (2) is clear, thus only the converse one needs to be proved. According to \cref{Cor:CharacterizationQuasiAffine} it suffices to show that the morphism $\sigma_X \colon X \to \Spec \cO(X)$ is injective. 
Suppose by contradiction that $\sigma_X$ is not injective. Then trivially $\sigma_X$ is not universally injective, therefore the diagonal morphism 
\[ \Delta \colon X \too Z := X \times_{\Spec \cO(X)} X \subset X \times_S X\]
is not surjective by \cite[\href{https://stacks.math.columbia.edu/tag/01S4}{lemma 01S4}]{stacks-project}. Let $z \in Z \smallsetminus \im \Delta$ be a closed point and write $x_i = \pr_i(z) \in X$ for $i = 1, 2$. The closed points $x_1, x_2 \in X$ are distinct and they correspond to distinct rigid points of $X^\an$ by \cite[lemma 2.7.1]{DucrosFlatness}. Let $I \subset \cO_X$ be the sheaf of functions vanishing at $x_1$. Since $X^\an$ is holomorphically separated there is $f \in \Gamma(X^\an, I^\an)$ such that $f(x_2) \neq 0$, see \cite[lemma 3.2]{notions}. The subset of $ \Gamma(X^\an, I^\an)$ made of functions not vanishing at $x_2$ is open, thus there is $g \in \Gamma(X, I)$ such that $g(x_2) \neq 0$ by density of $\Gamma(X, I) \subset \Gamma(X^\an, I^\an)$. Note that  $I$ is semi-reflexive, thus \cref{Thm:DensityAlgebraicSections} can be applied. In particular $\sigma_X(x_1) \neq \sigma_X(x_2)$ thus the point $z$ cannot lie in $Z$. Contradiction. \qed

\subsection{Proof of \cref{Thm:AffineIfFiniteType}} The statement is a consequence of \cref{Thm:SteinImpliesQuasiAffine} and the maximality property of Stein spaces explained in the following two results:

\begin{lemma} \label{Lemma:NonExtensionOfAnalyticFunctions} Let $Z$ be a closed subspace of a strict, Hausdorff and countable at infinity $K$-analytic space $X$ with a morphism $X \to \cM(A)$ without boundary. If~$X \smallsetminus Z$ is Stein and scheme-theoretically dense in $X$, and $\Gamma(X, \cO_X) \to \Gamma(X \smallsetminus Z, \cO_X)$ is surjective, then $Z = \emptyset$.
\end{lemma}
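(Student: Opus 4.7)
The plan is to use the hypotheses to build a retraction $\phi\colon X\to U$ of the open immersion $j\colon U:=X\smallsetminus Z\hookrightarrow X$, and then to show that such a retraction forces $X=U$.

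First I would observe that, since $U$ is scheme-theoretically dense in $X$, the restriction map $\rho\colon\Gamma(X,\cO_X)\to\Gamma(U,\cO_U)$ is automatically injective (its kernel sits inside the kernel of $\cO_X\to j_\ast\cO_U$), hence bijective by the surjectivity hypothesis. The sheaf $\cO_X$ is semi-reflexive, so by \cref{Thm:ClosednessImage} the map $\rho$ is a topological embedding onto a closed Fr\'echet subspace; combined with surjectivity this makes $\rho$ a homeomorphism of Fr\'echet algebras (equivalently, by Banach's open mapping theorem). In particular the inverse $\rho^{-1}\colon\cO(U)\to\cO(X)$ is a continuous $K$-algebra homomorphism. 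Since $U$ is Stein, hence countable at infinity, and $X$ is countable at infinity by hypothesis, \cref{Prop:YonedaStein} produces a unique morphism of $K$-analytic spaces $\phi\colon X\to U$ satisfying $\phi^\sharp=\rho^{-1}$. The composite $\phi\circ j\colon U\to U$ induces $(\phi\circ j)^\sharp=j^\sharp\circ\phi^\sharp=\rho\circ\rho^{-1}=\id_{\cO(U)}$, so a second application of \cref{Prop:YonedaStein} (now with both source and target equal to $U$) gives $\phi\circ j=\id_U$; thus $\phi$ is a retraction of $j$ and $\phi(u)=u$ for every $u\in U$.

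The last step is to conclude $X=U$ from the retraction. I would consider $j\circ\phi\colon X\to X$ alongside $\id_X$: for $u\in U$ one has $(j\circ\phi)(u)=j(u)=u$ by the retraction property, so these two continuous self-maps of $X$ agree on $U$. Scheme-theoretic density upgrades immediately to topological density: any non-empty open $V\subset X$ has $\cO_X(V)\ne 0$ (it contains the constant function $1$), and the injection $\cO_X(V)\hookrightarrow\cO_U(V\cap U)$ provided by scheme-theoretic density forces $V\cap U\ne\emptyset$. Since the Berkovich space $X$ is Hausdorff, two continuous maps into it that agree on a dense subset coincide, so $j\circ\phi=\id_X$. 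But $j\circ\phi$ factors set-theoretically through $U\subset X$, so $X\subset U$ and $Z=\emptyset$.

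I expect the main obstacle to lie precisely in this last step: transferring the algebraic identity $\phi\circ j=\id_U$, which drops out effortlessly from \cref{Prop:YonedaStein}, to the geometric identity $j\circ\phi=\id_X$ requires both the scheme-theoretic density hypothesis (which crucially provides topological density, not just continuity of $j^\sharp$) and Hausdorffness of the ambient space. The first two steps are essentially formal consequences of the density-and-Stein machinery developed earlier in the paper.
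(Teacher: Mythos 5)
Your route is genuinely different from the paper's. You turn the hypotheses into a retraction: injectivity of $\rho\colon\Gamma(X,\cO_X)\to\Gamma(X\smallsetminus Z,\cO_X)$ from scheme-theoretic density, bijectivity from the surjectivity hypothesis, continuity of $\rho^{-1}$ from \cref{Thm:ClosednessImage}, and then \cref{Prop:YonedaStein} (with the Stein space $U:=X\smallsetminus Z$ as target, both spaces being countable at infinity) to produce $\phi\colon X\to U$ with $\phi^\sharp=\rho^{-1}$ and, by a second application, $\phi\circ j=\id_U$. All of this is correct and is a nice functional-analytic use of the machinery; note in passing that you never use strictness or the boundaryless morphism to $\cM(A)$. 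The paper argues quite differently: it picks a rigid point $x\in Z$, uses Poineau's angelicity theorem to find a sequence of rigid points of $U$ converging to $x$, extracts along an affinoid Stein exhaustion so that the sequence is a discrete closed analytic subset of $U$, and interpolates a function on $U$ with $|f(u_i)|\to\infty$, which cannot extend to $X$ by continuity at $x$.

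The gap is in your last step, which you yourself flag: you assert that $|X|$ is Hausdorff, but the lemma only assumes $X$ quasi-separated, and quasi-separatedness is not separatedness; Berkovich analytic spaces are in general only locally Hausdorff, and none of the results you invoke supplies Hausdorffness of $|X|$. This matters: with only local Hausdorffness, the equalizer $E=\{x\in X: j(\phi(x))=x\}$ is merely locally closed; since $E\subseteq\operatorname{im}(j)=U$ and $E\supseteq U$, one gets $E=U$, i.e.\ only that $U$ is open and dense, which is no new information, so the argument does not close. To complete your proof you would either have to prove that the stated hypotheses force $|X|$ to be Hausdorff (a genuine point-set argument about quasi-separated analytic spaces, not an off-the-shelf fact), or strengthen the hypothesis to ``separated'' (which suffices for the application in \cref{Prop:Maximality}, where $X$ is the analytification of a separated scheme), or replace the density-plus-Hausdorff step by a different mechanism. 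The paper's proof of \cref{Lemma:NonExtensionOfAnalyticFunctions} avoids this issue entirely, since it never needs to separate points of $X$: it only needs a sequence in $U$ converging to a point of $Z$ and the boundedness of $|f|$ along a convergent sequence for $f\in\Gamma(X,\cO_X)$.
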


\begin{proof} Suppose $Z \neq \emptyset$ by contradiction. Let  $x \in Z$ be a rigid point and $U := X \smallsetminus Z$.
Then there is a sequence of rigid points $S = \{ u_i\}_{i \in \bbN}$ whose limit is $x$. To see this, one can either argue that rigid points have a countable basis of neighborhoods or apply \cite[Th\'eor\`eme 5.3]{PoineauAngeliques} using that rigid points in $U$ are dense because $U$ is strict.
Up to extracting a subsequence, we may suppose that $S$ is discrete in $U$. For, let $\{ U_i \}_{i \in \bbN}$ be an increasing $G$-cover of $U$ by compact analytic domains with $u_0 \in U_0$. Since $z \not \in U$ up to passing to a subsequence of the points $u_i$ and the domains $U_i$ we may assume that $u_i \in U_i$ and $u_{i + 1} \not\in U_i$ for all $i\in \bbN$. Such a subsequence will do. Now $S \subset U$ is a closed analytic subset. For each $i \in \bbN$, let $\lambda_i \in K$ be such that $|\lambda_i| \ge i$.  Since $U$ is Stein, the restriction homomorphism $\cO(U) \to \cO(S)$ is surjective, thus there is $f \in \cO(U)$ such that $f(u_i) = \lambda_i$. Such a $f$ cannot extend to $X$ because $\lim_{i \to \infty} |f(u_i)| = + \infty$.
\end{proof}

\begin{lemma} \label{Prop:Maximality} Let $U$ be a scheme-theoretically dense open subset of a separated $A$-scheme $X$ of finite type. If $U^\an$ is Stein and  $\Gamma(X, \cO_X) \to \Gamma(U, \cO_X)$ is surjective, then $U = X$. 
\end{lemma}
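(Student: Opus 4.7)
The plan is to deduce the statement from its analytic counterpart, namely \cref{Lemma:NonExtensionOfAnalyticFunctions}, applied to $X^\an$ and the closed analytic subspace $Z^\an$ where $Z := X \smallsetminus U$. To this end I need to check that each of the hypotheses of that lemma is satisfied.

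First, since $X$ is a separated $A$-scheme of finite type and $A$ is a strictly affinoid $K$-algebra, the $K$-analytic space $X^\an$ is strict, separated (hence quasi-separated) and countable at infinity, and the structural morphism $X^\an \to \cM(A)$ is without boundary (the analytification of any morphism of schemes is without boundary). The hypothesis that $U$ is scheme-theoretically dense in $X$ passes to the analytifications, so $U^\an$ is scheme-theoretically dense in $X^\an$, and by assumption $U^\an$ is Stein.

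The only remaining (and really only) point to verify is that the restriction map $\Gamma(X^\an,\cO_{X^\an})\to\Gamma(U^\an,\cO_{U^\an})$ is surjective. This is where I invoke \cref{Cor:SectionsAffineOpen} applied to the structure sheaf $\cO_X$ (which is semi-reflexive). The surjectivity hypothesis together with the injectivity of $\Gamma(X,\cO_X)\to\Gamma(U,\cO_X)$ coming from scheme-theoretic density gives condition (4) of that corollary, namely $\Gamma(X,\cO_X)=\Gamma(U,\cO_X)$. The corollary then yields condition (3), namely $\Gamma(X^\an,\cO_{X^\an})=\Gamma(U^\an,\cO_{U^\an})$, so the analytic restriction map is not only surjective but bijective.

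With all the hypotheses in place, \cref{Lemma:NonExtensionOfAnalyticFunctions} gives $Z^\an=\emptyset$, hence $Z=\emptyset$, hence $U=X$. There is no real obstacle here: all the work has been done in the analytic lemma and in the equivalence of \cref{Cor:SectionsAffineOpen} (which itself rests on the density theorem \cref{Thm:DensityAlgebraicSections}); the present statement is the transcription of that analytic maximality to the algebraic setting.
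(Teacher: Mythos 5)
Your proof is correct and takes essentially the same route as the paper: you extract surjectivity of $\cO(X^\an) \to \cO(U^\an)$ from the algebraic hypothesis via \cref{Thm:DensityAlgebraicSections} and \cref{Thm:ClosednessImage} (packaged as the implication (4) $\Rightarrow$ (3) of \cref{Cor:SectionsAffineOpen}, whose proof is exactly that combination), and then apply \cref{Lemma:NonExtensionOfAnalyticFunctions}. The one step you assert without justification---that scheme-theoretic density of $U$ in $X$ passes to the analytifications---is precisely what the paper settles by citing \cref{Cor:InjectivityPushForward}, so it is a missing reference rather than a gap.
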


\begin{proof} The open subset $U^\an \subset X^\an$ is scheme-theoretically dense by \cref{Cor:InjectivityPushForward}. Therefore the image of the restriction map $\cO(X^\an) \to \cO(U^\an)$ is closed by \cref{Thm:ClosednessImage} and $\cO(U) \subset \cO(U^\an)$ is dense by \cref{Thm:DensityAlgebraicSections}.  The hypothesis then implies that $\cO(X^\an) \to \cO(U^\an)$ is surjective. Since $U^\an$ is Stein, this implies that $X^\an \smallsetminus U^\an$ is empty by \cref{Lemma:NonExtensionOfAnalyticFunctions}.
\end{proof}

\begin{proof}[{Proof of \cref{Thm:AffineIfFiniteType}}] Assuming (2) \cref{Thm:AffineIfFiniteType} follows from \cref{Thm:SteinImpliesQuasiAffine} and \cref{Cor:QuasiAffineSteinNoetherianIsAffine}. Suppose that $\cO(X)$ is a finitely generated $A$-algebra. The $A$-scheme $X$ is quasi-affine by \cref{Thm:SteinImpliesQuasiAffine} thus  $\sigma_X \colon X \to Y:= \Spec \cO(X)$ is a scheme-theoretically dense open immersion (\cref{Lemma:SteinFactorizationOpenImmersion,,Lemma:SteinFactorizationDominant}). The restriction map~$\cO(Y) \to \cO(X)$ is an isomorphism by construction. Since $X^\an$ is Stein we have $X = Y$ by \cref{Prop:Maximality}.
\end{proof}

\subsection{Proof of \cref{Thm:RestrictionToCurvesIsSurjective}} 

First we need this simple consequence of the Stein property and \cref{Thm:DensityAlgebraicSections}:

\begin{lemma} \label{Prop:DensityOnClosedSubvarieties} Let $Y$ be a closed subscheme of a separated $A$-scheme $X$ of finite type and $F$ a semi-reflexive coherent $\cO_X$-module. If $X^\an$ is Stein, then \[
\im(\Gamma(X, F) \to \Gamma(Y, F)) \subset \Gamma(Y^\an, F^\an)\]
is dense.
\end{lemma}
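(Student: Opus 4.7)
The plan is to combine the global density result \cref{Thm:DensityAlgebraicSections} with the vanishing of higher cohomology on Stein spaces to transfer density from $X^\an$ down to $Y^\an$.

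First I would record the elementary reduction. Denote by $I_Y \subset \cO_X$ the sheaf of ideals defining $Y$. The short exact sequence of coherent $\cO_X$-modules
\[ 0 \too I_Y F \too F \too F_{\rvert Y} \too 0 \]
yields, after analytification (which is an exact functor on coherent sheaves, as $X^\an \to X$ is flat by \cite[2.6.2]{BerkovichIHES}), the short exact sequence
\[ 0 \too (I_Y F)^\an \too F^\an \too F^\an_{\rvert Y^\an} \too 0. \]
Since $X^\an$ is Stein, we have $\rH^1(X^\an, (I_Y F)^\an) = 0$, so the restriction map $\rho \colon \Gamma(X^\an, F^\an) \to \Gamma(Y^\an, F^\an)$ is surjective.

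Next I would invoke \cref{Thm:DensityAlgebraicSections}: the image of $\Gamma(X, F) \to \Gamma(X^\an, F^\an)$ is dense. The restriction map $\rho$ is continuous with respect to the natural topologies on global sections of coherent analytic sheaves (by the very definition, it suffices to observe that for any affinoid domain $V \subset Y^\an$ sitting inside an affinoid domain $W \subset X^\an$, the map $\Gamma(W, F^\an) \to \Gamma(V, F^\an)$ is continuous, and such $V$ cover $Y^\an$). Consequently $\rho$ carries a dense subset of $\Gamma(X^\an, F^\an)$ onto a dense subset of its image, which is all of $\Gamma(Y^\an, F^\an)$ by the first step. Finally, the composite $\Gamma(X, F) \to \Gamma(X^\an, F^\an) \to \Gamma(Y^\an, F^\an)$ coincides with $\Gamma(X, F) \to \Gamma(Y, F) \to \Gamma(Y^\an, F^\an)$ by functoriality of the analytification, whence the density of the image of $\Gamma(X, F) \to \Gamma(Y, F)$ in $\Gamma(Y^\an, F^\an)$.

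There is no real obstacle here: the only nontrivial input is \cref{Thm:DensityAlgebraicSections} (which we may cite), and the cohomological vanishing on $X^\an$ is an immediate consequence of the Stein assumption. The mildest subtlety is checking that analytification sends the algebraic short exact sequence $0 \to I_Y F \to F \to F_{\rvert Y} \to 0$ to an exact sequence on $X^\an$ — but this is standard exactness of the analytification functor for coherent sheaves on separated $A$-schemes of finite type.
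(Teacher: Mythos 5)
Your proof is correct and takes essentially the same route as the paper: Stein-ness gives surjectivity of $\Gamma(X^\an, F^\an) \to \Gamma(Y^\an, F^\an)$ (the paper invokes this directly, you make it explicit via the sequence $0 \to I_Y F \to F \to F_{\rvert Y} \to 0$ and the vanishing of $\rH^1$), and the density of $\Gamma(X, F)$ in $\Gamma(X^\an, F^\an)$ from \cref{Thm:DensityAlgebraicSections} is then transported by continuity of restriction. The paper phrases the same argument concretely, approximating a global analytic extension on an affinoid domain $D \subset X^\an$ containing a given affinoid $C \subset Y^\an$, which is just the norm-by-norm version of your abstract continuity step.
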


\begin{proof} Let $f \in \Gamma(Y^\an, F^\an)$. The statement boils down to approximating $f$ by elements of $\Gamma(X, F)$ on any compact analytic domain of $Y^\an$. Since $X^\an$ is Stein and $Y^\an$ is a closed subspace, there is $g \in \Gamma(X^\an, F^\an)$ such that $g_{\rvert Y^\an} = f$. Moreover, it suffices to approximate $g$ by elements of $\Gamma(X, F)$ on any compact analytic domain $D$ of $X^\an$. To do so, let $\| \cdot \|$ be a norm defining the topology of $F(D)$ and $\epsilon > 0$. Then by density of $F(X)$ in $F(X^\an)$ there is $h \in F(X)$ such that $\| g - h\| < \epsilon$.
\end{proof}

The following simple fact about curves hides the fact that closed points in a Dedekind scheme are Cartier divisors:

\begin{lemma} \label{lemma:NoBlowUpInCurves} Let $U$ be an non-empty open subset in an affine Noetherian integral scheme $X$ of dimension $1$. Let $S \subset \Gamma(U, \cO_U)$ a subring containing $\Gamma(X, \cO_X)$. If $X$ is integrally closed in $U$, then the morphism $i \colon \Spec S \to X$ is an open immersion.
\end{lemma}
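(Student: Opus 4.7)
Set $A := \Gamma(X, \cO_X)$ and $B := \Gamma(U, \cO_U)$, so that $A \subset S \subset B \subset K := \Frac(A)$; the hypothesis translates into $A$ being integrally closed in $B$, and hence in $S$. The plan is to show that for each maximal prime $\mathfrak{p}$ of $A$, the localization $S \otimes_A A_{\mathfrak{p}}$, viewed as a subring of $K$, equals either $A_{\mathfrak{p}}$ or $K$. Granting this, the set $V := \{\mathfrak{p} \in X : S \otimes_A A_{\mathfrak{p}} = A_{\mathfrak{p}}\}$ is an open subscheme of $X$ containing $U$ (its complement at maximal points is contained in the finite set $X \smallsetminus U$), the natural morphism $\Spec S \to V$ is an isomorphism, and $i$ factors as $\Spec S \xrightarrow{\sim} V \hookrightarrow X$, proving it is an open immersion.

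The local analysis splits in two. For $\mathfrak{p} \in U$, every element of $B$ is regular at $\mathfrak{p}$, so $S \subset B \subset A_{\mathfrak{p}}$ and therefore $S \otimes_A A_{\mathfrak{p}} = A_{\mathfrak{p}}$. For $\mathfrak{p} \in X \smallsetminus U$, no prime of $B$ lies over $\mathfrak{p}$, which forces $\mathfrak{p} B = B$ and hence $B \otimes_A A_{\mathfrak{p}} = K$; so $S \otimes_A A_{\mathfrak{p}}$ is an overring of $A_{\mathfrak{p}}$ in $K$, and a routine clearing-of-denominators argument transfers the integral closedness of $A$ in $S$ to $A_{\mathfrak{p}}$ being integrally closed in $S \otimes_A A_{\mathfrak{p}}$.

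The heart of the argument---and the step I expect to be the main obstacle---is the following structural claim: for a $1$-dimensional excellent local Noetherian domain $R$ with fraction field $K$, any overring $R \subseteq R' \subsetneq K$ with $R$ integrally closed in $R'$ satisfies $R' = R$. By excellence the normalization $\widetilde R$ is finite over $R$, hence a semi-local principal ideal domain, and integral closedness of $R$ in $R'$ gives $R' \cap \widetilde R = R$. Since $R' \ne K$, $R'$ is dominated by a DVR of $K$ over $R$, which is necessarily a localization $\widetilde R_{\mathfrak{n}}$ of $\widetilde R$ at a maximal ideal; hence $R' \subseteq \widetilde R_{\mathfrak{n}}$. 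In the analytically irreducible case ($\widetilde R$ local) this immediately forces $R' \subseteq \widetilde R$, whence $R' = R' \cap \widetilde R = R$. The general case, where $\widetilde R$ has multiple maximal ideals, is more delicate: any hypothetical $y \in R' \smallsetminus R$ would lie in $\widetilde R_{\mathfrak{n}} \smallsetminus \widetilde R$ and hence admit a presentation $u/s$ with $s \in \widetilde R$ a non-unit outside $\mathfrak{n}$, and the monic integral equation satisfied by $s$ over $R$ is then expected to conflict with $R$ being integrally closed in $R'$, yielding the desired contradiction.
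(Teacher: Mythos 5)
The reduction you set up is sound as far as it goes: $S_{\frp}=A_{\frp}$ for $\frp\in U$, and integral closure commuting with localization does give that $A_{\frp}$ is integrally closed in $S_{\frp}$ for every $\frp$. But the proof stops exactly where all the weight lies: the structural claim for a one-dimensional excellent \emph{local} domain $R$ whose normalization $\tilde R$ has several maximal ideals is not proved, only ``expected''. Worse, the mechanism you sketch does not engage the hypothesis: writing a hypothetical $y\in R'\smallsetminus R$ as $u/s$ with $u,s\in\tilde R$, neither $u$ nor $s$ need belong to $R'$, so a monic equation for $s$ over $R$ only produces elements integral over $R$ inside $\tilde R$, about which ``$R$ integrally closed in $R'$'' says nothing. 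The claim is in fact true (so your strategy is salvageable), but it needs a real argument; for instance, using that the conductor $\frak{c}=(R:\tilde R)$ is an $\tilde R$-ideal contained in $R$, one multiplies $y$ by elements of $\frak{c}$, twisted by units of $\tilde R$, so that the products lie in $\tilde R$ and their images in $\tilde R/\frak{c}$ sweep out a whole line concentrated on one pole branch just below the conductor exponent; if all of these lay in $R$, the conductor could be enlarged, a contradiction, so some product lies in $R'\cap\tilde R\smallsetminus R$. Two smaller points: your assertion that no prime of $B$ lies over $\frp\in X\smallsetminus U$ is itself unproved and not obvious (it amounts to the nontrivial fact that $U$ is affine), though luckily you never use more than the trivial inclusions $A_{\frp}\subseteq S_{\frp}\subseteq K$; and the final step, that the bijection $\Spec S\to V$ with matching local rings is an isomorphism of schemes, deserves a word (e.g.\ via \cref{Lemma:SteinFactorizationOpenImmersion} applied to $V$, as in the paper's conclusion).

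Note also that the paper's proof avoids the delicate local statement altogether, precisely because you discarded part of the hypothesis at the outset (``integrally closed in $B$, and hence in $S$''). Integral closedness of $A$ in all of $B=\Gamma(U,\cO_U)$ is much stronger at the missing points: since $\tilde A$ is finite over $A$ and $\tilde A/A$ has finite support, a non-normal point $x\in X\smallsetminus U$ would produce an element of $\tilde A$ regular on $U$ (hence in $B$) and integral over $A$ but not in $A$. So $\cO_{X,x}$ is a discrete valuation ring for every $x\in X\smallsetminus U$, and the only rings between a DVR and its fraction field are the DVR and the field; this replaces your structural claim by a triviality. In short, your route is genuinely different (purely local, at possibly non-normal points) and can be completed, but as written its central case is a gap, whereas the paper's use of the full hypothesis reduces the local analysis to the normal case.
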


\begin{proof} The closed subset $X \smallsetminus U$ is made of finitely many closed points because $\dim X = 1$. It follows that the image of $i$ is an open subset $V \subset X$.
\begin{claim} For all $y \in Y = \Spec S$ we have $\cO_{Y,y}= \cO_{X,i(y)}$. \end{claim}

\begin{proof}[Proof of the Claim] Notice first that the induced morphism $j \colon U \to Y := \Spec S$ is an open immersion. Therefore it suffices to prove the claim only for $y \in Y \smallsetminus j(U)$. Now $X$ is integrally closed in $U$, thus the local ring $\cO_{X,x}$ is normal  hence a discrete valuation ring for all $x \in X \smallsetminus U$. Set $x= i(y)$ and let $\pi \in \cO_{X, x}$ be a uniformizer. Every $\cO_{X, x}$-submodule of $F = \Frac(\cO_{X, x})$ is of the form $\pi^n \cO_{X, x}$ for some $n \in \bbZ$. Since $\cO_{Y, y} \subset F$ is a ring, there are only two possibilities: either $\cO_{X, x} = \cO_{Y, y}$ or $\cO_{Y, y} = F$. The second one cannot happen: otherwise $y$ would be the generic point. 
\end{proof}

Now the morphism $i$ is injective on points. Indeed if $y, y ' \in Y$ have image $x$ under $i$, then we have $\cO_{Y, y'} = \cO_{X, x} = \cO_{Y, y} $ by the claim. Let $\frp, \frp' \subset S$ be the prime ideals corresponding to $y, y'$ respectively. Then $\frp = \frp' = \frm \cap S$ where  $\frm\subset \cO_{X,x}$ is the maximal ideal, thus $y = y'$. On the other hand, by the claim we have
\[ \Gamma(V, \cO_X) = \bigcap_{x \in V} \cO_{X, x} = \bigcap_{y \in Y} \cO_{Y, y} = S. \]
This concludes the proof.
\end{proof}

In the proofs that follow we will use repeatedly the following:

\begin{remark} \label{rmk:OpenEmbedding} Let $f \colon X \to Y$ and $g \colon Y \to Z$ be morphisms of schemes where $f$ is quasi-compact and has a scheme-theoretically dense image, and $g \circ f$ is an immersion. Then, $f$ is an immersion by \cite[\href{https://stacks.math.columbia.edu/tag/07RK}{Lemma 07RK}]{stacks-project} which is actually an open immersion because $f$ has scheme-theoretically dense image \cite[\href{https://stacks.math.columbia.edu/tag/01QV}{Lemma 01QV}]{stacks-project}. In practice we will use this when $X$ is a Noetherian scheme, hence the quasi-compactness of $f$ will be automatic. 
\end{remark}

The key to reach \cref{Thm:RestrictionToCurvesIsSurjective} is that on curves there are no proper dense subrings. More precisely:

\begin{lemma} \label{Lem:DenseSubalgebrasCurves} Let $X$ be an affine $A$-scheme of finite type of dimension $\le 1$ and $S \subset \cO(X)$ an $A$-subalgebra. Suppose that $S \subset \cO(X^\an)$ is dense and that there is a finitely generated $A$-subalgebra $T \subset S$ such that the induced morphism $X \to \Spec T$ is an open immersion. Then  $S = \cO(X)$.
\end{lemma}

\begin{proof} 
\emph{Step 1.} Suppose $X$ integral. If $X$ is a singleton, then $\cO(X)$ is a finite extension of $K$ and $S = \cO(X^\an)$ by density. Suppose $\dim X = 1$. By hypothesis there is a finitely generated $A$-subalgebra $T \subset S$ such that the morphism $j \colon X \to Y := \Spec T$ is a scheme-theoretically dense open immersion. The scheme $X$ is excellent of dimension $1$, thus so is $Y$. Let $\tilde{Y} = \Spec \tilde{T}$ be the normalization of~$Y$ in $X$ \cite[\href{https://stacks.math.columbia.edu/tag/0BAK}{Section 0BAK}]{stacks-project} and  $\tilde{S} = \im(S \otimes_{T} \tilde{T} \to \cO(X))$.  The morphism $\Spec \tilde{S} \to \tilde{Y}$ induced by the inclusion $\tilde{T} \subset \tilde{S}$ is an open immersion by \cref{lemma:NoBlowUpInCurves}. Now $\tilde{S} \subset \cO(X^\an)$ is dense because it contains $S$ and is finite over $S$ because $\tilde{T}$ is a finitely generated $T$-module. \Cref{Cor:SectionsAffineOpen} implies $\tilde{S} = \cO(X)$ thus $\cO(X)$ is a finitely generated $S$-module. Now the morphism $X \to \Spec S$ is at the same time an open immersion (the composite morphism $X \to \Spec S \to \Spec T$ is so thus we can apply \cref{rmk:OpenEmbedding}), is finite and has a connected target. Thus it is an isomorphism.

\medskip

\emph{Step 2.} Suppose $X$ reduced. It suffices to prove that each irreducible component of $X$ in closed in $\Spec S$. If we do so, then the image of $j \colon X \to \Spec S$ is open and closed. Since $X$ is scheme-theoretically dense in $\Spec S$, it must be the whole $\Spec S$ concluding the proof when $X$ is reduced. Now an irreducible component $X'$ of $X$ endowed with its reduced structure is affine because $X$ is affine and $X'$ is closed in $X$. Since $X'$ is affine, in order to prove the statement it suffices to show that the restriction homomorphism $S \to \cO(X')$ is surjective. Let $S'$ be the image of $S$ in $\cO(X')$. Since $X$ is affine, the $K$-analytic space $X^\an$ is Stein. Therefore the restriction homomorphism $\cO(X^\an) \to \cO(X'^\an)$ is surjective. By hypothesis $S \subset \cO(X^\an)$ is dense, thus the image $S'$ of $S$ in $\cO(X')$ is dense in $\cO(X'^\an)$ thanks to \cref{Prop:DensityOnClosedSubvarieties}. The image $T'$ of $T$ in $S'$ is such that $X' \to \Spec T'$ is an open embedding, because the composite morphism $X \to \Spec T' \to \Spec T$ is an immersion and we can apply \cref{rmk:OpenEmbedding}. Step 1 implies $S' = \cO(X')$ because $X'$ is irreducible, thus $X'$ is closed in $\Spec S$. 

\medskip

\emph{Step 3.} In the general case let $N \subset \cO(X)$ be the nilpotent radical. Let $r \in \bbN$ be index of nilpotency of $X$, that is, the unique integer such that $N^r \neq 0$ and $N^{r+1} = 0$. Let us prove $\cO(X) = S$ by induction on $r$. If $r = 0$, then $X$ is reduced and the statement follows from the preceding case. Suppose $r \ge 2$ and the statement true for $A$-schemes of finite type of dimension $\le 1$ whose index of nilpotency is $\le r-1$. Let $X'$ be the closed subscheme of $X$ defined by the ideal $N^{r}$. Let $S'$ be the image of $S$ in $\cO(X')$. The following diagram is exact and commutative
\begin{center}
\begin{tikzcd}
0 \ar[r] & S \cap N^{r} \ar[r] \ar[d] & S \ar[r] \ar[d] & S ' \ar[r] \ar[d] & 0 \\
0 \ar[r] & N^{r} \ar[r] & \cO(X) \ar[r] & \cO(X') \ar[r] & 0
\end{tikzcd}
\end{center}
Since $\cO(X^\an) \to \cO(X'^\an)$ is surjective, $S'$ is dense in $\cO(X'^\an)$. Moreover, as above, the image $T'$ of $T$ in $S'$ is such that $X' \to \Spec T'$ is an open embedding.  The index of nilpotency of $X'$ is $\le r - 1$, hence $S' = \cO(X')$ by inductive hypothesis. On the other hand, $N^r$ has a natural structure of $\cO(X)/N$-module, and it is finitely generated as such. In particular, since $S' = \cO(X')$, $N^r$ is a finitely generated $S'$-module. The lower line in the previous commutative and exact diagram, implies that $\cO(X)$ is a finite $S$-module. Therefore, the morphism $X \to \Spec S$ is both a scheme-theoretically dense open immersion and finite, thus an isomorphism.
\end{proof}

\begin{lemma} \label{Lemma:NonProperAffinoidCurvesAreAffine} Let $X$ be an $A$-scheme of finite type of dimension $\le 1$. Then, the following are equivalent:
\begin{enumerate}
\item  the scheme $X$ is affine;
\item for $s \in \Spec A$, the $\kappa(s)$-scheme of finite type $X_s$ does not contain positive-dimensional closed subschemes that are proper over $\kappa(s)$.
\end{enumerate}
\end{lemma}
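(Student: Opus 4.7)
Implication \textbf{(1) $\Rightarrow$ (2)} is immediate: every closed subscheme of the affine $\kappa(s)$-scheme $X_s$ is affine, and a $\kappa(s)$-scheme that is simultaneously affine and proper is finite, hence zero-dimensional.

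For \textbf{(2) $\Rightarrow$ (1)}, the plan is to compactify $X$ over $A$ and realize the complement as the zero locus of a relatively ample line bundle. Since the affinoid algebra $A$ is Noetherian, Nagata's compactification theorem produces a proper $A$-scheme $\ol X$ containing $X$ as a dense open subscheme; in particular $\dim \ol X = \dim X \le 1$, so every fibre of $\ol X \to \Spec A$ has dimension at most one. Endow $Z := \ol X \smallsetminus X$ with its reduced scheme structure and blow it up to obtain a proper $A$-scheme $\pi \colon \ol X' \to \ol X$ such that $\pi$ restricts to an isomorphism over $X$ and $Z' := \pi^{-1}(Z)$ is an effective Cartier divisor on $\ol X'$; thus $X = \ol X' \smallsetminus Z'$.

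The core of the argument is to show that $\cO_{\ol X'}(Z')$ is ample relative to $f \colon \ol X' \to \Spec A$. By the fibre-wise criterion for relative ampleness (EGA III.4.7.1, valid for any proper morphism of Noetherian schemes with no flatness assumption), it suffices to verify that $\cO(Z'_s)$ is ample on the proper $\kappa(s)$-scheme $\ol X'_s$ for each $s \in \Spec A$. Zero-dimensional fibres are automatic. For a one-dimensional irreducible component $C$ of $(\ol X'_s)_\red$, viewed as an integral proper $\kappa(s)$-curve, were $C$ disjoint from $Z'_s$ then $C$ would sit inside $X_s$ as a positive-dimensional proper closed subscheme, contradicting (2). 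Hence $\cO(Z'_s)|_C$ has positive degree for every such $C$, giving ampleness of $\cO(Z'_s)$ on $(\ol X'_s)_\red$ by the classical Nakai--Moishezon criterion for reduced proper curves, and ampleness lifts through nilpotent thickenings to $\ol X'_s$. Therefore $\cO_{\ol X'}(Z')$ is $f$-ample, and $X = \ol X' \smallsetminus Z'$ is the non-vanishing locus of the tautological section of an $f$-ample line bundle on the proper $A$-scheme $\ol X'$, whence affine over $A$. The main delicacy is the invocation of the fibre-wise ampleness criterion in the non-flat setting and the lift of ampleness through nilpotent thickenings; the remainder is standard curve theory and bookkeeping around the blow-up.
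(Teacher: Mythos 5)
Your implication (1) $\Rightarrow$ (2) is fine. For (2) $\Rightarrow$ (1) you take a genuinely different route from the paper (which reduces to an integral situation, pushes the base down to a field or a Dedekind ring, and invokes Zariski's main theorem), but as written your argument has two gaps, one of them serious. First, you assert without proof that $\dim \overline{X} = \dim X \le 1$, hence that all fibres of $\overline{X}' \to \Spec A$ are curves. This is not formal: a dense open subscheme of a Noetherian scheme can have strictly smaller dimension than the ambient scheme (over a discrete valuation ring $R$ with fraction field $K$, $\bbA^1_K$ is a dense open of dimension $1$ in $\bbP^1_R$, which has dimension $2$). The equality does hold here, but only because $A$ is affinoid: $A$ is Jacobson and universally catenary, affinoid domains have all maximal ideals of full height, and the dimension formula then shows that every closed point of an integral finite-type $A$-scheme has height equal to its dimension, so dense opens have full dimension; this also has to be carried through the blow-up $\overline{X}' \to \overline{X}$. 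Second, and more seriously, you deduce $\deg\bigl(\cO(Z')|_C\bigr) > 0$ merely from $C \cap Z' \neq \emptyset$. That inference fails when $C \subseteq Z'$: the canonical section of $\cO(Z')$ restricts to zero on such a $C$ and the degree can be zero or negative (for the exceptional curve $E$ of the blow-up of a surface at a point one has $\deg \cO(E)|_E = -1$); this is exactly the phenomenon that makes boundary divisors non-ample in general, and your text never rules this case out.

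Both gaps can be closed, and the second reduces to the first: once $\dim \overline{X}' \le 1$ is established, a one-dimensional integral $C$ in a fibre is an irreducible component of $\overline{X}'$, and an effective Cartier divisor contains no irreducible component (its ideal is locally generated by a nonzerodivisor), so $C \not\subseteq Z'$; then the restricted canonical section is nonzero and vanishes on the nonempty finite scheme $C \cap Z'$, giving positive degree. With these points supplied, the rest of your argument (the fibrewise ampleness criterion of EGA III 4.7.1, ampleness on proper curves via degrees, insensitivity of ampleness to nilpotents, and affineness of the complement of an effective ample divisor on a scheme proper over $\Spec A$) is sound and yields a legitimate alternative to the paper's proof, at the price of these extra inputs on dimensions over affinoid bases which the paper's normalization/Zariski-main-theorem argument avoids.
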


\begin{proof} Let $\pi \colon X \to \Spec A$ be the structural morphism.

 (1) $\Rightarrow$ (2) If $X$ is affine, then all the fibers of $\pi$ are affine. The statement follows because proper affine schemes are finite.

(2) $\Rightarrow$ (1) One reduces to the situation where the affinoid $K$-algebra $A$ is integral, normal  and of dimension $\le 1$, the scheme $X$ is integral of dimension $1$ and the morphism $\pi$ is dominant. This is because of three facts. First, a point is affine, which covers the case of $0$-dimensional $X$. Second, a Noetherian scheme is affine if and only if each of its irreducible components (endowed with its reduced structure) is: this permits to reduce to the case where $A$ and $X$ are integral. Third, the normalization of $\Spec A$ is a finite morphism because $A$ is a Japanese ring. The second and third reduction steps use \cite[\href{https://stacks.math.columbia.edu/tag/01YQ}{Lemma 01YQ}]{stacks-project}. Under the above additional hypotheses, one distinguishes two cases. If $\dim A = 0$, then the affinoid $K$-algebra $A$ is a field and the statement is equivalent to the well-known fact that curves over a field with no projective component are affine. Suppose $\dim A = 1$. The ring $A$ is normal, Noetherian and of dimension $1$, thus a Dedekind ring \cite[\href{https://stacks.math.columbia.edu/tag/034X}{lemma 034X}]{stacks-project}. Being dominant, the morphism $\pi$ is flat. In particular, the fibers of $\pi$ are all $0$-dimensional, that is, the morphism $\pi$ is quasi-finite. By Zariski's main theorem \cite[\href{https://stacks.math.columbia.edu/tag/05K0}{lemma 05K0}]{stacks-project}, there is a finite morphism $\nu \colon \Spec B \to \Spec A$ and an open immersion $i \colon X \to \Spec B$ such that $\pi = \nu \circ i$. Up to replacing $\Spec B$ by the scheme-theoretic closure of the image of $i$, one may assume $B$ to be integral. Therefore, the affinoid $K$-algebra $B$ is of dimension $1$. In order to show that $X$ is affine, we may suppose that $B$ is normal because the normalization is a finite morphism ($B$ is a japanese ring); see \cite[\href{https://stacks.math.columbia.edu/tag/01YQ}{Lemma 01YQ}]{stacks-project}. One concludes because every open subset in an affine Dedekind scheme is affine.
\end{proof}

\begin{lemma} \label{Lemma:SteinsDontContainProper} Let $X$ be an $A$-scheme of finite type such that $X^\an$ is Stein. Then, for any $s \in \Spec A$, the $\kappa(s)$-scheme of finite type $X_s$  does not contain positive-dimensional closed subschemes that are proper over $\kappa(s)$.
\end{lemma}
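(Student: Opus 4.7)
My plan is to reduce the statement to a property of quasi-affine schemes via \cref{Thm:SteinImpliesQuasiAffine}. Since a Stein space is holomorphically separable, that theorem forces $X$ to be quasi-affine over $A$, and \cref{Cor:CharacterizationQuasiAffine} then gives that the canonical morphism $\sigma_X \colon X \to \Spec \Gamma(X, \cO_X)$ is injective on underlying sets. The whole problem thus becomes: show that an $A$-scheme of finite type whose Stein factorization $\sigma_X$ is injective cannot have a proper positive-dimensional closed subscheme in any fiber $X_s$.

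I will argue by contradiction. Assume some fiber $X_s$ contains a closed subscheme $Y$, proper over $\kappa(s)$, of dimension $\ge 1$. Picking an irreducible component of positive dimension endowed with its reduced induced structure yields an integral closed subscheme $Y_0 \subset X_s$, still proper over $\kappa(s)$ and of positive dimension. The standard coherent cohomology of proper schemes over a field implies that $L := \Gamma(Y_0, \cO_{Y_0})$ is a finite-dimensional $\kappa(s)$-algebra, and integrality of $Y_0$ forces it to be a domain; hence $L$ is a finite field extension of $\kappa(s)$.

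The crux of the argument is then to show that $\sigma_X$ collapses all the closed points of $Y_0$ to a single point of $\Spec \Gamma(X, \cO_X)$. For any closed $y \in Y_0$, the residue field $\kappa(y)$ is a finite extension of $\kappa(s)$ containing $L$, and the evaluation at $y$ factors as
\[
\Gamma(X, \cO_X) \too \Gamma(X_s, \cO_{X_s}) \too L \into \kappa(y),
\]
the last arrow being an injective inclusion of fields. Therefore $\ker(\ev_y) = \ker(\Gamma(X, \cO_X) \to L)$, which is independent of $y$. However, as a positive-dimensional scheme of finite type over the field $\kappa(s)$, the space $Y_0$ has infinitely many closed points, and these give pairwise distinct points of $X$ via the underlying-set-injective projection $X_s \to X$; this contradicts the injectivity of $\sigma_X$.

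I do not foresee any serious obstacle: the argument is built on standard facts (finiteness of global sections on proper schemes, factorisation of evaluation through closed subschemes, injectivity on points of the fiber inclusion $X_s \to X$). The only step demanding a moment of care is ensuring that the composite above is well defined, which is immediate from $Y_0 \subset X_s$ being closed and the functoriality of global sections.
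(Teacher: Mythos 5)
Your argument is correct, but it takes a genuinely different route from the paper. You deduce from \cref{Thm:SteinImpliesQuasiAffine} (Stein $\Rightarrow$ holomorphically separable $\Rightarrow$ quasi-affine) and \cref{Cor:CharacterizationQuasiAffine} that $\sigma_X$ is injective, and then run a purely algebraic argument: for an integral positive-dimensional $Y_0 \subset X_s$ proper over $\kappa(s)$, the ring $L = \Gamma(Y_0, \cO_{Y_0})$ is a finite field extension of $\kappa(s)$, so evaluation at every closed point of $Y_0$ has the same kernel $\ker(\Gamma(X,\cO_X) \to L)$, forcing $\sigma_X$ to collapse the (infinitely many) closed points of $Y_0$ --- a contradiction. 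All the steps check out (the factorization through $L$, injectivity of $L \hookrightarrow \kappa(y)$, injectivity of $Y_0 \to X_s \to X$ on underlying sets), and there is no circularity since \cref{Thm:SteinImpliesQuasiAffine} is proved earlier and independently of this lemma. The paper instead argues analytically and locally around the fiber: it first reduces to a $K$-rational point $s$ by extending scalars to the completed residue field (using that Stein-ness is stable under scalar extension), then observes that a proper closed $Y \subset X_s \subset X$ has $\Gamma(Y,\cO_Y)$ finite over $K$, that $Y^\an$ is a closed subspace of the Stein space $X^\an$ hence Stein, and that compactness makes its affinoid Stein exhaustion stationary, so $Y^\an$ is affinoid with finite function algebra, whence $Y$ is finite. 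Your route is shorter and avoids GAGA, scalar extension and the compactness argument, but it imports the full strength of \cref{Thm:SteinImpliesQuasiAffine}, hence of the density theorem \cref{Thm:DensityAlgebraicSections}, and it uses the standing separatedness hypothesis of the section (needed for \cref{Cor:CharacterizationQuasiAffine}); the paper's proof only uses holomorphic convexity-type consequences of Stein-ness of $X^\an$ and is self-contained at the analytic level.
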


\begin{proof} Let $S:= \Spec A$ so that $S^\an = \cM(A)$. Let $s \in S$. According to \cite[proposition 2.1.1]{BerkovichIHES}, there is a point $s^\an \in S^\an$ whose image in $S$ via the canonical morphism $S^\an \to S$ is $s$. In order to prove the statement, up to extending scalars to the complete residue field $K'$ at $s^\an$, one may assume that $s$ is $K$-rational. Indeed, consider the affinoid $K'$-algebra $A' := A \hotimes_{K} K'$, the scheme $S' = \Spec A'$ and the $S'$-scheme of finite type $X' = X \times_S S'$. Let $\pi' \colon X' \to S'$ be the morphism deduced from $\pi$ by base-change. The $K'$-point $s^\an$ of $S^\an$ defines a $K'$-point $s'^\an$ of $S'^\an$, which corresponds with a closed point $s'$ of $S'$. Let $F = S' \times_S \Spec K'$ be the fiber of the morphism $S' \to S$ induced by the inclusion $A \to A'$ at the $K'$-point of $S$ defined by $s^\an$. The point $s'$ defines a $K'$-point of the $K'$-scheme $F$ and induces a section of the projection $X'_F := X' \times_{S'} F \to X_s \times_{\kappa(s)} K'$. The situation is resumed in the following diagram:

\begin{center}
\begin{tikzpicture}[on top/.style={preaction={draw=white,-,line width=#1}}, on top/.default=4pt, scale=.9]
\def\a{1.5};
\def\b{1};

\node (A1) at (0,0) {$\Spec K'$};
\node (A2) at (2*\a,0) {$\Spec \kappa(s)$};
\node (A3) at (4*\a, 0) {$S$}; 

\node (B1) at (\a,\b) {$X_s \times_{\kappa(s)} K' $};
\node (B2) at (3*\a,\b) {$X_s$};
\node (B3) at (5*\a,\b) {$X$};

\node (C1) at (0,2*\b) {$F$};
\node (C3) at (4*\a,2*\b) {$S'$};

\node (D1) at (\a,3*\b) {$X'_F$};
\node (D3) at (5*\a,3*\b) {$X'$};

\node at (-\a/2.5,\b) {$_{s'}$};

\draw[->] (C1) -- (A1);

\draw[->] (A1) -- (A2);
\draw[->] (A2) -- (A3);

\draw[->] (B1) -- (A1);
\draw[->] (B2) -- (A2);
\draw[->] (B3) -- (A3);
\draw[->] (D3) -- (B3);
\draw[->] (D1) -- (D3);
\draw[->] (D1) -- (C1);
\draw[->] (D1) -- (B1);
\draw[->] (D3) -- (C3);
\draw[->] (B1) -- (B2);
\draw[->] (B2) -- (B3);

\draw[->] (A1) edge[bend left] node [left] {} (C1);
\draw[->] (B1) edge[bend left] node [left] {} (D1);

\draw[->, on top] (C1) -- (C3);
\draw[->, on top] (C3) -- (A3);

\end{tikzpicture}
\end{center}
Via the closed immersion $X_s \times_{\kappa(s)} K' \to X'_F$, the $K'$-scheme $X'_{s'}$ is identified with $X_s \times_{\kappa(s)} K'$. The $K'$-analytic space $X'^\an$ associated with $X'$ is the $K'$-analytic space obtained from $X^\an$ by extending scalars to $K'$. The $K'$-analytic space $X'^\an$ is Stein because the property of being Stein is compatible with scalar extensions. Since being proper is compatible with base-change, it suffices to show that the $K'$-scheme $X_s \times_{\kappa(s)} K'$ does not contain positive-dimensional closed subschemes that are proper over $K'$. By identifying  $X_s \times_{\kappa(s)} K'$ with $X'_{s'}$, one reduces to the case where $s$ is $K$-rational. Henceforth suppose that $s$ is $K$-rational. Let $Y$ be a closed subscheme of $X_s$ which is proper over $K$. Therefore the $K$-algebra $\Gamma(Y, \cO_Y)$ is finite and, by GAGA over a non-Archimedean field \cite[Corollary 3.4.10]{Berkovich90}, coincides with the $K$-algebra $\Gamma(Y^\an, \cO_{Y}^\an)$ of analytic functions on $Y^\an$. Now, since the point $s$ is $K$-rational, the subscheme $X_s$ is closed in $X$, thus $Y$ is a closed subscheme of $X$. It follows that the $K$-analytic space $Y^\an$ is Stein because it is a closed subspace of the Stein space $X^\an$. By compactness of $Y^\an$, an affinoid Stein exhaustion of $Y^\an$ is eventually stationary, showing that the $K$-analytic space $Y^\an$ is affinoid. Moreover, the $K$-algebra $\Gamma(Y^\an, \cO_Y^\an)$ being finite, the $K$-analytic space $Y^\an$ is finite over $K$. Therefore the $K$-scheme $Y$ is finite \cite[proposition 3.4.7 (4)]{Berkovich90}.\end{proof}

\begin{proof}[{Proof of \cref{Thm:RestrictionToCurvesIsSurjective}}] The $K$-analytic space $X^\an$ is Stein, thus holomorphically separable. It follows from \cref{Cor:CharacterizationQuasiAffine,,Thm:SteinImpliesQuasiAffine} that there is a finitely generated $A$-algebra $B \subset \cO(X)$ such that $X \to \Spec B$ is an open embedding. The $K$-analytic space $Y^\an$ is a closed subspace of the Stein analytic space $X^\an$ thus Stein. Thus the scheme $Y$ is affine by \cref{Lemma:NonProperAffinoidCurvesAreAffine,,Lemma:SteinsDontContainProper}. According to \cref{Prop:DensityOnClosedSubvarieties} the $A$-algebra $S = \im(\cO(X) \to \cO(Y))$ is dense in $\cO(Y^\an)$. Moreover, the image $T$ of $B$ in $S$ is such that $Y \to \Spec T$ is an open embedding because the composite morphism $Y \to \Spec T \to \Spec B$ is an immersion, so \cref{rmk:OpenEmbedding} gives the claim. Therefore we can apply \cref{Lem:DenseSubalgebrasCurves} and obtain $S = \cO(Y)$.
\end{proof}

\small

\bibliography{./../../biblio}

\bibliographystyle{amsalpha}

\end{document}